\long\def\metanote#1#2{{\color{#1}\
		\ifmmode\hbox\fi{\sffamily\mdseries\upshape [#2]}\ }}
\newcommand\xleftrightarrow[2][]{%
	\ext@arrow 9999{\longleftrightarrowfill@}{#1}{#2}}
\newcommand\longleftrightarrowfill@{%
	\arrowfill@\leftarrow\relbar\rightarrow}
\newcommand{\xRightarrow}[2][]{\ext@arrow 0359\Rightarrowfill@{#1}{#2}}
\newcounter{smalllist}
\DeclareMathOperator*{\esssup}{ess\,sup}
\DeclareMathOperator*{\essinf}{ess\,inf}
\numberwithin{equation}{section}
\newcommand{\be}{\beta}
\newcommand{\ga}{\gamma}
\newcommand{\Ga}{\Gamma}
\newcommand{\de}{\delta}
\newcommand{\De}{\Delta}
\newcommand{\ta}{\theta}
\newcommand{\Ta}{\Theta}
\newcommand{\ka}{\kappa}
\newcommand{\la}{\lambda}
\newcommand{\La}{\Lambda}
\newcommand{\si}{\sigma}
\newcommand{\om}{\omega}
\newcommand{\vphi}{\varphi}
\newcommand{\rb}[1]{\left(#1\right)}
\newcommand{\sqb}[1]{\left[#1\right]}
\newcommand{\abs}[1]{\left|#1\right|}
\newcommand{\inn}[1]{\left\langle #1 \right\rangle}
\newcommand{\mcl}{\mathcal}
\newcommand{\mbe}{\mathbb{E}}
\newcommand{\mcB}{\mathcal{B}}
\newcommand{\mcJ}{\mathcal{J}}
\newcommand{\mcK}{\mathcal{K}}
\newcommand{\mcP}{\mathcal{P}}
\newcommand{\mcW}{\mathcal{W}}
\newcommand{\mcp}{\mathcal{P}}
\newcommand{\mA}{\mathcal{A}}
\newcommand{\mB}{\mathcal{B}}
\newcommand{\bA}{\boldsymbol{\mathcal{A}}}
\newcommand{\bbA}{\boldsymbol{\mathcal{\bar A}}}
\newcommand{\brho}{\boldsymbol{\boldsymbol{\rho}}}
\newcommand{\mbr}{\mathbb R}
\newcommand{\f}{\frac}
\newcommand{\drv}[2]{\frac{d #1}{d #2}}
\newcommand{\bs}{\boldsymbol}
\newcommand{\Bbr}{\boldsymbol{\bar{\rho}}}
\newcommand{\br}{\bar{\rho}}
\newcommand{\bsF}{\boldsymbol{F}}
\newcommand{\bsx}{\boldsymbol{x}}
\newcommand{\bsy}{\boldsymbol{y}}
\newcommand{\bsX}{\boldsymbol{X}}
\newcommand{\bsPhi}{\boldsymbol{\Phi}}
\newcommand{\bsrho}{\boldsymbol{\rho}}
\newcommand{\HS}{\mathrm{HS}}
\newcommand{\mD}{\mathfrak{D}}
\newcommand{\md}{\mathfrak{d}}
\newcommand{\bL}{\boldsymbol{\mcl{L}}}
\newcommand{\mJ}{\mcl{J}}
\newcommand{\mK}{\mcl{K}}
\newcommand{\bK}{\boldsymbol{\mcl{K}}}
\newcommand{\eP}{\hat{\mcl{P}}_{N}(\Pi)}
\newcommand{\TV}{\mathrm{TV}}
\newtheorem{theorem}{Theorem}[section]
\newtheorem{proposition}[theorem]{Proposition}
\newtheorem{lemma}[theorem]{Lemma}
\newtheorem{corollary}[theorem]{Corollary}
\theoremstyle{definition}
\newtheorem{definition}[theorem]{Definition}
\newtheorem{remark}[theorem]{Remark}
\newtheorem{example}[theorem]{Example}
\theoremstyle{definition}
\newcommand{\setcustomref}[2]{%
  \begingroup
  \edef\@currentlabel{#2}%
  \label{#1}%
  \endgroup
}
\begin{document}
\title[]{Entropic Chaos of Mixed Mean-Field Jump Processes}

\author{Tau Shean Lim, Shuoning Zhang}

\address{\noindent Department of Mathematics \\ Xiamen University Malaysia}
\email{taushean.lim@xmu.edu.my \\ snzhang828@gmail.com }
\begin{abstract}
	This paper studies a class of mixed mean-field jump processes on an abstract state space $\Pi$, together with their associated $N$-particle systems. The dynamics consist of the superposition of an independent Markovian component and a bounded mean-field jump interaction; in particular, piecewise deterministic Markov processes (PDMPs) with mean-field interactions are covered by this framework. Under a \emph{second-order bounded difference condition} on the mean-field jump kernel, we establish \emph{entropic propagation of chaos} as $N \to \infty$. In particular, we obtain an explicit qualitative bound on the relative entropy between the law of the $N$-particle system and the product measure induced by the mean-field limit. The proof relies on the \emph{second-order concentration inequality} introduced in \cite{gotze2020second}.
\end{abstract}

\maketitle

\section{Introduction}

Mean-field models describe the macroscopic behavior of large systems of weakly interacting components and arise naturally in statistical physics, biology, information theory, and the study of large-scale stochastic networks\cite{mckean1966class,sznitman1991topics,
spohn2012large,
kurtz1970solutions,brunel2000dynamics,
jordan1999introduction,mezard2009information,
benaim2008class,
lasry2007mean,carmona2018probabilistic,
mei2018mean}. 
In this setting, one is often interested in understanding how the behavior of an $N$-particle system approximates that of its \emph{mean-field limit} as $N$ becomes large. A central concept in this direction is the \emph{propagation of chaos}, which asserts that individual particles become asymptotically independent as $N \to \infty$. 

One classical example of a mean-field model is the \emph{McKean–Vlasov diffusion} \cite{vlasov1968vibrational,McKean1969}. The corresponding $N$-particle system is typically described by the system of stochastic differential equations: for $i=1,2,\dots,N$,
\begin{align}\label{eq:MV-system}
	X_t^{i}
	&= X_0^{i} + \int_0^t b\!\left(X_s^{i},\mu_s^{N}\right)ds 
	+ \int_0^t \sigma\!\left(X_s^{i},\mu_s^{N}\right) dB_s^{i}, 
	\qquad 
	\mu_t^{N}:=\frac{1}{N}\sum_{k=1}^{N}\delta_{X_t^{k}},
\end{align}
where $\{\{B_t^{i}\}_{t\ge 0}\}_{i=1}^{N}$ are independent Brownian motions, and
\[
b:\mathbb{R}^d\times\mathcal{P}(\mathbb{R}^d)\to\mathbb{R}^d,
\qquad
\sigma:\mathbb{R}^d\times\mathcal{P}(\mathbb{R}^d)\to\mathbb{R}^{d\times d}
\]
are measurable, measure-dependent drift and diffusion coefficients. For each $i$, the process $\{X_t^{i}\}_{t\ge 0}$ describes the evolution of the $i$-th particle in $\mathbb{R}^d$, where the coefficients depend on the empirical distribution $\mu_t^{N}$ of the entire system. In other words, each particle interacts with the others only through the current empirical measure, representing a weak and symmetric mean-field interaction among all particles.

The $N$-particle system \eqref{eq:MV-system} admits a \emph{mean-field description}, which characterizes the limiting behavior of individual particles as $N \to \infty$. In this limit, each particle evolves according to a \emph{nonlinear McKean–Vlasov SDE} of the form
\begin{align}\label{eq:mf-SDE-expanded}
	\bar X_t^i = \bar X_0^i + \int_0^t b(\bar X_s^i, \bar \rho_s^i) ds + \int_0^t \sigma(\bar X_s^i, \bar \rho_s^i) dB_s^i,
	\qquad 
	\bar \rho_t^i = \mathrm{law}(\bar X_t^i).
\end{align}
Compared to \eqref{eq:MV-system}, the system \eqref{eq:mf-SDE-expanded} is \emph{decoupled} in the sense that the coefficients depend only on the law of the particle itself, rather than on the empirical measure of the finite system. As a consequence, if the initial states $\bar X_0^1, \dots, \bar X_0^N$ are i.i.d., then the solutions $\{\bar X_t^i\}_{t \ge 0}$ for $i=1,\dots,N$ remain independent and identically distributed for all $t \ge 0$, each following the common mean-field law $\bar \rho_t$.

The first step of the study is to establish the well-posedness of the mean-field equation \eqref{eq:mf-SDE-expanded}. Under the standard assumption that the coefficients $b$ and $\sigma$ are Lipschitz continuous with respect to both the spatial variable and the measure argument, where the measure dependence is quantified in the Wasserstein-2 metric $\mcW_2$, classical results \cite{sznitman1991topics} guarantee the existence and uniqueness of solutions. More precisely, suppose there exists a constant $L>0$ such that for all $x,y \in \mathbb{R}^d$ and $\mu,\nu \in \mathcal{P}_2(\mathbb{R}^d)$,
\begin{align}\label{eq:lip-cond}
	|b(x,\mu)-b(y,\nu)| + \|\sigma(x,\mu)-\sigma(y,\nu)\|_{\mcl{F}} \le L \big( |x-y| + \mcW_2(\mu,\nu) \big),	
\end{align}
where $\|\cdot\|_{\mcl{F}}$ denotes the Frobenius norm of matrices. Under this condition, for any i.i.d. initial data $\bar X_0^i \sim \mu_0$, the nonlinear McKean–Vlasov SDE \eqref{eq:mf-SDE-expanded} admits a unique  solution $\{\bar X_t^i\}_{t\ge 0}$, and its law $\bar \rho_t^i$ evolves continuously in time with respect to the $\mcW_2$ metric. 

With the well-posedness of the mean-field equation \eqref{eq:mf-SDE-expanded} established, the next natural step is to investigate the relationship between the $N$-particle system \eqref{eq:MV-system} and its mean-field limit. In particular, one is interested in the \emph{propagation of chaos}, which describes the phenomenon that, as $N \to \infty$, finite collections of particles 
$\{X_t^{i}\}_{i=1}^N$ 
become asymptotically independent and each follows the law of the limiting nonlinear process $\{\bar X_t^{i}\}_{i=1}^N$. More precisely, for any fixed $k \ge 1$, the joint law $\bs\rho_t^{N|k}\in \mcp(\Pi^k)$ of $(X_t^{1}, \dots, X_t^{k})$ converges, as $N \to \infty$, to the product measure $\bar \rho_t^{\otimes k}\in\mcp(\Pi^k)$, where $\bar \rho_t$ is the law of a single particle in the mean-field limit. The classical result, due to Sznitman \cite{sznitman1991topics}, establishes that under the Lipschitz condition \eqref{eq:lip-cond} on the drift and diffusion coefficients, propagation of chaos holds.

The study of mean-field limits and propagation of chaos has classically developed around McKean–Vlasov dynamics, dating back to McKean’s foundational works and the subsequent survey tradition \cite{mckean1966class,McKean1969,sznitman1991topics}. A modern line of results further systematizes these ideas for stochastic particle systems and Vlasov-type models, including comprehensive reviews and strong-form criteria \cite{jabin2014review,Jabin_2018,Chaintron_2022_1,Chaintron_2022_2,lacker2018}. Within this paradigm, the \emph{coupling method} is particularly natural in the SDE setting: one constructs a \emph{pathwise coupling} of two $N$-particle systems—e.g., \eqref{eq:MV-system} and \eqref{eq:mf-SDE-expanded}—by driving them with the same Brownian motions and then estimates inter-particle distances to show chaos propagation as $N\to\infty$ \cite{graham1997stochastic,hauray2014kac,mischler2015new}. Such couplings admit quantitative control in Wasserstein distance and related metrics, which is now a standard tool for stability and convergence of interacting Markov processes \cite{alfonsi2018,fournier2015rate,MR2459454}. These techniques extend beyond purely diffusive settings to McKean–Vlasov limits with jumps or Lévy noise, where coupling-based arguments and Lipschitz/regularity assumptions yield well-posedness and propagation-of-chaos estimates \cite{andreis2018mckean,ERNY2022192,Cavallazzi2023}. For further connections to control and mean-field games—another vantage point on McKean–Vlasov systems—see \cite{Carmona2013}.

Mean-field theory has also been studied beyond the classical SDE framework. To treat a broader class of systems, one can adopt an \emph{abstract mean-field generator} approach, as introduced and developed in  \cite{Chaintron_2022_1,LimTeoh2025,mischler2015new}. In this framework, a mean-field system on a (Polish) state space $\Pi$ is described by a family of \emph{mean-field dependent generators}
\[
\{\mcl{L}(\mu)\}_{\mu\in \mcl{P}(\Pi)},
\]
where, for each probability measure $\mu \in \mcl{P}(\Pi)$, $\mcl{L}(\mu)$ is a Markov/Feller generator that encodes the infinitesimal evolution of a single particle under the influence of the mean-field $\mu$. In other words, given the current distribution of the system, $\mcl{L}(\mu)$ specifies the short-time dynamics of a particle, whether via diffusion, jump, or more general stochastic mechanisms. This abstract formulation allows one to treat a wide variety of dynamics in a unified way, including classical McKean–Vlasov diffusions, pure jump processes, and piecewise deterministic Markov processes (PDMPs), without relying on a pathwise SDE representation.  
For example, the McKean–Vlasov system given in \eqref{eq:MV-system} has the mean-field generator
\[
\mcl{L}(\mu) f(x) = b(x,\mu) \cdot \nabla f(x) + \frac{1}{2} \operatorname{tr}\!\Big[\sigma(x,\mu)\sigma(x,\mu)^\top \nabla^2 f(x)\Big], 
\qquad f\in C_b^2(\mathbb{R}^d).
\]
In this abstract setting, the $N$-particle system and the corresponding mean-field process can be rigorously constructed by solving an associated martingale problem. See Section \ref{main} for details.

In the abstract generator setting, particularly when no SDE representation is available and path-level coupling is therefore not applicable, existing results on mean-field theory and propagation of chaos remain limited due to the inherent difficulties in constructing suitable couplings. To address this, the recent work \cite{LimTeoh2025} by the first author develops a \emph{timewise coupling} approach, which establishes the well-posedness and mean-field limit for such abstract systems without relying on pathwise SDEs.

Beyond the coupling method, several other approaches have been developed to establish propagation of chaos; for a comprehensive overview, see the review articles \cite{Chaintron_2022_1, Chaintron_2022_2} by Chaintron and Diez. Among these, the \emph{entropy/PDE-based method}, introduced by Jabin–Wang \cite{Jabin_2018} and further developed by Lacker 
\cite{Lacker_2023}, has been particularly influential recently. The key idea of this approach is to control the relative entropy between the law of the $N$-particle system and the product measure of the mean-field limit. By combining this with functional inequalities or PDE techniques, one can derive quantitative bounds on how well the finite system approximates the mean-field behavior as $N \to \infty$.
See also 
\cite{grass2024sharppropagationchaosmckeanvlasov,li2025propagationchaosapproximationerror,Chen_2024,bresch2019meanfieldlimitquantitative,huang2025entropycosttypepropagationchaos,gong2024uniformintimepropagationchaossecond,han2023entropicpropagationchaosmean}, and the references therein for the recent development using this method.

The objective of the present paper is to establish propagation of chaos for a class of \emph{mixed mean-field jump models}, as introduced in \cite{Chaintron_2022_1}, using this entropy/PDE-based approach. In the mean-field generator framework, a mixed mean-field jump process on an abstract state space $\Pi$ is described by the generator
\[
\mathcal{L}(\mu) := \mathcal{K} + \mathcal{A}(\mu),
\]
where $\mathcal{K}$ is a Markov generator on $\Pi$ representing the independent evolution of each particle (which could correspond to drift, diffusion, or other Markovian dynamics), and $\{\mathcal{A}(\mu)\}_{\mu \in \mathcal{P}(\Pi)}$ is a \emph{mean-field dependent jump generator} given by
\[
\mathcal{A}(\mu)\varphi(x) := \int_{\Pi} \big[\varphi(y)-\varphi(x)\big] \Lambda(x,dy;\mu),
\]
with $\{\Lambda(\mu)\}_{\mu}$ denoting a \emph{mean-field jump kernel} that encodes interactions among particles through the current distribution $\mu$.  
Intuitively, each particle evolves according to its own independent Markovian behavior through $\mathcal{K}$, while simultaneously interacting with the rest of the system through jumps dictated by $\mathcal{A}(\mu)$. 

Our model naturally encompasses both the \emph{mean-field piecewise deterministic Markov processes} (PDMPs) and \emph{mean-field jump processes} introduced in \cite{Chaintron_2022_1}. Specifically, if one sets $\mathcal{K}$ to be the generator of a deterministic dynamical system on $\Pi$, the resulting process is a mean-field PDMP, where each particle follows a deterministic flow between stochastic jumps governed by $\mathcal{A}(\mu)$. On the other hand, setting $\mathcal{K} \equiv 0$ reduces the model to a pure mean-field jump process, where the particle evolution is entirely driven by the mean-field dependent jump generator.

A key feature of the mixed mean-field jump model considered here is that the generator $\mathcal{L}(\mu)$ may depend on the measure $\mu$ in a \emph{nonlinear} fashion. This is in contrast to earlier works employing the entropy/PDE-based approach, where the mean-field generator typically depends on $\mu$ only in a linear or affine manner; that is, the generator often takes the averaged form
\[
\mathcal{L}(\mu) = \int_{\Pi} \tilde{\mathcal{L}}_x \, d\mu(x)
\]
for some family of generators $\{\tilde{\mathcal{L}}_x\}_{x \in \Pi}$. To the best of our knowledge, the present paper establishes the first quantitative entropic propagation of chaos result for a system whose generator exhibits such a general, nonlinear dependence on the mean-field measure. In fact, even in the case of a linear, bounded $\mathcal{A}(\mu)$, our result provides the first entropic propagation of chaos statement in the abstract generator setting, despite the fact that certain special cases have been previously considered in the literature \cite{lim2020quantitativepropagationchaosbimolecular}.

Our approach is primarily inspired by the methods of \cite{Jabin_2018, lim2020quantitativepropagationchaosbimolecular}. The key idea is to control the \emph{renormalized relative entropy} between the density $\bs\rho_t^{(N)} \in L^1(\nu^{\otimes N})$ of the $N$-particle system and the tensorized density $\bar{\bs\rho}_t^{(N)} = \bar\rho_t^{\otimes N} \in L^1(\nu^{\otimes N})$ of the mean-field limit:
\[
\mathcal{H}_N(\bs\rho_t^{(N)} \| \bar{\bs\rho}_t^{(N)}) := \frac{1}{N} \int_{\Pi^N} \bs\rho_t^{(N)} \log \frac{\bs\rho_t^{(N)}}{\bar{\bs\rho}_t^{(N)}} \, d\nu^{\otimes N}.
\]
Our main result is a bound on this quantity of order $O(N^{-1})$, which directly implies propagation of chaos. To obtain this estimate, we combine a range of functional inequalities—including the data processing inequality, Gibbs' variational principle, and Gr\"onwall’s inequality—with a concentration-type inequality for product measures.

The key difference in our approach lies in the concentration inequality employed in the entropy estimate. In \cite{Jabin_2018,lim2020quantitativepropagationchaosbimolecular}, the argument relies on a concentration bound of the form
\begin{align}\label{ineq:linconcen}
	\int_{\Pi^N}\exp\!\Big(\frac{1}{N}\sum_{i\neq j} \tilde \Phi(x_i,x_j)\Big)\,d\rho^{\otimes N}(x_1,\dots,x_N)\le C,
\end{align}
where $(\Pi,\rho)$ is a probability space and $\tilde\Phi:\Pi^2\to\mathbb{R}$ is a bounded measurable kernel satisfying a suitable cancellation condition. This inequality is inherently \emph{linear} in the empirical measure. 
In fact, in both works the estimate \eqref{ineq:linconcen} constitutes the main technical component needed to close the entropy argument.

In contrast, our argument requires a concentration estimate that remains valid for \emph{nonlinear} dependence on the empirical distribution. Specifically, we make use of
\begin{align}\label{ineq:concen}
	\int_{\Pi^N}\exp\!\Big(\sum_{i=1}^N \Phi(x_i,\mu_{-i})\Big)\,d\rho^{\otimes N}(x_1,\dots,x_N)\le C,\qquad 
	\mu_{-i}:=\frac{1}{N-1}\sum_{j\ne i}\delta_{x_j},
\end{align}
where $\Phi:\Pi\times\mathcal{P}(\Pi)\to\mathbb{R}$ satisfies a cancellation condition together with a \emph{second-order bounded difference property}. 
Observe that \eqref{ineq:concen} reduces to \eqref{ineq:linconcen} by taking
\[
\Phi(x,\mu)=\int_\Pi \tilde\Phi(x,y)\,d\mu(y).
\]
Thus, \eqref{ineq:concen} is a genuine generalization of the concentration inequality in \cite{Jabin_2018}. This generalization allows us to extend the entropy-based propagation of chaos result to models where the generator depends on the law in a \emph{nonlinear} manner.

Our proof of the generalized concentration inequality \eqref{ineq:concen} relies on the recent second–order concentration inequality obtained by G\"otze and Sambale \cite{gotze2020second}. In \cite{gotze2020second}, this inequality is derived via logarithmic Sobolev inequality, and used to establish exponential concentration bounds for functionals of independent random variables whose fluctuations are dominated by second–order interactions, with applications in particular to U-statistics and eigenvalue statistics in random matrix theory. Hence, the present work provides an additional application of this second–order concentration inequality to the analysis of interacting particle systems arising from mean-field models.

\medskip

\emph{Organization.} The remainder of the paper is organized as follows. In the next section, we formulate the mixed mean-field jump system on the abstract state space $\Pi$ and state the main assumptions and results of the present work. Our main results are twofold: (1) the well-posedness of the mean-field evolution equations; and (2) quantitative entropic estimates comparing the law of the $N$-particle system with the tensorized density of the mean-field law. In Section \ref{sec:wp}, we establish the well-posedness theory. Although the existence and uniqueness results follow from classical arguments, we provide the proofs for completeness and derive additional bounds on solutions that are crucial for the propagation of chaos analysis. Section \ref{sec:entest} is devoted to establishing propagation of chaos via entropic estimates, with the technical proof of the generalized concentration inequality deferred to Section \ref{sec:concen-ineq}.
In Section \ref{sec:example}, we present two examples of mixed mean-field jump systems to which our main results apply.
Finally, the appendix collects the proofs of several auxiliary lemmas used throughout the paper.
\section{Settings and Main Results}\label{main}

In this section, we introduce the mixed mean-field systems and state the main results of the present work. 
We begin by clarifying the general setting and the notations adopted throughout the paper. 
Sections~\ref{subsec:ams} and~\ref{sec:markovj} are devoted to preliminary materials: we recall the notions of adjoint Markov semigroups and generators, together with the concept of Markov jump kernels and several auxiliary results. 
The constructions of the mixed mean-field jump processes and their corresponding $N$-particle systems are presented in Sections~\ref{subsec:mixedmfj} and~\ref{subsec:mfNsys}, respectively. 
Finally, the main theorems of the paper, Theorems~\ref{main:wp-mf}, ~\ref{thm:wp-Nmf} and ~\ref{main:ec-Nmf}, will be stated in Sections~
\ref{subsec:mixedmfj} and \ref{subsec:ent-chaos}.

\subsection{Settings and Notations}

Throughout this paper, we work on a state space \((\Pi, \mcl{B}, \nu)\), 
where \(\Pi\) is a Polish space equipped with its Borel \(\sigma\)-algebra \(\mcl{B}\), 
and \(\nu\) is a \emph{finite} Borel measure, referred to as the \emph{reference measure}.  
We adopt the following notations for the associated function and measure spaces:
\begin{itemize}  
	\item $\mcp(\Pi)$: the set of Borel probability measures on $\Pi$;  
	\item $\mcl{M}(\Pi)$: the (Banach) space of finite signed measures on $\Pi$ with total variation norm $\|\cdot\|_{\TV}$;
	\item $\mcl{M}_+(\Pi)$: the space of finite (positive) measures on $\Pi$;
	\item $\mcl{B}(\Pi)$: the space of measurable functions on $\Pi$;
	\item $\mcl{B}_b(\Pi)$: the (Banach) space of bounded measurable functions $\varphi:\Pi\to\mbr$, equipped with the supremum norm  
	\begin{align*}  
		\|\varphi\|_\infty := \sup_{x\in \Pi} |\varphi(x)|;  
	\end{align*}  
	\item $L^p(\nu), \, p\in[1,\infty]$: the usual $L^p$-spaces of $p$-th integrable functions on $\Pi\to\mbr$;
	\item $\mcl{J}(\Pi)$: the space of \emph{bounded} signed kernels on $\Pi$;
	\item $\mcl{J}_+(\Pi)$: the space of \emph{bounded} (positive) kernels on $\Pi$;
	\item $\mcl{J}^0_+(\Pi)$: the space of \emph{bounded Markov} jump kernels on $\Pi$.
\end{itemize}  
See Section \ref{sec:markovj} for further discussion of kernels.

\subsubsection{Notations for pairings}\label{sec:pairings}
We will frequently use the notion of \emph{natural pairing} between measures and functions:
\begin{align*}
	\inn{\cdot,\cdot}:\mcl{M}(\Pi)\times \mcl{B}_b(\Pi)\to\mbr,\qquad \inn{\mu,\vphi} := \int_{\Pi} \varphi d\mu. 
\end{align*}
The \emph{total variation norm} for a signed measure $\mu\in \mcl{M}(\Pi)$ satisfies the duality relation
\begin{align*}
	\|\mu\|_{\mathrm{TV}}&= \sup_{\|\vphi \|_\infty \le 1}|\inn{\mu,\vphi}|. 
\end{align*}
The pairing between two measurable functions w.r.t. the reference measure $\nu$ is denoted as
\begin{align*}
	\inn{\cdot,\cdot}_\nu:\mcl{B}(\Pi)\times \mcl{B}(\Pi)\to\mbr,\qquad \inn{\psi,\vphi}_\nu&= \int_{\Pi} \psi(x)\vphi(x)d\nu(x), 
\end{align*}
so long as the integral w.r.t. the reference measure $\nu$ makes sense.


\subsubsection{Notations in High Dimensions}\label{sec:highdim}
A variable \(x \in \Pi\) is interpreted as the state of the evolution system.  
For a fixed integer \(N \geq 1\), representing the number of particles in the system, we define the 
\(N\)-fold product measure space
\[
(\Pi^N, \bs{\nu}), \qquad 
\bs{\nu} = \bs{\nu}_N := \overbrace{\nu \otimes \nu \otimes \cdots \otimes \nu}^{N}.
\]
Whenever the context is clear (for instance, when \(N\) is fixed), 
we suppress the subscript \(N\) in the tensorized reference measure $\bs\nu$ for brevity.

Elements of $\Pi^N$ are written in boldface, such as $\bs{x}$ or $\bs{y}$, and represent collective states of the $N$-particle system.  
In particular, for $\bs{x}\in \Pi^N$, we write
\[
\bs{x} = (x_1, x_2, \dots, x_N), 
\qquad x_k \in \Pi \;\;\text{for } 1 \leq k \leq N.
\]
For $\bs{x}\in \Pi^N$ and $k\in\{1,2,\dots, N\}$, the \emph{$k$-th truncated variable} is defined as
\[
\bs{x}_{-k} = (x_1,\dots, x_{k-1}, x_{k+1},\dots, x_N) \in \Pi^{N-1},
\]
which represents the collective state of the system after removing the $k$-th particle.  
Throughout this work, the convention is that boldface symbols (for example, $\bs{x}$, $\bs{\Phi}$, $\bs{\rho}$) denote objects defined on the product space $\Pi^N$, while ordinary symbols (for example, $x$, $\vphi$, $\rho$) are reserved for objects defined on the single-particle space $\Pi$.

For $N \ge 1$, we denote by $\eP \subset \mcp(\Pi)$ the subset of all empirical probability measures of $(N\!-\!1)$ particles, that is,
\begin{align}\label{def:emp-prob}
    \eP = \Bigl\{ \frac{1}{N-1}\sum_{k=1}^{N-1}\delta_{x_k} : x_1, \ldots, x_{N-1} \in \Pi \Bigr\}.
\end{align}
We emphasize that, despite the notation, \emph{the subscript $N$ refers to empirical measures of $(N\!-\!1)$ particles. }
This convention is adopted to simplify later expressions, as the $(N\!-\!1)$-particle case arises frequently in the analysis.

We also introduce the \emph{empirical measure operator} 
\[
\mu:\bigsqcup_{N\geq 1}\Pi^N \to \bigcup_{N\ge 2} \eP,
\qquad 
\mu(\bs{x}) = \frac{1}{N}\sum_{k=1}^N \delta_{x_k}\in\hat{\mcl{P}}_{N+1}(\Pi),
\]
which assigns to each configuration the empirical distribution of its components.  
In the description of our mean-field model, the empirical measure formed by a truncated variable $\bs{x}_{-k}$ is frequently used:
\[
\mu(\bs{x}_{-k}) = \frac{1}{N-1}\sum_{\ell=1,\, \ell\neq k}^N \delta_{x_\ell}\in \eP,
\]
representing the mean-field generated by all particles except the $k$-th.

\subsection{Adjoint Markov semigroups}\label{subsec:ams}
In this work we are interested in those Markov semigroups $\{T_t\}_{t\ge 0}$ whose adjoint defines a strongly continuous $(C_0$-) semigroup on $L^1(\nu)$, i.e., the formal adjoint $T_t^*$, for each $t\ge 0$, preserves absolute continuity of probability measures w.r.t. the reference measure $\nu$. This motivates the following definition:

\begin{definition}[Adjoint Markov semigroup and generator]
	Let $(\Pi, \nu)$ be a finite measure space. 
	
	(i) An \emph{adjoint Markov semigroup} on \(L^1(\nu)\) is a \(C_0\)-semigroup 
	\(\{S_t\}_{t \ge 0}\) on \(L^1(\nu)\) satisfying:
	\begin{itemize}
		\item (Positivity). If \(\rho \ge 0\), then \(S_t \rho \ge 0\) for all \(t \ge 0\);
		\item (Mass conservation). For all \(\rho \in L^1(\nu)\),
		\[
		\int_\Pi S_t \rho\, d\nu = \int_\Pi \rho\, d\nu.
		\]
	\end{itemize}
	
	(ii) The \emph{infinitesimal generator} associated with \(\{S_t\}_{t \ge 0}\) is the (typically unbounded) operator \(\mK^*\) on \(L^1(\nu)\) defined by
	\[
	\mK^* \rho := \lim_{h \searrow 0} \frac{S_h \rho - \rho}{h},
	\quad \text{whenever the strong limit in } L^1(\nu) \text{ exists}.
	\]
	Its \emph{domain} \(D(\mK^*)\) consists of all \(\rho \in L^1(\nu)\) for which the above limit exists. 
	
	(iii) Any operator \(\mK^*\) that arises as the infinitesimal generator of an adjoint Markov semigroup on \(L^1(\nu)\) is called an \emph{adjoint Markov generator}.
\end{definition}

Adjoint Markov semigroups arise naturally as the dual semigroup of a \emph{Markov semigroup} $\{T_t\}_{t\ge0}$ admitting a \emph{transition kernel} $\{\kappa_t(x,dy)\}_{t\ge0}$. For each $t\ge0$, the operator $T_t:\mathcal{B}(\Pi)\to\mathcal{B}(\Pi)$ acts on bounded measurable functions by 
\[
T_t\varphi(x) = \int_{\Pi} \varphi(y)\,\kappa_t(x,dy).
\]
The corresponding (formal) dual semigroup acts on the space of signed measures via the left action (see the coming subsection)
\[
T_t'[\mu] = \mu \kappa_t.
\]
When this dual semigroup preserves absolute continuity with respect to a reference measure $\nu$ and is strongly continuous on the space of $L^1(\nu)$-densities, it is an adjoint Markov semigroup.

The following provides a characterization of adjoint Markov generators, which is a direct consequence of \cite[Corollary 2.3]{schilling2001dirichlet}.
\begin{proposition}\label{prop:schilling}
	Let $\{S_t\}_{t\ge 0}$ be a $C_0$-semigroup on $L^1(\nu)$, and let $(\mathcal{K}^*, D(\mathcal{K}^*))$ denote its infinitesimal generator. The following statements are equivalent:
	\begin{itemize}
		\item $\{S_t\}_{t\ge 0}$ is an adjoint Markov semigroup;
		\item for every $\rho \in D(\mK^*)$, the following conditions hold:
		\begin{align}\label{eq:K-adj-markov}
			\int_{\Pi} \mathcal{K}^* \rho\, d\nu = 0, 
			\qquad 
			\int_{\{\rho>0\}} \mathcal{K}^* \rho\, d\nu \le 0.
		\end{align}
	\end{itemize}
\end{proposition}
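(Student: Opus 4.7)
I would prove each implication of the equivalence separately.

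For the direction that being an adjoint Markov semigroup implies the two integral conditions in \eqref{eq:K-adj-markov}, I would fix $\rho \in D(\mathcal{K}^*)$ and use that $h^{-1}(S_h\rho - \rho) \to \mathcal{K}^*\rho$ in $L^1(\nu)$. Since $\nu$ is finite, the functional $\int_\Pi \cdot\, d\nu$ is bounded on $L^1(\nu)$, so mass conservation gives $\int_\Pi (S_h\rho - \rho)\,d\nu = 0$ for every $h>0$; dividing by $h$ and passing $h \to 0^+$ yields the first identity. For the inequality, I would split $\rho = \rho^+ - \rho^-$ and use positivity of $S_h$ to obtain the pointwise bound $S_h\rho = S_h\rho^+ - S_h\rho^- \le S_h\rho^+$. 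Integrating over $\{\rho > 0\}$ and applying mass conservation to $\rho^+ \ge 0$ gives
\[
\int_{\{\rho > 0\}} S_h\rho\,d\nu \;\le\; \int_\Pi S_h\rho^+\,d\nu \;=\; \int_\Pi \rho^+\,d\nu \;=\; \int_{\{\rho > 0\}}\rho\,d\nu,
\]
and dividing the resulting difference by $h$ and taking $h \to 0^+$ (justified because $\mathbf{1}_{\{\rho>0\}} \in L^\infty(\nu)$) produces the second condition.

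For the converse, mass conservation of $\{S_t\}_{t \ge 0}$ follows quickly: invariance of $D(\mathcal{K}^*)$ under the semigroup together with the first condition applied to $S_t\rho$ gives $\frac{d}{dt}\int_\Pi S_t\rho\,d\nu = \int_\Pi \mathcal{K}^* S_t\rho\,d\nu = 0$, and density of $D(\mathcal{K}^*)$ in $L^1(\nu)$ extends the identity $\int_\Pi S_t\rho\,d\nu = \int_\Pi \rho\,d\nu$ to every $\rho \in L^1(\nu)$. The substantive part is positivity, which I would route through the resolvent. By Hille--Yosida, for all sufficiently large $\lambda>0$ the operator $R_\lambda := (\lambda - \mathcal{K}^*)^{-1}$ is bounded on $L^1(\nu)$, and the semigroup admits the Yosida representation $S_t\rho = \lim_{n\to\infty}(n/t)^n R_{n/t}^n\rho$ in $L^1(\nu)$, so it suffices to show $R_\lambda$ preserves positivity. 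Suppose, for contradiction, that $f \ge 0$ but $\rho := R_\lambda f$ satisfies $\rho^- \not\equiv 0$. Applying the second condition to $-\rho \in D(\mathcal{K}^*)$ on the set $\{-\rho>0\} = \{\rho<0\}$ gives $\int_{\{\rho<0\}} \mathcal{K}^*\rho\,d\nu \ge 0$, while the resolvent identity $\mathcal{K}^*\rho = \lambda\rho - f$ yields
\[
0 \;\le\; \int_{\{\rho<0\}} \mathcal{K}^*\rho\,d\nu \;=\; \lambda\int_{\{\rho<0\}}\rho\,d\nu - \int_{\{\rho<0\}} f\,d\nu \;\le\; 0.
\]
Both terms on the right are nonpositive, so they must vanish; since $\lambda>0$ and $\rho<0$ on the set of integration, the vanishing of the first forces $\nu(\{\rho<0\}) = 0$, contradicting $\rho^- \not\equiv 0$.

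The main obstacle will be the final step: transferring positivity from the resolvent to the semigroup. This is a standard Hille--Yosida / Yosida-approximation argument, but must be implemented carefully in the $L^1(\nu)$ setting, in particular checking density of $D(\mathcal{K}^*)$ and the resolvent norm bounds needed to identify the strong limit of $(n/t)^n R_{n/t}^n$. The remaining computations are essentially manipulations of the two defining conditions combined with the Jordan decomposition $\rho = \rho^+ - \rho^-$ and invariance of $D(\mathcal{K}^*)$ under $\{S_t\}_{t\ge 0}$.
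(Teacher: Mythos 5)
Your proof is correct, and it goes beyond what the paper itself provides: the paper does not prove this proposition, but simply cites it as a direct consequence of Corollary~2.3 in Schilling's 2001 paper on Dirichlet operators. So what you have produced is a genuinely self-contained argument that reconstructs the substance of the cited result in the $L^1(\nu)$ setting.

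The forward direction (split $\rho = \rho^+ - \rho^-$, use positivity of $S_h$ to dominate $S_h\rho$ by $S_h\rho^+$ on $\{\rho>0\}$, then difference-quotient and pair against $\mathbf{1}_{\{\rho>0\}}\in L^\infty$) is clean. The converse is where the real content is, and your route is the standard but correct one: derive mass conservation by differentiating $t\mapsto\int_\Pi S_t\rho\,d\nu$ for $\rho\in D(\mathcal K^*)$ and extending by density, then establish positivity by first showing the resolvent $R_\lambda=(\lambda-\mathcal K^*)^{-1}$ is positivity-preserving and then invoking the Euler formula $S_t = \slim_n (\tfrac{n}{t}R_{n/t})^n$ together with closedness of the $L^1$-cone under strong limits. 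The contradiction argument for resolvent positivity is exactly right: apply the second condition to $-\rho\in D(\mathcal K^*)$ (legal since $D(\mathcal K^*)$ is a linear subspace), substitute $\mathcal K^*\rho = \lambda\rho - f$, observe both resulting terms are forced to vanish, and conclude $\nu(\{\rho<0\})=0$. Two things worth stating explicitly when writing this up: (i) $R_{n/t}$ exists for all $n$ large enough because $n/t$ eventually exceeds the Hille--Yosida growth bound of the semigroup, so the Euler approximants are well-defined; and (ii) denseness of $D(\mathcal K^*)$ and the convergence of the Euler approximation are both supplied by the $C_0$-semigroup hypothesis, so no extra assumption is being smuggled in. Since the paper leaves this proof entirely to a citation, your argument would make a reasonable appendix item if the authors wanted the manuscript to be self-contained.
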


\begin{remark}
	In \eqref{eq:K-adj-markov}, the first condition expresses \emph{mass conservation}, while the second corresponds to the \emph{positive maximum principle}.
\end{remark}

We often assume the following \emph{bounded growth condition} for an adjoint Markov generator:
\begin{enumerate}[label=(K\arabic*)]
	\item\label{K-cond} We have $1\in D(\mK^*)$, and it holds $\|\mK^*1\|_{L^\infty(\nu)}<\infty$. 
\end{enumerate}
The bounded growth condition ensures that the semigroup generated by $\mK^*$ does not amplify the density too rapidly in an exponential sense. 
The next proposition makes this statement precise by providing a logarithmic bound. 
Its proof is elementary and can be found as a special case of Proposition~\ref{lem:log-bdd diff}, which treats a more general setting.

\begin{proposition}\label{prop:k-bdd}
	Let $\mK^*$ be an adjoint Markov generator on $L^1(\nu)$ satisfying \ref{K-cond}. Let $\rho_0\in L^1(\nu)$ with $\|\log(\rho_0)\|_{L^\infty(\nu)}<\infty$. Then it holds $\|\log(e^{t\mK^*}\rho_0)\|_{L^\infty(\nu)}\le \|\log(\rho_0)\|_{L^\infty(\nu)}+Mt$, where $M:=\|\mK^*1\|_{L^\infty(\nu)}$. 
\end{proposition}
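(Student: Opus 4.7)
The plan is to derive pointwise upper and lower bounds on $\rho_t := e^{t\mK^*}\rho_0$ of the form
\[
e^{-(a+Mt)} \le \rho_t \le e^{a+Mt} \quad\text{a.e.}, \qquad a:= \|\log\rho_0\|_{L^\infty(\nu)},
\]
by means of a comparison principle against explicit spatially-constant super- and sub-solutions. Taking the logarithm then yields the claim immediately. The only pieces I need from the hypotheses are: (i) positivity preservation of $\{e^{t\mK^*}\}_{t\ge 0}$; (ii) $1\in D(\mK^*)$ with $\|\mK^*1\|_{L^\infty(\nu)}\le M$; and (iii) the finiteness of $\nu$, which ensures constants lie in $L^1(\nu)$ and allows me to substitute them freely into Duhamel's formula.

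For the upper bound, I would choose the constant-in-space supersolution $U_t := e^{a+Mt}$ and set $v_t := U_t - \rho_t$, noting $v_0 \ge 0$ by the definition of $a$. Using $\mK^* U_t = U_t\,\mK^*1$ (valid since $U_t$ is a scalar multiple of $1\in D(\mK^*)$), I compute
\[
\partial_t v_t = \mK^* v_t + U_t\bigl(M-\mK^*1\bigr).
\]
The forcing term $g_t := U_t(M-\mK^*1)$ is a.e.\ non-negative by \ref{K-cond} and lies in $L^\infty(\nu)\subset L^1(\nu)$. The Duhamel representation
\[
v_t = e^{t\mK^*} v_0 + \int_0^t e^{(t-s)\mK^*} g_s\, ds
\]
then shows $v_t \ge 0$ a.e., since both $v_0$ and $g_s$ are non-negative and $\{e^{t\mK^*}\}_{t\ge 0}$ preserves positivity. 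This yields $\rho_t \le e^{a+Mt}$.

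The lower bound is symmetric: set $L_t := e^{-(a+Mt)}$ and $w_t := \rho_t - L_t$, so that $w_0\ge 0$. An analogous computation gives
\[
\partial_t w_t = \mK^* w_t + L_t\bigl(\mK^*1 + M\bigr),
\]
whose forcing term is again a.e.\ non-negative by \ref{K-cond}. The same Duhamel/positivity argument delivers $w_t \ge 0$, i.e.\ $\rho_t \ge e^{-(a+Mt)}$. Combining the two inequalities proves the proposition.

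The only genuinely delicate step is justifying the Duhamel formula for $v_t$ (resp.\ $w_t$) and the identity $\mK^*U_t = U_t\,\mK^*1$ rigorously within the $C_0$-semigroup framework on $L^1(\nu)$, since $\rho_0$ need not lie in $D(\mK^*)$. This is handled in the standard way by first working with $\rho_0 \in D(\mK^*)$ (where the differential identity holds strongly in $L^1(\nu)$) and then passing to general $\rho_0$ by density, using the $L^1$-continuity of $e^{t\mK^*}$ together with the a.e.\ bounds established along a subsequence. Since the proposition is stated as a special case of Proposition~\ref{lem:log-bdd diff}, these technicalities are deferred there.
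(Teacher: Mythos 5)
Your proof is correct, and it takes a genuinely different (though equally standard) route from the paper. The paper obtains Proposition~\ref{prop:k-bdd} as a by-product of the proof of Proposition~\ref{lem:log-bdd diff}, whose core mechanism is the comparison principle Lemma~\ref{lem:comparison}, itself established via an energy-type estimate on $\int_\Pi z_t^-\,d\nu$ that invokes the characterization in Proposition~\ref{prop:schilling}. You instead bypass the comparison-principle lemma entirely: writing $v_t = U_t - \rho_t$ against the explicit constant supersolution $U_t = e^{a+Mt}$, the forcing term $g_t = U_t(M-\mK^*1)$ is nonnegative a.e.\ directly by \ref{K-cond}, and then $v_t \ge 0$ follows in one step from Duhamel's representation $v_t = e^{t\mK^*}v_0 + \int_0^t e^{(t-s)\mK^*}g_s\,ds$ together with positivity preservation of the semigroup. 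Both approaches establish the same pointwise two-sided bounds $e^{-(a+Mt)}\le\rho_t\le e^{a+Mt}$ and reduce the general case to $\rho_0\in D(\mK^*)$ by the same density argument. Your computations of the forcing terms and their signs are correct. The advantage of your route is that it avoids the pointwise chain-rule manipulation with $\chi_{\{-z_t>0\}}$ inside the energy estimate of Lemma~\ref{lem:comparison}, which requires some care to justify in $L^1(\nu)$; the advantage of the paper's route is that Lemma~\ref{lem:comparison} is re-used elsewhere (e.g.\ in Proposition~\ref{lem:density-preserve}), so the machinery does double duty.
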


\subsection{Markov jump kernels, jump generators, and adjoint}\label{sec:markovj}

We now provide a more detailed discussion of kernels, Markov jump kernels, and jump generators, since these objects play a central role in this work. For a detailed discussion we point the readers to \cite{BlumenthalGetoor1968, EthierKurtz1986,Kallenberg2002}.

\subsubsection{The space of bounded signed kernels}
Let $(\Pi,\mathcal{B})$ be a measurable space. A \emph{signed kernel} is a map $\Lambda:(\Pi,\mathcal{B})\to (-\infty,\infty)$ such that  
\begin{itemize}
	\item for every $E \in \mathcal{B}$, the map $x \mapsto \Lambda(x,E)$ is measurable;  
	\item for each $x\in \Pi$, the map $E\mapsto \La(x,E)$ is a bounded signed measure on $(\Pi,\mcl{B})$. 
\end{itemize}
A \emph{positive kernel}, or simply \emph{kernel}, is defined similarly, except the second condition is replaced by:
\begin{itemize}
	\item for each $x \in \Pi$, the map $E \mapsto \Lambda(x,E)$ is a bounded positive measure on $(\Pi,\mathcal{B})$.  
\end{itemize}
Alternatively, a signed kernel may be regarded as a map
\[
\Lambda:\Pi \to \mathcal{M}(\Pi), 
\qquad 
x \mapsto \Lambda(x) := \Lambda(x,dy),
\]
while a positive kernel is a map $\La:\Pi\to\mcl{M}_+(\Pi)$. Clearly every positive kernel is a signed kernel. 

\begin{definition}[The space of bounded kernels]
	A signed kernel $\La:\Pi\to \mcl{M}(\Pi)$ is \emph{bounded} if 
	\begin{align}\label{def:kernelbdd}
		\|\Lambda\|_{\mathcal{J}} := \sup_{x\in \Pi} \|\Lambda(x,\cdot)\|_{\TV} < \infty.
	\end{align}
	$\mcl{J}(\Pi)$ denotes the \emph{Banach space} of all bounded signed kernels with norm given in \eqref{def:kernelbdd}. Let $\mcl{J}_+(\Pi)\subset \mcJ(\Pi)$ denote that of all positive kernels, which forms a positive cone in $\mcl{J}(\Pi)$. 
\end{definition}


For every bounded positive kernel $\Lambda\in \mcJ_+(\Pi)$, one defines two natural actions. The \emph{right action} on measurable functions $\varphi$ and the \emph{left action} on measures $\mu$:
\[
(\Lambda \varphi)(x) := \int_\Pi \varphi(y)\,\Lambda(x,dy),\qquad (\mu \Lambda)(E) := \int_\Pi \Lambda(x,E)\,\mu(dx).
\]
These actions satisfy the duality relation
\[
\langle \mu \Lambda, \varphi \rangle = \langle \mu, \Lambda \varphi \rangle.
\]
If $\vphi,\mu$ are bounded, namely $\vphi\in \mcl{B}_b(\Pi),\mu\in \mcl{M}(\Pi)$, then so are $\La\vphi\in \mcl{B}_b(\Pi),\mu\La\in \mcl{M}(\Pi)$. 
Moreover, the following bounds hold for all $\varphi \in \mathcal{B}_b(\Pi)$, $\mu \in \mathcal{M}(\Pi)$ and $\La\in\mcl{J}(\Pi)$:
\begin{align}\label{bdd:mkkernel}
	\|\Lambda \varphi\|_\infty \leq \|\Lambda\|_{\mathcal{J}} \, \|\varphi\|_\infty, 
	\qquad 
	\|\mu \Lambda\|_{\mathrm{TV}} \leq \|\Lambda\|_{\mathcal{J}} \, \|\mu\|_{\mathrm{TV}}.
\end{align}


\subsubsection{Markov jump kernels}

A \emph{Markov jump kernel} $\Lambda$ is a positive kernel with the additional property that no mass is assigned to the starting point, namely
\begin{itemize} 
	\item for all $x\in \Pi,\La(x,\{x\})=0$. 
\end{itemize}
As introduce earlier, we denote $\mcl{J}^0_+(\Pi)\subset \mcJ_+(\Pi)$ the set of all bounded Markov jump kernels on $\Pi$. 
Given such a kernel, one defines the associated \emph{Markov jump generator} $\mathcal{A} = \mathcal{A}_\Lambda$ as the operator acting on bounded measurable functions $\varphi \in \mathcal{B}_b(\Pi)$ by
\begin{align}\label{def:markovj}
	\mathcal{A}\varphi(x) := \int_{\Pi} \big[ \varphi(y) - \varphi(x) \big]\,\Lambda(x,dy), \qquad x \in \Pi.	
\end{align}
We denote $\mA_\La$ to stress this dependency. 

If the jump kernel $\Lambda$ has bounded intensity, in the sense of \eqref{def:kernelbdd}, then we have 
\begin{align}\label{eq:bdd-int}
	\mA\vphi(x)&= \La\varphi(x) - \La(x,\Pi)\varphi(x). 
\end{align}
In this case we say $\mA$ is a \emph{bounded Markov jump generator}.
Specifically, the generator $\mathcal{A}$ is bounded on $\mathcal{B}_b(\Pi)$ and therefore gives rise to a Markov semigroup $\{T_t\}_{t \geq 0}$ on $\mcl{B}(\Pi)$. The semigroup admits a transition kernel $\kappa_t(x,dy)$ such that
\[
T_t\varphi(x) = \int_\Pi \varphi(y)\,\kappa_t(x,dy), \qquad \varphi \in \mathcal{B}_b(\Pi),
\]
and $\mathcal{A}$ serves as the infinitesimal generator of this semigroup. In this way, every Markov jump kernel with bounded intensity generates a transition kernel. 

Given a bounded Markov jump generator $\mA=\mA_\La$ with $\La\in\mcJ_+^0(\Pi)$, following \eqref{eq:bdd-int}
the \emph{left action of Markov jump generator $\mA_{\La}$ onto measures} $\mu\in \mcl{M}(\Pi)$ is naturally defined as 
\begin{align}\label{def:leftactmj}
	\mu \mA&= \mu \La - \La(x,\Pi)\mu. 
\end{align}
Specifically $\La(x,\Pi)\mu$ is understood as $\La(x,\Pi)\mu(dx)$. 
We note $\mu\mA$ is a signed measure, and the duality relation holds: for any $\vphi\in \mcl{B}_b(\Pi),\mu\in\mcl{M}(\Pi)$ it holds
\begin{align*}
	\inn{\mu \mA,\vphi} = \inn{\mu,\mA\vphi}. 
\end{align*}

Recall the \emph{total variation–type distance} between two bounded Markov jump kernels 
$\La,\Ta\in\mcJ_+^0(\Pi)$, defined in \eqref{def:kernelbdd},  
\begin{align*}
	\|\La-\Ta\|_{\mcl{J}} := \sup_{x\in \Pi} \|\La(x)-\Ta(x)\|_{\mathrm{TV}}. 
\end{align*}
The functional $\|\cdot\|_{\mcl{J}}$ is in fact a norm on the space of bounded kernels.  
It induces a natural metric structure and provides convenient operator bounds for the 
associated jump generators. The following proposition records these bounds; the proof 
is straightforward and thus omitted.

\begin{proposition}\label{prop:mj-bound}
	Let $\mA_\La$ and $\mA_\Ta$ be the bounded Markov jump generators associated with $\La$ and $\Ta$. Then for any $\varphi\in\mcl{B}_b(\Pi)$,
	\begin{align*}
		\|(\mA_\La-\mA_\Ta)\varphi\|_{\infty}\le 2\|\La -\Ta\|_{\mcl{J}}\, \|\varphi\|_\infty. 
	\end{align*}
	Also for any $\rho\in \mcl{M}(\Pi)$, we have 
	\begin{align*}
		\|\rho(\mA_\La-\mA_\Ta)\|_{\TV}\le 2\|\La-\Ta\|_{\mcl{J}}\,\|\rho\|_{\TV}. 
	\end{align*}
\end{proposition}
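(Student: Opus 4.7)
The plan is to exploit the explicit decomposition \eqref{eq:bdd-int} of a bounded Markov jump generator, namely $\mA_\La\vphi(x) = \La\vphi(x) - \La(x,\Pi)\vphi(x)$, which extends by linearity to the difference $\mA_\La - \mA_\Ta$ with the signed kernel $\La - \Ta \in \mcl{J}(\Pi)$ playing the role of $\La$. With this in hand, both inequalities reduce to bookkeeping on top of the standard kernel bounds \eqref{bdd:mkkernel}, which apply equally to signed kernels in $\mcJ(\Pi)$.

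For the pointwise bound on $(\mA_\La-\mA_\Ta)\vphi$, I would write
\[
(\mA_\La-\mA_\Ta)\vphi(x) = (\La-\Ta)\vphi(x) - (\La-\Ta)(x,\Pi)\,\vphi(x).
\]
The first term is bounded by $\|\La-\Ta\|_{\mcJ}\|\vphi\|_\infty$ by the right-action estimate in \eqref{bdd:mkkernel}. For the second, one uses $|(\La-\Ta)(x,\Pi)| \le \|(\La-\Ta)(x)\|_{\TV} \le \|\La-\Ta\|_{\mcJ}$, which also contributes $\|\La-\Ta\|_{\mcJ}\|\vphi\|_\infty$. Summing and taking the supremum over $x \in \Pi$ yields the factor $2$ in the first inequality.

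For the measure inequality, I would invoke the duality $\inn{\rho\mA,\vphi} = \inn{\rho,\mA\vphi}$ together with the total-variation duality $\|\mu\|_{\TV} = \sup_{\|\vphi\|_\infty\le 1}|\inn{\mu,\vphi}|$, so that
\[
\|\rho(\mA_\La-\mA_\Ta)\|_{\TV} = \sup_{\|\vphi\|_\infty\le 1}|\inn{\rho,(\mA_\La-\mA_\Ta)\vphi}| \le \|\rho\|_{\TV}\sup_{\|\vphi\|_\infty\le 1}\|(\mA_\La-\mA_\Ta)\vphi\|_\infty,
\]
and the first inequality immediately gives the desired bound $2\|\La-\Ta\|_{\mcJ}\|\rho\|_{\TV}$. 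There is no serious obstacle here; the only point requiring verification is that the operator bounds in \eqref{bdd:mkkernel} extend verbatim to signed kernels, which is immediate from the definition of $\|\cdot\|_{\mcJ}$ as a supremum of total-variation norms.
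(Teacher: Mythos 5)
Your proof is correct and is the natural argument; the paper explicitly omits the proof of Proposition~\ref{prop:mj-bound} as "straightforward," so there is no paper proof to diverge from. Decomposing $\mA_\La-\mA_\Ta$ via \eqref{eq:bdd-int} into the two terms $(\La-\Ta)\vphi$ and $(\La-\Ta)(x,\Pi)\vphi$, bounding each by $\|\La-\Ta\|_{\mcJ}\|\vphi\|_\infty$, and then transferring to measures through the duality $\inn{\rho\mA,\vphi}=\inn{\rho,\mA\vphi}$ and $\|\mu\|_{\TV}=\sup_{\|\vphi\|_\infty\le1}|\inn{\mu,\vphi}|$ is exactly the intended bookkeeping, and your observation that \eqref{bdd:mkkernel} applies verbatim to signed kernels is correct since $\|\cdot\|_{\mcJ}$ is already defined on $\mcJ(\Pi)$.
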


\subsubsection{Adjoint kernels and adjoint generators}
As seen earlier, the left action of $\mA$ defines a bounded operator on the space of 
signed measures, $\mcl{M}(\Pi) \to \mcl{M}(\Pi)$. 
In this work we are primarily interested in the evolution of measures that admit a density with respect to a fixed 
reference measure $\nu$. For a more refined analysis, it is desirable that the left-action 
operator extends to a bounded operator $\mA^*$ on $L^1(\nu)\subset \mcl{M}(\Pi)$, 
where we identify $\rho \in L^1(\nu)$ with the measure $\rho\, d\nu \in \mcl{M}(\Pi)$.
Since $\mA$ is defined as an integral operator with respect to the kernel $\La$, it is natural to begin with the notion of the 
\emph{adjoint kernel}.

\begin{definition}[Adjoint jump kernel]\label{def:adjker}
	Let $\La\in\mcl{J}^+(\Pi)$ be a positive kernel. Given a reference measure $\nu$ on $\Pi$, we say $\La^*\in \mcJ^+(\Pi)$  is the \emph{adjoint kernel} of $\La$ w.r.t. $\nu$ if it holds 
	\begin{align}\label{cond:Ladual}
		\La(x,dy)\,\nu(dx) \;=\; \La^*(y,dx)\,\nu(dy),
	\end{align}
	Note the above is understood as an equality of measures on $\Pi^2$. That is, for every bounded measurable $F\in\mcl{B}_b(\Pi^2)$, we have 
	\begin{align*}
		\int_{\Pi^2} F(x,y)\,\La(x,dy)\,\nu(dx)
		\;=\; \int_{\Pi^2} F(x,y)\,\La^*(y,dx)\,\nu(dy).
	\end{align*}
\end{definition}

\begin{example}[Reversible case]
	If $\La = \La^*$, then \eqref{cond:Ladual} reduces to the
	\emph{detailed balance condition}, which arises in the study of reversible Markov 
	processes. In this case, the operator $\vphi \mapsto \La\vphi$ is self-adjoint in 
	$L^2(\nu)$. 
\end{example}

\begin{example}[Absolutely continuous kernels]
	Suppose $\La$ is absolutely continuous with respect to $\nu$, i.e.
	\[
	\La(x,dy) \;=\; \lambda(x,y)\,\nu(dy),
	\]
	for some nonnegative measurable function $\lambda:\Pi^2 \to [0,\infty)$. 
	Then the adjoint kernel is given explicitly by
	\[
	\La^*(y,dx) \;=\; \lambda(y,x)\,\nu(dx).
	\]
	In particular, if $\lambda$ is symmetric, $\lambda(x,y) = \lambda(y,x)$, then 
	$\La = \La^*$ and hence $\La$ satisfies the detailed balance condition.
\end{example}

A technical question arises: given a reference measure $\nu$, does a positive kernel $\La$ admits an adjoint jump kernel $\La^*$. We provide a characterization  for the existence, whose proof is given in the appendix.

\begin{proposition}\label{prop:adjoint-ker}
	A bounded positive kernel $\La \in \mcJ_+(\Pi)$ admits a bounded adjoint 
	kernel $\La^*$ with respect to the reference measure $\nu$ if and only if 
	\[
	d(\nu \La) = \lambda\, d\nu \quad \text{with} \quad 
	\la \in L^1(\nu),\,\la\ge0.
	\]
	that is, the measure $\nu \La$ is absolutely continuous with respect to $\nu$ with density $\lambda\in L^1(\nu)$. In this case, the adjoint kernel $\La^*$ 
	is uniquely determined $\nu$-a.e., and moreover
	\[
	\|\La\|_{\mcl{J}} = \|\lambda\|_{L^\infty(\nu)}.
	\]
\end{proposition}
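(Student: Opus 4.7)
The plan is to prove both implications and then identify the adjoint via a disintegration argument.

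For the necessity direction, I would assume $\La^*\in\mcJ_+(\Pi)$ exists and test the duality identity \eqref{cond:Ladual} against $F(x,y)=\mathbf{1}_E(y)$ for an arbitrary $E\in\mcB$. This immediately yields
\[
(\nu\La)(E)=\int_\Pi \La(x,E)\,\nu(dx)=\int_E \La^*(y,\Pi)\,\nu(dy),
\]
so $\nu\La$ is absolutely continuous w.r.t.\ $\nu$ with Radon--Nikodym derivative $\la(y):=\La^*(y,\Pi)$. Since $\La^*$ is bounded, $\la\in L^\infty(\nu)$, and since $\nu$ is finite, $\la\in L^1(\nu)$.

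For the sufficiency direction, the idea is to produce $\La^*$ by disintegrating the natural joint measure on $\Pi^2$. Assuming $d(\nu\La)=\la\,d\nu$ with $\la\ge 0$ integrable, I would define
\[
\pi(dx\,dy):=\La(x,dy)\,\nu(dx)\in\mcl{M}_+(\Pi^2),
\]
which is finite (by $\|\La\|_{\mcJ}<\infty$ and $\nu(\Pi)<\infty$) and whose $y$-marginal equals $\la(y)\,\nu(dy)$. Since $\Pi$ is Polish, the disintegration theorem supplies a $\nu$-a.e.\ defined measurable family of probability kernels $\{Q(y,\cdot)\}_{y\in\Pi}$ such that $\pi(dx\,dy)=Q(y,dx)\,\la(y)\,\nu(dy)$. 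Setting $\La^*(y,dx):=\la(y)\,Q(y,dx)$ produces a bounded positive kernel with $\La^*(y,\Pi)=\la(y)$, and the duality identity \eqref{cond:Ladual} for general $F\in\mcB_b(\Pi^2)$ is then read off directly from the disintegration formula. Uniqueness of $\La^*$ ($\nu$-a.e.) then follows from the uniqueness clause of the disintegration theorem. Finally, the norm equality $\|\La^*\|_{\mcJ}=\|\la\|_{L^\infty(\nu)}$ (which I suspect is the intended form of the stated $\|\La\|_{\mcJ}=\|\la\|_{L^\infty(\nu)}$) is immediate from $\La^*(y,\Pi)=\la(y)$.

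The main technical obstacle is the backward direction. The disintegration theorem requires the Polish hypothesis on $\Pi$, and one must be careful that $Q(y,\cdot)$ is only determined up to $\nu$-nullsets, so $\La^*$ is only intrinsically defined $\nu$-a.e.; this matches exactly the uniqueness claim in the proposition, but one has to phrase all statements about $\La^*$ modulo $\nu$-negligible sets. A small additional subtlety is to verify that the assignment $y\mapsto \La^*(y,\cdot)$ obtained this way is genuinely a bounded kernel in the sense of the definition (measurability in $y$ and $\sigma$-additivity in $dx$), both of which are guaranteed by the standard formulation of the disintegration theorem on Polish spaces.
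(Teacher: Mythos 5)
Your argument follows the same route as the paper's: test the duality identity \eqref{cond:Ladual} to recover $\la$ in the necessity direction, and disintegrate the joint measure $\La(x,dy)\,\nu(dx)$ on the Polish space $\Pi^2$ to build $\La^*$ in the sufficiency direction. Your necessity step is in fact a touch cleaner --- testing against $F(x,y)=\chi_E(y)$ identifies $\la(y)=\La^*(y,\Pi)$ directly, giving both absolute continuity and the $L^\infty$ bound at once, whereas the paper separately invokes $L^1$--$L^\infty$ duality --- and you are right that the stated equality $\|\La\|_{\mcJ}=\|\la\|_{L^\infty(\nu)}$ should read $\|\La^*\|_{\mcJ}=\|\la\|_{L^\infty(\nu)}$.

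There is one genuine (if inherited) gap in your sufficiency direction. You begin, following the proposition's own wording, with $\la\ge0$ \emph{integrable}, disintegrate to obtain probability kernels $Q(y,\cdot)$, and set $\La^*(y,dx)=\la(y)\,Q(y,dx)$; you then assert this is a \emph{bounded} kernel. But $\La^*(y,\Pi)=\la(y)$, so $\La^*\in\mcJ_+(\Pi)$ requires $\la\in L^\infty(\nu)$, not just $\la\in L^1(\nu)$. For instance, with $\Pi=[0,1]$, $\nu$ Lebesgue and $\La(x,\cdot)=\de_{x^2}$, one finds $\la(y)=\tfrac{1}{2\sqrt{y}}\in L^1(\nu)\setminus L^\infty(\nu)$; the disintegration still produces a kernel, but it is not bounded, and indeed $\La$ admits no bounded adjoint. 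The correct hypothesis --- and the one the paper's own proof actually uses when it opens the converse direction with ``density $\la\in L^\infty(\nu)$'' --- is $\la\in L^\infty(\nu)$. You should impose that explicitly before concluding that your constructed $\La^*$ lies in $\mcJ_+(\Pi)$; the rest of your construction and the uniqueness argument via disintegration are fine.
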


Suppose $\Lambda$ admits an adjoint kernel $\Lambda^*$ with respect to $\nu$. 
Given $\rho\in L^1(\nu)$, and abusing notation write $\rho(dx)=\rho(x)\,\nu(dx)$.
By Tonelli/Fubini (justified since $\Lambda$ is a bounded jump kernel and $\rho\in L^1(\nu)$), for any $\varphi\in\mathcal{B}_b(\Pi)$ we have
\begin{align*}
	\langle \rho\Lambda,\varphi\rangle 
	&= \langle \rho,\Lambda\varphi\rangle
	= \int_{\Pi} \Lambda\varphi(x)\,\rho(x)\,\nu(dx) \\
	&= \int_{\Pi^2} \varphi(y)\,\Lambda(x,dy)\,\rho(x)\,\nu(dx)
	= \int_{\Pi^2} \varphi(y)\,\rho(x)\,\Lambda^*(y,dx)\,\nu(dy) \\
	&= \int_{\Pi} \varphi(y)\!\left(\int_{\Pi}\rho(x)\,\Lambda^*(y,dx)\right)\nu(dy)
	= \int_{\Pi} \varphi(y)\,(\Lambda^*\rho)(y)\,\nu(dy) 
	= \langle \Lambda^*\rho,\varphi\rangle_\nu.
\end{align*}
Hence the measure \(\rho\Lambda\) has density \(\Lambda^*\rho\) with respect to \(\nu\).
The identity above is the key ingredient in the proof of the two next propositions, which identifies the $L^1(\nu)$--adjoint of the jump generator \(\mathcal{A}_\Lambda\).

\begin{proposition}\label{prop:A*-bound}
	Let $\La \in \mcl{J}_+^0(\Pi)$ be a bounded Markov jump kernel admitting an adjoint kernel $\La^*$. 
	Let $\mA = \mA_\La$ denote the jump generator associated with $\La$. 
	Then the operator $\mA^*$ defined by
	\begin{align}\label{eq:adj-formula}
		\mA^*(\rho) := \La^* \rho - \La(x,\Pi)\rho, \qquad \rho \in L^1(\nu),
	\end{align}
	is the $L^1(\nu)$-adjoint of $\mA$, in the sense that for all $\varphi \in L^\infty(\nu)$ and $\rho \in L^1(\nu)$,
	\begin{align*}
		\inn{\rho,\, \mA \varphi}_\nu = \inn{\mA^*\rho,\, \varphi}_\nu.
	\end{align*}
	Furthermore, $\mA^*$ is a bounded linear operator on $L^1(\nu)$ satisfying
	\begin{align*}
		\|\mA^*\|_{L^1(\nu)\to L^1(\nu)} \le 2\|\La\|_{\mcl{J}}.
	\end{align*}
	Finally, $\mA^*$ is an adjoint Markov generator on $L^1(\nu)$. If additionally $\La^*$ has bounded intensity, i.e., $\|\La^*\|_{\mJ}<\infty$, then $\mA^*$ extends to a bounded linear operator on $L^\infty(\nu)\to L^\infty(\nu)$, with bound 
    \begin{align*}
        \|\mA^*\|_{L^\infty(\nu)\to L^\infty(\nu)} \le \|\La\|_{\mcl{J}}+ \|\La^*\|_{\mJ} .
    \end{align*}
\end{proposition}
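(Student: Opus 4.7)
The plan is to lean heavily on the identity $\langle\rho\Lambda,\varphi\rangle=\langle\Lambda^*\rho,\varphi\rangle_\nu$ already derived in the paragraph preceding the statement, which shows that the signed measure $\rho\Lambda\in\mcl{M}(\Pi)$ admits the $\nu$-density $\Lambda^*\rho$. Combining this with the left-action formula \eqref{def:leftactmj}, $\rho\mA = \rho\La - \La(x,\Pi)\rho$ as measures, and comparing densities with respect to $\nu$, I would recover exactly the formula \eqref{eq:adj-formula} for $\mA^*\rho$. This simultaneously establishes the duality $\inn{\rho,\mA\vphi}_\nu=\inn{\mA^*\rho,\vphi}_\nu$ for all $\vphi\in L^\infty(\nu)$ and $\rho\in L^1(\nu)$.

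For the $L^1(\nu)\to L^1(\nu)$ bound, I would estimate each piece of \eqref{eq:adj-formula} separately. Using Tonelli together with the defining relation \eqref{cond:Ladual}, one gets $\int |\La^*\rho|\,d\nu \le \int \La^*|\rho|\,d\nu = \int|\rho|(x)\,\La(x,\Pi)\,d\nu(x)\le \|\La\|_{\mcJ}\,\|\rho\|_{L^1(\nu)}$. The multiplication term satisfies the pointwise bound $|\La(\cdot,\Pi)\rho|\le \|\La\|_{\mcJ}|\rho|$ directly, giving the second factor of $\|\La\|_{\mcJ}$. Summing yields the factor of $2$.

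To show that $\mA^*$ is an adjoint Markov generator, I would verify the two conditions in Proposition \ref{prop:schilling}, which is legitimate since $\mA^*$ is bounded on $L^1(\nu)$ and therefore generates a uniformly continuous $C_0$-semigroup. Mass conservation $\int \mA^*\rho\,d\nu=0$ is immediate by applying the duality to $\vphi\equiv 1$ and noting $\mA 1 = 0$. For the dissipativity condition $\int_{\{\rho>0\}}\mA^*\rho\,d\nu\le 0$, I would plug the bounded measurable test function $\vphi=\mathbbm{1}_{\{\rho>0\}}$ into the duality, giving $\int \rho(x)\,\mA\vphi(x)\,d\nu(x)$, and split the integration region: on $\{\rho>0\}$ the inner integrand $\mA\vphi(x)=\int(\mathbbm{1}_{\{\rho(y)>0\}}-1)\,\La(x,dy)\le 0$ while $\rho>0$; on $\{\rho\le 0\}$ we instead have $\mA\vphi(x)=\int\mathbbm{1}_{\{\rho(y)>0\}}\La(x,dy)\ge 0$ while $\rho\le 0$. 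In both cases the integrand is nonpositive.

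The $L^\infty(\nu)\to L^\infty(\nu)$ bound under the extra assumption $\|\La^*\|_{\mJ}<\infty$ reduces to the first inequality in \eqref{bdd:mkkernel} applied to both $\La^*$ and to the multiplication operator $\La(\cdot,\Pi)$, whose sup-norm is at most $\|\La\|_{\mcJ}$. The bulk of the work is truly in the sign analysis for the second Schilling condition, and the only real subtlety is ensuring that $\mathbbm{1}_{\{\rho>0\}}$ is admissible as a test function — which it is, being bounded and measurable — and handling the two regions of integration symmetrically so that both contributions come out with the correct sign.
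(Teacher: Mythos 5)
Your proposal is correct and, for the parts the paper actually proves, follows essentially the same route: the duality from the pre-stated identity $\langle\rho\Lambda,\varphi\rangle=\langle\Lambda^*\rho,\varphi\rangle_\nu$, the $L^1$-bound by splitting \eqref{eq:adj-formula} into two terms each controlled by $\|\La\|_{\mcJ}$ (the paper works on the measure side via \eqref{def:leftactmj} and \eqref{bdd:mkkernel}, you work directly with densities via Tonelli and \eqref{cond:Ladual} — the same estimate), and the $L^\infty$-bound by the pointwise estimate on each piece. Where you genuinely depart from the paper is in the claim that $\mA^*$ is an adjoint Markov generator: the paper's two-sentence proof does not address this at all, whereas you supply a complete verification via Proposition \ref{prop:schilling}. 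Your argument is sound — since $\mA^*$ is bounded on $L^1(\nu)$ it generates a uniformly continuous semigroup; mass conservation follows from $\mA 1=0$ and the duality with $\varphi\equiv 1$; and the positive-maximum-principle condition follows from testing against $\varphi=\mathbbm{1}_{\{\rho>0\}}$ and the sign analysis $\rho(x)\,\mA\varphi(x)\le 0$ on both $\{\rho>0\}$ (where $\mA\varphi\le 0$) and $\{\rho\le 0\}$ (where $\mA\varphi\ge 0$). This fills a genuine gap in the paper's stated proof; it is the kind of verification that should not be elided, and you have done it correctly.
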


\begin{proof}
	The duality is proved in the paragraph before the proposition. The $L^1(\nu)\to L^1(\nu)$ bound follows direction from \eqref{def:leftactmj}, while the $L^\infty(\nu)\to L^\infty(\nu)$ bound follows from \eqref{eq:adj-formula}
\end{proof}

\begin{proposition}\label{prop:adj-op-bound}
	Let $\Lambda,\tilde\Lambda\in \mathcal{J}_+^0(\Pi)$ and assume they admit adjoint kernels $\Lambda^*,\tilde\Lambda^*$.  
	Let $\mathcal{A}_\Lambda^*,\mathcal{A}_{\tilde\Lambda}^*$ be their adjoint operators as in~\eqref{eq:adj-formula}.  
	Then for every $\rho\in L^1(\nu)$,
	\[
	\|\mathcal{A}_\Lambda^*(\rho)-\mathcal{A}_{\tilde\Lambda}^*(\rho)\|_{L^1(\nu)}
	\;\le\;
	2\int_{\Pi} \|\Lambda(x,\cdot)-\tilde\Lambda(x,\cdot)\|_{\mathrm{TV}}\,|\rho(x)|\,d\nu(x).
	\]
	In particular,
	\[
	\|\mathcal{A}_\Lambda^*-\mathcal{A}_{\tilde\Lambda}^*\|_{L^1(\nu)\to L^1(\nu)}
	\;\le\; 2\|\Lambda-\tilde\Lambda\|_{\mathcal{J}}.
	\]
\end{proposition}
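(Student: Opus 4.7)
The plan is to reduce the $L^1(\nu)$-norm of $\mA_\La^*\rho - \mA_{\tilde\La}^*\rho$ to a dual pairing via the adjoint relation established in Proposition~\ref{prop:A*-bound}, and then exploit the fact that the jump generators act on test functions through a difference $\vphi(y)-\vphi(x)$, which interacts cleanly with the total-variation distance on the kernels.

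Concretely, since $\nu$ is finite, $L^\infty(\nu) = L^1(\nu)^*$ and for any $f \in L^1(\nu)$ one has the dual characterization $\|f\|_{L^1(\nu)} = \sup_{\|\vphi\|_\infty \le 1} |\inn{f,\vphi}_\nu|$, where the supremum may be restricted to $\vphi \in \mcl{B}_b(\Pi)$. Applying this to $f = \mA_\La^*\rho - \mA_{\tilde\La}^*\rho \in L^1(\nu)$ and using the duality identity $\inn{\mA_\La^*\rho - \mA_{\tilde\La}^*\rho,\vphi}_\nu = \inn{\rho,(\mA_\La - \mA_{\tilde\La})\vphi}_\nu$ from Proposition~\ref{prop:A*-bound}, the task reduces to a pointwise estimate of $(\mA_\La - \mA_{\tilde\La})\vphi(x)$.

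Writing
\[
(\mA_\La - \mA_{\tilde\La})\vphi(x) = \int_\Pi [\vphi(y) - \vphi(x)]\,[\La(x,dy) - \tilde\La(x,dy)],
\]
the integrand is a bounded measurable function of $y$ with $\|\vphi(\cdot)-\vphi(x)\|_\infty \le 2\|\vphi\|_\infty$, and the integrator is the signed measure $\La(x,\cdot) - \tilde\La(x,\cdot)$. The TV duality therefore yields the pointwise bound $|(\mA_\La - \mA_{\tilde\La})\vphi(x)| \le 2\|\vphi\|_\infty\, \|\La(x,\cdot) - \tilde\La(x,\cdot)\|_{\TV}$ (this is essentially the content of Proposition~\ref{prop:mj-bound}). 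Integrating against $|\rho|\,d\nu$ gives
\[
|\inn{\rho,(\mA_\La - \mA_{\tilde\La})\vphi}_\nu| \le 2\|\vphi\|_\infty \int_\Pi \|\La(x,\cdot) - \tilde\La(x,\cdot)\|_{\TV}\,|\rho(x)|\,d\nu(x),
\]
and taking the supremum over $\|\vphi\|_\infty \le 1$ produces the first inequality in the statement. The operator bound then follows by pulling out $\sup_x \|\La(x,\cdot) - \tilde\La(x,\cdot)\|_{\TV} = \|\La - \tilde\La\|_{\mcJ}$ and using $\int |\rho|\,d\nu = \|\rho\|_{L^1(\nu)}$.

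I do not anticipate a substantial technical obstacle: the argument is essentially a careful unpacking of duality plus the pointwise TV bound on jump generators already recorded in Proposition~\ref{prop:mj-bound}. The only mild subtlety is justifying the dual characterization of the $L^1(\nu)$-norm through $\mcl{B}_b(\Pi)$-test functions, which is immediate since $\nu$ is finite and bounded measurable functions are dense in the unit ball of $L^\infty(\nu)$ for the relevant weak-$*$ topology.
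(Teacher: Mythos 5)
Your proposal is correct and takes essentially the same route as the paper: reduce the $L^1(\nu)$-norm to a supremum over test functions $\vphi$ with $\|\vphi\|_\infty \le 1$, pass through the adjoint identity $\inn{(\mA_\La^*-\mA_{\tilde\La}^*)\rho,\vphi}_\nu = \inn{\rho,(\mA_\La-\mA_{\tilde\La})\vphi}_\nu$, bound the inner integral pointwise in $x$ by $2\|\vphi\|_\infty\,\|\La(x,\cdot)-\tilde\La(x,\cdot)\|_{\TV}$, and integrate against $|\rho|\,d\nu$. The only cosmetic difference is that you frame the pointwise estimate as an invocation of Proposition~\ref{prop:mj-bound}, whereas the paper re-derives it inline from $|\inn{\mu,\vphi}|\le\|\vphi\|_{L^\infty}\|\mu\|_{\TV}$; both are the same calculation.
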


\begin{proof}
	Fix $\varphi\in L^\infty(\nu)$ with $\|\varphi\|_{L^\infty}\le 1$. Observe that
	\begin{align*}
		|\langle(\mA_\La^*-\mA_{\tilde \La}^*)\rho , \vphi \rangle _\nu|&= |\langle\rho , (\mA_\La-\mA_{\tilde\La})\vphi \rangle_\nu| \\
		&=\abs{ \int_{\Pi} \rho(x)\int_{\Pi} (\varphi(y)-\varphi(x))[\La(x,dy)-\tilde \La(x,dy)]d\nu(x) }\\
		&\le \int_{\Pi} |\rho(x)|\, \abs{\int_{\Pi} (\varphi(y)-\varphi(x))[\La(x,dy)-\tilde \La(x,dy)]} d\nu(x)\\
		&\le 2\int_{\Pi} |\rho(x)|\, \|\La(x,\cdot)-\tilde \La(x,\cdot)\|_{\TV}d\nu(x). 
	\end{align*}
	In the last inequality we use the bound $|\inn {\mu,\vphi}|\le \|\vphi\|_{L^\infty}\|\mu\|_{\TV}$.
	The $L^1$-bound then follow from $L^1$-$L^\infty$ duality. 
\end{proof}

%

\subsection{Mixed mean-field jump processes: evolution equations and its well-posedness}\label{subsec:mixedmfj}

We now introduce the notion of \emph{mixed mean-field jump processes}. 
Simply put, it is a Markov process obtained by \emph{superposing} a mean-field jump component with an independent Markov component, which may correspond to a diffusion, drift, or jump mechanism. 
At the generator level, such a process is governed by the \emph{sum} of an independent Markov generator and a mean-field jump generator.

\subsubsection{Mean-field jump kernels, generators, and evolution equations}
Let us begin by recalling the notion of mean-field jump generators, as developed in~\cite{Chaintron_2022_1}.

\begin{definition}[Mean-field jump kernels and generators]\label{def:mfjg}
	(i) A \emph{mean-field jump kernel} is a map 
	\[
	\Lambda : \mcp(\Pi) \longrightarrow \mcJ_+^0(\Pi), 
	\qquad 
	\mu \longmapsto \Lambda(\mu) = \Lambda(x,dy;\mu).
	\]
	In particular, for each $x \in \Pi$, $\mu \in \mcp(\Pi)$, and $E \in \mcl{B}(\Pi)$, the object 
	$\Lambda(\mu) \in \mcJ_+^0(\Pi)$ is a bounded jump kernel, 
	$\Lambda(x;\mu) \in \mcl{M}_+(\Pi)$ is a positive measure, and 
	$\Lambda(x,E;\mu) \in [0,\infty)$ denotes the jump intensity measure of the set $E$.
	
	(ii) The associated \emph{mean-field jump generator} $\mA = \mA_\Lambda$ is defined as in~\eqref{def:markovj}. 
	For any test function $\varphi \in \mcl{B}(\Pi)$, we write 
	\[
	\mA(\varphi;\mu)(x) 
	= \mA_{\Lambda(\mu)}\varphi(x)
	= \int_{\Pi} \big[ \varphi(y) - \varphi(x) \big] \,\Lambda(x,dy;\mu).
	\]
	
	(iii) In the case where, for each \(\mu \in \mcp(\Pi)\), the operator \(\mA(\mu)\) admits a well-defined \(L^1(\nu)\)-adjoint, we denote this adjoint by \(\mA^*(\mu)\). 
	The operator \(\mA^*(\mu)\) then acts as a bounded operator from \(L^1(\nu)\) to \(L^1(\nu)\). 
	For any \(\rho \in L^1(\nu)\), we use the notation
	\[
	\mA^*(\rho; \mu) := \mA^*(\mu)\rho = \La^*(\mu)(\rho)- \La(x,\Pi)\,\rho \in L^1(\nu).
	\]
\end{definition}

The mixed mean-field process $\{\bar X_t\}_{t \ge 0}$ is a (typicaly nonlinear) Markov process on $\Pi$ characterized at the generator level by 
\begin{align}\label{eq:mfgen-total}
	\mcl{L}(\mu) = \mcl{K} + \mA(\mu),
\end{align}
where $\mcl{K}$ denotes the generator of an independent (single-particle) Markov process, 
and $\{\mA(\mu)\}_{\mu}$ is the mean-field jump generator associated with the mean-field jump kernel $\{\Lambda(\mu)\}_\mu$.
At the process level, $\{\bar X_t\}_{t \ge 0}$ evolves as the superposition of two dynamics:
the independent Markov dynamics driven by $\mcl{K}$, and the mean-field jump mechanism encoded by $\mA(\mu_t)$, 
where $\mu_t = \mathrm{law}(\bar X_t)$ denotes the distribution of the process at time~$t$.

To recover the process from the generator, one follows the classical martingale problem formulation 
for nonlinear Markov processes. 
Specifically, the mixed mean-field process $\{\bar X_t\}_{t \ge 0}$ is defined as a solution to the following 
nonlinear martingale problem: for all test functions $\varphi \in D(\mcl{K})$,
\begin{align}\label{eq:mart-prob}
	M_t^\varphi 
	:= \varphi(\bar X_t) 
	- \int_0^t \big[ \mcl{K}\varphi(\bar X_s) 
	+ \mA(\varphi; \mu_s)(\bar X_s) \big] ds,\qquad \mu_s=\mathrm{law}(\bar X_s)
\end{align}
is a martingale with respect to the natural filtration of~$\{\bar X_t\}_{t\ge0}$. The well-posedness of this martingale problem holds under fairly general assumptions on the generators $\mcl{K}$ and $\{\mA(\mu)\}_\mu$. See \cite{EthierKurtz1986,kolokoltsov2010nonlinear}.

The law of the process, assuming it admits a density with respect to the reference measure $\nu$, is governed by the nonlinear Fokker–Planck equation
\begin{align}\label{eq:mf}
	\begin{cases}
		\partial_t \rho_t = \mcl{K}^*\rho_t + \mA^*(\rho_t;\rho_t), & t>0,\\
		\rho_0 \in \mcp(\Pi)\cap L^1(\nu),
	\end{cases}
\end{align}
where $\mcl{K}^*$ and $\mA^*(\mu)$ denote the $L^1(\nu)$-adjoints of the operators $\mcl{K}$ and $\mA(\mu)$, respectively.  
Equation \eqref{eq:mf} will be referred to as the \emph{mean-field evolution equation} associated with the mixed mean-field jump process.  
The well-posedness of \eqref{eq:mf} (in an appropriate weak sense) is closely linked to that of the martingale problem \eqref{eq:mart-prob}. We shall next examine this property in more detail. We also remark that the recent work \cite{LimTeoh2025} establishes well-posedness under the Wasserstein topology on $\mcp(\Pi)$.

Let us assume $\mcl{K}^*$ is an adjoint Markov generator, namely, the generator of an adjoint Markov semigroup on $L^1(\nu)$. We impose the following assumption on the mean-field jump kernel $\La:\mcp(\Pi)\to \mcJ_+^0(\Pi)$: 

\begin{enumerate}[label=($\La$\arabic*)]
	\item \label{A1}(Bounded intensity and Lipschitz continuity). The kernel $\La(\mu)$ admits a uniform bounded intensity, that is, it holds
	\begin{align*}
		M_\La:= \sup_{\mu \in \mcp(\Pi)} \|\La(\mu)\|_{\mcl{J}}<\infty. 
	\end{align*}
	Moreover, for each $\mu\in \mcp(\Pi)$, $\La(\mu)$ admits an adjoint kernel $\La^*(\mu)$. Moreover, there is $C\ge 0$, for every $\rho,\rho'\in \mcp(\Pi)\cap L^1(\nu)$, it holds
	\begin{align*}
		\|\La(\rho)-\La(\rho')\|_{\mcl J}\le C\|\rho-\rho'\|_{L^1(\nu)}. 
	\end{align*}
	\item \label{A2} (Bounded adjoint intensity). The adjoint kernel $\La^*$ satisfies the bound
	\begin{align*}
		M_\La^*:= \sup_{\mu\in \mcP(\Pi)}\|\La^*(\mu)\|_{\mJ}<\infty. 
	\end{align*}
\end{enumerate}

\begin{remark}\label{rem:A-bdd}
	A consequence of \ref{A1}, together with Propositions \ref{prop:A*-bound}, \ref{prop:adj-op-bound}, is that the $L^1(\nu)$-adjoint $\mA^*(\mu)=\mA(\mu)^*$ exists, and is bounded on $L^1(\nu)\to L^1(\nu)$ with bound
	\begin{align*}
		\|\mA^*(\mu)\|_{L^1(\nu)\to L^1(\mu)}\le 2\|\La(\mu)\|_{\mJ}\le 2M_\La. 
	\end{align*}
    and Lipschitz continuous with bound
    \begin{align*}
       \|\mA^*(\mu)-\mA^*(\rho)\|_{L^1(\nu)\to L^1(\mu)}\le 2\|\La(\mu)-\La(\rho)\|_{\mJ} \le 2C\|\mu-\rho\|_{L^1(\nu)}.
    \end{align*}
	Similarly if \ref{A2} is satisfied, then Proposition \ref{prop:A*-bound} implies
    \begin{align*}
        \|\mA^*(\mu)\|_{L^\infty(\nu)\to L^\infty(\mu)}\le \|\La(\mu)\|_{\mJ}+\|\La^*(\mu)\|_{\mJ}\le M_\La+M_\La^*. 
    \end{align*}
\end{remark}

We now introduce some notions of solutions associated to the Cauchy problem \eqref{eq:mf} in the framework of semigroup theory. 
For $T\in(0,\infty)$, a curve $\{\rho_t\}_{t\ge 0} \subset C([0,T];L^1(\nu))$ is called a \emph{mild solution} of \eqref{eq:mf} if it satisfies
\begin{align*}
	\rho_t 
	= e^{t\mcl{K}^*}\rho_0 
	+ \int_0^t e^{(t-s)\mcl{K}^*} \big[ \mA^*(\rho_s;\rho_s) \big]\, ds,\quad \mbox{ for all }t\in[0,T].
\end{align*}
It is said to be a \emph{classical solution} if $t\mapsto \rho_t$ is continuously differentiable in $L^1(\nu)$ and satisfies the evolution equation \eqref{eq:mf} pointwise in time. Note that every classical solution to \eqref{eq:mf} is a mild solution.  

%

We now state the well-posedness of the mean-field evolution equation~\eqref{eq:mf}, together with a linear-in-time growth bound for the logarithmic oscillation of its solutions. For notational convenience, we introduce the \emph{oscillation norm} of a function $f:\Pi\to\mbr$:
\begin{align}\label{def:osc-norm}
	\Delta(f) := \esssup_{x,y\in \Pi} |f(x)-f(y)| \in [0,\infty].
\end{align}
Note that $\De(f)<\infty$ if and only if $\|f\|_{L^\infty(\nu)}<\infty$. 

\begin{theorem}\label{main:wp-mf}
	Let $\mcl{K}^*$ be an adjoint Markov generator, $\{\La(\mu)\}_{\mu}$ be a mean-field jump kernel satisfying \ref{A1}, and $\{\mA^*(\mu)\}_\mu$ be the associated adjoint mean-field jump generator. 
	
	(i) For every $\rho_0\in L^1(\nu)$, the Cauchy problem \eqref{eq:mf} of the mean-field equation admits a unique global mild solution $\{\rho_t\}\in C([0,\infty);L^1(\nu))$. 
	
	(ii) Assume additionally $\rho_0\in D(\mK^*)$. Then the mild solution to \eqref{eq:mf} is a classical solution.
	
	(iii) Suppose $\mK^*$ satisfies \ref{K-cond} and $\La$ satisfies \ref{A2}.
	Let $\{\rho_t\}$ be the global mild solution of \eqref{eq:mf}. 
	It holds for all $t\ge 0$:
	\begin{align*}
		\De(\log \rho_t)\le \De(\log \rho_0)+2Mt,\qquad M:= \sup_{\mu\in \mcp(\Pi)}\|\mK^*(1)+\mA^*(1;\mu)\|_{L^\infty(\nu)}. 
	\end{align*}
\end{theorem}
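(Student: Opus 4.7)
The plan is to treat the three parts sequentially: (i)--(ii) by standard $C_0$-semigroup perturbation theory, and (iii) by freezing the nonlinearity and invoking a log-oscillation bound for the resulting time-inhomogeneous linear evolution.

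For part (i), I would apply a Banach fixed-point argument to the Duhamel map
\[
\Gamma[\rho]_t := e^{t\mK^*}\rho_0 + \int_0^t e^{(t-s)\mK^*}\mA^*(\rho_s;\rho_s)\,ds
\]
on $C([0,T];L^1(\nu))$ for sufficiently small $T>0$. The $C_0$-property of $\{e^{t\mK^*}\}$ gives uniform boundedness on $[0,T]$, while \ref{A1} together with Remark~\ref{rem:A-bdd} yields local Lipschitz continuity of $\rho\mapsto\mA^*(\rho;\rho)$ in $L^1(\nu)$ via the splitting
\[
\mA^*(\rho;\rho)-\mA^*(\tilde\rho;\tilde\rho)=[\mA^*(\rho)-\mA^*(\tilde\rho)]\rho+\mA^*(\tilde\rho)(\rho-\tilde\rho),
\]
so the contraction argument produces a unique local-in-time mild solution. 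Global existence follows from a Gronwall-type a priori bound on $\|\rho_t\|_{L^1(\nu)}$, using the uniform estimate $\|\mA^*(\mu)\|_{L^1\to L^1}\le 2M_\La$ together with the exponential bound on the $C_0$-semigroup. Part (ii) is then a standard consequence of semigroup perturbation theory: since $\rho\mapsto\mA^*(\rho;\rho)$ is Lipschitz on bounded balls and $\rho_0\in D(\mK^*)$, the mild solution lies in $C^1([0,\infty);L^1(\nu))$ and satisfies \eqref{eq:mf} pointwise in $L^1(\nu)$.

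The substantive part is (iii). I would freeze the nonlinear dependence and regard \eqref{eq:mf} as the time-inhomogeneous linear equation
\[
\partial_t u_t=\mcl{L}^*_t u_t,\qquad \mcl{L}^*_t:=\mK^*+\mA^*(\rho_t),
\]
with the now-known coefficient path $t\mapsto\rho_t$. Under \ref{K-cond} and \ref{A2}, each $\mcl{L}^*_t$ is itself an adjoint Markov generator (combining Propositions~\ref{prop:schilling} and~\ref{prop:A*-bound}) with uniform bounded-growth estimate $\|\mcl{L}^*_t(1)\|_{L^\infty(\nu)}\le M$, by the very definition of $M$. By uniqueness for the linear problem, $\rho_t=u_t$, so the target estimate reduces to a log-oscillation bound for a time-dependent adjoint Markov evolution with uniform bounded-growth constant $M$. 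This is precisely the content of the more general Proposition~\ref{lem:log-bdd diff} referenced in the excerpt (of which Proposition~\ref{prop:k-bdd} is the time-homogeneous special case); applying it with constant $M$ yields $\Delta(\log\rho_t)\le\Delta(\log\rho_0)+2Mt$.

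The main obstacle is justifying the underlying pointwise comparison argument in the abstract adjoint-Markov setting. Heuristically, at an essential maximizer $x^*$ of $g_t:=\log\rho_t$ one has
\[
\frac{\mA^*(\rho_t)\rho_t(x^*)}{\rho_t(x^*)}=\mA^*(1;\rho_t)(x^*)+\int\!\bigl(e^{g_t(y)-g_t(x^*)}-1\bigr)\La^*(x^*,dy;\rho_t)\le \mA^*(1;\rho_t)(x^*),
\]
since the integrand is $\le 0$; a parallel (and genuinely nontrivial) bound $\mK^*\rho_t(x^*)/\rho_t(x^*)\le\mK^*(1)(x^*)$ then delivers $\partial_t\sup_x\log\rho_t\le M$, with a symmetric estimate at an essential minimizer giving $\partial_t\inf_x\log\rho_t\ge -M$, so that the oscillation grows at rate $\le 2M$. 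It is precisely these pointwise bounds for a general (not necessarily jump-type) adjoint Markov generator $\mK^*$ that are packaged into Proposition~\ref{lem:log-bdd diff}; a standard truncation replacing $\rho_0$ by $\rho_0\vee\varepsilon$ followed by $\varepsilon\searrow 0$ takes care of the positivity requirement when $\rho_0$ is not bounded below, and a density argument extends the bound from classical to mild solutions.
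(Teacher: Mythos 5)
Your outline matches the paper's strategy at the level of structure: part (i) by a fixed-point argument on the Duhamel map (the paper sets it up as a global contraction in a weighted norm $\sup_t e^{-\omega t}\|\cdot\|_{L^1}$ on the closed set of probability-density curves, going through the prescribed equation \eqref{eq:pmf}, whereas you run a local contraction plus Gronwall extension, but these are interchangeable); part (ii) by identifying the mild solution with the classical one for the linearized time-inhomogeneous problem $\mA_t^*:=\mA^*(\rho_t)$; and part (iii) by freezing the nonlinearity and invoking Proposition~\ref{lem:log-bdd diff}. So far so good.

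The one place your sketch diverges from the paper is the heuristic you offer for why Proposition~\ref{lem:log-bdd diff} holds. You propose differentiating $\sup_x \log\rho_t$ and arguing, at an essential maximizer $x^*$, that $\mA^*(\rho_t)\rho_t(x^*)/\rho_t(x^*)\le \mA^*(1;\rho_t)(x^*)$ together with a ``parallel'' bound $\mK^*\rho_t(x^*)/\rho_t(x^*)\le \mK^*(1)(x^*)$. The jump part is fine because $\mA^*$ has an explicit integral representation, but the second bound is exactly the kind of pointwise statement one cannot make sense of for an abstract adjoint Markov generator $\mK^*$ on $L^1(\nu)$ --- $\mK^*\rho_t$ is only an $L^1$ function, there need be no well-defined value at an $\nu$-essential maximizer, and no such local maximum principle is available in the generality the theorem requires (and the truncation $\rho_0\vee\varepsilon$ you suggest does not fix this, since the issue is regularity of $\mK^*\rho$, not positivity of $\rho$). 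The paper sidesteps this entirely: Proposition~\ref{lem:log-bdd diff} is proved by exhibiting the constant-in-space curves $g_t=Me^{Ct}$ and $\tilde g_t=me^{-Ct}$ ($M=\esssup\rho_0$, $m=\essinf\rho_0$) as classical super- and subsolutions (this uses only $1\in D(\mK^*)$ and $\|\mK^*1+\mA_t^*1\|_{L^\infty}\le C$), and then applying the comparison principle Lemma~\ref{lem:comparison}, whose proof works by integrating $\partial_t(\rho_t-\sigma_t)^-$ and invoking the two integral conditions of Proposition~\ref{prop:schilling} --- a globally integrated positive maximum principle, not a pointwise one. That device is precisely what buys the result for arbitrary $\mK^*$. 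So your reduction to Proposition~\ref{lem:log-bdd diff} is the right move, but the justification you sketch for that proposition would not go through; the paper's comparison-principle argument is the correct one.
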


Let us briefly comment on the linear-in-time bound for the logarithmic oscillation. 
The estimate in Part~(iii) above is \emph{vacuous} if $\Delta(\log\rho_0)=\infty$. 
The essential content is that, whenever the initial logarithmic density has bounded oscillation, its oscillation can grow at most linearly in time. 
This control is particularly useful when deriving entropy estimates for solutions evolved under the mean-field dynamics.

The well-posedness of the Cauchy problem~\eqref{eq:mf}, on the other hand, follows from the standard theory of abstract Cauchy problems on Banach spaces. 
For completeness, a short proof is provided in Section~\ref{sec:wp}.

\subsubsection{Empirical jump kernels/generators}
In the upcoming subsections,
when an $N$-particle system for a fixed $N\ge 2$ is considered, the dynamic of a single particle depends only on the ``mean-field'' formed by the empirical distribution of the remaining $N-1$ particles. Thus, only the restriction of $\mcl{L}(\mu)$ to empirical measures $\mu \in \eP \subset \mcp(\Pi)$ is relevant, where $\eP$ denotes the collection 
of all empirical probability measures on $\Pi$ formed by $(N-1)$ particles, as introduced in \eqref{def:emp-prob}

\begin{definition}[Empirical jump kernel and generators]\label{def:mfN}
	Fix $N \geq 2$. An \emph{$N$-particle empirical jump kernel} is a map
	\begin{align*}
		\La:\eP\to \mcJ_+^0(\Pi). 
	\end{align*}
	Notations for the associated generators such as $\mA(\varphi;\mu),\mA^*(\rho;\mu)$, for $\vphi\in \mcB(\Pi),\rho\in L^1(\nu)$ and $\mu\in\eP$ are defined in the same way as in Definition~\ref{def:mfjg}.
\end{definition}

\begin{remark}
	An alternative view of $\La$ is that $\La:\Pi^{N-1}\to \mcl{J}_+^0(\Pi)$, where $\La$ satisfies the invariant under coordinate permutation: for any permutation $\si$ on $\{1,\cdots, N-1\}$,
	\begin{align*}
		\La(x_1,x_2,\cdots, x_{N-1})=\La(x_{\si(1)},x_{\si(2)},\cdots,x_{\si(N-1)}). 
	\end{align*}
\end{remark}

We caution the reader about the distinction between the \emph{mean-field} jump kernel $\Lambda : \mcp(\Pi) \to \mcJ_+^0(\Pi)$ (see Definition~\ref{def:mfjg}) and the \emph{empirical} jump kernel $\Lambda : \eP \to \mcJ_+^0(\Pi)$, although the notation is identical. 
In particular, given a mean-field jump kernel $\Lambda$, its restriction to $\eP \subset \mcp(\Pi)$ defines an $N$-particle empirical jump kernel. 
Conversely, starting from an empirical kernel $\Lambda$ defined on $\eP$, one can associate a corresponding mean-field kernel, denoted by $\bar{\Lambda}$, as follows.

\begin{definition}[Averaged mean-field jump kernel]\label{def:avg-mf}
	Let $\Lambda:\eP \to \mcl{J}_+^0(\Pi)$ be an empirical jump kernel. 
	
	(i) The \emph{averaged mean-field jump kernel} $\bar \La:\Pi\to\mcl{J}_+^0(\Pi)$ is defined by  
	\begin{align*}
		\bar\Lambda(x,dy;\rho) 
		= \int_{\Pi^{N-1}} \Lambda\!\bigl(x,dy;\mu(\bsy)\bigr)\, 
		d\rho^{\otimes(N-1)}(\bsy),
		\quad \mu(\bsy)= \f{1}{N-1}\sum_{k=1}^{N-1} \de_{y_k}\in \eP. 	
		\end{align*}
	That is, $\bar\Lambda(\rho)$ is the average of $\Lambda(\mu(\bsy))$ 
	with respect to the tensorized law $\rho^{\otimes (N-1)}(d\bsy).$
	
	(ii) The associated \emph{averaged mean-field jump generator} is given by  
	\begin{align*}
		\bar \mA(\varphi;\rho)(x) 
		&= \int_{\Pi} \bigl[\varphi(y)-\varphi(x)\bigr]\, d\bar\Lambda(x,dy;\rho) 
		= \int_{\Pi^{N-1}} \mA(\varphi;\mu(\bsy))(x)\, 
		d\rho^{\otimes(N-1)}(\bsy).
	\end{align*}
	
	(iii) If $\La(\mu)$ admits an adjoint kernel for each $\mu\in \eP$, the \emph{averaged adjoint mean-field generator} $\bar\mA^*$ is defined similarly as Definition \ref{def:mfjg}(iii). Note that it holds 
	\begin{align*}
		\bar\mA^*(\rho';\rho)=\int_{\Pi^{N-1}} \mA^*(\rho';\mu(\bsy))(x)\, 
		d\rho^{\otimes(N-1)}(\bsy).
	\end{align*}
\end{definition}

Given an $N$-particle empirical kernel $\{\Lambda(\mu)\}_{\mu\in \eP}$,  
we introduce the corresponding \emph{averaged dynamics}, governed by the evolution equation associated with the averaged adjoint mean-field jump generator  
$\{\bar{\mathcal{A}}^*(\mu)\}_{\mu\in \mcp(\Pi)}$:
\begin{align}\label{eq:Nmf}
	\begin{cases}
		\partial_t \bar{\rho}_t 
		= \mathcal{K}^*\bar{\rho}_t + \bar{\mathcal{A}}^*(\bar{\rho}_t;\bar{\rho}_t),
		& t>0,\\[0.5em]
		\bar{\rho}_0 \in \mathcal{P}(\Pi)\cap L^1(\nu).
	\end{cases}
\end{align}
We refer to \eqref{eq:Nmf} as the \emph{averaged mean-field equation} associated with the $N$-particle mixed mean-field jump system.  
This equation serves as a key object in our analysis, particularly in establishing the entropic estimates for the $N$-particle mean-field dynamics.

To discuss the well-posedness of the Cauchy problem \eqref{eq:Nmf}, 
we impose the following assumptions on the $N$-particle empirical kernel. 
They are parallel to \ref{A1}, \ref{A2}, except that it is restricted to $\mu\in\eP $.

\begin{enumerate}[label=(A\arabic*)]
	\item \label{A1p} (Bounded intensity).
	The kernel $\Lambda(\mu)$ admits a uniformly bounded intensity:
	\[
	M_\Lambda := \sup_{\mu \in \eP} 
	\|\Lambda(\mu)\|_{\mathcal{J}} < \infty.
	\]
	Moreover, for each $\mu \in \eP$, $\Lambda(\mu)$ admits 
	an adjoint kernel $\Lambda^*(\mu)$.
	\item \label{A2p}(\emph{Bounded adjoint intensity}). The adjoint kernel $\{\La^*(\mu)\}_\mu$ admits the bound
	\begin{align*}
		M_\La^*:= \sup_{\mu\in \eP}\|\La^*(\mu)\|_{\mcl{J}}<\infty. 
	\end{align*}
\end{enumerate}

We now state the well-posedness and linear-in-time growth bound for the logarithmic oscillation of solutions to \eqref{eq:Nmf}, in parallel with Theorem~\ref{main:wp-mf}.  This result follows directly from Theorem~\ref{main:wp-mf}, whose proof will be presented in Section~\ref{sec:wp}.

\begin{theorem}\label{thm:wp-Nmf}
	Let $\mathcal{K}^*$ be an adjoint Markov generator on $L^1(\nu)$, 
	and let $\{\Lambda(\mu)\}_{\mu\in\eP}$ be an $N$-particle empirical 
	jump kernel satisfying Condition~\ref{A1p}, with corresponding 
	adjoint jump generator $\{\mathcal{A}^*(\mu)\}_{\mu\in\eP}$. 
	
	(i) For every $\bar \rho_0\in \mathcal{P}(\Pi)\cap L^1(\nu)$, 
	the Cauchy problem to the averaged mean-field equation \eqref{eq:Nmf} admits a unique mild solution $\{\bar\rho_t\}_{t\ge 0}$. 
	
	(ii) If additionally $\bar \rho_0\in D(\mK^*)$, then the mild solution $\{\bar\rho_t\}_{t\ge0}$ is classical.
	
	(iii) Assume $\mK^*$ satisfies \ref{K-cond} and \ref{A2p} holds.
	Then the mild solution $\{\bar\rho_t\}_{t>0}$ satisfies for all $t\ge 0$:
	\begin{align*}
		\De(\log \bar\rho_t)\le \De(\log\bar\rho_0)+2Mt,\qquad M:= \sup_{\mu\in \eP}\|\mK^*(1)+\mA^*(1;\mu)\|_{L^\infty(\nu)}. 
	\end{align*}
	Specifically the bound holds with $M=\|\mK^*(1)\|_{L^\infty}+M_\La+M_\La^*$, where $M_\La,M_\La^*\ge0$ are from Conditions \ref{A1p}, \ref{A2p}. 
\end{theorem}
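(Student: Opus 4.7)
The plan is to reduce the theorem to Theorem~\ref{main:wp-mf} by constructing the averaged mean-field kernel $\bar\La:\mcp(\Pi) \to \mcJ_+^0(\Pi)$ from the $N$-particle empirical kernel $\{\La(\mu)\}_{\mu \in \eP}$, exactly as in Definition~\ref{def:avg-mf}. By that definition, the averaged adjoint generator $\bar{\mathcal{A}}^*$ appearing in \eqref{eq:Nmf} coincides with the adjoint of the mean-field generator associated to $\bar\La$, so \eqref{eq:Nmf} is \emph{literally} an instance of the mean-field equation \eqref{eq:mf} with jump kernel $\bar\La$. Therefore, once I verify that $\bar\La$ satisfies hypothesis~\ref{A1} (and \ref{A2} when needed), all three conclusions follow at once by invoking Theorem~\ref{main:wp-mf} applied to $\bar\La$.

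To verify \ref{A1} for $\bar\La$, the uniform bound is immediate: since $\bar\La(x,\cdot;\rho)$ is a $\rho^{\otimes(N-1)}$-convex combination of kernels $\La(x,\cdot;\mu(\bsy))$, each of total mass at most $M_\La$, I obtain $\|\bar\La(\rho)\|_{\mcJ}\le M_\La$. The adjoint kernel of $\bar\La(\rho)$ likewise exists and is given by averaging $\La^*(\mu(\bsy))$ against $\rho^{\otimes(N-1)}$; the duality relation \eqref{cond:Ladual} is verified via Fubini. The one non-trivial step is the Lipschitz continuity $\rho\mapsto\bar\La(\rho)$ in the $\mcJ$-norm with respect to $\|\cdot\|_{L^1(\nu)}$, which follows from the telescoping identity
\begin{align*}
\rho^{\otimes(N-1)} - (\rho')^{\otimes(N-1)}
= \sum_{k=1}^{N-1}\rho^{\otimes(k-1)}\otimes(\rho-\rho')\otimes (\rho')^{\otimes(N-1-k)},
\end{align*}
which yields $\|\rho^{\otimes(N-1)}-(\rho')^{\otimes(N-1)}\|_{\TV} \le (N-1)\|\rho-\rho'\|_{L^1(\nu)}$; combining this with the uniform bound $M_\La$ on each $\La(x,\cdot;\mu)$ gives $\|\bar\La(\rho)-\bar\La(\rho')\|_{\mcJ} \le (N-1)M_\La\,\|\rho-\rho'\|_{L^1(\nu)}$. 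Hypothesis \ref{A2} for $\bar\La$ transfers from \ref{A2p} by the same averaging argument.

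With \ref{A1} (and, for part (iii), \ref{A2}) in hand, parts (i) and (ii) follow directly from Theorem~\ref{main:wp-mf}(i),(ii) applied to $\bar\La$. For part (iii), Theorem~\ref{main:wp-mf}(iii) provides the oscillation bound with constant $\bar M := \sup_{\rho\in\mcp(\Pi)}\|\mK^*1+\bar{\mathcal{A}}^*(1;\rho)\|_{L^\infty(\nu)}$. Since $\bar{\mathcal{A}}^*(1;\rho)(x)$ is a $\rho^{\otimes(N-1)}$-average of $\mathcal{A}^*(1;\mu(\bsy))(x)$ over $\mu(\bsy)\in\eP$, and $\mK^*1$ is independent of the averaging variable, I obtain
\begin{align*}
\|\mK^*1+\bar{\mathcal{A}}^*(1;\rho)\|_{L^\infty(\nu)}
\le \sup_{\mu\in\eP}\|\mK^*1+\mathcal{A}^*(1;\mu)\|_{L^\infty(\nu)} = M,
\end{align*}
so $\bar M\le M$. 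The explicit upper bound $M\le \|\mK^*1\|_{L^\infty(\nu)}+M_\La+M_\La^*$ then comes from the $L^\infty\to L^\infty$ operator bound on $\mathcal{A}^*(\mu)$ in Proposition~\ref{prop:A*-bound} applied to the constant function $1$.

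The only substantive new ingredient beyond invoking Theorem~\ref{main:wp-mf} is the telescoping estimate yielding the $L^1$-Lipschitz continuity of $\bar\La$, which I expect to be the main—though still minor—technical step. All remaining bounds transfer cleanly through the averaging procedure by convexity/Fubini.
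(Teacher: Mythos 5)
Your proposal is correct and follows essentially the same route as the paper: both reduce to Theorem~\ref{main:wp-mf} by verifying that the averaged kernel $\bar\La$ inherits \ref{A1} (via the same telescoping identity giving the Lipschitz constant $(N-1)M_\La$) and, for part (iii), by pushing the $L^\infty$ bound on $\bar\mA^*(1;\cdot)$ through the averaging. The only cosmetic difference is that you frame part (iii) as verifying \ref{A2} for $\bar\La$ before invoking Theorem~\ref{main:wp-mf}(iii), whereas the paper verifies the finiteness of the relevant constant $M$ directly; these amount to the same computation.
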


\subsection{Mean-field $N$-particle systems}\label{subsec:mfNsys}

Given $N \ge 1$ and a mean-field dependent generator of the form
$$\mcl{L}(\mu)=\mK+\mA(\mu),$$
where $\mK$ is a Markov generator and $\{\mA(\mu)\}_{\mu \in \eP}$ denotes a family of mean-field Markov jump generators, one can associate an $N$-particle system represented by the $\Pi^N$-valued Markov process
\[
\{\bsX_t\}_{t \ge 0} = \{(X_t^1, \ldots, X_t^N)\}_{t \ge 0},
\]
where, for each $1 \le k \le N$, the coordinate process $\{X_t^k\}_{t \ge 0}$ describes the state of the $k$-th particle. 
In what follows, we describe the dynamics of $\{\bsX_t\}_{t \ge 0}$ through its adjoint generator $\bL^* = \bL_N^*$. 
We note that this construction extends to more general classes of generators 
(see \cite{Chaintron_2022_1, LimTeoh2025}), although we focus here on the case of mixed mean-field jump processes.
Throughout the discussion, we assume the empirical jump kernel satisfies Condition \ref{A1p}.



The dynamics of the full $N$-particle system are described by the generator 
$\bL^*=\bL^*_N$ on the space $L^1(\Pi^N,\bs\nu)$, $\bs\nu= \nu^{\otimes N}$, which takes the form
\begin{align*}
	\bL_N^* \;=\; \sum_{k=1}^N(\bK_N^{*(k)} +\bA_N^{*(k)}).
\end{align*}
For each $1\le k\le N$, $\bK_N^{*(k)}, \bA_N^{*(k)}$ in the summation are generators on $\Pi^N$ corresponding to the action of the single-particle description applied to the $k$-th particle in the system. 
The precise definition is given as follows.

Let us begin with the case $k=1$, i.e., the generator acting on the first particle. 
Before that let us recall the convention of boldface symbol from Section \ref{sec:highdim}, such as $\bsx,\bs\Pi,\bsPhi,\bsrho$, for variables in high dimensions. $\bK_N^{*(1)}$ is defined by
\[
\bK_N^{*(1)} := \mK^* \otimes 
\overbrace{I \otimes \cdots \otimes I}^{N-1}.
\]
That is, $\bK_N^{*(1)}$ is the adjoint generator $\mK^*$ acting on the first coordinate (or particle), representing the evolution of the first particle under the Markov generator $\mK$ while keeping the remaining $(N-1)$ coordinates fixed.

Let us next define $\bA_N^{*(1)}$. For $\bs{\rho} \in L^1(\bs\nu)$ and $\bsx=(x_1,\bsx_{-1}) \in \Pi\times \Pi^{N-1}$, we define
\begin{align*}
	\bA_N^{*(1)}[\bs\rho](\bsx)
	&= \mA^*\big(\bs\rho(\,\cdot\,,\bsx_{-1});\mu(\bsx_{-1})\big)(x_1).
\end{align*}
That is, for each fixed $\bsx_{-1} \in \Pi^{N-1}$, we regard the map $x_1 \mapsto \bs\rho(x_1,\bsx_{-1})$ as a function in $L^1(\Pi)$ (note by Fubini's theorem this functions is in $L^1(\nu)$ for $\bs\nu_{N-1}$-a.e. ,$\bsx_{-1}\in\Pi^{N-1}$). We then apply the adjoint mean-field generator $\mA^*(\mu(\bsx_{-1}))$---with 
\[
\mu(\bsx_{-1}) = \frac{1}{N-1}\sum_{k=2}^N \delta_{x_k} \in \eP
\] 
the empirical measure formed by the remaining particles---to this function. Concretely, writing as an integral operator of its associated adjoint mean-field jump kernel $\La^*(\mu)$, which exists by Condition \ref{A1p}. 
In light of \eqref{eq:adj-formula}, we have for $\bsrho\in L^1(\bs\nu)$:
\begin{align}
	(\bA_N^{*(1)}\bsrho)(\bsx)
	&= \int_{\Pi}  \bs\rho(y_1,\bsx_{-1})
	\,\Lambda^*\!\left(x_1,dy_1;\mu(\bsx_{-1})\right)- \La(x_1,\Pi;\mu(\bsx_{-1}))\bsrho(\bsx)  \label{def:1act} \\
	&= \int_{\Pi^N} \bs\rho(\bsy)\bs\La^{*(1)}(\bsx,d\bsy)- \bs\La^{(1)}(\bsx,\Pi^N)\bsrho(\bsx) , \nonumber
\end{align}
where $\bs\La^{(1)},\bs\La^{*(1)}$ are jump kernels given by
\[
\bs{\Lambda}^{(1)}(\bsx,d\bsy)= \Lambda\!\left(x_1,dy_1;\mu(\bsx_{-1})\right) \otimes \delta_{\bsx_{-1}},\qquad \bs{\Lambda}^{*(1)}(\bsx,d\bsy)=\Lambda^*\!\left(x_1,dy_1;\mu(\bsx_{-1})\right) \otimes \delta_{\bsx_{-1}}
\]
Specifically, $\bs{\Lambda}^{(1)}(\bsx),\bs\La^{*(1)}(\bsx)$ are both measures supported on the set $\Pi \times \{(x_2,\ldots,x_N)\} \subset \Pi^N$. 
From a process perspective, this means that when a jump occurs, only the first coordinate---corresponding to the first particle---changes, while all the other coordinates remain fixed. One may easily verify that $\bs\La^{*(1)}$ is the $L^1(\bs\nu)$-adjoint kernel of  $\bs\La^{(1)}$. 

The descriptions of $\bK^{*(k)}_N$ and $\bA^{*(k)}_N$ are identical to the case $k=1$ above, except that the mean-field generator or jump kernel acts on the $k$-th coordinate.  
An efficient way to define these operators is via the action of coordinate permutations.  
Let $\si:\{1,2,\dots,N\}\to \{1,2,\dots,N\}$ be a permutation, i.e., a bijection.  
Given a function $\bs\rho\in L^1(\bs\nu)$, we define the action of $\si$ on $\bs\rho$ by
\[
(\si\bsrho)(x_1,x_2,\dots,x_N) := \bsrho(x_{\si(1)},x_{\si(2)},\dots,x_{\si(N)}).
\]
For $1\le k\le N$, let $\si_k$ denote the permutation that interchanges $1$ and $k$.  
In particular, $\si_1$ is the identity permutation.  
Then we define
\begin{align}\label{def:kact}
	\bK_N^{*(k)} := \si_k^{-1} \bK_N^{*(1)} \si_k, 
	\qquad 
	\bA_N^{*(k)} := \si_k^{-1} \bA_N^{*(1)} \si_k. 
\end{align}
Explicitly, for $1\le k\le N$,
\[
\bK_N^{*(k)} = I^{\otimes (k-1)} \otimes \mK^* \otimes I^{\otimes (N-k)}.
\]

Summing over all particle indices \(k=1,\dots,N\), we obtain the full generator of the mean-field \(N\)-particle system:
\begin{align}\label{def:superposition}
	\bL_N^*
	:= \sum_{k=1}^N \sigma_k^{-1}\bigl[\,\bK_N^{*(1)} + \bA_N^{*(1)}\,\bigr]\sigma_k
	= \sum_{k=1}^N \bigl(\bK_N^{*(k)} + \bA_N^{*(k)}\bigr)
	=: \bK_N^* + \bA_N^*.
\end{align}
We refer to \(\bL_N^*\) as the \emph{adjoint generator of the mean-field \(N\)-particle system} 
associated with the mean-field generator
\(
\mathcal{L}^*(\mu) = \mathcal{K}^* + \mathcal{A}^*(\mu).
\)
By an entirely analogous construction, one obtains the corresponding forward generator \(\bL_N\).
Once the superposition generator \(\bL_N\) is defined, 
the \(N\)-particle process \(\{\bsX_t\}_{t\ge 0}\) 
can be characterized as the (unique) solution to the martingale problem:
for every test function \(\boldsymbol{\varphi}\in D(\bL_N)\),
\begin{align*}
	M_t^{\boldsymbol{\varphi}}
	:= \boldsymbol{\varphi}(\bsX_t)
	- \int_0^t \bL_N \boldsymbol{\varphi}(\bsX_s)\,ds
	\quad \text{is a martingale.}
\end{align*}


We have now constructed the adjoint generator $\bL_N^*$ of the $N$-particle system. 
A natural question is whether this operator indeed generates an adjoint Markov semigroup on $L^1(\boldsymbol{\nu})$. 
The answer is affirmative, following a classical perturbation result from semigroup theory.
From \eqref{def:superposition} we recall that $\bL_N^* = \bK_N^* + \bA_N^*$. 
The operator $\bK_N^*$ is the generator of the tensor product adjoint Markov semigroup
\[
e^{t\bK_N^*}
= e^{t\mK^*}\otimes \cdots \otimes e^{t\mK^*}.
\]
Hence $\bK_N^*$ is an adjoint Markov generator. 
Moreover, for each $k$, by Proposition~\ref{prop:A*-bound} and~\ref{A1p}, 
the operator $\bA_N^{*}$ is bounded on $L^1(\boldsymbol{\nu})$, with 
\begin{align}\label{eq:bA-bdd}
	\|\bA_N^{*}\|_{L^1(\boldsymbol{\nu})\to L^1(\boldsymbol{\nu})}\le \sum_{k=1}^N \|\bA_N^{*(k)}\|_{L^1(\bs\nu)\to L^1(\bs\nu)}
	\le 2\sum_{k=1}^N\|\boldsymbol{\Lambda}^{(k)}\|_{\mathcal{J}}
	\le 2NM_\Lambda.	
\end{align}
Similarly, if \ref{A2p} holds, then Proposition \ref{prop:A*-bound} implies
\[
\|\bA_N^{*}\|_{L^\infty(\boldsymbol{\nu})\to L^\infty(\boldsymbol{\nu})}\le \sum_{k=1}^N \|\bA_N^{*(k)}\|_{L^\infty(\bs\nu)\to L^\infty(\bs\nu)}
\le \sum_{k=1}^N(\|\boldsymbol{\Lambda}^{(k)}\|_{\mathcal{J}}+\|\bs\La^{*(k)}\|_\mJ)
\le N(M_\Lambda^*+M_\La).
\]

By the \emph{bounded perturbation theorem} for $C_0$-semigroups (see, e.g., \cite{engel2000one,Pazy1983}), 
it follows that $\bL_N^*$ generates a strongly continuous semigroup $\{e^{t\bL_N^*}\}_{t\ge0}$ on $L^1(\boldsymbol{\nu})$.
Furthermore, both the mass conservation and the positive maximum principle 
are preserved under finite summation of generators. 
Hence, by the characterization in Proposition~\ref{prop:schilling}, 
the semigroup $\{e^{t\bL_N^*}\}_{t\ge0}$ is indeed an \emph{adjoint Markov semigroup}.

\begin{proposition}\label{prop:N-part-gen}
	Let $\mK^*$ be an adjoint Markov generator on $L^1(\nu)$, $\{\La(\mu)\}_{\mu\in\eP}$ be a mean-field jump kernel satisfying \ref{A1p}, and $\{\mA^*(\mu)\}_{\mu\in\eP}$ be its adjoint Markov jump generator.
	Then the operator $\bL_N^*$ generates an adjoint Markov semigroup 
	$\{e^{t\bL_N^*}\}_{t\ge0}$ on $L^1(\boldsymbol{\nu})$.
\end{proposition}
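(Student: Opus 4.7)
The plan is to treat $\bL_N^* = \bK_N^* + \bA_N^*$ as a bounded perturbation of an unbounded generator. First, I would construct the ``free'' semigroup generated by $\bK_N^*$: since $\{e^{t\mK^*}\}_{t\ge 0}$ is an adjoint Markov $C_0$-semigroup on $L^1(\nu)$, the $N$-fold tensor product $U_t := e^{t\mK^*}\otimes \cdots \otimes e^{t\mK^*}$ is a $C_0$-semigroup on $L^1(\bs\nu)$ (strong continuity is checked first on simple tensors and extended by density), and its infinitesimal generator coincides with $\sum_{k=1}^N \bK_N^{*(k)} = \bK_N^*$ on the dense core $\bigotimes_{k=1}^N D(\mK^*)$. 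Positivity and mass conservation pass to the tensor factor-by-factor, so $\bK_N^*$ is itself an adjoint Markov generator on $L^1(\bs\nu)$.

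Next, the bound \eqref{eq:bA-bdd}, which follows from Condition \ref{A1p} together with Proposition \ref{prop:A*-bound}, gives that $\bA_N^*$ is a bounded operator on $L^1(\bs\nu)$ with $\|\bA_N^*\|\le 2N M_\La$. I would then apply the bounded perturbation theorem for $C_0$-semigroups (see \cite{engel2000one,Pazy1983}) to conclude that $\bL_N^* = \bK_N^* + \bA_N^*$ generates a strongly continuous semigroup $\{e^{t\bL_N^*}\}_{t\ge 0}$ on $L^1(\bs\nu)$, with $D(\bL_N^*) = D(\bK_N^*)$.

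It then remains to verify that this semigroup is adjoint Markov. Using the characterization in Proposition \ref{prop:schilling}, I would show that for every $\bsrho \in D(\bL_N^*)$ the two conditions in \eqref{eq:K-adj-markov} hold. Since both conditions are additive in the generator, I would verify them separately for each summand $\bK_N^{*(k)}$ and $\bA_N^{*(k)}$. By Fubini, for $\bs\nu_{N-1}$-a.e.\ $\bsx_{-k}\in \Pi^{N-1}$, the slice $x_k \mapsto \bsrho(x_k,\bsx_{-k})$ belongs to $L^1(\nu)$, and the slicewise action of either $\mK^*$ or $\mA^*(\mu(\bsx_{-k}))$ is an adjoint Markov generator on $L^1(\nu)$ (the latter by Proposition~\ref{prop:A*-bound}). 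Hence the two integral conditions hold slicewise, and integrating in the remaining $N{-}1$ coordinates and summing over $k$ yields the global identities on $L^1(\bs\nu)$.

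The delicate step is the last one: the positive-maximum inequality $\int_{\{\bsrho>0\}} \bL_N^*\bsrho\, d\bs\nu \le 0$ must be checked for an arbitrary $\bsrho \in D(\bL_N^*)$, not merely for product-form densities, and the set $\{\bsrho>0\}\subset\Pi^N$ need not respect the tensor structure. The key observation is that for each fixed $\bsx_{-k}$ the section $\{x_k:\bsrho(x_k,\bsx_{-k})>0\}\subset \Pi$ is measurable and equals the positivity set of the slice, so the slicewise adjoint Markov property applies there; Tonelli's theorem then reassembles the slicewise inequalities into the global one on $\{\bsrho>0\}$. Once this slicewise reduction is in place, no further subtleties remain.
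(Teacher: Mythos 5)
Your proposal follows essentially the same route as the paper: establish that $\bK_N^*$ is an adjoint Markov generator via the tensor-product semigroup, use \eqref{eq:bA-bdd} to see $\bA_N^*$ is bounded on $L^1(\bs\nu)$, invoke the bounded perturbation theorem, and then verify the two conditions of Proposition \ref{prop:schilling} to conclude that the resulting semigroup is adjoint Markov. The paper states the final verification in one line (``mass conservation and the positive maximum principle are preserved under finite summation''), whereas you unpack it via a slicewise/Fubini reduction.

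One small infelicity in your unpacking: you propose to verify \eqref{eq:K-adj-markov} ``separately for each summand $\bK_N^{*(k)}$ and $\bA_N^{*(k)}$,'' but for a generic $\bsrho\in D(\bK_N^*)$ the individual slices $\bK_N^{*(k)}\bsrho$ need not be well-defined in $L^1(\bs\nu)$ (only the sum $\bK_N^*\bsrho$ is), and the slicewise application of the unbounded $\mK^*$ would require $\bsrho(\cdot,\bsx_{-k})\in D(\mK^*)$ for a.e.\ $\bsx_{-k}$, which is not automatic. This is not fatal, because you already observed earlier that $\bK_N^*$ is an adjoint Markov generator, hence satisfies \eqref{eq:K-adj-markov} on all of $D(\bK_N^*)$ directly by Proposition~\ref{prop:schilling}---no slicing needed for the $\bK$ part. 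The slicewise Tonelli argument is needed (and works cleanly) only for the bounded summands $\bA_N^{*(k)}$, since there $\mA^*(\mu(\bsx_{-k}))$ is defined on all of $L^1(\nu)$. With that small rearrangement, the argument closes exactly as the paper intends.
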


\subsection{Entropic estimate of the law of mean-field $N$-particle systems}
\label{subsec:ent-chaos}

Our main result on entropic propagation of chaos is established within the empirical framework. Recall that the well-posedness of the mean-field evolution equation requires only a \emph{pointwise} bound on the kernel $\La$, valid for each $\mu \in \mcp(\Pi)$ (or $\mu \in \eP$).  
However, such a condition alone is insufficient to guarantee propagation of chaos. For this, additional regularity with respect to perturbations of $\mu \in \eP$ is needed.  

Intuitively, changing a single particle in the mean-field empirical measure, say replacing $x_2$ by $x_2'$, alters the empirical distribution by an amount of order $N^{-1}$, which in turn induces a perturbation of comparable size in the kernel $\La$. This requirement is referred to as the \emph{first-order bounded difference condition}. Moreover, in order to control entropy dissipation, we also need a stronger property, the \emph{second-order bounded difference condition}, formulated below.  

In what follows, expressions of the forms
\begin{align*}
	\mu=\si + \tfrac{1}{N-1}\de_x, 
	\qquad 
	\mu'=\si'+ \tfrac{1}{N-1}(\de_x+\de_y), 
\end{align*}
are always understood as $\mu,\mu' \in \eP$, with $\si$ the empirical measure formed by $N-2$ particles in the first case, and $\si'$ that formed by $N-3$ particles in the second case.  Specifically $\si$ has  total mass $1-(N-1)^{-1}$, and $\si'$ has $1-2(N-1)^{-1}$. 

Let us assume the following bounded difference conditions: there is a constant $\Theta\ge 0$ such that the following holds.

\begin{enumerate}[label=(A\arabic*)]
	\setcounter{enumi}{2}
	\item \label{A3}(\emph{First-order bounded difference condition}).  
	For any $x_1,x_1'\in \Pi$, if
	\[
	\mu = \si + \tfrac{1}{N-1}\de_{x_1},
	\qquad 
	\mu^{(1)}= \si + \tfrac{1}{N-1}\de_{x_1'}, 
	\]
	then for $\Upsilon=\La,\La^*$, it holds:
	\[
	\big\|\Upsilon(\mu)-\Upsilon(\mu^{(1)})\big\|_{\mcJ}
	\le \frac{\Theta}{N-1}. 
	\]
	
	\item \label{A4}(\emph{Second-order bounded difference condition}).  
	For any $x_1,x_1',x_2,x_2'\in \Pi$, if
	\[
	\mu = \si + \tfrac{1}{N-1}(\de_{x_1}+\de_{x_2}),\quad 
	\mu^{(1,2)}= \si + \tfrac{1}{N-1}(\de_{x_1'}+\de_{x_2'}),
	\]
	\[
	\mu^{(1)}= \si + \tfrac{1}{N-1}(\de_{x_1'}+\de_{x_2}), 
	\qquad 
	\mu^{(2)} = \si + \tfrac{1}{N-1}(\de_{x_1}+\de_{x_2'}),
	\]
	then, for $\Upsilon=\La,\La^*$, it holds:
	\begin{align*}
		\big\|\Upsilon(\mu)-\Upsilon(\mu^{(1)})-\Upsilon(\mu^{(2)})+\Upsilon(\mu^{(1,2)})\big\|_{\mcJ}
		\le \frac{\Theta}{(N-1)(N-2)}.
	\end{align*}
\end{enumerate}

Conditions \ref{A3} and \ref{A4} may be interpreted as discrete analogues of,
respectively, a Lipschitz-type regularity and a second-order Lipschitz (or
quadratic displacement) regularity of the map 
\(\mu \mapsto \Upsilon(\mu)\) with respect to the total variation norm.
The following proposition provides natural sufficient conditions on a mean-field jump kernel ensuring that
these discrete bounds hold for empirical measures.

\begin{proposition}\label{prop:second-to-A4}
	Let \(\{\Upsilon(\mu)\}_{\mu\in\mcl{P}(\Pi)}\) be a mean-field jump kernel.
	
	(i) 
	If \(\Upsilon\) is Lipschitz continuous with respect to the total variation,
	that is, if there exists \(\Theta' \ge 0\) such that for all
	\(\mu_0,\mu_1\in\mcl{P}(\Pi)\),
	\[
	\|\Upsilon(\mu_0)-\Upsilon(\mu_1)\|_{\mJ}
	\le \Theta' \|\mu_0-\mu_1\|_{\TV},
	\]
	then the restriction \(\Upsilon|_{\eP}\) satisfies Condition \ref{A3}.
	
	(ii) 
	Suppose that \(\Upsilon\) satisfies the second-order bounded displacement
	condition with respect to the total variation: there exists \(\Theta' \ge 0\)
	such that for all \(\mu_0,\mu_1\in\mcl{P}(\Pi)\) and all \(t\in[0,1]\),
	\[
	\big\|\Upsilon(\mu_t) - (1-t)\Upsilon(\mu_0) - t \Upsilon(\mu_1)\big\|_{\mJ}
	\le \frac{\Theta'}{2}\, t(1-t)\, \|\mu_0-\mu_1\|^2_{\TV},
	\quad
	\mu_t = (1-t)\mu_0 + t\mu_1,
	\]
	then the restriction \(\Upsilon|_{\eP}\) satisfies Condition \ref{A4}.
\end{proposition}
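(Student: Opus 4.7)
The plan is to treat the two parts separately, with Part~(i) following directly from the Lipschitz hypothesis and Part~(ii) reducing to an almost-trivial computation once a key parallelogram identity between the four empirical measures is noted.

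For Part~(i), the telescoping difference is clean: $\mu - \mu^{(1)} = \frac{1}{N-1}(\delta_{x_1} - \delta_{x_1'})$, whose total variation is at most $\frac{2}{N-1}$. Plugging into the Lipschitz hypothesis directly gives the bound $\frac{2\Theta'}{N-1}$, so \ref{A3} holds with $\Theta = 2\Theta'$. No subtlety arises here.

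The heart of the argument is Part~(ii). I would begin by observing that the four empirical measures satisfy the parallelogram identity $\mu + \mu^{(1,2)} = \mu^{(1)} + \mu^{(2)}$, which is immediate from the defining sums. Consequently, the pairs $(\mu, \mu^{(1,2)})$ and $(\mu^{(1)}, \mu^{(2)})$ share a common midpoint $\bar\mu := \frac{1}{2}(\mu + \mu^{(1,2)}) = \frac{1}{2}(\mu^{(1)} + \mu^{(2)})$. Applying the second-order displacement hypothesis to each pair at $t = 1/2$ and subtracting makes the $\Upsilon(\bar\mu)$ term vanish; the triangle inequality then yields a bound on the second difference of the form $\frac{\Theta'}{4}\bigl(\|\mu - \mu^{(1,2)}\|_{\TV}^2 + \|\mu^{(1)} - \mu^{(2)}\|_{\TV}^2\bigr)$. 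Each of these total-variation distances is at most $\frac{4}{N-1}$, so the right-hand side is of order $\frac{1}{(N-1)^2}$, which is in turn controlled by $\frac{\Theta}{(N-1)(N-2)}$ for a suitable absolute multiple, e.g.\ $\Theta = 8\Theta'$.

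I expect no substantive obstacle: the only nonroutine observation is the parallelogram identity, without which one would be forced to interpolate along a longer path and lose control of the second-order displacement. Once that identity is in hand, the remainder is bookkeeping of total-variation norms and constants.
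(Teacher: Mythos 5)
Your proposal is correct and takes essentially the same route as the paper: part~(i) is the direct observation that $\|\mu-\mu^{(1)}\|_{\TV}\le\tfrac{2}{N-1}$, and part~(ii) hinges on the parallelogram identity $\mu+\mu^{(1,2)}=\mu^{(1)}+\mu^{(2)}$, which gives a shared midpoint to which the second-order displacement hypothesis is applied at $t=\tfrac12$ for both pairs, followed by the triangle inequality and the bound $\|\mu-\mu^{(1,2)}\|_{\TV},\|\mu^{(1)}-\mu^{(2)}\|_{\TV}\le\tfrac{4}{N-1}$. The constants you arrive at ($\Theta=2\Theta'$ and $\Theta=8\Theta'$) agree with the paper's.
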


\begin{proof}
	(i) This follows directly from the observation that for $\mu\in\eP$ and
	$\mu^{(1)}$ as in \ref{A3}, one has
	\(
	\|\mu-\mu^{(1)}\|_{\TV} \le \frac{2}{N-1}.
	\)
	
	(ii) Let $\mu,\mu^{(1)},\mu^{(2)},\mu^{(1,2)}$ be as in \ref{A4}. Set
	$\mu_0=\mu$, $\mu_1=\mu^{(1,2)}$, and
	$\tilde\mu_0=\mu^{(1)}$, $\tilde\mu_1=\mu^{(2)}$.  
	Observe that the interpolations $\mu_t=(1-t)\mu_0+t\mu_1$ and 
	$\tilde\mu_t=(1-t)\tilde\mu_0+t\tilde\mu_1$ coincide at $t=\tfrac12$.
	Using this and rewriting the second-order difference in \ref{A4} as the
	difference of the two midpoint deviations, we obtain
	\begin{align*}
		\|\Upsilon(\mu)-\Upsilon(\mu^{(1)})-\Upsilon(\mu^{(2)})+\Upsilon(\mu^{(1,2)})\|_{\mJ}
		&\le
		2\Big\|\tfrac12\Upsilon(\mu_0)+\tfrac12\Upsilon(\mu_1)-\Upsilon(\mu_{1/2})\Big\|_{\mJ} \\
		&\quad + 
		2\Big\|\tfrac12\Upsilon(\tilde\mu_0)+\tfrac12\Upsilon(\tilde\mu_1)-\Upsilon(\tilde\mu_{1/2})\Big\|_{\mJ}.
	\end{align*}
	By the second-order displacement condition,
	\[
	\Big\|\tfrac12\Upsilon(\mu_0)+\tfrac12\Upsilon(\mu_1)-\Upsilon(\mu_{1/2})\Big\|_{\mJ}
	\le \frac{\Theta'}{8}\|\mu_0-\mu_1\|_{\TV}^2,
	\]
	and similarly for the tilde terms. Since
	$\|\mu_0-\mu_1\|_{\TV}=\tfrac{4}{N-1}$ (the two empirical measures differ by two
	atoms of mass $\tfrac{1}{N-1}$), and the same bound holds for
	$\tilde\mu_0,\tilde\mu_1$, we obtain
	\[
	\|\Upsilon(\mu)-\Upsilon(\mu^{(1)})-\Upsilon(\mu^{(2)})+\Upsilon(\mu^{(1,2)})\|_{\mJ}
	\le \frac{8\Theta'}{(N-1)^2}
	\le \frac{8\Theta'}{(N-1)(N-2)}.
	\]
	This proves Condition \ref{A4}.
\end{proof}

Our main result provides an estimate on the \emph{renormalized relative entropy} 
between the probability densities $\boldsymbol{\rho}_t = e^{t\bL_N^*}\boldsymbol{\rho}_0$ 
and the tensorized law $\bar{\boldsymbol{\rho}}_t = \bar{\rho}_t^{\otimes N}$, 
where $\{\bar{\rho}_t\}_{t\ge0}$ denotes the solution of the averaged mean-field equation~\eqref{eq:Nmf}. 
For two densities $\boldsymbol{\rho}, \boldsymbol{\sigma} \in \mathcal{P}(\Pi^N) \cap L^1(\boldsymbol{\nu}_N)$, 
the renormalized relative entropy is defined by
\[
\mathcal{H}_N(\boldsymbol{\rho}\,\|\,\boldsymbol{\sigma})
:= \frac{1}{N}\,\mathcal{H}(\boldsymbol{\rho}\,\|\,\boldsymbol{\sigma})
= \frac{1}{N}\int_{\Pi^N} 
\boldsymbol{\rho}\,
\log\!\left(\frac{\boldsymbol{\rho}}{\boldsymbol{\sigma}}\right)
d\boldsymbol{\nu}_N.
\]
As mentioned earlier, the averaged mean-field equation~\eqref{eq:Nmf} 
describes the mean dynamics of the $N$-particle system. 
Our main estimate below shows that, under Assumptions~\ref{A3} and~\ref{A4}, 
the law $\boldsymbol{\rho}_t$ remains close to the tensorized averaged law $\bar{\boldsymbol{\rho}}_t$ 
in the entropic sense.
\begin{theorem}[Entropic estimate]\label{main:ec-Nmf}
	Let $\mK^*$ be an adjoint Markov generator satisfying \ref{K-cond},
	$\{\La(\mu)\}_{\mu\in\eP}$ be an $N$-particle empirical jump kernel, and assume Conditions \ref{A1p}--\ref{A4} hold for some constants $M_\La,M_\La^*,\Theta\ge 0$.
	Let $\mcl{L}^*(\mu)=\mK^*+\mA^*(\mu)$ and denote $\bL^*=\bL_N^*$ the associated adjoint generator of the $N$-particle system as in \eqref{def:superposition}. 
	
	Let $\bsrho_0\in \mcp(\Pi^N)\cap L^1(\bs\nu)$ and $\bar\rho_0\in \cap(\Pi)\cap L^1(\nu)$, and assume
	\begin{align*}
		\bs\rho_0 \log\bs\rho_0\in L^1(\bs\nu),\qquad \De(\log\bar\rho_0)<\infty.
	\end{align*}
	Set $\bsrho_t= e^{t\bL^*}\bsrho_0$
	and let $\{\bar\rho_t\}_{t\ge 0}$ be the mild solution to the averaged mean-field equation \eqref{eq:Nmf}. 
    Define the product law $\bar\bsrho_t := \br_t^{\otimes N}$. Then, for any $T\ge 0$, there is $\be=\be_T$ depending on $T,\|\mK^*1\|_{L^\infty},M_\La,M_\La^*,\Theta$ and $\De(\log\bar\rho_0)$, such that the following relative entropy estimate holds:
	\begin{align*}
		\sup_{t\in[0,T]}\mcl{H}_N(\bsrho_t \,\|\, \bar\bsrho_t) \;\le\; \mcl{H}_N(\bsrho_0 \,\|\, \bar\bsrho_0)e^{\beta t}+\frac{\log2}{N}\left(\frac{e^{\beta T}-1}{\beta}\right).
	\end{align*}
\end{theorem}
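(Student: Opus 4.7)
The plan is to differentiate $\mcl{H}_N(\bsrho_t\|\bar\bsrho_t)$ in time, isolate a \emph{fluctuation term} that measures the discrepancy between the full $N$-particle generator and the mean-field generator when applied to the tensorized law, and then close the bound by combining Gibbs' variational principle with the second-order concentration inequality~\eqref{ineq:concen} and Gr\"onwall's lemma. Using mass conservation under $\bL^*$, the derivative reads
\begin{align*}
	N\frac{d}{dt}\mcl{H}_N(\bsrho_t\|\bar\bsrho_t)
	= \int_{\Pi^N}\bL^*\bsrho_t\,\log\frac{\bsrho_t}{\bar\bsrho_t}\,d\bs\nu
	- \int_{\Pi^N}\frac{\bsrho_t}{\bar\bsrho_t}\,\partial_t\bar\bsrho_t\,d\bs\nu.
\end{align*}
The key algebraic observation is that $\partial_t\bar\bsrho_t\neq \bL^*\bar\bsrho_t$; unrolling the definition of $\bA_N^*$ in Section~\ref{subsec:mfNsys} and of $\bar\mA^*$ in Definition~\ref{def:avg-mf} gives
\begin{align*}
	(\bL^*\bar\bsrho_t - \partial_t\bar\bsrho_t)(\bsx)
	= \sum_{k=1}^N\bigl[\mA^*(\bar\rho_t;\mu(\bsx_{-k}))(x_k) - \bar\mA^*(\bar\rho_t;\bar\rho_t)(x_k)\bigr]\prod_{j\neq k}\bar\rho_t(x_j).
\end{align*}
Substituting this decomposition and using the standard entropy dissipation inequality for adjoint Markov generators,
\begin{align*}
	\int_{\Pi^N}\bL^*\bsrho\cdot\log\frac{\bsrho}{\bar\bsrho}\,d\bs\nu
	- \int_{\Pi^N}\frac{\bsrho}{\bar\bsrho}\,\bL^*\bar\bsrho\,d\bs\nu \le 0
\end{align*}
(which follows from the pointwise convex inequality $a\log(b/a)\le b-a$ applied under each Markov jump kernel entering $\bL^*$), I arrive at
\begin{align*}
	N\frac{d}{dt}\mcl{H}_N \;\le\; \int_{\Pi^N}\bsrho_t\sum_{k=1}^N\Phi_t\!\big(x_k,\mu(\bsx_{-k})\big)\,d\bs\nu,
	\qquad
	\Phi_t(x,\mu):= \frac{\mA^*(\bar\rho_t;\mu)(x) - \bar\mA^*(\bar\rho_t;\bar\rho_t)(x)}{\bar\rho_t(x)}.
\end{align*}

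Next I would apply Gibbs' variational principle with a free parameter $\eta>0$ against the product measure $\bar\bsrho_t=\bar\rho_t^{\otimes N}$ to get
\begin{align*}
	\frac{d}{dt}\mcl{H}_N \;\le\; \frac{1}{\eta}\mcl{H}_N + \frac{1}{\eta N}\log\int_{\Pi^N}\bar\bsrho_t\exp\!\Big(\eta\sum_{k=1}^N\Phi_t(x_k,\mu(\bsx_{-k}))\Big)d\bs\nu.
\end{align*}
The log-moment on the right is precisely the quantity controlled by the second-order concentration inequality~\eqref{ineq:concen} (to be established in Section~\ref{sec:concen-ineq} following \cite{gotze2020second}). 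I would then verify its hypotheses: (i) the cancellation $\int\Phi_t(x,\mu(\bsy))\,d\bar\rho_t^{\otimes(N-1)}(\bsy)=0$ for every $x$ is immediate from Definition~\ref{def:avg-mf}; (ii) a uniform $L^\infty$ bound on $\Phi_t$ follows from \ref{A1p}, \ref{A2p}, and Remark~\ref{rem:A-bdd}, combined with the lower bound $\bar\rho_t\ge e^{-\De(\log\bar\rho_0)-2MT}$ supplied by Theorem~\ref{thm:wp-Nmf}(iii); and (iii) the first- and second-order bounded-difference conditions for $\mu\mapsto\Phi_t(x,\mu)$ are transferred from \ref{A3}, \ref{A4} via Proposition~\ref{prop:adj-op-bound} together with the pointwise upper bound on $1/\bar\rho_t(x)$. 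Choosing $\eta$ small enough so that the concentration constant collapses to $\log 2$ and invoking Gr\"onwall's lemma with $\beta=1/\eta$ then yields the stated estimate, with $\beta=\beta_T$ absorbing the dependence on $T,\|\mK^*1\|_{L^\infty},M_\La,M_\La^*,\Theta$ and $\De(\log\bar\rho_0)$.

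I expect the main obstacle to be the careful verification of the second-order bounded-difference condition for $\Phi_t$. Because $\mA^*_\La$ combines the linear action $\La^*(\mu)$ on $L^1(\nu)$ with the multiplicative intensity $\La(x,\Pi;\mu)$, propagating \ref{A4} through the discrete second difference in $\mu$ requires a Leibniz-type expansion in which \ref{A3} controls the mixed first-order cross terms arising from paired single-particle changes, while \ref{A4} alone governs the genuine second-order piece. The pre-factor $1/\bar\rho_t(x)$, uniformly bounded on $[0,T]$ only by virtue of Theorem~\ref{thm:wp-Nmf}(iii), is precisely what forces the hypothesis $\De(\log\bar\rho_0)<\infty$ to propagate into the final constants.
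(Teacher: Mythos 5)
Your plan follows the same skeleton as the paper's proof—differentiate the relative entropy, invoke data processing to discard the dissipative part, isolate the fluctuation term $\bs F_s$, apply Gibbs' variational principle against $\bar\bsrho_t$, bound the log–moment via the second‑order concentration inequality, and finish with Gr\"onwall—so the overall architecture is correct and matches Sections~\ref{subsec:4.2}--\ref{subsec:4.5}. A few of the supporting justifications in your sketch, however, reach for the wrong tools or underestimate technical points the paper has to work around.

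First, the ``standard entropy dissipation inequality'' you invoke is the data processing inequality (Lemma~\ref{lem:dpineq}), and your proposed justification (pointwise convex inequality $a\log(b/a)\le b-a$ ``applied under each Markov jump kernel'') is not sufficient: $\bL^*=\bK^*+\bA^*$, and $\bK^*$ is an \emph{arbitrary} adjoint Markov generator (e.g.\ a diffusion), not a jump kernel, so the pointwise jump calculus does not cover it. The paper handles this abstractly via Jensen's inequality for Markov operators (Proposition~\ref{prop:dataproc}). Second, you cite Proposition~\ref{prop:adj-op-bound} to transfer \ref{A3}, \ref{A4} to $\Phi_t$; that proposition only yields $L^1(\nu)$ operator bounds, whereas Conditions~\ref{Phi2}--\ref{Phi3} require \emph{pointwise} bounds on $\Phi_t(z,\mu)$. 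The paper instead uses the explicit representation $\Phi_s(x,\mu)=(\La^*(\mu)\bar\rho_s)(x)/\bar\rho_s(x)-\La(x,\Pi;\mu)$ and the uniform lower bound on $\bar\rho_s$ to get the pointwise estimate directly. Third, your concern that a Leibniz-type expansion is needed because $\mA^*$ ``combines'' $\La^*(\mu)$ and $\La(x,\Pi;\mu)$ is unfounded: the two appear \emph{additively}, so the discrete second difference in $\mu$ distributes term-by-term and \ref{A4} applies verbatim to each—no first-order cross terms arise, and \ref{A3} plays no role in the verification of~\ref{Phi3}. Finally, the formal differentiation of $\mcl{H}_N(\bsrho_t\|\bar\bsrho_t)$ is only valid for classical solutions with bounded logarithmic oscillation; under the stated hypotheses ($\bsrho_0\log\bsrho_0\in L^1$, $\bsrho_0\notin D(\bL^*)$) one only has a mild solution, and the paper's Lemma~\ref{lem:int-hard} goes through a four-step approximation argument to extend the integral inequality from the regular case. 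Your sketch elides this entirely.
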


\subsection{Propagation of chaos of mixed mean-field systems}
\label{sec-Chaos-mixed-mf}
Let us now address the \emph{propagation of chaos} for our mean-field model. 
Fix $N \ge 1$ and consider a \emph{permutation-invariant} probability density $\bsrho^{(N)}$, that is, for every permutation $\sigma:\{1,\dots,N\} \to \{1,\dots,N\}$, we have 
\[
\bsrho^{(N)}(x_1, x_2, \dots, x_N) = \bsrho^{(N)}(x_{\sigma(1)}, x_{\sigma(2)}, \dots, x_{\sigma(N)}).
\]
Informally, a sequence of probability densities $\{\bsrho^{(N)}\}_{N\ge1}$ is said to be \emph{chaotic} if, as $N \to \infty$, the particles become asymptotically independent—namely, if $\bsrho^{(N)}$ approaches a product measure in the appropriate sense. 
A dynamics is said to exhibit \emph{propagation of chaos} if this asymptotic independence, when present at the initial time, is preserved (or propagated) at later times. 
The precise definitions are given below.

\begin{definition}[Chaos]
	Let $(\Pi,\nu)$ be a measure space, and denote $\bs\nu_N$, for $N\ge 1$, the $N$-fold product measure on $\Pi^N$. Let $\rho\in \mcp(\Pi)\cap L^1(\nu)$ be a probability density, and for $N\ge 1$, let 
	$\bs\rho^{(N)}\in \mcp(\Pi^N)\cap L^1(\bs\nu_N)$ be permutation- invariant probability densities on the product space $\Pi^N$. 
	
	(i) We say the sequence $\{\bsrho^{(N)}\}_N$  is \emph{$L^1$ $\rho$-chaotic} if it holds for all $k\ge 1$ that 
	\begin{align*}
		\bsrho^{(N|k)}\to \rho^{\otimes k} \mbox{ in }L^1(\bs\nu_k),\quad \mbox{where for }N> k,\, \bsrho^{(N|k)}(\bsx_k)&= \int_{\Pi^{N-k}}\bsrho^{(N)}(\bsx_k,\bsx_{N-k})d\nu_{N-k}(\bsx_{N-k}).
	\end{align*}
	
	(ii) We say the sequence $\{\bsrho^{(N)}\}_N$ is \emph{entropic $\rho$-chaotic} if it holds
	\begin{align*}
		\lim_{N\to\infty}\mcl{H}_N(\bsrho^{(N)}\|\rho^{\otimes N})=0. 
	\end{align*}
\end{definition}

\begin{definition}[Propagation of chaos]
	For each $N\ge 1$, let $\{\bsX_t^N\}_{t\in[0,T]}$ be a permutation-invariant Markov process on $\Pi^N$, with $\{\bsrho_t^{(N)}\}_{t\ge 0}$ being its density w.r.t. $\bs\nu_N$. Let $\{\rho_t\}_{t\in[0,T]}$ be a curve of probability densities. We say the sequence $\{\{\bsX_t^N\}_{t\ge 0}\}_{N\ge 1}$ of Markov processes exhibits the $L^1$ (resp. \emph{entropic}) \emph{propagation of chaos} (w.r.t. $\{\rho_t\}_{t\in[0,T]}$) if the following holds. If the initial law $\bsrho_0^{(N)}$ is $L^1$ (resp. entropic) $\rho_0$-chaotic, then for each $t\in[0,T]$, the law $\bsrho_t^{(N)}$ is $L^1$ (resp. entropic) $\rho_t$-chaotic. 
\end{definition}

\begin{remark}
	We note entropic chaos (and hence entropic propagation of chaos) implies $L^1$-chaos (and hence $L^1$ propagation of chaos). This follows as a consequence of Pinsker inequality. See the proof of Corollary \ref{cor:average vs mf}. 
\end{remark}

Recall that the entropic estimate in Theorem \ref{main:ec-Nmf} is stated for the $N$-particle empirical kernel family $\{\Lambda(\mu)\}_{\mu\in\eP}$. 
To formulate propagation of chaos we return to the mean-field setting $\{\Lambda(\mu)\}_{\mu\in\mathcal{P}(\Pi)}$. 
For a fixed $N\ge 2$ denote the restriction of $\Lambda$ to empirical measures by
\begin{align}\label{def:restricted-mfj}
	\Lambda_N \;:=\; \Lambda\big|_{\eP}.
\end{align}
Similarly, write $\bar\Lambda_N$, $\bar{\mathcal{A}}_N$ and $\bar{\mathcal{A}}_N^*$ for the associated averaged kernel, averaged generator, and its adjoint, respectively, defined as in Definition \ref{def:avg-mf}. 
We emphasize that, in general, the averaged objects $\bar\Lambda_N,\bar{\mathcal{A}}_N,\bar{\mathcal{A}}_N^*$ may depend on $N$; 
in particular they are $N$-independent only in special cases (for example, when $\mu\mapsto\Lambda(\mu)$ is linear).

In this setting, we consider the $N$-dependent averaged mean-field equation with a fixed initial condition:
\begin{align}\label{eq:Nmf-N}
	\begin{cases}
		\partial_t \bar{\rho}^{(N)}_t 
		= \mK^*\bar{\rho}^{(N)}_t + \bar{\mA}_N^*(\bar{\rho}^{(N)}_t;\bar{\rho}^{(N)}_t),
		& t>0,\\[0.5em]
		\bar{\rho}_0^{(N)}=\rho_0 \in \mcl{P}(\Pi)\cap L^1(\nu).
	\end{cases}
\end{align}
Theorem \ref{main:ec-Nmf} provides an estimate for the renormalized relative entropy between $\bsrho_t^{(N)}$ and the tensorized law $\bar\bsrho_t^{(N)}=(\bar\rho_t^{(N)})^{\otimes N}$, where $\{\bar\rho_t^{(N)}\}_{t\ge 0}$ is the density solution to \eqref{eq:Nmf-N}. 
To apply this estimate and conclude the propagation of chaos property, we require the convergence of $\{\{\bar\rho_t^{(N)}\}_t\}_N$ towards a limiting curve of densities $\{\rho_t\}_t$, which is naturally expected to solve the mean-field evolution equation \eqref{eq:mf}.

\begin{corollary}
\label{cor:average vs mf}
	Let $\mK^*$ be an adjoint Markov generator satisfying \ref{K-cond}. 	
	Let $\{\La(\mu)\}_{\mu\in\mcP(\Pi)}$ be a mean-field kernel, and assume that for each $N\ge 2$, the restricted $N$-particle kernel $\{\La_N(\mu)\}_\mu$ defined in \eqref{def:restricted-mfj} satisfies \ref{A1p}--\ref{A4} with constants independent of $N$. 
	Let $\bar\rho_0\in\mcl{P}(\Pi)\cap L^1(\nu)$ and assume $\De(\log\bar\rho_0)<\infty$. 
	For each $N\ge 2$, let $\{\bar\rho_t^{(N)}\}_{t\ge 0}$ be the solution of \eqref{eq:Nmf-N} with initial condition $\bar\rho_0^{(N)}=\rho_0$. 
	
	\smallskip
	(a) Suppose that for some $T>0$, there exists a curve of densities $\{\rho_t\}_{t\in[0,T]}$ such that 
	\begin{align}\label{cond-L1-conv}
		\lim_{N\to\infty} \sup_{t\in[0,T]}\|\bar\rho_t^{(N)}-\rho_t\|_{L^1(\nu)}=0. 
	\end{align}
	Then the mean-field $N$-particle system exhibits the $L^1$-propagation of chaos with respect to $\{\rho_t\}_{0\le t\le T}$ as $N\to\infty$. 
	
	(b) If additionally $\log\br_t^{(N)}$ converges to $\log\rho_t$ as $N\to\infty$ in $L^\infty(\nu)$, we have the entropic chaos hold.
\end{corollary}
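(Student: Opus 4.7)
The plan is to turn the entropic bound of Theorem~\ref{main:ec-Nmf} into chaos at each fixed $k$-marginal level, using subadditivity of relative entropy and Pinsker's inequality, then transfer from $(\bar\rho_t^{(N)})^{\otimes N}$ to $\rho_t^{\otimes N}$ via the convergence hypotheses---\eqref{cond-L1-conv} for (a), and the $L^\infty$-log convergence for (b). First, Theorem~\ref{main:ec-Nmf} applied to the $N$-particle system paired with the averaged equation~\eqref{eq:Nmf-N} gives
\begin{align*}
	\sup_{t\in[0,T]} \mathcal{H}_N\bigl(\boldsymbol{\rho}_t^{(N)} \,\big\|\, (\bar\rho_t^{(N)})^{\otimes N}\bigr) \le \mathcal{H}_N\bigl(\boldsymbol{\rho}_0^{(N)} \,\big\|\, \rho_0^{\otimes N}\bigr)e^{\beta T} + \frac{\log 2}{N}\cdot\frac{e^{\beta T}-1}{\beta},
\end{align*}
with $\beta=\beta_T$ independent of $N$: the constants $M_\Lambda, M_\Lambda^*, \Theta$ are $N$-uniform by hypothesis, and $\Delta(\log\bar\rho_0^{(N)})=\Delta(\log\rho_0)$ by the common initialization $\bar\rho_0^{(N)}=\rho_0$. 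Under entropic $\rho_0$-chaos (which in particular implies the $L^1$-chaos hypothesis of (a) via Pinsker, and is therefore the natural initial regularity in both parts), the right-hand side vanishes uniformly in $t\in[0,T]$ as $N\to\infty$.

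For part (a), fix $k\ge 1$. Subadditivity of relative entropy, combined with the permutation invariance of $\boldsymbol{\rho}_t^{(N)}$ and the partition of $\{1,\dots,N\}$ into $\lfloor N/k\rfloor$ disjoint $k$-blocks, yields
\begin{align*}
	\mathcal{H}\bigl(\boldsymbol{\rho}_t^{(N|k)} \,\big\|\, (\bar\rho_t^{(N)})^{\otimes k}\bigr) \le \frac{N}{\lfloor N/k\rfloor}\,\mathcal{H}_N\bigl(\boldsymbol{\rho}_t^{(N)} \,\big\|\, (\bar\rho_t^{(N)})^{\otimes N}\bigr),
\end{align*}
which tends to $0$ uniformly in $t\in[0,T]$ by the first step. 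Pinsker's inequality then gives $\|\boldsymbol{\rho}_t^{(N|k)} - (\bar\rho_t^{(N)})^{\otimes k}\|_{L^1(\boldsymbol{\nu}_k)} \to 0$. Combining this with the standard tensorization estimate $\|(\bar\rho_t^{(N)})^{\otimes k} - \rho_t^{\otimes k}\|_{L^1(\boldsymbol{\nu}_k)} \le k\|\bar\rho_t^{(N)} - \rho_t\|_{L^1(\nu)}$ (obtained by telescoping product densities) and hypothesis~\eqref{cond-L1-conv}, the triangle inequality delivers $L^1$ $\rho_t$-chaos.

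For part (b), I would use the entropy decomposition
\begin{align*}
	\mathcal{H}_N\bigl(\boldsymbol{\rho}_t^{(N)} \,\big\|\, \rho_t^{\otimes N}\bigr) = \mathcal{H}_N\bigl(\boldsymbol{\rho}_t^{(N)} \,\big\|\, (\bar\rho_t^{(N)})^{\otimes N}\bigr) + \int_\Pi \boldsymbol{\rho}_t^{(N|1)}(x) \log\frac{\bar\rho_t^{(N)}(x)}{\rho_t(x)}\, d\nu(x),
\end{align*}
where the cross term collapses to a single-marginal integral via the product structure of $(\bar\rho_t^{(N)})^{\otimes N}/\rho_t^{\otimes N}$ and the permutation invariance of $\boldsymbol{\rho}_t^{(N)}$. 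The last integral is bounded in absolute value by $\|\log\bar\rho_t^{(N)}-\log\rho_t\|_{L^\infty(\nu)}$, which vanishes by the additional hypothesis. Together with the first step, this yields $\mathcal{H}_N(\boldsymbol{\rho}_t^{(N)}\|\rho_t^{\otimes N}) \to 0$ uniformly in $t\in[0,T]$.

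The main obstacle is essentially bookkeeping rather than technical: the key check is that the exponential constant $\beta_T$ in Theorem~\ref{main:ec-Nmf} is genuinely $N$-independent, which is immediate from the $N$-uniform bounds in \ref{A1p}--\ref{A4} and the common initial condition $\bar\rho_0^{(N)}=\rho_0$. Everything else is a standard chain of subadditivity, Pinsker, and triangle inequalities, with the $L^\infty$-log hypothesis in (b) furnishing the precise additional control needed to swap the two tensorized reference densities at the entropy level.
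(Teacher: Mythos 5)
Your proof is correct and follows essentially the same route as the paper's: invoke the entropic estimate of Theorem~\ref{main:ec-Nmf} with an $N$-uniform constant $\beta_T$, pass to $k$-marginals by a superadditivity/monotonicity argument, apply Pinsker, and then swap the reference density via \eqref{cond-L1-conv} in (a) and an $L^\infty$-log bound in (b). The only cosmetic differences are that the paper uses the sharper monotonicity $\mathcal{H}(\bs\rho^{(N|k)}\|\cdot)\le\tfrac{k}{N}\mathcal{H}(\bs\rho^{(N)}\|\cdot)$ rather than your block-partition factor $\tfrac{1}{\lfloor N/k\rfloor}$ (asymptotically equivalent), and in (b) the paper bounds the cross term directly by $\|\log\bar\rho_t^{(N)}-\log\rho_t\|_{L^\infty(\nu)}$ without first collapsing to the one-particle marginal (same bound either way).
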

\begin{proof}
(a) The main ingredients of this proof is the following two inequalities: (1) monotonicity of the renormalized relative entropy given $N\ge 1$, two probability densities $\bs\rho^N,\bs\si^N\in L^1(\nu^{\otimes N})$, and $1\le k\le N$, it holds
\begin{align*}
    \mcl{H}_k(\bs\rho^{(N|k)}\|\bs\si^{(N|k)})\le \mcl{H}_N(\bs\rho^N\|\bs\si^N);
\end{align*}
(2) Pinsker inequality: given two probability densities $\rho,\si\in L^1(\nu)$ it holds
\begin{align*}
    \|\rho-\si\|_{L^1(\nu)}\le \mcl{H}(\rho\|\si). 
\end{align*}

Fix $k\ge 1$, $N\ge k$, and let $\{\bsrho_t^{(N)}\}_{t\ge 0}$ the law of $N$-particle system, $\{\bar\rho_t^{(N)}\}_{t\ge 0}$ be the solution of \eqref{eq:Nmf-N} and $\{\rho_t\}_{t\ge 0}$ be the solution of \eqref{eq:mf}.
Denote $\bar{\bs\rho}_t^{(N|k)}= (\bar\rho_t^{(N)})^{\otimes k}$.
Then
\begin{align*}
\|\bsrho^{(N|k)}_t-\rho_t^{\otimes k}\|_{L^1(\bs\nu_k)}\le\|\bsrho^{(N|k)}_t-\Bbr_t^{(N|k)}\|_{L^1(\bs\nu_k)}+\|\Bbr_t^{(N|k)}-\rho_t^{\otimes k}\|_{L^1(\bs\nu_k)}
\end{align*}
To bound the first term, combining the two inequalities from the previous paragraph, we have
\begin{align*}
\|\bsrho^{(N|k)}_t-\Bbr_t^{(N|k)}\|_{L^1(\bs\nu_k)}\le\sqrt{2k\mcl{H}_k(\bsrho^{(N|k)}_t\|\Bbr_t^{(N|k)})} \le \sqrt{2k\mcl{H}_N(\bsrho_t^{(N)}\|\Bbr_t^{(N)})} . 
\end{align*}
By Theorem \ref{main:ec-Nmf}, we obtain an explicit bound for the relative entropy $\mcl{H}_N$, which gives
\begin{align*}
    \sup_{t\in[0,T]}\|\bsrho^{(N|k)}_t-\Bbr_t^{(N|k)}\|_{L^1(\bs\nu^k)}\le 
    \f{C_T\sqrt{k}}{\sqrt N}\to 0\quad \mbox{ as }N\to\infty. 
\end{align*}
Also the convergence \eqref{cond-L1-conv} and the tensorization property of $L^1$-norm give
\begin{align*}
    \lim_{N\to\infty}\sup_{t\in[0,T]}\|\Bbr_t^{(N|k)}-\rho_t^{\otimes k}\|_{L^1(\bs\nu_k)}\le k\lim_{N\to\infty} \sup_{t\in[0,T]}\|\br_t^{(N)}-\rho_t^{\otimes k}\|_{L^1(\nu)}=0.
\end{align*}
Combining both we conclude
\begin{align*}
   \lim_{N\to\infty}\sup_{t\in[0,T]} \|\bsrho^{(N|k)}_t-\rho_t^{\otimes k}\|_{L^1(\bs\nu_k)}&=0.
\end{align*}
Therefore, $\{\bsrho_t^{(N)}\}_N$ is $L^1$ $\rho_t$-chaotic.

\smallskip
(b) Assume the setting from (a). Denote $\bar{\bs\rho}_t^{(N)}=(\bar\rho_t)^{\otimes N}$. For each $t\ge 0$, we have 
\begin{align*}
    \mcl{H}_N(\bsrho_t^{(N)} \,\|\, \rho_t^{\otimes N}) 
    &= \frac{1}{N}\int_{\Pi^N} 
    \bsrho_t^{(N)}\,\left[
    \log\!\left(\frac{\bsrho_t^{(N)}}{\Bbr_t^{(N)}}\right)
    +\log\!\left(\frac{\Bbr_t^{(N)}}{\rho_t^{\otimes N}}\right)
    \right]d\boldsymbol{\nu}_N\\
    &\le\mcl{H}_N(\bsrho_t^{(N)} \,\|\, \Bbr_t^{(N)}) 
    +\frac{1}{N} \int_{\Pi^N} \bsrho_t^{(N)} \left\|\log(\Bbr_t^{(N)})-\log(\rho_t^{\otimes N})\right\|_{L^\infty(\boldsymbol{\nu}_N)}d\boldsymbol{\nu}_N\\
    &\le\mcl{H}_N(\bsrho_t^{(N)} \,\|\, \Bbr_t^{(N)})
    +\left\|\log(\br_t^{(N)})-\log(\rho_t)\right\|_{L^\infty(\nu)}. 
\end{align*} 
Then by Theorem \ref{main:ec-Nmf} and the assumption on $\log\br_t^{(N)}\to\log\rho_t$ in $L^\infty(\nu)$, we conclude that the law $\bsrho_t^{(N)}$ is entropic $\rho_t$-chaotic.
\end{proof}

Let us now discuss conditions ensuring the convergence of the family of laws $\{\{\bar\rho_t^{(N)}\}_t\}_N$ to the limit curve $\{\rho_t\}_{t\ge 0}$ described by \eqref{eq:mf} as $N\to\infty$. 
Heuristically, for each fixed $N\ge 2$, the curve $\{\bar\rho_t^{(N)}\}_{t\ge 0}$ represents the averaged solution of the mean-field evolution equation \eqref{eq:mf}, where the mean-field interaction is approximated by the i.i.d. empirical measure
\[
\mu_t^{(N)} = \frac{1}{N-1}\sum_{k=1}^{N-1}\delta_{X_t^k},
\]
with $\{X_t^k\}_{k=1}^{N-1}$ being independent random variables of common law $X_t^k\sim\bar\rho_t^{(N)}$. 
By the (uniform) law of large numbers, the empirical measure $\mu_t^{(N)}$ concentrates, in an appropriate sense, to its expected law $\bar\rho_t^{(N)}$ as $N$ increases. 
Therefore, one expects $\bar\rho_t^{(N)}\to\rho_t$ provided that the mean-field generator $\mu\mapsto\mA^*(\mu)$, or equivalently the mean-field jump kernel $\mu\mapsto\La(\mu)$, depends continuously on $\mu$ in a topology compatible with this empirical convergence. 
In particular, any such limit $\{\rho_t\}_{t\ge0}$ must then solve the mean-field equation \eqref{eq:mf}.

Let us now impose a continuity assumption on the mean-field kernel 
$\mu \mapsto \Lambda(\mu)$, which will be sufficient to guarantee the 
convergence of the laws.

\begin{enumerate}[label=(A\arabic*)]
	\setcounter{enumi}{4}
	\item \label{A5} (\emph{Continuity of the mean-field kernel}).
	There exists a nonnegative function 
	$\Xi:\Pi\times \mcp(\Pi)^2 \to [0,\infty)$ such that:
	\begin{itemize}
		\item For each $N \ge 1$ and $\rho \in L^1(\nu)$, define
		\begin{align*}
			\varepsilon_N(\rho)
			:= \int_{\Pi^N} 
			\Xi\bigl(x_1,\mu(\bsx_{-1}), \rho\bigr)\,
			d\bsrho_N(\bsx),
			\quad 
			d\bsrho_N(\bsx) := \rho^{\otimes N}(\bsx)d\nu^{\otimes N}(\bsx),
		\end{align*}
		where $\mu(\bsx_{-1}) = \frac{1}{N}\sum_{k=2}^N \delta_{x_k}$.
		Then, for all sufficiently large $R > 0$,
		\begin{align*}
			\lim_{N\to\infty} 
			\sup_{\rho:\|\rho\|_{L^\infty(\nu)}\le R}
			\varepsilon_N(\rho)
			= 0.
		\end{align*}
		
		\item For each $x\in \Pi$, the map $\mu\to \Lambda(x,\cdot;\mu)\in \mcl{M}(\Pi)$ is Lipschitz continuous with respect 
		to $\Xi$ in the sense that, for all 
		$\mu,\rho \in \mcp(\Pi)$, 
		\begin{align*}
			\|\Lambda(x,\cdot;\mu) - \Lambda(x,\cdot;\rho)\|_{\TV}
			\le \Xi(x,\mu, \rho).
		\end{align*}
	\end{itemize}
\end{enumerate}

Intuitively, the functional $\Xi$ serves as a (semi)metric on the space of probability measures, quantifying the continuity of the mean-field kernel $\Lambda$. 
Meanwhile, $\varepsilon_N(\rho)$ measures the expected deviation between the empirical measure $\mu(\bsx_N)$ and its law under i.i.d.\ sampling with density $\rho$. 
The limit condition then ensures that this expected deviation vanishes uniformly over all densities with a uniform $L^\infty$-bound as $N\to\infty$.

We are now in a position to state the convergence result for the laws as $N\to\infty$ under this setting.

\begin{proposition}
\label{prop:amf to mf}
	Let $\{\La(\mu)\}_{\mu\in\mcp(\Pi)}$ be a mean-field jump kernel satisfying \ref{A1p}, \ref{A2p}, \ref{A5}, $N\ge 2$, and denote $\bar \La_N$ as in \eqref{def:restricted-mfj}, and let $\{\bar\rho_t^{(N)}\}_{t\ge 0}$ be the solution of \eqref{eq:Nmf-N} with a fixed initial condition $\bar\rho_0^{(N)}=\rho_0$. 
	Let $\{\rho_t\}_{t\ge 0}$ be the solution of \eqref{eq:mf} with the same initial condition. Let $K=2(M_\La+C)$, for all $t>0$ it holds
	\begin{align*}
		\|\rho_t - \bar \rho^{(N)}_t\|_{L^1(\nu)}\le 2t\left(\frac{e^{Kt}-1}{K}\right)\sup_{s\in[0,t]}\varepsilon_{N}(\bar\rho_s^{(N)})
	\end{align*}
	Specifically, the convergence \eqref{cond-L1-conv} holds. 
	
	If additionally $\{\La(\mu)\}_\mu$ satisfies \ref{A3}, \ref{A4}, and $\mK^*$ is an adjonit Markov generator satisfying \ref{K-cond}, then the $N$-particle system associated with the generator $\mcl{L}^*(\mu)=\mK^*+\mA^*(\mu)$ exhibits the $L^1$-propagation of chaos property as $N\to\infty$.
\end{proposition}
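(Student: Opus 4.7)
\emph{Proof proposal.} Both $\{\rho_t\}$ and $\{\bar\rho_t^{(N)}\}$ are mild solutions with the same initial datum $\rho_0$ and share the linear semigroup $e^{t\mK^*}$; Duhamel's formula and subtraction give
\begin{equation*}
\rho_t - \bar\rho_t^{(N)} \;=\; \int_0^t e^{(t-s)\mK^*}\!\Bigl[\mA^*(\rho_s;\rho_s) - \bar\mA_N^*(\bar\rho_s^{(N)};\bar\rho_s^{(N)})\Bigr]\,ds.
\end{equation*}
Since $\{e^{t\mK^*}\}$ is an adjoint Markov semigroup, it contracts the $L^1(\nu)$-norm (split into positive and negative parts and combine positivity with mass conservation). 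Taking $L^1$-norms reduces the problem to estimating the integrand pointwise in $s$.

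The plan is to decompose the integrand, with the shorthand $\rho=\rho_s$ and $\bar\rho=\bar\rho_s^{(N)}$, into three pieces that isolate the sources of discrepancy:
\begin{align*}
\mA^*(\rho;\rho) - \bar\mA_N^*(\bar\rho;\bar\rho)
&= \bigl[\mA^*(\rho;\rho)-\mA^*(\rho;\bar\rho)\bigr] + \mA^*(\rho-\bar\rho;\bar\rho) \\
&\quad + \bigl[\mA^*(\bar\rho;\bar\rho) - \bar\mA_N^*(\bar\rho;\bar\rho)\bigr].
\end{align*}
The first piece is bounded by $2C\|\rho-\bar\rho\|_{L^1(\nu)}$ via Proposition \ref{prop:adj-op-bound} and the Lipschitz hypothesis on $\mu\mapsto\La(\mu)$ from \ref{A1} (using $\|\rho\|_{L^1}=1$). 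The second is bounded by $2M_\La\|\rho-\bar\rho\|_{L^1(\nu)}$ via Proposition \ref{prop:A*-bound}. The third — the averaging discrepancy, which is the heart of the argument — is handled by unfolding the definition of $\bar\mA_N^*$,
\begin{equation*}
\mA^*(\bar\rho;\bar\rho) - \bar\mA_N^*(\bar\rho;\bar\rho) = \int_{\Pi^{N-1}}\!\bigl[\mA^*(\bar\rho;\bar\rho) - \mA^*(\bar\rho;\mu(\bsy))\bigr]\,d\bar\rho^{\otimes(N-1)}(\bsy),
\end{equation*}
applying Proposition \ref{prop:adj-op-bound} inside the integrand, and invoking the $\Xi$-Lipschitz bound of \ref{A5} to reach the $L^1(\nu)$-bound $2\varepsilon_N(\bar\rho)$. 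Summing the three contributions and invoking Gr\"onwall with $K=2(M_\La+C)$ produces the stated quantitative estimate.

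For the qualitative convergence \eqref{cond-L1-conv}, it remains to verify that $\sup_{s\in[0,T]}\varepsilon_N(\bar\rho_s^{(N)})\to 0$ as $N\to\infty$. By \ref{A5} this convergence holds uniformly over any $L^\infty(\nu)$-bounded family of densities, so the main obstacle — and the one delicate point of the argument — is securing an $N$-uniform upper bound on $\|\bar\rho_s^{(N)}\|_{L^\infty(\nu)}$ over $s\in[0,T]$. This I would extract from Theorem \ref{thm:wp-Nmf}(iii): the estimate $\De(\log\bar\rho_s^{(N)}) \le \De(\log\bar\rho_0) + 2Ms$, where $M\le \|\mK^*1\|_{L^\infty}+M_\La+M_\La^*$ is uniform in $N$ under \ref{A1p}--\ref{A2p}, combined with mass conservation and the finiteness of $\nu(\Pi)$, converts the oscillation control into an $N$- and $s$-uniform $L^\infty$-bound on $\bar\rho_s^{(N)}$. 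Inserting this into \ref{A5} and the Gr\"onwall estimate yields \eqref{cond-L1-conv}; the $L^1$-propagation of chaos statement then follows by feeding \eqref{cond-L1-conv} into Corollary \ref{cor:average vs mf}, whose remaining hypotheses \ref{A3}, \ref{A4}, \ref{K-cond} are exactly those added in the second half of the proposition.
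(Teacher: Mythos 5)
Your proposal follows the paper's proof essentially step for step: the same Duhamel plus $L^1$-contraction reduction, a telescoping decomposition of $\mA^*(\rho;\rho)-\bar\mA_N^*(\bar\rho;\bar\rho)$ into three pieces that are bounded, in order, by $2C\|\rho-\bar\rho\|_{L^1}$ (Lipschitz in $\mu$ via Proposition \ref{prop:adj-op-bound}), $2M_\La\|\rho-\bar\rho\|_{L^1}$ (operator bound via Proposition \ref{prop:A*-bound}), and $2\varepsilon_N(\bar\rho)$ (the averaging error via Proposition \ref{prop:adj-op-bound} and \ref{A5}), then Gr\"onwall with $K=2(M_\La+C)$, the $N$-uniform $L^\infty$-bound from Theorem \ref{thm:wp-Nmf}(iii), and Corollary \ref{cor:average vs mf}. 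The only (inessential) difference is that you telescope first in the mean-field argument and then in the density argument, whereas the paper does the reverse — the constants are identical — and you spell out the step from the oscillation bound $\De(\log\bar\rho_s^{(N)})$ to the $L^\infty$-bound (using mass conservation and $\nu(\Pi)<\infty$), which the paper leaves implicit.
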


\section{Well-posedness of Mean-Field Evolution Equations}\label{sec:wp}

In this section, we establish the well-posedness of the mean-field evolution equation \eqref{eq:mf} and the averaged version \eqref{eq:Nmf} in the settings of mild solutions. 
We follow the classical approach in the theory of evolution equations.
First, we establish the well-posedness of the linear evolution problem
\begin{align}\label{eq:lmf}
	\begin{cases}
		\partial_t \rho_t = \mcK^*\rho_t+\mA^*_t(\rho_t) , & t \in (0,T), \\[0.5ex]
		\rho_0 \in \mcp(\Pi)\cap L^1(\nu),
	\end{cases}
\end{align}
where $\{\mA^*_t\}_{t\in[0,T]}$ is a (continuous) curve of bounded adjoint Markov generators acting on $L^1(\nu)$.
As a consequence, for a given bounded adjoint mean-field generator $\{\mA^*(\mu)\}_\mu$ and a prescribed curve of mean-fields $\{\si_t\}_t\in C([0,T];\mcp(\Pi)\cap L^1(\nu))$, the \emph{prescribed mean-field equation}
\begin{align}\label{eq:pmf}
	\begin{cases}
		\partial_t \rho_t = \mcK^*\rho_t+\mA^*(\rho_t;\si_t) , & t \in (0,T), \\[0.5ex]
		\rho_0 \in \mcp(\Pi)\cap L^1(\nu),
	\end{cases}
\end{align}
admits a unique solution. Finally, the well-posedness of the nonlinear mean-field equation \eqref{eq:mf} follows from a standard application of the contraction mapping theorem.

\subsection{Well-posedness of linear Fokker-Planck equations}
 
Let us begin with the work of well-posedness of the linear problem. We begin with a simple comparison principle between classical super- and subsolutions to the Fokker-Planck equation.
\begin{lemma}[Comparison principle]\label{lem:comparison}
    For $t\in[0,T]$ $\mK^*,\mA_t^*$ be adjoint Markov generators. 
    Suppose $\{\rho_t\}_{t},\{\si_t\}_{t}\in C^1([0,T];L^1(\nu))$ satisfies $\rho_0\ge \si_0$ and the following differential inequalities in the classical sense:
    \begin{align*}
        \partial_t \rho_t &\ge \mK^*\rho_t + \mA_t^*(\rho_t),\qquad \partial_t \si_t \le \mK^*\si_t + \mA_t^*(\si_t),\qquad t\in(0,T). 
    \end{align*}
    Then it holds $\rho_t\ge\si_t$ for all $t\in[0,T]$.
\end{lemma}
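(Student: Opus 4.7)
The plan is to reduce the comparison to a one-sided estimate on the difference $v_t := \sigma_t - \rho_t \in C^1([0,T]; L^1(\nu))$. By subtracting the two differential inequalities and using linearity of $\mK^*$ and $\mA_t^*$, one finds
\begin{align*}
\partial_t v_t \le \mcl{L}_t^* v_t, \qquad \mcl{L}_t^* := \mK^* + \mA_t^*,
\end{align*}
with $v_0 \le 0$. Since both $\mK^*$ and $\mA_t^*$ are adjoint Markov generators, and since both the mass-conservation identity and the positive-maximum-principle inequality of Proposition \ref{prop:schilling} are preserved under summation, $\mcl{L}_t^*$ is itself an adjoint Markov generator. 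The goal then becomes to show that $v_t \le 0$ $\nu$-a.e.\ for every $t \in [0,T]$; equivalently, that the functional
\begin{align*}
\phi(t) := \int_\Pi v_t^+ \, d\nu
\end{align*}
vanishes identically on $[0,T]$.

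The main step is to prove that $\phi$ is non-increasing. Formally, differentiating under the integral sign and using $\partial_t v_t \le \mcl{L}_t^* v_t$ together with the a.e.\ identity $\partial_s (s^+) = \mbo_{\{s>0\}}$ gives
\begin{align*}
\frac{d}{dt}\phi(t) \le \int_{\{v_t > 0\}} \mcl{L}_t^* v_t \, d\nu \le 0,
\end{align*}
where the last inequality is precisely the positive maximum principle of Proposition \ref{prop:schilling} applied to the adjoint Markov generator $\mcl{L}_t^*$ and to $\rho = v_t$. Combined with $\phi(0) = 0$, this monotonicity forces $\phi \equiv 0$ on $[0,T]$, which is the desired conclusion $\rho_t \ge \sigma_t$.

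The main technical obstacle is making rigorous the chain rule for the non-smooth map $s \mapsto s^+$ applied to a curve that is only $C^1$ in $L^1(\nu)$. I would handle this by approximating $(\cdot)^+$ with a family of convex $C^1$ functions $\eta_\varepsilon:\mbr\to[0,\infty)$ satisfying $\eta_\varepsilon \nearrow (\cdot)^+$, $\eta_\varepsilon'\in[0,1]$, and $\eta_\varepsilon' \to \mbo_{(0,\infty)}$ pointwise as $\varepsilon \searrow 0$. For each fixed $\varepsilon$, the functional $v \mapsto \int_\Pi \eta_\varepsilon(v)\, d\nu$ is Fréchet differentiable on $L^1(\nu)$ (with derivative $\eta_\varepsilon'(v)$ paired via the $L^1$--$L^\infty$ duality), so the classical chain rule yields the analogue of the above estimate with $\eta_\varepsilon$ in place of $(\cdot)^+$. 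Integrating in time and passing $\varepsilon \searrow 0$ by dominated convergence---justified by $\eta_\varepsilon(v_t) \le |v_t| \in L^1(\nu)$ and the $L^1(\nu)$-bound on $\mcl{L}_t^* v_t$ that follows from $v_t \in D(\mK^*)$ and the boundedness of $\mA_t^*$---produces the desired monotonicity of $\phi$ and completes the proof.
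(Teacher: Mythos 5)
Your argument follows exactly the same route as the paper's own proof: subtract the two inequalities, study the $L^1$-integral of the positive part of the difference (your $v_t^+ = (\sigma_t - \rho_t)^+$ is the same object the paper calls $z_t^- = (\rho_t - \sigma_t)^-$), invoke the positive-maximum-principle part of Proposition~\ref{prop:schilling}, and conclude by monotonicity of the resulting integral. The one substantive difference is that you make explicit a convex-regularization argument for the chain rule applied to the non-smooth map $s\mapsto s^+$, which the paper applies only formally; this is an improvement in rigor. However, the justification you offer --- Fr\'echet differentiability of $v\mapsto\int_\Pi \eta_\varepsilon(v)\,d\nu$ on $L^1(\nu)$ --- is incorrect. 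Even for $C^1$ convex $\eta_\varepsilon$ with bounded Lipschitz derivative, the first-order remainder is only of order $\|h\|_{L^2}^2$, which need not be $o(\|h\|_{L^1})$; concretely, with $\eta_\varepsilon'(v_0)\equiv 0$ and $h=M\,\mbo_E$ where $\nu(E)=M^{-2}$, the ratio of the remainder to $\|h\|_{L^1}$ stays bounded away from zero as $M\to\infty$. What you actually need (and what does hold) is the weaker statement that $t\mapsto\eta_\varepsilon(v_t)$ is differentiable in $L^1(\nu)$ with $\partial_t\eta_\varepsilon(v_t)=\eta_\varepsilon'(v_t)\,\partial_t v_t$ for any $C^1$-in-$L^1$ curve $\{v_t\}$ and any $\eta_\varepsilon$ with bounded continuous derivative; this is precisely Lemma~\ref{lem:derivative}(ii) of the paper, which is proved directly via difference quotients and Vitali convergence and does not pass through Fr\'echet differentiability. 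With that substitution for the Fr\'echet claim, your regularization argument closes cleanly and yields a rigorous version of the paper's proof.
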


\begin{proof}
    Let $z_t = \rho_t-\si_t$. Then $\{z_t\}_t\in C^1([0,T];L^1(\nu))$ satisfies the differential inequality 
    \begin{align*}
        \partial_t z_t \ge \mK^* z_t +\mA_t^*(z_t),\qquad z_0\ge 0. 
    \end{align*}
    Consider the integral of $z_t^-:=\max\{-z_t,0\}$. Taking the derivative of the integral and using the chain rule, we obtain
    \begin{align*}
     \drv{}{t}\int_\Pi z_t^- d\nu&=\int_\Pi\partial_tz_t^-d\nu
    = \int_{\Pi} \chi_{\{-z_t>0\}} \partial_t(- z_t) d\nu\\ 
     &\leq\int_\Pi\chi_{\{-z_t>0\}}\left[\mcK^*(-z_t)+\mA_t^*(-z_t)\right] d\nu\\
&=\int_{\{-z_t>0\}}\mcK^*(-z_t)d\nu+\int_{\{-z_t>0\}}\mA_t^*(-z_t)d\nu \le 0.
    \end{align*}
The last step is due to Proposition \ref{prop:schilling} (with $\rho= -z_t$), and that $\mK^*,\mA_t^*$ are adjoint Markov generators.
Therefore, the quantity obtained by integrating the negative part of $z_t$ over $\Pi$ with respect to $\nu$ has a non-positive time derivative. Since $z_0 \ge 0$, its negative part vanishes and this integral is zero at time $t = 0$. It must therefore remain zero for all $t \in [0,T]$, which forces $z_t^- = 0$ for all $t \in [0,T]$. In particular, we conclude that $\rho_t \ge \sigma_t$ for all $t \in [0,T]$.
\end{proof}

Next, we present the following well-posedness result in the abstract linear problem with time-independent operators on $L^1(\nu)$. 
It follows as a direct consequence of \cite[Chapter 6, Theorems 1.2, 1.7]{Pazy1983}.

\begin{proposition}\label{lem:linear_wp}
   Let $\{\mA_t^*\}_{t\in[0,T]}$ be a curve of bounded operators on $L^1(\nu)\to L^1(\nu)$, with 
   \begin{align*}
     \sup_{t\in[0,T]}  \|\mA_t^*\|_{L^1(\nu)\to L^1(\nu)}<\infty.
   \end{align*}
   Assume also $t\mapsto \mA_t^*$ is continuous with the operator norm. 
   Let $\mK^*$ be the infinitesimal generator of a $C_0$-semigroup on $L^1(\nu)$. Then for every $\rho_0\in L^1(\nu)$ the initial value problem 
\begin{align*}
	\begin{cases}
		\partial_t \rho_t = \mcK^*\rho_t+\mA^*_t(\rho_t) , & t \in (0,T), \\[0.5ex]
		\rho_0 \in L^1(\nu),
	\end{cases}
\end{align*} has a unique mild solution
$\rho\in C([0,T];L^1(\nu))$, namely 
\begin{align*}
    \rho_t 
	= e^{t\mcl{K}^*}\rho_0 
	+ \int_0^t e^{(t-s)\mcl{K}^*} \big[ \mA_s^*(\rho_s) \big]\, ds .
\end{align*}
and the mapping $\rho_0\mapsto \{\rho_t\}_t$ is Lipschitz
continuous from $L^1(\nu)$ into $C([0,T];L^1(\nu))$.
Moreover, if we further assume $\rho_0\in\mathcal{D}(\mK^*)$, $\{\rho_t\}_t$ belongs to $C^1([0,T);L^1(\nu))$, and it is a classical solution. 
\end{proposition}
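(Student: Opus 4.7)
The plan is to treat $\{\mA_t^*\}_t$ as a time-dependent bounded perturbation of the $C_0$-semigroup generator $\mK^*$, so that the result fits directly into the framework of bounded perturbations developed in Chapter~6 of Pazy. Concretely, I would rewrite the Cauchy problem in its Duhamel form and look for $\rho\in X := C([0,T]; L^1(\nu))$ solving the fixed-point equation
$$\rho_t \;=\; \Phi(\rho)_t \;:=\; e^{t\mK^*}\rho_0 \;+\; \int_0^t e^{(t-s)\mK^*}\,\mA_s^*(\rho_s)\,ds,\qquad t\in[0,T].$$

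For mild well-posedness I would carry out the standard contraction argument on $X$. Using the growth bound $\|e^{t\mK^*}\|_{L^1\to L^1}\le C_0 e^{\omega T}$ coming from the $C_0$-property of $\{e^{t\mK^*}\}_t$, together with $M := \sup_{t\in[0,T]}\|\mA_t^*\|_{L^1(\nu)\to L^1(\nu)}<\infty$, a direct iteration of $\Phi$ yields
$$\|\Phi^n(\rho)-\Phi^n(\tilde\rho)\|_X \;\le\; \frac{(C_0 e^{\omega T}MT)^n}{n!}\,\|\rho-\tilde\rho\|_X,$$
so some $\Phi^n$ is a strict contraction and the Banach fixed-point theorem produces a unique mild solution. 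Continuity of $t\mapsto \Phi(\rho)_t$ in $L^1(\nu)$ follows from the strong continuity of $e^{t\mK^*}$ combined with the operator-norm continuity of $t\mapsto \mA_t^*$. The Lipschitz continuity of the data-to-solution map $\rho_0\mapsto \{\rho_t\}_t$ is then obtained by a direct Gr\"onwall estimate on $\|\rho_t-\tilde\rho_t\|_{L^1(\nu)}$ applied to two mild solutions with different initial data.

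For the classical regularity statement, I would exploit the fact that a bounded perturbation of a $C_0$-generator retains the same domain. Fixing $\rho_0\in D(\mK^*)$, the non-autonomous evolution system $\{U(t,s)\}_{0\le s\le t\le T}$ associated to $\mK^*+\mA_t^*$, guaranteed to exist by Pazy Chapter~6 Theorem~1.2, preserves $D(\mK^*)$, and Pazy Chapter~6 Theorem~1.7 then yields that $t\mapsto U(t,0)\rho_0$ is $C^1$ from $[0,T]$ into $L^1(\nu)$ and satisfies the equation pointwise. By uniqueness of the mild solution constructed above, this classical solution coincides with the fixed point $\{\rho_t\}_t$ of $\Phi$.

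There is no genuinely hard step here; the work is purely non-autonomous bookkeeping. The only hypothesis that must be carefully matched against Pazy's framework is the operator-norm continuity of $t\mapsto \mA_t^*$, which is exactly what is assumed in the statement and which ensures that both the evolution system and its classical solutions are well defined. Everything else---existence of the mild solution, Lipschitz dependence on initial data, and promotion to a classical solution on $D(\mK^*)$---then follows as a direct application of the cited theorems.
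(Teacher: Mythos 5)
Your proposal is correct and takes essentially the same route as the paper. The paper's proof is a one-line citation to Pazy, Chapter~6, Theorems~1.2 and~1.7, which is precisely the bounded-perturbation/Duhamel-iteration argument for the non-autonomous Cauchy problem that you reconstruct in detail, including the same two Pazy theorems for the evolution system and for classical regularity on $D(\mK^*)$.
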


If additionally $\mK^*,\mA_t^*$'s are adjoint Markov generators, then the evolution equation preserves probability densities. 
\begin{proposition}\label{lem:density-preserve}
    Under the assumptions of Proposition~\ref{lem:linear_wp}, suppose in addition that $\mK^*$ and $\{\mA_t^*\}_{t\in[0,T]}$ are adjoint Markov generators, and that $\rho_0 \in \mcp(\Pi) \cap L^1(\nu)$. Then the corresponding mild solution satisfies $\rho_t \in \mcp(\Pi) \cap L^1(\nu)$ for all $t \in [0,T]$. 
\end{proposition}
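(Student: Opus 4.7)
The plan is to reduce to classical solutions, where both properties can be extracted from the adjoint Markov structure (Proposition \ref{prop:schilling}) and the comparison principle (Lemma \ref{lem:comparison}), and then pass to mild solutions via continuous dependence on initial data.

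First I would treat the classical case: assume $\rho_0 \in D(\mK^*) \cap \mcp(\Pi) \cap L^1(\nu)$, so that the mild solution $\{\rho_t\}$ from Proposition \ref{lem:linear_wp} is classical. Mass conservation then follows by differentiation: since $\rho_t \in D(\mK^*)$ and $\mA_t^*\rho_t \in L^1(\nu)$, one has
\begin{align*}
    \drv{}{t}\int_\Pi \rho_t\,d\nu = \int_\Pi \bigl(\mK^*\rho_t + \mA_t^*\rho_t\bigr)\,d\nu = 0,
\end{align*}
where both integrands contribute zero integral by Proposition \ref{prop:schilling} applied to the adjoint Markov generators $\mK^*$ and $\mA_t^*$ (recall $\mA_t^*$ is bounded, so its domain is all of $L^1(\nu)$). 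For positivity I would invoke Lemma \ref{lem:comparison} with the trivial subsolution $\sigma_t \equiv 0$: since $\rho_0 \ge 0 = \sigma_0$, the lemma yields $\rho_t \ge 0$ on $[0,T]$.

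To handle a general initial datum $\rho_0 \in \mcp(\Pi) \cap L^1(\nu)$ I would regularize by setting
\begin{align*}
    \rho_0^\varepsilon := \frac{1}{\varepsilon}\int_0^\varepsilon e^{s\mK^*}\rho_0\,ds, \qquad \varepsilon > 0.
\end{align*}
By standard $C_0$-semigroup theory, $\rho_0^\varepsilon \in D(\mK^*)$ and $\rho_0^\varepsilon \to \rho_0$ in $L^1(\nu)$ as $\varepsilon\to 0$. Crucially, the curve $s\mapsto e^{s\mK^*}\rho_0$ remains in the convex set $\mcp(\Pi) \cap L^1(\nu)$ (because $\{e^{t\mK^*}\}_{t\ge 0}$ is an adjoint Markov semigroup), and averaging over $s\in[0,\varepsilon]$ preserves both nonnegativity and unit total mass. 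Hence $\rho_0^\varepsilon \in D(\mK^*) \cap \mcp(\Pi) \cap L^1(\nu)$, and the previous step applies to the corresponding classical solution $\{\rho_t^\varepsilon\}$, yielding $\rho_t^\varepsilon \ge 0$ and $\int\rho_t^\varepsilon\,d\nu = 1$ throughout $[0,T]$.

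Finally I would pass to the limit using the Lipschitz continuity of the solution map from $L^1(\nu)$ into $C([0,T];L^1(\nu))$ asserted by Proposition \ref{lem:linear_wp}: as $\varepsilon\to 0$, $\rho_t^\varepsilon \to \rho_t$ in $L^1(\nu)$ uniformly in $t\in[0,T]$. Both nonnegativity (after extracting a $\nu$-a.e.\ convergent subsequence) and unit total mass (by $L^1$-continuity of integration) pass to the limit, giving $\rho_t \in \mcp(\Pi) \cap L^1(\nu)$ for all $t \in [0,T]$. The main, albeit minor, obstacle is the regularization step itself, namely ensuring that $\rho_0^\varepsilon$ lies simultaneously in $D(\mK^*)$ and in the set of probability densities. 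This hinges on the compatibility between the averaging operation (which produces elements of $D(\mK^*)$) and the convex-cone structure of probability densities (which is preserved by the adjoint Markov semigroup $\{e^{t\mK^*}\}$).
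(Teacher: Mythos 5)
Your proof is correct and follows essentially the same two-step strategy as the paper: establish mass conservation and positivity for classical solutions starting from $D(\mK^*)$ (via Proposition~\ref{prop:schilling} and the comparison principle), then extend to general $\rho_0$ by density and continuity of the solution map. The only difference is that you make the approximation explicit via the Cesàro average $\rho_0^\varepsilon = \frac{1}{\varepsilon}\int_0^\varepsilon e^{s\mK^*}\rho_0\,ds$, whereas the paper simply cites density of $\mcp(\Pi)\cap D(\mK^*)$ in $\mcp(\Pi)\cap L^1(\nu)$; your construction is a clean concrete realization of that claim and is consistent with the paper's own use of resolvent-type regularizations elsewhere (Lemma~\ref{lem:log-approx}).
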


\begin{proof}
    First, assume $\rho_0 \in \mathcal{D}(\mK^*)$. Then by Proposition \ref{lem:linear_wp} $\{\rho_t\}_t$ is a classical solution of \eqref{eq:lmf}. 
    Using the mass conservation property of the generators $\mK^*$ and $\mA_t^*$, we obtain
    \begin{align*}
        \partial_t \int_\Pi \rho_t \, d\nu
        &= \int_\Pi \partial_t \rho_t \, d\nu
        = \int_\Pi \mK^* \rho_t \, d\nu + \int_\Pi \mA_t^*(\rho_t) \, d\nu = 0.
    \end{align*}
    Hence, $\int_\Pi \rho_t \, d\nu = \int_\Pi \rho_0 \, d\nu = 1$ for all $t \in [0,T]$.

    Next, since $\rho_0 \ge 0$ and $\si_t \equiv 0$ is a (classical) solution of \eqref{eq:lmf}, the comparison principle (Lemma~\ref{lem:comparison}) implies that $\rho_t \ge \si_t = 0$ for all $t \in[0,T]$. Thus, $\rho_t$ is a probability density.

    Finally, to extend the result to a general initial condition $\rho_0 \in \mcp(\Pi) \cap L^1(\nu)$, note that $\mcp(\Pi) \cap \mathcal{D}(\mK^*)$ is dense in the closed set $\mcp(\Pi) \cap L^1(\nu)$, and that the solution operator (mapping $\rho_0$ to $\rho_t$ for each fixed $t \ge 0$) is continuous. The conclusion then follows by approximation.
\end{proof}

Before we proceed, let us establish a stability estimate of solutions to \eqref{eq:lmf} with respect to the initial conditions and generators $\{\mA_t\}_t$. 

\begin{proposition}\label{prop:cont_wp}
	Let $\mK^*$ be an adjoint Markov generator on $L^1(\nu)$, $\{\mA_t\}_t,\{\mB_t\}_t$ be two curves of bounded adjoint Markov generators. Let 
    \begin{align*}
    M_T&:= \sup _{t\in[0,T]}\|\mA_t^*\|_{L^1(\nu)\to L^1(\nu)}
     \qquad    C_T:= \sup _{t\in[0,T]} \| \mA^*_t-\mB^*_t\|_{L^1(\nu)\to L^1(\nu)}. 
    \end{align*}
    Let $\{\rho_t\}_t$ be a mild solution of \eqref{eq:lmf}, and $\{\si_t\}_t$ be that to the evolution equation 
    \begin{align*}
        \partial_t \si _t &= \mK^*\si_t + \mB_t^*\si_t,\qquad t\in(0,T).
    \end{align*}
    Then it holds for $t\in[0,T]:$
    \begin{align*}
        \|\rho_t-\si_t\|_{L^1(\nu)} &\leq  \|\rho_0-\si_0\|_{L^1(\nu)} + \int_0^t M_T\|\rho_s-\si_s\|_{L^1(\nu)} ds +\int_0^t \|\mA^*_s-\mB^*_s\|_{L^1(\nu)\to L^1(\nu)} ds.
    \end{align*}
    Specifically, the following estimate holds,
	\begin{align*}
		\|\rho_t-\si_t\|_{L^1(\nu)}\leq 
          \|\rho_0-\si_0\|_{L^1(\nu)}+\frac{C_T(e^{tM_T}-1)}{M_T}.
	\end{align*}
\end{proposition}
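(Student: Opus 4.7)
The plan is to subtract the mild formulations of the two Cauchy problems and isolate the two sources of discrepancy, then close via Gr\"onwall. Writing both solutions in mild form,
\begin{align*}
\rho_t &= e^{t\mK^*}\rho_0 + \int_0^t e^{(t-s)\mK^*}\mA_s^*\rho_s\,ds,\\
\sigma_t &= e^{t\mK^*}\sigma_0 + \int_0^t e^{(t-s)\mK^*}\mB_s^*\sigma_s\,ds,
\end{align*}
I would subtract and decompose the integrand by adding and subtracting $\mA_s^*\sigma_s$:
\[
\mA_s^*\rho_s - \mB_s^*\sigma_s \;=\; \mA_s^*(\rho_s-\sigma_s) \;+\; (\mA_s^*-\mB_s^*)\sigma_s.
\]

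Next I would use two $L^1$-level ingredients. First, since $\mK^*$ is an adjoint Markov generator, the semigroup $\{e^{t\mK^*}\}_{t\ge 0}$ is an $L^1(\nu)$-contraction: for any signed density, the Jordan decomposition together with positivity and mass conservation (Proposition~\ref{prop:schilling}) gives $\|e^{t\mK^*}f\|_{L^1(\nu)}\le \|f_+\|_{L^1(\nu)} + \|f_-\|_{L^1(\nu)} = \|f\|_{L^1(\nu)}$. Second, since $\sigma_0\in\mcp(\Pi)\cap L^1(\nu)$ and both $\mK^*$ and the $\mB_s^*$ are adjoint Markov generators, Proposition~\ref{lem:density-preserve} applies and yields $\|\sigma_s\|_{L^1(\nu)}=1$ for every $s\in[0,T]$. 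Applying the triangle inequality to the mild identity and invoking these two facts produces
\[
\|\rho_t-\sigma_t\|_{L^1(\nu)} \;\le\; \|\rho_0-\sigma_0\|_{L^1(\nu)} + \int_0^t \|\mA_s^*\|_{L^1\to L^1}\|\rho_s-\sigma_s\|_{L^1(\nu)}\,ds + \int_0^t \|\mA_s^*-\mB_s^*\|_{L^1\to L^1}\,ds,
\]
which, after replacing the operator-norm factors by their suprema $M_T$ and $C_T$, is exactly the first stated estimate.

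For the explicit closed-form bound, I would set $f(t):=\|\rho_t-\sigma_t\|_{L^1(\nu)}$, so that $f(t)\le f(0)+C_T t + M_T\int_0^t f(s)\,ds$, and apply Gr\"onwall's lemma (via the auxiliary $u(t):=f(0)+C_Tt+M_T\int_0^t f(s)\,ds$, which satisfies $u'(t)\le C_T + M_T u(t)$ and is integrated using the factor $e^{-M_T t}$). This yields the desired bound involving $(e^{tM_T}-1)/M_T$.

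No step is genuinely hard; the only point that requires care is the $L^1$-contractivity of the adjoint Markov semigroup, which is a standard consequence of Proposition~\ref{prop:schilling} together with the Jordan decomposition. Everything else is bookkeeping and a textbook Gr\"onwall application.
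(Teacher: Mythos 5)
Your proof is essentially identical to the paper's own argument: subtract the mild formulations, split $\mA_s^*\rho_s-\mB_s^*\sigma_s=\mA_s^*(\rho_s-\sigma_s)+(\mA_s^*-\mB_s^*)\sigma_s$, use the $L^1$-contractivity of $e^{t\mK^*}$ and $\|\sigma_s\|_{L^1(\nu)}=1$ from density preservation, then close with Gr\"onwall. The only divergence is cosmetic — you spell out the justification of $L^1$-contractivity via Jordan decomposition where the paper simply cites it. (As a side remark applying to both your write-up and the paper's: the Gr\"onwall step actually yields $\|\rho_0-\sigma_0\|_{L^1(\nu)}\,e^{tM_T}+C_T(e^{tM_T}-1)/M_T$ rather than the stated closed form without the exponential on the first term; this is harmless in the paper's applications, where $\rho_0=\sigma_0$.)
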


\begin{proof}
	Let $z_t = \rho_t-\si_t\in L^1(\nu)$. Then, by subtracting the two integral equations (satisfied by the two mild solutions) we have
\begin{align*}
    z_t&=e^{t\mK^*}z_0 +\int_0^te^{(t-s)\mcK^*}(\mA^*_s(\rho_s)-\mB^*_s(\si_s))ds\\
    &=e^{t\mK^*}z_0+\int_0^te^{(t-s)\mK^*}\mA^*_s(z_s) ds +\int_0^t e^{(t-s)\mK^*}(\mA^*_s-\mB^*_s)(\si_s) ds.
    \end{align*}
    Taking $L^1(\nu)$-norm and using the $L^1$-contraction of the semigroup $\{e^{t\mK^*}\}_{t\ge 0}$, $\|\si_s\|_{L^1(\nu)}=1$, and the bound for operators, it holds for all $t\in [0,T]$
    \begin{align*}
    \|z_t\|_{L^1(\nu)} &\leq  \|z_0\|_{L^1(\nu)} + \int_0^t M_T\|z_s\|_{L^1(\nu)} ds +\int_0^t \|\mA^*_s-\mB^*_s\|_{L^1(\nu)\to L^1(\nu)}\|\si_s\|_{L^1(\nu)} ds\\
    &\le \|z_0\|_{L^1(\nu)} + \int_0^t [M_T\|z_s\|_{L^1(\nu)}+C_T] ds.
    \end{align*}
    The desired inequality then follows by Gr\"onwall inequality. 
\end{proof}

Let us record a general bound on the growth of the logarithmic oscillation of solutions. 
Recall the definition of the oscillation norm from~\eqref{def:osc-norm}.

\begin{proposition}\label{lem:log-bdd diff}
	Let $\mK^*$ and $\{\mA^*_t\}_{t\in[0,T]}$ be as in Proposition~\ref{lem:density-preserve}. 
	Assume in addition that $1\in D(\mK^*)$ and that
	\[
	C := \sup_{0\le t\le T} 
	\|\mK^*(1) + \mA_t^*(1)\|_{L^\infty(\nu)} < \infty.
	\]
	If $\{\rho_t\}_{t\in[0,T]}$ is a mild solution to~\eqref{eq:lmf}, then for all $t\ge0$,
	\[
	\Delta(\log \rho_t) 
	\;\le\; 
	\Delta(\log \rho_0) + 2 C t.
	\]
\end{proposition}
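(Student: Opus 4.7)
The plan is to sandwich $\rho_t$ pointwise $\nu$-a.e.\ between two explicit spatially-constant curves and then read off the oscillation bound directly. Without loss of generality I may assume $\Delta(\log\rho_0)<\infty$, since otherwise the asserted inequality is vacuous. Under this hypothesis, $M:=\esssup_\nu\rho_0<\infty$, $m:=\essinf_\nu\rho_0>0$, and $\Delta(\log\rho_0)=\log(M/m)$. It therefore suffices to establish
\begin{align*}
	m\,e^{-Ct}\;\le\;\rho_t\;\le\;M\,e^{Ct}\qquad \text{$\nu$-a.e.\ on }\Pi,\quad t\in[0,T],
\end{align*}
since then $\Delta(\log\rho_t)\le\log(Me^{Ct})-\log(me^{-Ct})=\Delta(\log\rho_0)+2Ct$.

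The candidate barriers are the constants-in-space $\sigma_t:=Me^{Ct}$ and $\tau_t:=me^{-Ct}$, regarded as elements of $L^1(\nu)\cap L^\infty(\nu)$. Since $\nu$ is finite and $1\in D(\mK^*)$ by \ref{K-cond}, both curves lie in $D(\mK^*)$ and are of class $C^1$ in $t$ with values in $L^1(\nu)$. Using linearity of $\mK^*$ and $\mA_t^*$ in the argument,
\begin{align*}
	\partial_t\sigma_t-\mK^*\sigma_t-\mA_t^*\sigma_t\;=\;Me^{Ct}\bigl(C-\mK^*(1)-\mA_t^*(1)\bigr)\;\ge\;0\quad\text{a.e.,}
\end{align*}
where I use $\mK^*(1)+\mA_t^*(1)\le \|\mK^*(1)+\mA_t^*(1)\|_{L^\infty(\nu)}\le C$. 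The symmetric lower bound $\mK^*(1)+\mA_t^*(1)\ge -C$ makes $\tau_t$ a classical subsolution. Finally $\tau_0=m\le\rho_0\le M=\sigma_0$ a.e.

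The remaining step is a comparison between the classical super/subsolutions and the only-mild solution $\rho_t$, and this is the main technical obstacle: Lemma~\ref{lem:comparison} is stated for $C^1$ curves. My plan is to upgrade comparison to the mild setting as follows. Set $z_t:=\sigma_t-\rho_t$; subtracting the Duhamel identity for $\rho_t$ from the one valid for the classical $\sigma_t$ shows that $z_t$ is the unique mild solution of
\begin{align*}
	\partial_t z_t\;=\;\mK^* z_t+\mA_t^*(z_t)+h_t,\qquad z_0\ge 0,\quad h_t:=\partial_t\sigma_t-\mK^*\sigma_t-\mA_t^*\sigma_t\ge 0.
\end{align*}
For each fixed $t$, the operator $\mK^*+\mA_t^*$ inherits both properties in \eqref{eq:K-adj-markov} from its summands and is therefore an adjoint Markov generator; by Proposition~\ref{prop:schilling} it generates a positivity-preserving $C_0$-semigroup. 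Approximating the time-dependent problem by freezing $\mA_t^*$ on a fine partition of $[0,T]$ and concatenating the corresponding (positivity-preserving, mass-preserving) semigroups yields an evolution family $U(t,s)$ on $L^1(\nu)$ that inherits positivity; then the variation-of-constants representation
\begin{align*}
	z_t\;=\;U(t,0)z_0+\int_0^t U(t,s)\,h_s\,ds
\end{align*}
makes $z_t\ge 0$ transparent, so $\rho_t\le\sigma_t$ a.e. The bound $\rho_t\ge\tau_t$ follows by an identical argument applied to $\tau_t-\rho_t$ and a source of the opposite sign. Combining the two pointwise inequalities closes the proof.
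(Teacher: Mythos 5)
Your proof is correct, and it takes a genuinely different route from the paper's on the step that matters. Both proofs construct the same spatially-constant barriers $Me^{Ct}$ and $me^{-Ct}$ and show they are classical super/subsolutions; the difference is how the comparison is upgraded to mild solutions $\rho_t$. The paper keeps the comparison principle (Lemma~\ref{lem:comparison}) as a tool for $C^1$ curves, and instead regularizes the \emph{initial datum}: it invokes Lemma~\ref{lem:log-approx} to produce $\rho_0^{(n)}\in D(\mK^*)$ with $\rho_0^{(n)}\to\rho_0$ in $L^1$ and $\De(\log\rho_0^{(n)})\to\De(\log\rho_0)$, applies the classical bound to each $\rho_t^{(n)}$, and then passes to the limit using $L^1$-continuity of the solution map. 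You instead leave $\rho_t$ alone and prove a mild-solution comparison directly, by arguing that the non-autonomous problem admits a positivity-preserving evolution family $U(t,s)$ (via piecewise-freezing of the bounded perturbation $\mA_t^*$) so that $z_t=U(t,0)z_0+\int_0^t U(t,s)h_s\,ds\ge 0$. Your route avoids Lemma~\ref{lem:log-approx} altogether and is conceptually cleaner, but it requires two facts that the paper never establishes and that you only sketch: (i) the existence and positivity of the evolution family for the time-dependent generator $\mK^*+\mA_t^*$, and (ii) the equivalence of the Duhamel-type mild formulation $z_t=e^{t\mK^*}z_0+\int_0^t e^{(t-s)\mK^*}[\mA_s^*z_s+h_s]\,ds$ with the variation-of-constants formula for $U(t,s)$. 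Both are standard consequences of the bounded-perturbation theory for non-autonomous linear problems (e.g.\ \cite{Pazy1983}, Ch.~5), but they would need to be cited or proved if this argument were to replace the paper's, whereas the paper's approximation argument only reuses machinery it has already built.
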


\begin{proof}
	Let us first assume additionally $\rho_0 \in D(\mK^*)$, in which case $\{\rho_t\}_t$ solves the equation \eqref{eq:lmf} in the classical sense (see Proposition \ref{lem:linear_wp}). Set $M:=\esssup \rho_0$ and consider the function $g_t:= M e^{Ct}$. Since $1\in D^*(\mK)$, we find that $\{g_t\}_t$ is a (classical) super-solution to \eqref{eq:lmf}:
	\begin{align*}
		\partial_t g_t  = MCe^{Ct} \ge Me^{Ct}[\mK^*(1)+\mA_t^*(1)]= \mK^*g_t +\mA_t^*(g_t). 
	\end{align*}
	By the comparison principle (Lemma \ref{lem:comparison}), it follows $\rho_t\le g_t = Me^{Ct}$ for all $t\ge 0$. A similar argument, using the subsolution $\tilde g_t = m e^{-Ct}$, $m=\essinf \rho_0$, yields the lower bound $\rho_t \ge m e^{-Ct}$. Combining the two inequalities gives $me^{-Ct}\le \rho_t \le Me^{Ct}$. Taking logarithms, we obtain for $\nu$-a.e. $x\in \Pi$:
	\begin{align*}
		\log m -Ct\le \log \rho_t(x) \le \log M +Ct. 
	\end{align*}
	This implies $\De(\log\rho_t)\le \log M-\log m+2Ct$. The desired inequality follows by that $\De(\log\rho_0)=\log M-\log m$.
	
	In the above we have establish the bound for classical solutions. 
    To extend the bound to mild solutions $\{\rho_t\}_t$ with $\De(\log\rho_0)<\infty$, we need an approximation of mild solutions using classical solutions but bounded logarithm oscillation. 
    Indeed by Lemma \ref{lem:log-approx}, there is a sequence $\{\rho_0^{(n)}\}_n\subset D(\mK^*)$ of probability densities such that $\rho_0^{(n)}\to\rho_0$ in $L^1(\nu)$ and $\De (\log\rho_0^{(n)})\to \De(\log\rho_0).$ 
    Let $\{\rho_t^{(n)}\}_{t\ge 0}$ be the solution of \eqref{eq:lmf} with initial condition $\rho_0^{(n)}$. By the first part of the proof, we have
\[
\Delta(\log \rho_t^{(n)}) \le \Delta(\log \rho_0^{(n)}) + 2Ct,
\qquad t \in [0,T].
\]
By Proposition~\ref{lem:linear_wp}, the solution map is continuous in
$L^1(\nu)$, so $\rho_t^{(n)} \to \rho_t$ in $L^1(\nu)$ for each fixed
$t \in [0,T]$. Then the desired bound holds for mild solutions as well.
\end{proof}

\subsection{Well-posedness of mean-field equations}
Following the well-posedness theory for the linear equation, we may now proceed next to the well-posedness of the prescribed mean-field equation \eqref{eq:pmf}. 

\begin{corollary}\label{cor:pmf}
	Let $\mK^*$ be an adjoint Markov generator satisfying \ref{K-cond},
	$\{\La(\mu)\}_\mu$ be a mean-field jump kernel satisfying \ref{A1}, and $\{\mA^*(\mu)\}_\mu$ be the associated mean-field jump generator. 
    Fix a curve of probability densities $\{\si_t\}\in C([0,T];\mcp(\Pi)\cap L^1(\nu))$. 
    For every initial condition $\rho_0\in \mcp(\Pi)\cap L^1(\nu)$, there is a unique mild solution $\{\rho_t\}_t\in C([0,T];\mcp(\Pi)\cap L^1(\nu))$ to the problem \eqref{eq:pmf}. Moreover, if $\rho_0\in D(\mK^*)$, then the mild solution is a classical solution. 
\end{corollary}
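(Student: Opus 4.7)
The plan is to recognize the prescribed mean-field equation \eqref{eq:pmf} as a \emph{linear} nonautonomous Cauchy problem in the unknown $\rho_t$, with the time-dependent zeroth-order term $\mathcal{A}^*(\rho_t;\sigma_t) = \mathcal{A}^*(\sigma_t)\rho_t$. Once this linear structure is made explicit, the corollary reduces to a direct application of the linear well-posedness theory developed in Propositions~\ref{lem:linear_wp} and~\ref{lem:density-preserve}.

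Concretely, I would set $\mathcal{A}^*_t := \mathcal{A}^*(\sigma_t)$, so that \eqref{eq:pmf} rewrites as
\begin{align*}
\partial_t \rho_t = \mathcal{K}^*\rho_t + \mathcal{A}^*_t(\rho_t),\qquad \rho|_{t=0}=\rho_0.
\end{align*}
To invoke Proposition~\ref{lem:linear_wp}, I would verify two items. First, \emph{uniform boundedness}: by Remark~\ref{rem:A-bdd} (which combines \ref{A1} with Proposition~\ref{prop:A*-bound}), one has $\sup_{t\in[0,T]}\|\mathcal{A}^*_t\|_{L^1(\nu)\to L^1(\nu)}\le 2M_\Lambda<\infty$. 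Second, \emph{continuity} of $t\mapsto \mathcal{A}^*_t$ in the operator norm: combining the Lipschitz bound in \ref{A1} with Proposition~\ref{prop:adj-op-bound} yields
\begin{align*}
\|\mathcal{A}^*_t-\mathcal{A}^*_s\|_{L^1(\nu)\to L^1(\nu)}
\le 2\|\Lambda(\sigma_t)-\Lambda(\sigma_s)\|_{\mathcal{J}}
\le 2C\,\|\sigma_t-\sigma_s\|_{L^1(\nu)},
\end{align*}
and the right-hand side tends to $0$ as $s\to t$ since $\{\sigma_t\}_t\in C([0,T];L^1(\nu))$ by assumption. Proposition~\ref{lem:linear_wp} then provides a unique mild solution $\{\rho_t\}_t\in C([0,T];L^1(\nu))$, and the ``classical solution'' statement under the additional assumption $\rho_0\in D(\mathcal{K}^*)$ is immediate from the same proposition.

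It remains to upgrade $\{\rho_t\}_t$ into a curve of probability densities. For this I would apply Proposition~\ref{lem:density-preserve}, which requires $\mathcal{K}^*$ and each $\mathcal{A}^*_t$ to be adjoint Markov generators. The former is assumed; the latter is precisely the last assertion of Proposition~\ref{prop:A*-bound}, applied to the kernel $\Lambda(\sigma_t)$ (which admits an adjoint by \ref{A1}). Proposition~\ref{lem:density-preserve} then yields $\rho_t\in\mathcal{P}(\Pi)\cap L^1(\nu)$ for every $t\in[0,T]$, completing the proof.

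There is no substantial obstacle here: every ingredient has already been prepared in the preceding subsections, and the argument is essentially an assembly of the linear theory with the Lipschitz/boundedness properties recorded in \ref{A1} and Propositions~\ref{prop:A*-bound}--\ref{prop:adj-op-bound}. The only point deserving care is the norm-continuity of $t\mapsto\mathcal{A}^*_t$, which is the single place where the Lipschitz hypothesis on $\mu\mapsto\Lambda(\mu)$ (rather than merely pointwise boundedness) is genuinely used.
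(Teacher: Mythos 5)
Your proposal is correct and takes essentially the same route as the paper: interpret \eqref{eq:pmf} as a linear nonautonomous problem with $\mA_t^*=\mA^*(\si_t)$, verify uniform boundedness and norm-continuity of $t\mapsto\mA_t^*$ via \ref{A1}, then apply Propositions~\ref{lem:linear_wp} and~\ref{lem:density-preserve}. The only cosmetic difference is that the paper cites Proposition~\ref{prop:mj-bound} for the continuity estimate while you cite Proposition~\ref{prop:adj-op-bound}; both yield the needed $\|\mA_t^*-\mA_s^*\|_{L^1\to L^1}\le 2C\|\si_t-\si_s\|_{L^1(\nu)}$.
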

\begin{proof}
Set $\mA_t^{*}=\mA^*(\si_t)$ be a generator on $L^1(\nu)$ for all $\,t\in[0,T]$. By the Lipschitz assumption from \ref{A1}, Proposition \ref{prop:mj-bound} and that $t\mapsto\si_t$ is continuous in $L^1(\nu)$, it implies that $t\mapsto \mA_t^*$ is continuous in the space of bounded operators on $L^1(\nu)\to L^1(\nu)$, with $\sup_{t\in[0,T]} \|\mA^*_t\|_{L^1(\nu)\to L^1(\nu)}<\infty$. 
By Propositions \ref{lem:linear_wp} and \ref{lem:density-preserve}, for every initial condition $\br_0\in \mcp(\Pi)\cap L^1(\nu)$, \eqref{eq:pmf} admits a unique mild solution $\{\bar \rho_t\}\in C([0,T];\mcp(\Pi)\cap L^1(\nu))$. Moreover, if $\bar\rho_0\in D(\mK^*)$, then the solution is classical. 
\end{proof}

We may now establish Theorem \ref{main:wp-mf}.

\begin{proof}[Proof of Theorem \ref{main:wp-mf} (i)]
    Denote the space $Y := C([0,\infty);L^1(\nu))$ and equip it with the weighted norm for some $\om>0$ to be determined:
    \begin{align*}
    	\|\rho\|_{Y}:= \sup_{t\in[0,\infty)} e^{-\om t}\|\rho_t\|_{L^1(\nu)},
    \end{align*}
    We note $(Y,\|\cdot\|_Y)$ defines a Banach space. Let $Y_\mcp\subset Y$ the subspace of curves of probability densities:
    \begin{align*}
    	Y_\mcp:= C([0,\infty);\mcp(\Pi)\cap L^1(\nu)). 
    \end{align*}
    Since $Y_\mcp\subset Y$ is closed w.r.t. $\|\cdot\|_Y$, the inherited subspace is a complete metric space.
    
    Fix a probability density $\rho_0 \in\mcp(\Pi)\cap L^1(\nu)$ and consider the solution map $\Phi:Y_\mcp\rightarrow Y_\mcp$ of the Cauchy problem to the prescribed mean-field equation \eqref{eq:pmf}, that is, given $\mu\in Y_\mcP$, let $\rho = \Phi(\mu)\in Y_\mcp$ be the (mild) solution of \eqref{eq:pmf} with initial condition $\rho_0$. Corollary \ref{cor:pmf} implies the map is well-defined. 
    
    We shall next show $\Phi$ is a strict contraction, provided $\om>0$ large. 
    Specifically, for $\si,\si'\in Y_\mcp$, let $\rho^\si=\Phi(\si),\rho^{\si'}=\Phi(\si')$. By Proposition \ref{prop:cont_wp} and Lipschitz property (Remark \ref{rem:A-bdd}), it holds for all $t\ge 0$:
    
    \begin{align*}
    e^{-\omega t}\|\rho^\si_t-\rho^{\si'}_t\|_{L^1}&\le \int_0^t e^{(M-\omega)(t-\tau)}e^{-\omega \tau}\|\mA^*(\si_\tau)-\mA^*(\si'_\tau)\|_{L^1(\nu)\to L^1(\nu)}d\tau\\&\leq \int_0^t e^{(M-\omega)(t-\tau)}e^{-\omega \tau}C\|\si_\tau-\si'_\tau\|_{L^1(\nu)}d\tau.
     \end{align*}
Taking the supremum over $t\in[0,T]$ and using the definition of the $Y$-norm gives
\begin{align*}
    \|\rho^\si_t-\rho^{\si'}_t\|_{Y}\leq \int_0^Te^{(M-\omega)(T-\tau)}C\|\si_t-\si'_t\|_{Y}d\tau \leq \frac{C}{\omega-M}\|\si_t-\si'_t\|_{Y}.
\end{align*}
By choosing $\omega$ sufficiently large, we ensure that the contraction factor $\tfrac{C}{\omega-M}$ is strictly less than $1$, hence $\Phi$ is a contraction on $Y_\mcP$.  

By the Banach fixed-point theorem, $\Phi$ admits a unique fixed point. This fixed point is precisely the unique mild solution of \eqref{eq:mf}.

(ii) Let $\rho$ be the mild solution of \eqref{eq:mf} we obtain in (i), then let $\mA^*_t=\mA^*(\rho_t)$, then the continuity of map $t\mapsto \|\mA^*_t\|_{L^1(\nu)\to L^1(\nu)}$ is give by Assumption \ref{A1}, Proposition \ref{prop:mj-bound} and $C^0$ of mild solution. Therefore, we assume $\rho_0\in\mcl{D}(\mK^*)$ and invoke Proposition \ref{lem:linear_wp} to see that there is a classical solution to \eqref{eq:mf}, and since classical solution is also a mild solution, by uniqueness of solution, we know that $\rho_t$ is a classical solution.
    
(iii) Since Assumption \ref{K1} implies $\|\mK^*(1)\|_{L^\infty}<\infty$ and Assumption \ref{A1},\ref{A2} imply \begin{align*}
    \sup_{\mu\in\mcP(\Pi)}\|\mA^*(1;\mu)\|_{L^\infty(\nu)}\leq\sup_{\mu\in\mcP(\Pi)}\|\mA^*(\mu)\|_{L^\infty(\nu)\to L^\infty(\nu)}\leq M_{\La}+M_{\La}^*<\infty.
\end{align*}
We know that $C:=\sup_{\mu\in\mcP(\Pi)}\|\mK^*(1)+\mA^*(1;\mu)\|_{L^\infty(\nu)}<\infty$. Therefore let $\mA^*_t=\mA^*(\rho_t)$, by Proposition \ref{lem:log-bdd diff} we know that the inequality is satisfied.
\end{proof}

\begin{proof}[Proof of Theorem \ref{thm:wp-Nmf}]
(i)
The key step is to verify that the averaged mean-field jump kernel $\bar\La$ satisfies assumption \ref{A1}, i.e., there exists $C \ge 0$ such that for any $\rho,\rho' \in L^1(\nu)$, 
\begin{align*}
    \|\bar\Lambda(\rho)-\bar\Lambda(\rho')\|_{\mcJ} \le C \|\rho-\rho'\|_{L^1(\nu)}.
\end{align*}

Recall that the averaged kernel is defined as \ref{def:avg-mf}
\begin{align*}
		\bar\Lambda(x,dy;\rho) 
		= \int_{\Pi^{N-1}} \Lambda\!\bigl(x,dy;\mu(\bsx_{-1})\bigr)\, 
		d\rho^{\otimes(N-1)}(\bsx_{-1}),
\end{align*}
where $\Lambda:\eP \to \mcJ_+^0(\Pi)$ denotes the empirical jump kernel.  
By assumption \ref{A1p}, we obtain
\begin{align*}
\|\bar\Lambda(\rho)-\bar\Lambda(\rho')\|_{\mcl{J}}
&= \sup_{x\in\Pi} \|\bar\Lambda(\rho)-\bar\Lambda(\rho')\|_{\TV} \\
&= \sup_{x\in \Pi} \sup_{\|\varphi\|_\infty\le 1} \big|\inn{\bar\Lambda(\rho)-\bar\Lambda(\rho'),\varphi}\big| \\
&= \sup_{x\in \Pi} \sup_{\|\varphi\|_\infty\le 1} \left|\int_\Pi \varphi(y)\,\big[\bar\Lambda(x,dy;\rho)-\bar\Lambda(x,dy;\rho')\big]\right| \\
&= \sup_{x\in\Pi} \sup_{\|\varphi\|_\infty \le 1} 
   \left|\int_{\Pi^{N-1}} \int_\Pi \varphi(y)\,\Lambda\!\big(x,dy;\mu(\bsx_{-1})\big)\,
        \big(\bs{\rho}_{N-1}-\bs{\rho'}_{N-1}\big)(\bsx_{-1})\,d\bs{\nu}_{N-1}(\bsx_{-1})\right| \\
&\le \sup_{x\in\Pi} \sup_{\|\varphi\|_\infty \le 1} 
    \|\varphi\|_\infty \sup_{\mu\in\eP}\|\Lambda(x,\cdot;\mu)\|_{\TV}
    \,\|\bs{\rho}_{N-1}-\bs{\rho'}_{N-1}\|_{L^1(\bs{\nu}_{N-1})} \\
&\le M_{\La} \|\bs{\rho}_{N-1}-\bs{\rho'}_{N-1}\|_{L^1(\bs{\nu}_{N-1})}.
\end{align*}

Next, observe that by interpolation,
\begin{align*}
    \|\bs{\rho}_{N-1}-\bs{\rho'}_{N-1} \|_{L^1(\bs{\nu}_{N-1})}&=\left\|\sum_{k=1}^{N-1}\bs{\rho}_{k-1}\otimes(\rho-\rho')\otimes \bs{\rho}'_{N-1-k} \right\|_{L^1(\bs{\nu}_{N-1})}
    \leq
     (N-1)\,\|\rho-\rho'\|_{L^1(\nu)}.
\end{align*}
Substituting this into the previous bound yields
\begin{align*}
    \|\bar\Lambda(\rho)-\bar\Lambda(\rho')\|_{\mcl{J}}
    \le (N-1)M_{\La}\,\|\rho-\rho'\|_{L^1(\nu)}.
\end{align*}
The boundedness of $\bar\La$ is given by
\begin{align*}
    \sup_{\rho\in\mcp(\Pi)}\|\bar\La(\rho)\|_{\mcl{J}}=\sup_{\rho\in\mcp(\Pi)}\left\|\int_{\Pi^{N-1}}\La(\mu(\bsx_{-1}))d\bs{\rho}_{N-1}(\bsx_{-1})\right\|_{\mcl{J}}\leq \sup_{\mu(\bsx_{-1})\in\eP}\|\La(\mu(\bsx_{-1}))\|_{\mcJ}\leq M_{\La}.
\end{align*}
Therefore, the averaged operator $\bar\Lambda$ satisfies \ref{A1} with Lipschitz constant $(N-1)M_{\La}$. By Theorem \ref{main:wp-mf}, it follows that for every $L^1$-initial condition, there exists a unique mild solution to \ref{eq:Nmf}.

(ii) The density preservation follows from Theorem \ref{main:wp-mf}.

(iii) We verify that the adjoint averaged jump operator $\bar \mA^*(\rho)$ satisfies the bound:
\begin{align*}
  \sup_{\rho\in\mcP(\Pi)}\|\bar\mA^*(1;\rho)\|_{L^\infty(\nu)}&=\sup_{\rho\in\mcP(\Pi)}\left\|\int_{\Pi^{N-1}} \mA^*(1;\mu(\bsx_{-1}))\, d\rho^{\otimes(N-1)}(\bsx_{-1})\right\|_{L^\infty(\nu)}\\
    &\leq \sup_{\mu\in\eP}\|\mA^*(1;\mu)\|_{L^\infty(\nu)}\leq M_{\La}+M_{\La}^*<\infty.
\end{align*}
Then, Theorem \ref{main:wp-mf} give the inequality and the deravition above gives $M=\|\mK^*(1)\|_{L^\infty}+M_{\La}+M_{\La}^*$ as desired.
\end{proof}

\subsection{Convergence of solutions of averaged mean-field system to mean-field system}
Let us now present the proof of Proposition \ref{prop:amf to mf}. 
\begin{proof}[Proof of Proposition \ref{prop:amf to mf}]
Fix $N\ge 2$ and let $\{\rho_t\}_t,\{\bar\rho_t^{(N)}\}_{t}$ be the mild solutions from Proposition \ref{prop:amf to mf}. Note they  share the same initial conditions $\rho_0 = \bar\rho_0^{(N)}$. Consider their $L^1$ difference. By the definition of mild solutions and the contraction property of the adjoint Markov semigroups, it follows
\begin{align*}
D_t:=  \|\rho_t-\bar\rho^{(N)}_t\|_{L^1(\nu)}&=\left\|\int_0^te^{(t-s)\mK^*}[\mA^*(\rho_s;\rho_s)-\bar\mA^*(\br^{(N)}_s;\br^{(N)}_s)]ds\right\|_{L^1(\nu)}\\
&\le \int_0^t [A_s+B_s+C_s] ds,
\end{align*}
where the quantities $A_s,B_s,C_s$ are given and bounded using Assumption \ref{A1} and Remark \ref{rem:A-bdd} as follows:
\begin{align*}
    A_s&:= \left\|\mA^*(\rho_s;\rho_s)-\mA^*(\br^{(N)}_s;\rho_s)\right\|
    _{L^1(\nu)}\le \left\|\mA^*(\rho_s)\right\|_{L^1(\nu)\to L^1(\nu)}\|\rho_s-\bar\rho^{(N)}_s\|_{L^1(\nu)}\\ 
    &\le 2M_\La\|\rho_s-\bar\rho_s^{(N)}\|_{L^1(\nu)}=2M_\La D_s\\
    B_s&:=\left\|\mA^*(\br^{(N)}_s;\rho_s)-\mA^*(\br^{(N)}_s;\br^{(N)}_s)\right\|_{L^1(\nu)}\le\left\|\mA^*(\rho_s)-\mA^*(\br^{(N)}_s)\right\|_{L^1(\nu)\to L^1(\nu)}\left\|\br^{(N)}_s\right\|_{L^1(\nu)}
    \\ &\le 2C\|\rho_s-\bar\rho_s^{(N)}\|_{L^1(\nu)} =2CD_s\\
    C_s&:=\left\|\mA^*(\br^{(N)}_s;\br^{(N)}_s)-\bar\mA^*(\br^{(N)}_s;\br^{(N)}_s)\right\|_{L^1(\nu)}
\end{align*}
For $C_s$ using Proposition \ref{prop:adj-op-bound} and Assumption \ref{A5}, we may further bound it by
\begin{align*}
	C_s&\le 2\int_{\Pi} \|\La(x_1,\cdot;\bar\rho_s^{(N)})- \bar \La(x_1,\cdot;\bar\rho_s^{(N)}) \|_{\TV} d\bar\rho_s^{(N)}(x_1)\\
	&= 2\int_{\Pi} \left \|\La(x_1,\cdot;\bar\rho_s^{(N)})-  \int_{\Pi^{N-1}} \La(x_1,\cdot;\mu(\bsx_{-1}) ) d(\bar\rho_s^{(N)})^{\otimes (N-1)}(\bsx_{-1}) \right \|_{\TV} d\bar\rho_s^{(N)}(x)\\
	&\le 2\int_{\Pi}   \int_{\Pi^{N-1}} \left \|\La(x_1,\cdot;\bar\rho_s^{(N)})-\La(x_1,\cdot;\mu(\bsx_{-1}) ) \right \|_{\TV} d(\bar\rho_s^{(N)})^{\otimes (N-1)}(\bsx_{-1}) d\bar\rho_s^{(N)}(x)\\
	&\le 2 \int_{\Pi^{N}} \Xi(x_1,\mu(\bsx_{-1}),\bar\rho_s^{(N)} ) d(\bar\rho_s^{(N)})^{\otimes N}(\bsx) =2 \varepsilon_{N}(\bar\rho_s^{(N)}). 
\end{align*}
Inserting all the bounds for $A_s,B_s,C_s$ established above to the integral inequality for $D_t$, we find 
\begin{align*}
    D_t&\le \int_0^t [2(M_\La+ C)D_s+2 \varepsilon_{N}(\bar\rho_s^{(N)})] ds. 
\end{align*}
Let $K=2(M_\La+C)$. Applying Gr\"onwall inequality, we arrive at
\begin{align*}
D_t &\leq\left(\frac{e^{Kt}-1}{K}\right)\int_0^t2\varepsilon_{N}(\br_s^{(N)})ds \leq 2t\left(\frac{e^{Kt}-1}{K}\right)\sup_{s\in[0,t]}\varepsilon_{N}(\bar\rho_s^{(N)}).
\end{align*}
We note that Theorem \ref{thm:wp-Nmf}(iii) implies $\|\bar\rho_s^{(N)}\|_{L^\infty}$ is uniformly bounded in $N\ge 2$ and $s\in[0,t]$. Passing $N\to\infty$, Condition \ref{A5} guarantees that the supremum term vanishes as $N\to\infty$. Hence, the bound above implies for all $T\ge 0$,
\begin{align*}
    \sup_{t\in[0,T]}D_t=\sup_{t\in[0,T]}\|\rho_t-\bar\rho_t^{(N)}\|_{L^1(\nu)}   \to 0. \qquad \qedhere 
\end{align*}
\end{proof}
\subsection{Logarithmic regularized approximation of probability densities}
\label{sec:bounded}

Let us record the following lemma, which is used in Proposition \ref{lem:log-bdd diff} and the coming section.

\begin{lemma}\label{lem:log-approx}
	Let $\mK^*$ be an adjoint Markov generator on $L^1(\nu)$ satisfying \ref{K-cond}   
    with $C:=\|\mK^*1\|_{L^\infty}<\infty$, and $\rho_0\in L^1(\nu)\cap \mcp(\Pi)$ be such that $\|\log\rho_0\|_{L^\infty}<\infty$. There is a family $\{\rho_0^{(\la)}\}_{\la>0}\subset \mcp(\Pi)\cap D(\mK^*)$ such that 
	\begin{align*}
		\sup_{\la \ge 2C}\De(\log\rho_0^{(\la)})\le \De(\log \rho_0)+\log 3	,\qquad  \rho_0^{(\la)}\to \rho_0 \mbox{ in }L^1(\nu). 
	\end{align*}
\end{lemma}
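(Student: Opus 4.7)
\textbf{Proof proposal for Lemma \ref{lem:log-approx}.}
The plan is to use the resolvent (Yosida-type) approximation of $\mK^*$. Since $\mK^*$ generates an adjoint Markov semigroup on $L^1(\nu)$, this semigroup is an $L^1$-contraction (decomposing $\rho = \rho^+ - \rho^-$ and applying positivity plus mass conservation), so the resolvent $R(\la,\mK^*) := (\la - \mK^*)^{-1}$ is well defined for every $\la > 0$ and can be represented as the Laplace transform of the semigroup. I would therefore set
\[
\rho_0^{(\la)} \;:=\; \la R(\la,\mK^*)\rho_0 \;=\; \int_0^\infty \la e^{-\la t}\, e^{t\mK^*}\rho_0 \, dt \qquad (\la>0),
\]
where the integral is interpreted as a Bochner integral in $L^1(\nu)$. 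Standard semigroup theory gives $\rho_0^{(\la)} \in D(\mK^*)$, together with the convergence $\rho_0^{(\la)}\to \rho_0$ in $L^1(\nu)$ as $\la\to\infty$. Nonnegativity of $\rho_0^{(\la)}$ follows from positivity of $e^{t\mK^*}$, and $\int \rho_0^{(\la)}\, d\nu = 1$ follows from mass conservation and Fubini; hence $\rho_0^{(\la)} \in \mcp(\Pi)\cap D(\mK^*)$.

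The only nontrivial point is the logarithmic oscillation bound. Set $M := \esssup \rho_0$ and $m := \essinf \rho_0$, so $\De(\log \rho_0) = \log(M/m)$. Applying Proposition \ref{prop:k-bdd} to the constant super- and subsolutions $M e^{Ct}$ and $m e^{-Ct}$ (or, more directly, the comparison principle argument used in the proof of Proposition \ref{lem:log-bdd diff}), one obtains the pointwise inequalities
\[
m\, e^{-Ct} \;\le\; e^{t\mK^*}\rho_0 \;\le\; M\, e^{Ct} \qquad \text{$\nu$-a.e., for every }t\ge0.
\]
Integrating these pointwise bounds against the probability measure $\la e^{-\la t}\, dt$ on $[0,\infty)$ and using that the Bochner integral preserves lattice order in $L^1(\nu)$, I obtain, for $\la > C$,
\[
m\,\frac{\la}{\la+C} \;\le\; \rho_0^{(\la)} \;\le\; M\,\frac{\la}{\la-C} \qquad \text{$\nu$-a.e.,}
\]
and hence
\[
\De(\log \rho_0^{(\la)}) \;\le\; \log\!\frac{M}{m} + \log\!\frac{\la+C}{\la-C} \;=\; \De(\log \rho_0) + \log\!\frac{\la+C}{\la-C}.
\]

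For $\la \ge 2C$ the ratio $(\la+C)/(\la-C)$ is monotone decreasing in $\la$ and equals $3$ at $\la = 2C$, which gives the uniform bound $\De(\log \rho_0^{(\la)}) \le \De(\log \rho_0) + \log 3$ and yields the sharp choice of $2C$ in the statement. The main subtlety I anticipate is purely technical, namely to justify carefully that the pointwise two-sided bound on $e^{t\mK^*}\rho_0$ passes through the Bochner integral to give a pointwise (a.e.) bound on $\rho_0^{(\la)}$; this is standard but worth writing out, since the a.e.\ exceptional sets may depend on $t$. This can be handled by approximating the integral by Riemann sums in $L^1(\nu)$ and extracting an a.e.\ convergent subsequence, or equivalently by testing against nonnegative $L^\infty$ functions to reduce to Fubini on the joint measure $\la e^{-\la t}dt \otimes \nu$.
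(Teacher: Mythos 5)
Your proposal is correct and follows essentially the same route as the paper's proof: both use the scaled resolvent $\rho_0^{(\la)}=\la(\la-\mK^*)^{-1}\rho_0$, obtain the two-sided pointwise bound $m e^{-Cs}\le e^{s\mK^*}\rho_0\le M e^{Cs}$ from the comparison principle of Proposition~\ref{lem:log-bdd diff} (or, equivalently, by applying Proposition~\ref{prop:k-bdd} after first replacing $\rho_0$ by the constants $\esssup\rho_0$ and $\essinf\rho_0$ and using positivity of the semigroup), and then push this through the Laplace-transform representation to get $m\,\la/(\la+C)\le\rho_0^{(\la)}\le M\,\la/(\la-C)$ for $\la>C$, from which the $\log 3$ bound at $\la\ge 2C$ follows. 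Your extra care about order-preservation of the Bochner integral is a reasonable thing to flag, but the paper handles it implicitly by dominating with the constant $e^{\esssup\log\rho_0}\in D(\mK^*)$, which keeps all exceptional sets harmless.
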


\begin{proof}
	Given $\la>0$, denote the bounded operator $I_\la:L^1(\nu)\to L^1(\nu)$ the scaled resolvent operator
	\begin{align*}
		I_\la &=\la (\la I-\mK^*)^{-1}= \int_0^\infty \la e^{-\la s} e^{s\mK^*}ds . 
	\end{align*}
	We note that $I_\la$ is positive and mass conserving. Specifically for any $\rho_0\in \mcp(\Pi)\cap L^1(\nu)$, $I_\la( \rho_0)\in \mcp(\Pi)\cap L^1(\nu)$. Moreover, the scaled resolvent operator for all $\la>0$ that satisfies $I_\la\rho \in D(\mK^*)$ and $I_\la\rho\to \rho $ in $L^1(\nu)$, see \cite{engel2000one}.
	
	Now given $\rho_0$ as in the lemma, let $\rho_0^{(\la)}=I_\la\rho_0$. Let $M:=\esssup \log(\rho_0),m:=\essinf \log(\rho_0)$.
	By Theorem \ref{prop:k-bdd}, it holds for all $\la\ge 2C$:
	\begin{align*}
		\rho_0^{(\la)}&=\int_0^\infty\la  e^{-\la s}e^{s\mK^*}\rho_0ds \le \int_0^\infty \la e^{-\la s}e^{s\mK^*}[e^M] ds \le \int_0^\infty \la e^{-\la s} e^{M+Cs}ds \le \f{\la e^M}{\la-C}
		\le 2e^M.
	\end{align*}
	By the same argument, we also have 
	\begin{align*}
		\rho_0^{(\la)}&=\int_0^\infty\la  e^{-\la s}e^{s\mK^*}\rho_0ds \ge \int_0^\infty \la e^{-\la s}e^{s\mK^*}[e^m] ds \ge \int_0^\infty \la e^{-\la s} e^{m-Cs}ds \ge \f{\la e^m}{\la+C}
		\ge 2e^m.
	\end{align*}
	Hence we have
	\begin{align*}
		m + \log\rb{\f{\la}{\la+C}}\le \log \rho_0^{(\la)}\le M + \log\rb{\f{\la}{\la-C}},
	\end{align*}
	which then implies
	\begin{align*}
		\De(\log\rho_0^{(\la)})\le M-m + \log\rb{\f{\la+C}{\la-C}}\le \De(\log\rho_0)+\log(3). \qquad \qedhere 
	\end{align*}
\end{proof}
\section{Proof of Entropic Estimate (Theorem \ref{main:ec-Nmf})}\label{sec:entest}

The aim of this section is to establish an entropic estimate for the law of mean-field $N$-particle systems, and thereby to prove Theorem~\ref{main:ec-Nmf}. The section is organized as follows. In the next subsection, we introduce a key integral inequality that serves as the main tool for deriving the entropic estimate, while deferring the proof of a crucial intermediate result to Section~\ref{subsec:int-hard}. The proof of Theorem~\ref{main:ec-Nmf} itself will be developed over Sections~\ref{subsec:4.2}--\ref{subsec:4.5}.

\subsection{Integral inequalities for relative entropy}
Let us first establish an important integral inequality for the analysis. 
In this section, we will state these inequalities in the general settings, then apply them later in the special case. 

Throughout this subsection, let us assume the setting in Section \ref{sec:wp}. Let
$(\Pi,\nu)$ be a finite measure space and denote $L^1(\nu)=L^1(\Pi,\nu)$.
Given two probability densities $\rho,\si\in \mcp(\Pi)\cap L^1(\nu)$, we denote the relative entropy functional between $\rho$ and $\si$ by
\begin{align*}
	\mcl{H}(\rho\|\si):=\inn{\rho,\log\!\rb{\frac{\rho}{\si}}}_\nu 
	=\int_{\Pi} \rho \log\!\rb{\frac{\rho}{\si}}\, d\nu.
\end{align*}
Here we recall the natural pairing notation
\[
\inn{\vphi,\psi}_\nu := \int_{\Pi} \vphi\,\psi\,d\nu.
\]

Our goal in this subsection is to establish an integral inequality that bounds the relative entropy between two probability densities evolving under distinct Markovian flows. 
Let $(\mK^*,D(\mK^*))$ be an adjoint Markov generator on $L^1(\nu)$. Consider two families, $\{\mA_t^*\}_{t\in[0,T]}$, and $\{\mB_t^*\}_{t\in[0,T]}$ of bounded adjoint Markov generators on $L^1(\nu)$. 
We impose the following assumptions:
\begin{enumerate}[label=(K\arabic*)]
	\item\label{K1} $1\in D(\mK^*)$, and $\|\mK^*1\|_{L^\infty}<\infty$.
	
	\item\label{K2} The maps $t\mapsto \mA_t^*, \mB_t^*$ are continuous with respect to the operator norm $L^1(\nu)\to L^1(\nu)$, and satisfy the uniform bounds
	\begin{align}\label{eq:unif-norm-L1}
		\sup_{t\in[0,T]}\|\mA_t^*\|_{L^1\to L^1}<\infty, \qquad 
		\sup_{t\in[0,T]}\|\mB_t^*\|_{L^1\to L^1}<\infty.
	\end{align}
	
	\item\label{K3} For each $t\in[0,T]$, the operators $\mA_t^*$ and $\mB_t^*$ extend to bounded linear operators on $L^\infty(\nu)\to L^\infty(\nu)$, with uniform bounds
	\begin{align}\label{eq:unif-norm}
		\sup_{t\in[0,T]}\|\mA_t^*\|_{L^\infty\to L^\infty}<\infty, \qquad 
		\sup_{t\in[0,T]}\|\mB_t^*\|_{L^\infty\to L^\infty}<\infty.
	\end{align}
\end{enumerate}

We can now state the main integral inequality.

\begin{proposition}\label{lem:int-ineq}
	Let $\mK^*$, $\{\mA_t^*\}_t$, and $\{\mB_t^*\}_t$ be as above, and assume \ref{K1}--\ref{K3} hold. Let $\{\rho_t\}_t$ and $\{\si_t\}_t$ be mild solutions to the Fokker–Planck equations
	\begin{align}\label{eq:2fp-eq}
		\partial_t \rho_t = \mK^*(\rho_t)+\mA_t^*(\rho_t), \qquad 
		\partial_t \si_t = \mK^*(\si_t)+\mB_t^*(\si_t), \qquad t\in(0,T).
	\end{align}
	Assume further that 
	\begin{align*}
		\rho_0\log\rho_0 \in L^1(\nu),\qquad \|\log \si_0\|_{L^\infty(\nu)}<\infty.
	\end{align*}
	Then for all $t\in[0,T]$ and $\eta>0$, it holds that
	\begin{align*}
		\mcl{H}(\rho_t\|\si_t)
		\leq \mcl{H}(\rho_0\|\si_0)
		+ \eta \int_0^t \left[
		\mcl{H}(\rho_s\|\si_s)
		+ \log\!\int_{\Pi} 
		\exp\!\left(
		\frac{1}{\eta}\,
		\frac{(\mA_s^*-\mB_s^*)\si_s}{\si_s}
		\right)
		d\si_s
		\right] ds.
	\end{align*}
\end{proposition}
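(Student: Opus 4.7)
The plan is to derive a pointwise-in-time differential inequality for $t\mapsto\mcl{H}(\rho_t\|\si_t)$ and then integrate. Formally differentiating and substituting the two Fokker--Planck equations in \eqref{eq:2fp-eq}, and using mass conservation to absorb the $+1$ coming from $\partial_t(\rho\log\rho)$, gives
\begin{align*}
	\frac{d}{dt}\mcl{H}(\rho_t\|\si_t)
	&=\int (\mK^*{+}\mA_t^*)\rho_t\,\log(\rho_t/\si_t)\,d\nu
	-\int \rho_t\,\frac{(\mK^*{+}\mB_t^*)\si_t}{\si_t}\,d\nu.
\end{align*}
Adding and subtracting $\int \rho_t\,\frac{(\mK^*{+}\mA_t^*)\si_t}{\si_t}\,d\nu$ splits this into the \emph{entropy dissipation functional} of the common generator $\mL_t^*:=\mK^*{+}\mA_t^*$ evaluated at $(\rho_t,\si_t)$, plus a remainder $\int \rho_t\,\frac{(\mA_t^*{-}\mB_t^*)\si_t}{\si_t}\,d\nu$.

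\textbf{Dissipation is nonpositive.} Since $\mL_t^*$ is an adjoint Markov generator, the induced semigroup $\{e^{s\mL_t^*}\}_{s\ge0}$ satisfies the data processing inequality: $s\mapsto\mcl{H}(e^{s\mL_t^*}\rho\,\|\,e^{s\mL_t^*}\si)$ is nonincreasing for any two probability densities $\rho,\si$. Differentiating at $s=0$ yields
\[
\int (\mL_t^*\rho_t)\,\log(\rho_t/\si_t)\,d\nu-\int \rho_t\,\frac{\mL_t^*\si_t}{\si_t}\,d\nu\;\le\;0,
\]
which is exactly the dissipation piece. Hence $\tfrac{d}{dt}\mcl{H}(\rho_t\|\si_t)\le\int\rho_t\,h_t\,d\nu$ with $h_t:=(\mA_t^*-\mB_t^*)\si_t/\si_t$.

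\textbf{Gibbs and integration.} I then invoke the Gibbs variational (change-of-measure) inequality on $(\rho_t\,d\nu,\si_t\,d\nu)$: for $\eta>0$ and bounded measurable $f$,
$
\int f\,\rho_t\,d\nu\le\eta\log\int e^{f/\eta}\,\si_t\,d\nu+\eta\,\mcl{H}(\rho_t\|\si_t).
$
Taking $f=h_t$ produces exactly the integrand in the claim, and integrating the resulting differential inequality from $0$ to $t$ yields the asserted bound.

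\textbf{Main obstacle.} The main difficulty is rigor: legitimizing the time differentiation and ensuring the Gibbs step applies. The latter requires $h_t\in L^\infty(\nu)$ uniformly in $t\in[0,T]$, which in turn requires a strictly positive lower bound on $\si_t$. This is exactly where Proposition \ref{lem:log-bdd diff} enters: under \ref{K1}, \ref{K3}, and $\|\log\si_0\|_{L^\infty}<\infty$, it bounds $\De(\log\si_t)$ linearly in $t$; combined with $\int\si_t\,d\nu=1$ and $\nu(\Pi)<\infty$, this pins $\si_t$ between two positive constants on $[0,T]$. Hypothesis \ref{K3} then makes $(\mA_t^*-\mB_t^*)\si_t$ bounded, so $h_t\in L^\infty(\nu)$ uniformly. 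To justify differentiability and ensure $\rho_t\log(\rho_t/\si_t)\in L^1(\nu)$ throughout, I would first carry out the computation assuming $\rho_0\in D(\mK^*)$ with $\log\rho_0\in L^\infty$, so that Proposition \ref{lem:linear_wp} produces classical solutions and all integrals are manifestly finite; then approximate the general $(\rho_0,\si_0)$ via the logarithmic regularization of Lemma \ref{lem:log-approx} together with the $L^1$-continuity of the solution map, and pass to the limit in the integrated inequality, using lower semicontinuity of relative entropy to recover the bound for the original data.
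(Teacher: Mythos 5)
Your proposal matches the paper's argument step for step: formal differentiation of the entropy, adding and subtracting the common-generator term, the data-processing inequality for the dissipation of $\mL_t^*=\mK^*+\mA_t^*$, Gibbs' variational principle, and finally regularization via $D(\mK^*)$-valued classical solutions and Lemma~\ref{lem:log-approx}. The only soft spot is the last sentence: Lemma~\ref{lem:log-approx} requires $\|\log\rho_0\|_{L^\infty}<\infty$, which is strictly stronger than the hypothesis $\rho_0\log\rho_0\in L^1(\nu)$, so to cover the general case you need an extra approximation (the paper truncates $\rho_0$ to $\min\{\max\{\rho_0,n^{-1}\},n\}$ and renormalizes, which gives convergence of the initial entropy $\int\rho_0^{(n)}\log\rho_0^{(n)}\,d\nu\to\int\rho_0\log\rho_0\,d\nu$; lower semicontinuity alone does not let you pass to the limit on the right-hand side).
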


The integral inequality above follows directly from the combination of the next two lemmas.

\begin{lemma}\label{lem:int-hard}
	Assume the setting of Lemma~\ref{lem:int-ineq}. Then the following estimate holds:
	\begin{align}\label{int-hard}
		\mcl{H}(\rho_t\|\si_t)
		\le 
		\mcl{H}(\rho_0\|\si_0)
		+\int_0^t 
		\int_{\Pi} 
		\rho_s \,
		\frac{(\mA_s^*-\mB_s^*)\si_s}{\si_s}
		\,d\nu\, ds.
	\end{align}
\end{lemma}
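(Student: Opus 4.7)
The plan is to establish a differential version of the bound by differentiating $\mcl{H}(\rho_t\|\si_t)$ and then integrating in time. I would first strengthen the hypotheses to $\rho_0,\si_0\in D(\mK^*)\cap\mcp(\Pi)$ with $\|\log\rho_0\|_{L^\infty(\nu)}$ and $\|\log\si_0\|_{L^\infty(\nu)}$ both finite. Under these conditions, Proposition~\ref{lem:linear_wp} yields classical solutions $\{\rho_t\},\{\si_t\}\in C^1([0,T];L^1(\nu))$, Proposition~\ref{lem:density-preserve} keeps them probability densities, and Proposition~\ref{lem:log-bdd diff} (combined with assumption \ref{K3}) keeps $\log\rho_t$ and $\log\si_t$ uniformly bounded on $[0,T]$. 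In particular $\rho_t,\si_t$ are bounded above and away from zero, which legitimates the chain-rule manipulations to follow.

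Next I would differentiate the relative entropy. The chain rule, together with the mass-conservation identity $\int_\Pi\partial_t\rho_t\,d\nu=0$, gives
\[
\frac{d}{dt}\mcl{H}(\rho_t\|\si_t)
= \int_\Pi \partial_t\rho_t\,\log\!\mleft(\frac{\rho_t}{\si_t}\mright) d\nu
- \int_\Pi \rho_t\,\frac{\partial_t\si_t}{\si_t}\,d\nu.
\]
Substituting $\partial_t\rho_t=(\mK^*+\mA_t^*)\rho_t$ and $\partial_t\si_t=(\mK^*+\mB_t^*)\si_t$, and adding and subtracting the term $\int_\Pi\rho_t(\mA_t^*\si_t)/\si_t\,d\nu$, the right-hand side decomposes as
\[
D_{\mK^*}(\rho_t\|\si_t) \;+\; D_{\mA_t^*}(\rho_t\|\si_t)
\;+\; \int_\Pi \rho_t\,\frac{(\mA_t^*-\mB_t^*)\si_t}{\si_t}\,d\nu,
\]
where for any adjoint Markov generator $L^*$ I set
\[
D_{L^*}(\rho\|\si) := \int_\Pi (L^*\rho)\,\log\!\mleft(\frac{\rho}{\si}\mright) d\nu - \int_\Pi \rho\,\frac{L^*\si}{\si}\,d\nu.
\]

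The key analytic input is the \emph{entropy dissipation inequality} $D_{L^*}(\rho\|\si)\le 0$ for every adjoint Markov generator $L^*$ and every pair $\rho,\si$ of probability densities with bounded logs. I would derive it from the data-processing property: joint convexity of $(x,y)\mapsto x\log(x/y)$ together with Jensen's inequality applied to the dual Markov kernel gives the semigroup contraction $\mcl{H}(e^{tL^*}\rho\|e^{tL^*}\si)\le\mcl{H}(\rho\|\si)$ for all $t\ge 0$, and differentiating at $t=0^+$ produces exactly $D_{L^*}(\rho\|\si)\le 0$. Applied to $L^*=\mK^*$ and $L^*=\mA_t^*$, this kills both dissipation terms, leaving $\frac{d}{dt}\mcl{H}(\rho_t\|\si_t)\le\int_\Pi\rho_t(\mA_t^*-\mB_t^*)\si_t/\si_t\,d\nu$; integrating on $[0,t]$ proves \eqref{int-hard} under the regular hypotheses.

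Finally I would remove the extra regularity by approximation. Using Lemma~\ref{lem:log-approx}, preceded if necessary by a truncation step $\rho_0\mapsto\max(\min(\rho_0,n),1/n)$ (suitably renormalized) to achieve $\|\log\rho_0\|_\infty<\infty$, I construct $\rho_0^{(n)},\si_0^{(n)}\in D(\mK^*)\cap\mcp(\Pi)$ with bounded logs, $L^1$-convergent to $\rho_0,\si_0$, and with $\mcl{H}(\rho_0^{(n)}\|\si_0^{(n)})\to\mcl{H}(\rho_0\|\si_0)$ (justified by $\rho_0\log\rho_0\in L^1$ and $\log\si_0\in L^\infty$). Proposition~\ref{prop:cont_wp} transports the $L^1$-convergence uniformly to $[0,T]$; lower semicontinuity of $\mcl{H}$ bounds the left-hand side; the uniform control on $\log\si_t^{(n)}$ via Proposition~\ref{lem:log-bdd diff} and \ref{K3} makes the integrand on the right bounded uniformly in $n$, allowing passage to the limit by dominated convergence. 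The main obstacle I anticipate is the rigorous justification of $D_{L^*}(\rho\|\si)\le 0$: although morally immediate from data processing, one must carefully verify that the derivative at $t=0^+$ of $\mcl{H}(e^{tL^*}\rho\|e^{tL^*}\si)$ coincides with the algebraic expression $D_{L^*}(\rho\|\si)$, which requires the uniform log bounds to exchange differentiation with integration.
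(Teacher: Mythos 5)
Your proposal is correct and follows essentially the same route as the paper: differentiate the relative entropy for regularized initial data (classical solutions, bounded logs), invoke the semigroup form of the data-processing inequality to kill the dissipation terms, integrate, and then relax the hypotheses by the same truncation-and-mollification approximation the paper uses (Lemma~\ref{lem:log-approx} plus the $\max(\min(\rho_0,n),1/n)$ renormalization). The only (harmless) cosmetic difference is that you apply the dissipation inequality $D_{L^*}\le 0$ separately to $\mK^*$ and $\mA_t^*$, whereas the paper applies Lemma~\ref{lem:dpineq} once to the combined generator $\mcl{L}_t^*=\mK^*+\mA_t^*$; since $D$ is linear in $L^*$ these are equivalent, and both require the same technical input (the uniform log-bounds from Proposition~\ref{lem:log-bdd diff} to justify differentiating $\mcl{H}(e^{sL^*}\rho\|e^{sL^*}\si)$ at $s=0^+$, which you correctly flag as the delicate step).
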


\begin{lemma}[Gibbs' variational principle]\label{lem:gibbs}
	Let $\varphi:\Pi\to\mbr$ be a bounded measurable function, and let $\rho,\si\in\mcp(\Pi)$ be two probability measures. For every $\eta>0$, it holds that
	\begin{equation}
		\int_{\Pi} \varphi\, d\rho
		\;\le\;
		\eta\left(
		\mcl{H}(\rho\Vert \si)
		+\log \int_{\Pi} e^{\eta^{-1} \varphi}\, d\si
		\right).
	\end{equation}
\end{lemma}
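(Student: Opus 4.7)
The plan is to derive the inequality via a change-of-measure argument combined with the non-negativity of relative entropy; this is the standard route to Gibbs' variational principle. First I would dispose of the degenerate case: if $\rho$ is not absolutely continuous with respect to $\sigma$, then $\mcl{H}(\rho\|\sigma) = +\infty$ and the inequality holds trivially, since $\varphi$ is bounded and the exponential integral $\int_\Pi e^{\varphi/\eta}\,d\sigma$ is a finite positive number. So I restrict attention to the case $\rho \ll \sigma$ and set $f := d\rho/d\sigma \in L^1(\sigma)$.

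The core step is to introduce the tilted probability measure
\[
d\tilde\sigma := Z^{-1} e^{\varphi/\eta}\,d\sigma, \qquad Z := \int_\Pi e^{\varphi/\eta}\,d\sigma.
\]
Since $\varphi$ is bounded, $Z \in (0,\infty)$ and $\tilde\sigma$ is a well-defined probability measure equivalent to $\sigma$. A direct computation from the chain rule $d\rho/d\tilde\sigma = f \cdot Z \cdot e^{-\varphi/\eta}$ yields the exact identity
\[
\mcl{H}(\rho\|\tilde\sigma) \;=\; \mcl{H}(\rho\|\sigma) \;-\; \frac{1}{\eta}\int_\Pi \varphi\,d\rho \;+\; \log Z.
\]
By Jensen's inequality applied to the convex function $x \mapsto x\log x$, the relative entropy $\mcl{H}(\rho\|\tilde\sigma)$ between two probability measures is non-negative. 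Rearranging the identity and multiplying through by $\eta > 0$ produces the desired inequality.

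There is no genuine obstacle here; this is a classical result and the argument reduces to a one-line computation once the tilted reference measure is introduced. An equivalent approach that bypasses the auxiliary measure $\tilde\sigma$ is to invoke the pointwise Young-type inequality $xy \le x\log x - x + e^y$ (valid for $x \ge 0$ and $y \in \mbr$) with $x = f$ and $y = \varphi/\eta - \log Z$, and then integrate against $\sigma$; using $\int f\,d\sigma = 1$ together with the normalization $\int e^{\varphi/\eta - \log Z}\,d\sigma = 1$ recovers the same bound. The choice of whether to present the proof via a change of measure or via Young's inequality is purely stylistic.
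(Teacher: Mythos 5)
Your proof is correct. The paper itself does not supply a proof of this lemma—it simply cites the standard reference (Jabin–Wang, Lemma~1)—and your argument is precisely the classical change-of-measure proof of Gibbs' variational principle: the identity $\mathcal{H}(\rho\|\tilde\sigma)=\mathcal{H}(\rho\|\sigma)-\eta^{-1}\int\varphi\,d\rho+\log Z$ is computed correctly, nonnegativity of relative entropy closes the argument, the degenerate case $\rho\not\ll\sigma$ is handled, and your alternative via the pointwise Young inequality $xy\le x\log x - x + e^{y}$ is equally valid.
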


Combining the two lemmas above, we can now complete the proof of the main integral inequality.

\begin{proof}[Proof of Proposition~\ref{lem:int-ineq}]
	By Lemma~\ref{lem:int-hard}, the estimate~\eqref{int-hard} holds. 
	From Proposition \ref{lem:log-bdd diff}, we further have the uniform bound 
	\[
	\|\log\si_t\|_{L^\infty}\le \|\log\si_0\|_{L^\infty}+2Mt,\qquad M=\sup_{t\in[0,T]}\|\mK^*1+ \mA_t^*1\|_{L^\infty}. 
	\]
	We note $C<\infty$ by Conditions \ref{K1}, \ref{K3}. 
	Notice also that the function 
	\[
	\vphi_s := \frac{(\mA_s^*-\mB_s^*)\si_s}{\si_s}
	\]
	is bounded, since both $\log\si_s$ and $(\mA_s^*-\mB_s^*)\si_s$ are bounded by~\eqref{eq:unif-norm}. 
	Applying Lemma~\ref{lem:gibbs} to the inner integral in~\eqref{int-hard} with 
	$\vphi=\vphi_s$ and $d\si=\si_s\,d\nu$, we obtain the desired inequality. 
	Hence, Lemma~\ref{lem:int-ineq} follows.
\end{proof}

Let us now discuss the two lemmas used in the proof. 
Lemma~\ref{lem:gibbs} is the classical \emph{Gibbs variational principle} for the relative entropy; 
a proof can be found, for instance, in~\cite[Lemma~1]{Jabin_2018}. 
Lemma~\ref{lem:int-hard}, on the other hand, is a standard integral inequality 
frequently employed in the entropy method 
(see, e.g.,~\cite{lim2020quantitativepropagationchaosbimolecular}). 
A complete proof will be provided at the end of this section, 
while for the moment we give an informal justification to convey the main idea.

Let $W(t)$ denote the relative entropy between the densities $\rho_t$ and $\sigma_t$ for $t\in[0,T]$. 
Formally differentiating $W(t)$ and applying the product and chain rules, we obtain
\begin{align}
	W'(t)
	&= \frac{d}{dt} \int_{\Pi} \rho_t \bigl[\log(\rho_t)-\log(\sigma_t)\bigr]\,d\nu \nonumber \\
	&= \int_{\Pi} \partial_t \rho_t \log\!\left(\frac{\rho_t}{\sigma_t}\right)\,d\nu
	+ \int_{\Pi} \rho_t\!\left(\frac{\partial_t\rho_t}{\rho_t}-\frac{\partial_t\sigma_t}{\sigma_t}\right)\!d\nu \nonumber \\
	&= \int_{\Pi} \mcl{L}_t^* \rho_t \log\!\left(\frac{\rho_t}{\sigma_t}\right)\,d\nu
	+ \int_{\Pi} \rho_t\!\left(\frac{\mcl{L}_t^*\rho_t}{\rho_t}
	-\frac{\mcl{M}_t^* \sigma_t}{\sigma_t}\right)\!d\nu,\label{eq:calc1}
\end{align}
where we used the equations~\eqref{eq:2fp-eq} and the abbreviations
$\mcl{L}_t^*=\mK^*+\mA_t^*$ and $\mcl{M}_t^*=\mK^*+\mB_t^*$.
We further compute
\begin{align*}
	W'(t)
	&= \int_{\Pi} \mcl{L}_t^* \rho_t \log\!\left(\frac{\rho_t}{\sigma_t}\right)\,d\nu
	+ \int_{\Pi} \rho_t\!\left(\frac{\mcl{L}_t^*\rho_t}{\rho_t}
	-\frac{\mcl{L}_t^* \sigma_t}{\sigma_t}\right)\!d\nu
	+ \int_{\Pi} \rho_t\!\left[\frac{(\mcl{L}_t^*-\mcl{M}_t^*)\sigma_t}{\sigma_t}\right]\!d\nu.
\end{align*}
By the data-processing inequality (see Lemma~\ref{lem:dpineq}), 
the sum of the first two integrals is nonpositive. 
Since $\mcl{L}_t^*-\mcl{M}_t^*=\mA_t^*-\mB_t^*$, we obtain
\begin{align}\label{ineq:diff-bound}
	W'(t)
	&\le \int_{\Pi} \rho_t\!\left[\frac{(\mA_t^*-\mB_t^*)\sigma_t}{\sigma_t}\right]\!d\nu,
\end{align}
which represents the differential form of~\eqref{int-hard}.

The rigorous proof follows the same line of computation. 
The main technical difficulty lies in extending the above formal estimate 
to the setting of mild solutions $\{\rho_t\}$ and $\{\sigma_t\}$, 
for which time differentiability is not guaranteed.

\subsection{Set up of integral inequality}\label{subsec:4.2}
The coming four subsections are devoted to the proof of Theorem~\ref{main:ec-Nmf}. 
We begin by recalling the setting of the theorem. 
Fix $N \ge 1$, the number of particles in the system. 
Let $(\Pi,\nu)$ be a finite measure space with its $L^1$-space denoted as $L^1(\nu)=L^1(\Pi,\nu)$, and consider the $N$-fold product measure space
\begin{align*}
	(\Pi^N,\bs\nu), \qquad \bs\nu = \bs\nu_N := \nu^{\otimes N},\qquad L^1(\bs\nu)=L^1(\Pi^N,\bs\nu).
\end{align*}
Let $\mK^*$ be an adjoint Markov generator on $L^1(\nu)$. 
Let $\{\La(\mu)\}_{\mu\in \eP}$ be an $N$-particle empirical jump kernel, 
and let $\{\mA^*(\mu)\}_\mu$ denote adjoint Markov jump generator 
associated with the jump generator $\{\La(\mu)\}_\mu$. 
We further write $\{\bar\mA^*(\mu)\}_\mu$ for the corresponding averaged mean-field jump generators. 
See Definitions~\ref{def:mfN} and~\ref{def:avg-mf} for the precise formulations of these notions. 
Throughout the proof, we shall omit any super- or subscript involving $N$ in densities, generators, and related quantities, 
as $N$ will remain fixed for the entire argument.

Let $\bL^* = \bK^* + \bA^*$ be the adjoint generator associated with the $N$-particle system; 
see Definition~\ref{def:superposition}. 
By Proposition~\ref{prop:N-part-gen}, it generates an adjoint Markov semigroup on $L^1(\bs\nu)$. 
Fix an initial density $\brho_0 \in L^1(\bs\nu)$, and for $t \ge 0$ define
\begin{align*}
	\bs\rho_t = e^{t\bL^*}\brho_0 \in L^1(\bs\nu).
\end{align*}
The curve $\{\bs\rho_t\}_{t \ge 0} \in C([0,\infty); L^1(\bs\nu) \cap \mcp(\Pi^N))$ 
is then a mild solution to the evolution equation
\begin{align}\label{eq:A-gen}
	\partial_t \bs\rho_t = \bK^* \bs\rho_t + \bA^* \bs\rho_t, \qquad t > 0.
\end{align}

Next, fix an initial density $\bar\rho_0 \in L^1(\nu)$ with bounded logarithm $\|\log\bar\rho_0\|_{L^\infty(\nu)}<\infty$, and let $\{\bar\rho_t\}_{t \ge 0}$ denote a mild solution to the averaged mean-field evolution equation~\eqref{eq:Nmf}. 
Define the tensorized density $\bar\brho_t := \bar\rho_t^{\otimes N}$. 
We note that $\{\bar\brho_t\}_{t\ge 0}$ satisfies, in the mild sense, the evolution equation
\begin{align}\label{eq:tensor-eq}
	\partial_t \bar\brho_t 
	= \bK^* \bar\brho_t + \bbA^*(\bar\brho_t; \bar\rho_t), 
	\qquad t > 0,
\end{align}
where the adjoint Markov generator $\bbA^*(\mu)$ on $\Pi^N$, depending on the mean field $\mu \in \mcl{P}(\Pi)$, 
is defined analogously to $\bA_N^*$ in~\eqref{def:1act}–\eqref{def:kact}, 
as the $N$-fold tensorization of the averaged adjoint mean-field generator $\bar\mA^*(\mu)$:
\begin{align}\label{eq:tensorized2}
	\bbA^*(\mu) 
	= \sum_{k=1}^N \sigma_k^{-1}\big[\bar\mA^*(\mu) \otimes I_{N-1}\big]\sigma_k.
\end{align}

To see why the tensorized density $\{\bar\brho_t\}_{t\ge 0}$ satisfies~\eqref{eq:tensor-eq}, 
let us first assume that $\bar\rho_0 \in D(\mK^*)$. 
By Theorem \ref{thm:wp-Nmf}(ii), the curve $\{\bar\rho_t\}_{t\ge 0}$ is a classical solution to~\eqref{eq:Nmf}. 
Applying the product rule for time differentiation and using~\eqref{eq:Nmf} and~\eqref{eq:tensorized2}, we compute:
\begin{align*}
	\partial_t \bar\brho_t
	&= \sum_{\ell=1}^{N} 
	\bar\rho_t^{\otimes(\ell-1)} \otimes \partial_t \bar\rho_t \otimes \bar\rho_t^{\otimes(N-\ell)} \\
	&= \sum_{\ell=1}^{N} 
	\bar\rho_t^{\otimes(\ell-1)} \otimes [\mK^* \bar\rho_t + \bar\mA^*(\bar\rho_t; \bar\rho_t)] 
	\otimes \bar\rho_t^{\otimes(N-\ell)} \\
	&= \sum_{\ell=1}^{N} 
	\!\left(I^{\otimes(\ell-1)} \otimes \mK^* \otimes I^{\otimes(N-\ell)}\right)\! \bar\brho_t 
	+ \sum_{\ell=1}^{N} 
	\!\left(I^{\otimes(\ell-1)} \otimes \bar\mA^*(\bar\rho_t) \otimes I^{\otimes(N-\ell)}\right)\! \bar\brho_t \\
	&= \bK^* \bar\brho_t + \bs{\bar\mA}^*(\bar\brho_t; \bar\rho_t).
\end{align*}
Hence, $\{\bar\brho_t\}_{t \ge 0}$ satisfies~\eqref{eq:tensor-eq} in the classical sense, 
and therefore also in the mild sense. 
To extend the result to arbitrary initial data $\bar\rho_0 \in L^1(\nu)$, 
we use the density of $D(\mK^*)$ in $L^1(\nu)$ and the continuity of the solution operator 
$\bar\rho_0 \mapsto \{\bar\rho_t\}_{t \ge 0}$.

To proceed with the proof, we establish an integral inequality for the \emph{normalized relative entropy} between $\bsrho_t$ and $\bar\bsrho_t$:
\begin{align*}
	W(t)=W_N(t):=\frac{1}{N}\,\mcl{H}(\bsrho_t\|\bar\bsrho_t)
	=\frac{1}{N}\int_{\Pi^N} \bsrho_t \log\!\left(\frac{\bsrho_t}{\bar\bsrho_t}\right) d\bs\nu. 
\end{align*}
We will invoke Proposition~\ref{lem:int-ineq} with the setting $(\Pi^N,\bs\nu)$ in place of $(\Pi,\nu)$ and $\bK^*$, $\bA_t^*,\{ \bbA^*(\bar\rho_t)\}_t$ in place of $\mK^*,\{\mA_t^*\},\{\mB_t^*\}$. 

Let us verify that Conditions \ref{K1}--\ref{K3} hold under the present setting. 
For \ref{K1}, since \(1 \in D(\mK^*)\) and \(\|\mK^*1\|_{L^\infty(\nu)} < \infty\) as assumed in Theorem \ref{main:wp-mf}, it follows from the tensorization property of adjoint operators that \(1 \in D(\bK^*)\), with  
\[
\|\bK^*1\|_{L^\infty(\bs\nu)} \le N \|\mK^*1\|_{L^\infty(\nu)} < \infty.
\]
For \ref{K2} and \ref{K3}, note that \(\mA_t^* = \bA^*\) (a time-independent generator), which satisfies both conditions since \(\bA^*\) is a bounded operator from \(L^1(\bs\nu)\) to \(L^1(\bs\nu)\) and from \(L^\infty(\bs\nu)\) to \(L^\infty(\bs\nu)\); see the discussion surrounding \eqref{eq:bA-bdd}. 
Finally, the operator \(\mB_t^* = \bbA^*(\bar\rho_t)\) also satisfies the continuity and boundedness requirements in \ref{K2} and \ref{K3}. The corresponding estimates are analogous to those established in \eqref{eq:bA-bdd}.

Now invoking Proposition \ref{lem:int-ineq}, we obtain, for any $\eta>0$, the integral inequality
\begin{align}
	\label{eq-W entropy}
	W(t)\le W(0)+ \int_0^t \eta\,[ W(s)+\alpha(s) ]\,ds ,
\end{align}
where
\begin{align}
	\label{eq:bsF-intro}
	\alpha(s)&:=\frac{1}{N} 
	\log\int_{\Pi^N}\exp\!\left[\eta^{-1}\bs{F}_s(\bsx)\right] 
	\bar\bsrho_s(\bsx)\,d\bs\nu(\bsx),\\
	\bs{F}_s(\bsx)&:=\frac{\bA^*(\bar\bsrho_s)- \bbA^*(\bar\bsrho_s;\br_s)}{\bar\bsrho_s}(\bsx). \nonumber 
\end{align}
We specifically note here that the adjoint generator $\bK^*$ is cancelled in the expression of $\bs{F}_s$ above, due to the difference $\mA_t^*-\mB_t^*$. 

Let us further simplify the expression for $\bs{F}_s$. 
Using the tensorized structure of $\bar\bsrho_s$ and the adjoint generators $\bA^*$ and $\bbA^*(\mu)$ from \eqref{def:1act}, \eqref{def:kact} and \eqref{eq:tensorized2}, the expression of $\bs{F}_s$ can be rewritten as
\begin{align}\label{eq:bsF-def}
	\bs{F}_s(\bsx)= \sum_{k=1}^N 
	\frac{\big[\mA^*(\br_s;\mu_{-k})-\bar\mA^*(\br_s;\br_s)\big](x_k)}{\br_s(x_k)}, 
\end{align}
where $\mu_{-k}=\frac{1}{N-1}\sum_{j\ne k}\delta_{x_j}$ is the empirical measure of all particles except the $k$-th.

We shall further clarify the structure of each term appearing in the summation. 
For notational simplicity, we introduce abbreviations for the \((N-1)\)-fold product densities and measures, together with a slight abuse of notation:
\begin{align*}
	\bar\bsrho_s^{(-1)}(\bsy) &= \prod_{k=1}^{N-1}\bar\rho_s(y_k), 
	\qquad 
	\bs\nu^{(-1)} = \nu^{\otimes (N-1)}, 
	\qquad 
	d\bar\bsrho_s^{(-1)}(\bsy) = \bar\bsrho_s^{(-1)}(\bsy)\, d\bs\nu^{(-1)}(\bsy).
\end{align*}
For $s\ge 0$ we define two functions
\begin{align*}
	\Phi_s:\Pi\times \eP\to\mbr,\qquad 
	\bar\Phi_s:\Pi\to\mbr.
\end{align*}
by
\begin{align}
	\label{eq:Phi-def}
	\Phi_s(x,\mu)&:=\frac{\mA^*(\bar\rho_s;\mu)(x)}{\bar\rho_s(x)},\qquad 
	\bar\Phi_s(x) :=\int_{\Pi^{N-1}} \Phi_s(x,\mu(\bsy)) d\bar \bsrho_{s}^{(-1)}(\bsy)
\end{align}
That is, for each $x\in\Pi$, $\bar\Phi_s(x)$ represents the expectation of $\Phi_s(x,\mu)$ with respect to the empirical mean-field $\mu=\frac{1}{N-1}\sum_{k=1}^{N-1}\delta_{y_k}$ under the tensorized probability density $\bar\bsrho_{s}^{(-1)}$.  
From the definition of the averaged mean-field generator in \eqref{def:avg-mf}, we in fact have
\begin{align*}
	\bar\Phi_s(x)=\frac{\bar\mA^*(\bar\rho_s;\bar\rho_s)(x)}{\bar\rho_s(x)}.
\end{align*}
Consequently, $\bs{F}_s$ in \eqref{eq:bsF-def} can be expressed compactly as
\begin{align*}
	\bs{F}_s(\bsx)
	=\sum_{k=1}^N \big[\Phi_s(x_k,\mu_{-k}) -\bar\Phi_s(x_k)\big].
\end{align*}

\subsection{A Concentration Inequality for Exponential Integrals}
To proceed with bounding the exponential integral $\alpha(s)$ from \eqref{eq:bsF-intro}, we introduce a concentration inequality that will be instrumental in the estimation. 
Before doing so, let us consider a more general framework. 
Let $(\Pi,\rho)$ be a probability space, and 
\[
\Phi:\Pi\times \eP \to \mathbb{R}
\]
be a given function. 
We may interpret $\Phi$ as a function on the collective configuration $(x_1,x_2,\ldots,x_N)\in \Pi^N$, where it depends explicitly on the first coordinate $x_1$ and on the remaining coordinates $(x_2,\ldots,x_N)$ only through their empirical measure. 
As discussed earlier, the function $\Phi_s$ defined in \eqref{eq:Phi-def} is an example of such a function.
We shall next state a concentration inequality for functions of this type.

Let us next state the assumptions on $\Phi$ for the concentration inequality.
In what follows, we adopt the same convention as in Section~\ref{subsec:ent-chaos}. 
Whenever we write, for $x,y,z \in \Pi$,
\begin{align*}
	\Phi\!\left(z,\sigma + \tfrac{1}{N-1}\delta_x\right), \qquad 
	\Phi\!\left(z,\sigma' + \tfrac{1}{N-1}(\delta_x + \delta_y)\right),
\end{align*}
the second arguments in the expressions above are always understood as empirical measures in $\eP$. 
We impose the following assumptions: there exists a constant $C \ge 0$ such that, for each $z \in \Pi$ and measures $\sigma, \sigma'$, the following hold:

\begin{enumerate}[label=($\Phi$\arabic*)]
	\item \label{Phi0} (Bounded difference in the first argument).
	For every $\mu \in \eP$ and $x,x'\in \Pi$, it holds
	\[
	|\Phi(x;\mu)-\Phi(x';\mu)| \le C.
	\]

	\item \label{Phi1} (Centered in $x$).
	For every $\mu \in \eP$, it holds
	\[
	\int_\Pi \Phi(x,\mu) \, d\rho(x) = 0.
	\]

	\item \label{Phi2} (First-order bounded difference in measures).
	For any $x_1, x_1' \in \Pi$, if 
	\(
	\mu = \sigma + \tfrac{1}{N-1}\delta_{x_1}, 
	\mu^{(1)} = \sigma + \tfrac{1}{N-1}\delta_{x_1'},
	\)
	then
	\[
	|\Phi(z,\mu) - \Phi(z,\mu^{(1)})| \le \frac{C}{N-1}.
	\]

	\item \label{Phi3} (Second-order bounded difference in measures).
	For any $x_1, x_1', x_2, x_2' \in \Pi$, if
	\[
	\mu = \sigma + \tfrac{1}{N-1}(\delta_{x_1} + \delta_{x_2}), \quad
	\mu^{(1,2)} = \sigma + \tfrac{1}{N-1}(\delta_{x_1'} + \delta_{x_2'}),
	\]
	\[
	\mu^{(1)} = \sigma + \tfrac{1}{N-1}(\delta_{x_1'} + \delta_{x_2}), \quad
	\mu^{(2)} = \sigma + \tfrac{1}{N-1}(\delta_{x_1} + \delta_{x_2'}),
	\]
	then
	\[
	\big|\Phi(z,\mu) - \Phi(z,\mu^{(1)}) - \Phi(z,\mu^{(2)}) + \Phi(z,\mu^{(1,2)})\big| 
	\le \frac{C}{(N-1)(N-2)}.
	\]
\end{enumerate}

For $N \ge 1$, denote by $\brho_N = \rho^{\otimes N}$ the $N$-fold product measure of $\rho$. 
Define $\bar{\Phi} : \Pi \to \mathbb{R}$ by
\begin{align}\label{eq:compensator}
    \bar{\Phi}(x)
    = \int_{\Pi^{N-1}} \Phi\bigl(x, \mu(\bsy)\bigr)\, d\brho_{N-1}(\bsy),
\end{align}
that is, $\bar{\Phi}$ is the expectation of $\Phi(x,\mu)$ with respect to the empirical measure 
\(
\mu(\bsy) = \frac{1}{N-1}\sum_{k=1}^{N-1}\delta_{y_k}
\)
under the product measure $\brho_{N-1}$. 
The concentration inequality will be applied to the exponential integral with respect to $\brho_N$ of the function 
$\bs{F} = \bs{F}_\Phi : \Pi^N \to \mathbb{R}$ defined by the symmetrized superposition of 
$\Phi(x_1, \mu(\bsx_{-1})) - \bar{\Phi}(x_1)$:
\begin{align}
\label{eq:concen_func}
	\bs{F}_\Phi(\bsx)
	&= \sum_{k=1}^N \Bigl[
		\Phi\bigl(x_k, \mu(\bsx_{-k})\bigr) 
		- \bar{\Phi}(x_k)
	\Bigr].
\end{align}

With the above setup, we are now in a position to state the concentration inequality that governs the fluctuation of $\bs{F}_\Phi$ under the product measure $\brho_N$. The proof will be deferred to Section~\ref{sec:concen-ineq}.

\begin{lemma}\label{lem:concen}
Let $(\Pi,\rho)$ be a probability space and $N \ge 2$. 
Suppose $\Phi:\Pi\times \eP \to \mbr$ satisfies Conditions \ref{Phi0}--\ref{Phi3} for some constant $C \ge 0$.  
Define $\bs{F}_\Phi:\Pi^N \to \mbr$ by \eqref{eq:concen_func}, \eqref{eq:compensator}. 
Then there exists a universal constant $b>0$ independent of $N$, such that 
\begin{align*}
	\int_{\Pi^N} \exp\Bigl(\f{b}{C}\,|\bs{F}_\Phi(\bsx)| \Bigr) \, d\bs\rho_N(\bsx) \;\le\; 2.
\end{align*}
\end{lemma}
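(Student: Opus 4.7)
The plan is to apply the second-order concentration inequality of Götze and Sambale \cite{gotze2020second}, which delivers a $\psi_1$-type exponential integrability bound whenever a centered function of independent coordinates has a linear Hoeffding component of negligible size and a bilinear component with controlled discrete Hessian. To use it I verify: (a) $\bs F_\Phi$ is centered under $\brho_N$; (b) the first-order discrete difference of $\bs F_\Phi$ in each coordinate is $O(C)$ uniformly; (c) the second-order discrete difference in each pair of coordinates is $O(C/N)$ uniformly, so that the full discrete Hessian has Hilbert--Schmidt norm $O(C)$; and (d) the linear Hoeffding component of $\bs F_\Phi$ has $L^2$-norm $o(C)$ as $N\to\infty$, so only the bilinear part drives the final sub-exponential scale.

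Centering is immediate from \ref{Phi1}: for each $k$ and each fixed $\bsx_{-k}$, $\int_\Pi \Phi(x_k,\mu(\bsx_{-k}))\,d\rho(x_k)=0$, and Fubini applied to \eqref{eq:compensator} gives $\int_\Pi \bar\Phi(x)\,d\rho(x)=0$. For (b), the change of $\bs F_\Phi$ when $x_i$ is replaced by $x_i'$ splits into the diagonal summand $k=i$ (controlled by $2C$ via \ref{Phi0} applied to both $\Phi$ and $\bar\Phi$) and the off-diagonal summands $k\ne i$ (for each of which $\mu(\bsx_{-k})$ undergoes a single-atom shift, so \ref{Phi2} gives $C/(N-1)$ per term, summing to $O(C)$ over the $N-1$ indices). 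For (c), the mixed change under replacement of the pair $(x_i,x_j)$ with $i\ne j$ contributes $O(C/(N-1))$ from each $k\in\{i,j\}$ via \ref{Phi2} on the remaining shifted atom, whereas the dominant contributions $k\notin\{i,j\}$ see a genuine two-atom replacement inside $\mu(\bsx_{-k})$, and \ref{Phi3} yields the sharp bound $C/((N-1)(N-2))$ per summand, totalling $O(C/(N-1))$ over the $N-2$ such indices.

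For (d), the linear Hoeffding component is $f_1(\bsx)=\sum_i \psi(x_i)$ with $\psi(x):=\mathbb{E}[\bs F_\Phi\mid X_1=x]$. The diagonal contribution $k=1$ to $\psi(x)$ equals $\bar\Phi(x)-\bar\Phi(x)=0$ by the very definition of $\bar\Phi$, and each off-diagonal contribution is $O(C/(N-1))$ by \ref{Phi2}; hence $\|\psi\|_\infty=O(C/(N-1))$ and $\|f_1\|_2=O(C/\sqrt N)=o(C)$. Feeding the first- and second-order bounds from (b) and (c), together with the Hoeffding smallness from (d), into the second-order concentration inequality of \cite{gotze2020second} yields $\int_{\Pi^N}\exp(b|\bs F_\Phi|/C)\,d\brho_N\le 2$ for a universal $b>0$ independent of $N$.

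I expect the main obstacle to be step (c): without the genuine second-order bound \ref{Phi3}, two applications of \ref{Phi2} on the $k\notin\{i,j\}$ summands would give only $O(C/N)$ per pair, causing the total Hilbert--Schmidt Hessian to scale like $C\sqrt N$ rather than $C$ and ruining the $N$-independence of $b$. The delicate part of the argument is the combinatorial bookkeeping that isolates the two-atom pattern targeted by \ref{Phi3} on the dominant off-diagonal contributions, while handling the diagonal/off-diagonal boundary terms $k\in\{i,j\}$ and the single-coordinate terms against $\bar\Phi$ by the cruder bounds \ref{Phi0}--\ref{Phi2}.
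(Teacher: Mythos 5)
Your approach is the same as the paper's: invoke the G\"otze--Sambale second-order concentration inequality (Theorem~\ref{thm:GS}) and verify its hypotheses from Conditions~\ref{Phi0}--\ref{Phi3}. Steps~(a), (b), (c) are essentially the paper's verification of the boundedness of $\max_i\mathfrak{d}_i\bs F_\Phi$ and of the Hilbert--Schmidt norm of $\mathfrak{d}^{(2)}\bs F_\Phi$, with the same decomposition by summand index $k$ and the same use of \ref{Phi2} for the boundary terms $k\in\{i,j\}$ and of \ref{Phi3} for the dominant $k\notin\{i,j\}$ terms.

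There is, however, a concrete error in step~(d). You claim that each off-diagonal contribution to $\psi(x)=\mathbb{E}_{-1}[\bs F_\Phi]$ is $O(C/(N-1))$ by~\ref{Phi2}, leading only to $\|f_1\|_2=o(C)$. In fact, each off-diagonal contribution vanishes \emph{exactly}. For $k\ne 1$, the quantity $\mu(\bsx_{-k})$ does not depend on $x_k$; hence integrating out $x_k$ first, Condition~\ref{Phi1} gives $\int_\Pi\Phi(x_k,\mu(\bsx_{-k}))\,d\rho(x_k)=0$, and likewise $\int_\Pi\bar\Phi(x_k)\,d\rho(x_k)=0$ by Fubini and~\ref{Phi1}. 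So $\psi\equiv 0$, i.e.\ $\mathbb{E}_{-k}[\bs F_\Phi]=0$ for every $k$. This matters because Theorem~\ref{thm:GS} as cited requires the identity $\sum_{k}\mathbb{E}_{-k}[\bs F]=0$, not merely $\|f_1\|_2=o(C)$; the smallness you derived would not directly verify the hypothesis, and you would be left without a justification for applying the theorem to $\bs F_\Phi$ itself. The correction is straightforward --- replace the appeal to~\ref{Phi2} in step~(d) by the same \ref{Phi1}-cancellation you already used in step~(a) --- and then your argument lines up with the paper's.
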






\subsection{Verification of Conditions \ref{Phi0}--\ref{Phi3}}

We apply Lemma~\ref{lem:concen} to the function $\Phi=\Phi_s:\Pi\times \eP\to\mbr$ defined in~\eqref{eq:Phi-def}. 
Using the adjoint identity~\eqref{eq:adj-formula}, we may rewrite it as 
\begin{align}
	\label{eq:Phi_s simp}
	\Phi_s(x,\mu)
	= \frac{(\La^*(\mu)\bar\rho_s)(x)}{\bar\rho_s(x)} - \La(x,\Pi;\mu). 
\end{align}
To invoke Lemma~\ref{lem:concen}, it remains to verify that $\Phi_s$ satisfies Conditions~\ref{Phi0}--\ref{Phi3}. 

\smallskip

Throughout this verification, a key ingredient is the oscillation norm of the logarithm of the mean-field density $u_t := \log \bar\rho_t,$
which follows from Theorem~\ref{thm:wp-Nmf}\,(iii). Specifically, with $M=\|\mK^*1\|_{L^\infty}+M_\La+M_\La^*$, it holds 
\begin{align*}
	\De(u_t) \;\le\; \De(u_0) + 2M t,
	\qquad t\ge 0. 
\end{align*}
Consequently, for all $y,z\in \Pi$, we have 
\begin{align}
	\frac{\bar\rho_t(y)}{\bar\rho_t(z)}
	= \exp\!\big(u_t(y)-u_t(z)\big)
	\;\le\; \exp\!\big(\De(u_0) + 2Mt\big)
	\;=:\; B(t).
\end{align}

\subsubsection*{Condition~\ref{Phi0}}
Taking the $L^\infty$-norm on both sides of~\eqref{eq:Phi_s simp} and using \ref{A1p}, \ref{A2p}, we obtain for all $s\in[0,T]$,
\begin{align*}
	|\Phi_s(z,\mu)-\Phi_s(z',\mu)|
	&\le 2\|\Phi_s(\cdot,\mu)\|_{L^\infty(\nu)}
	\le 2\Bigg[\left\|\int_\Pi 
	\frac{\bar\rho_s(y)}{\bar\rho_s(z)}\,\Lambda^*(z,dy;\mu)\right\|_{L^\infty(\nu)}
	+ \|\Lambda(\mu)\|_{\mathcal J}\Bigg] \\
	&\le 2(B(T)\,\|\Lambda^*(\mu)\|_{\mathcal J}
	+ \|\Lambda(\mu)\|_{\mathcal J})
	\le 2(B(T)M_\La + M_\La^*).
\end{align*}  
Condition~\ref{Phi0} follows.

\subsubsection*{Condition~\ref{Phi1}}
For any $\mu\in\eP$, the operator $\mA^*(\mu)$ is a bounded adjoint Markov generator. Hence, for every $\rho\in L^1(\nu)$, it satisfies
\(
\int_\Pi \mA^*(\rho;\mu)\,d\nu = 0.
\)
Applying this property to $\rho=\bar\rho_s$, we have
\begin{align*}
	\int_\Pi \Phi(x,\mu)\,\bar\rho_s(x)\,d\nu(x)
	&= \int_\Pi 
	\frac{\mA^*(\bar\rho_s;\mu)(x)}{\bar\rho_s(x)}\,\bar\rho_s(x)\,d\nu(x)
	= \int_\Pi \mA^*(\bar\rho_s;\mu)(x)\,d\nu(x)
	= 0.
\end{align*}
Hence, Condition~\ref{Phi1} is verified.

\subsubsection*{Condition~\ref{Phi2}}
For $z,x_1,x_1'\in \Pi$, let 
$\mu,\mu^{(1)}$ be defined as in Assumption~\ref{A3}. 
We compute, for all $s\in[0,T]$,
\begin{align*}
	|\Phi_s(z,\mu)-\Phi_s(z,\mu^{(1)})|
	&= \left|\frac{\big[(\Lambda^*(\mu)-\Lambda^*(\mu^{(1)}))\bar\rho_s\big](z)}{\bar\rho_s(z)}
	-\big(\Lambda(z,\Pi;\mu)-\Lambda(z,\Pi;\mu^{(1)})\big)\right| \\
	&\le \left|\int_\Pi 
	\frac{\bar\rho_s(y)}{\bar\rho_s(z)}\,
	\big(\Lambda^*(z,dy;\mu)-\Lambda^*(z,dy;\mu^{(1)})\big)\right|
	+|\Lambda(z,\Pi;\mu)-\Lambda(z,\Pi;\mu^{(1)})| \\
	&\le B(T)\,\|\Lambda^*(\mu)-\Lambda^*(\mu^{(1)})\|_{\mathcal J}
	+ \|\Lambda(\mu)-\Lambda(\mu^{(1)})\|_{\mathcal J}.
\end{align*}
By Assumption~\ref{A3}, it follows that
\begin{align*}
	|\Phi_s(z,\mu)-\Phi_s(z,\mu^{(1)})|
	\le \frac{(B(T)+1)\Theta}{N-1},
\end{align*}
verifying Condition~\ref{Phi2}.

\subsubsection*{Condition~\ref{Phi3}}
Let $z,x_1,x_2,x_1',x_2'\in\Pi$ and 
$\mu,\mu^{(1)},\mu^{(2)},\mu^{(1,2)}$ 
be defined as in Assumption~\ref{A4}. 
Using the same computation as above and applying Assumption~\ref{A4}, we obtain for all $s\in[0,T]$,
\begin{align*}
	&\qquad |\Phi_s(z,\mu)-\Phi_s(z,\mu^{(1)})-\Phi_s(z,\mu^{(2)})+\Phi_s(z,\mu^{(1,2)})| \\
	&\le B(T)\,
	\|\Lambda^*(\mu)-\Lambda^*(\mu^{(1)})-\Lambda^*(\mu^{(2)})+\Lambda^*(\mu^{(1,2)})\|_{\mathcal J} \\
	&\qquad
	+\|\Lambda(\mu)-\Lambda(\mu^{(1)})
-\Lambda(\mu^{(2)})+\Lambda(\mu^{(1,2)})\|_{\mathcal J}\le \frac{(B(T)+1)\Theta}{(N-1)(N-2)},
\end{align*}
which establishes Condition~\ref{Phi3}.

We have verified that Conditions~\ref{Phi0}--\ref{Phi3} hold with the uniform constant
\[
C = C_T := \max\!\left\{\,2B(T)M_\La+M_\La^*,\,\Theta(B(T)+1)\,\right\}.
\]
Invoking Lemma~\ref{lem:concen}, there exists $b>0$ such that 
\begin{align*}
	\alpha(s)
	= \frac{1}{N}\log\!\int_{\Pi^N}
	\exp\!\left(\frac{b}{C_T}\,\bs{F}_s(\bsx)\right)
	\bs\rho_s(\bsx)\,d\bs\nu(\bsx)
	\le \frac{\log 2}{N}.
\end{align*}

\subsection{Conclusion}\label{subsec:4.5}
Set $\be=\be_T=\frac{b}{C_T}$. 
Inserting the bound for $\alpha(s)$ obtained above into the inequality \eqref{eq-W entropy}, we derive the integral inequality: for $t\in[0,T]$,
\begin{align*}
W(t)\le W(0)+ \int_0^t \left[\beta W(s) +\frac{\log2}{N}\right] ds.
\end{align*}
Apply Gronwall's inequality, we finally conclude the proof of Theorem \ref{main:ec-Nmf}:
\begin{align*}
 W(t)\leq W(0)e^{\beta t}+\frac{\log2}{N}\left(\frac{e^{\beta t}-1}{\beta}\right) ,\qquad t\in[0,T]  .
\end{align*}
We remark that the constant $\be$ depends on $T,\|\mK^*1\|_{L^\infty}, M_\La,M_\La^*,\Theta$ and $\De(\log\bar\rho_0)$.

\subsection{Proof of Lemma \ref{lem:int-hard}}\label{subsec:int-hard}
Let us return to the proof of Lemma~\ref{lem:int-hard}. 
As seen in \eqref{eq:calc1}, the argument rests on differentiating the entropy functional along a time-dependent density. 
To make this rigorous we interpret time derivatives in the Banach space \(L^1(\nu)\). 
Concretely, given a curve \(f:[0,T]\to L^1(\nu)\) we say that \(f\) is differentiable at \(t\in[0,T)\) with derivative \(\partial_t f_t\in L^1(\nu)\) if
\[
\partial_t f_t \;=\; \lim_{h\to0}\frac{f_{t+h}-f_t}{h}\qquad\text{in }L^1(\nu),
\]
and we adopt the right-hand derivative at \(t=0\). 
We denote by \(C^1([0,T];L^1(\nu))\) the class of curves \(f\) for which the derivative exists for every \(t\in[0,T]\) (interpreted as the right-hand derivative at \(t=0\)) and the map \(t\mapsto\partial_t f_t\) is continuous as an \(L^1(\nu)\)-valued function. 

With this convention, all formal manipulations in \eqref{eq:calc1} are understood as equalities in \(L^1(\nu)\) (or equivalently, after integrating against a test function). 
In particular, differentiation rules such as linearity, the product rule, and the chain rule may be rigorously justified for curves that are continuously differentiable in \(L^1(\nu)\). 
For later use, we record the following lemma. 

\begin{lemma}\label{lem:derivative}
	Let \(\{f_t\}_{t\in[0,T)}\) and \(\{g_t\}_{t\in[0,T)}\) be elements of \(C^1([0,T);L^1(\nu))\).
	
	(i) Assume that
	\[
	\sup_{t\in[0,T)}\|f_t\|_{L^\infty(\nu)} < \infty
	\quad \text{and} \quad
	\sup_{t\in[0,T)}\|g_t\|_{L^\infty(\nu)} < \infty.
	\]
	Then the product curve \(\{f_t g_t\}_{t\in[0,T)}\) belongs to \(C^1([0,T);L^1(\nu))\), and its derivative is given by
	\[
	\partial_t (f_t g_t)
	= (\partial_t f_t)\, g_t + f_t\, (\partial_t g_t)
	\quad \text{for all } t \in [0,T).
	\]
	
	(ii) Let \(\Phi:\mathbb{R}\to\mathbb{R}\) be a continuously differentiable function with bounded derivative, \(\|\Phi'\|_{L^\infty}<\infty\).
	Then the composition curve \(\{\Phi(f_t)\}_{t\in[0,T)}\) belongs to \(C^1([0,T);L^1(\nu))\), and it satisfies
	\[
	\partial_t \Phi(f_t) = \Phi'(f_t)\, \partial_t f_t
	\quad \text{for all } t \in [0,T).
	\]
\end{lemma}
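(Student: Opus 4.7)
The plan is to establish both statements directly from the definition of the $L^1(\nu)$-derivative: for each fixed $t$, I write the relevant difference quotient, subtract the claimed limit, and show that the error converges to $0$ in $L^1(\nu)$. Continuity of the derivative curve in $t$, and hence membership in $C^1([0,T);L^1(\nu))$, is then obtained by rerunning the same scheme.

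For part (i), I decompose
\[
\frac{f_{t+h}g_{t+h}-f_tg_t}{h}-\bigl[(\partial_tf_t)g_t+f_t(\partial_tg_t)\bigr]=E_h^{(1)}+E_h^{(2)}+E_h^{(3)},
\]
where $E_h^{(1)}=\bigl[(f_{t+h}-f_t)/h-\partial_tf_t\bigr]g_t$, $E_h^{(2)}=f_{t+h}\bigl[(g_{t+h}-g_t)/h-\partial_tg_t\bigr]$, and $E_h^{(3)}=(f_{t+h}-f_t)\,\partial_tg_t$. The first two error terms are immediate: H\"older's inequality with the uniform $L^\infty$-bounds on $g_t$ and on $f_{t+h}$ yields $\|E_h^{(j)}\|_{L^1}\to 0$ for $j=1,2$. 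The delicate term is $E_h^{(3)}$, because $\partial_tg_t$ lies only in $L^1(\nu)$ while $f_{t+h}-f_t\to 0$ only in $L^1(\nu)$, so neither factor has a usable uniform $L^\infty$-bound on its own. I would handle it by density: given $\varepsilon>0$, pick $\psi\in L^\infty(\nu)$ with $\|\partial_tg_t-\psi\|_{L^1}<\varepsilon$ and use $\|f_{t+h}-f_t\|_{L^\infty}\le 2M$ with $M:=\sup_{s\in[0,T)}\|f_s\|_{L^\infty}$ to get
\[
\|E_h^{(3)}\|_{L^1}\le\|\psi\|_{L^\infty}\,\|f_{t+h}-f_t\|_{L^1}+2M\varepsilon.
\]
Letting $h\to0$ and then $\varepsilon\to0$ gives $\|E_h^{(3)}\|_{L^1}\to 0$.

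For part (ii), I invoke the pointwise fundamental theorem of calculus to write, almost everywhere,
\[
\Phi(f_{t+h}(x))-\Phi(f_t(x))=\bigl(f_{t+h}(x)-f_t(x)\bigr)\int_0^1\Phi'\bigl(f_t(x)+\theta(f_{t+h}-f_t)(x)\bigr)\,d\theta,
\]
and split $\tfrac{\Phi(f_{t+h})-\Phi(f_t)}{h}-\Phi'(f_t)\partial_tf_t$ into the easy piece $\Phi'(f_t)[(f_{t+h}-f_t)/h-\partial_tf_t]$, controlled in $L^1$ by $\|\Phi'\|_{L^\infty}$, and a remainder $A_h\cdot(f_{t+h}-f_t)/h$, where
\[
A_h:=\int_0^1\bigl[\Phi'\bigl(f_t+\theta(f_{t+h}-f_t)\bigr)-\Phi'(f_t)\bigr]\,d\theta.
\]
The two key observations are that $|A_h|\le 2\|\Phi'\|_{L^\infty}$ almost everywhere, and that, by extracting an a.e.-convergent subsequence from $f_{t+h}\to f_t$ in $L^1(\nu)$ and using continuity of $\Phi'$, one obtains $A_h\to 0$ in every $L^p(\nu)$ with $p<\infty$ (the finiteness of $\nu$ is used here for dominated convergence). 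The same density device as in part (i), now applied to $\partial_tf_t$, converts $A_h\cdot(f_{t+h}-f_t)/h$ into $A_h\cdot\psi$ plus a controlled remainder, and the former tends to $0$ in $L^1$ because $\|A_h\|_{L^1}\to 0$.

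The main obstacle in both parts is the same, and is precisely the term $E_h^{(3)}$ (and its analogue in part (ii)): a product whose one factor is only in $L^1$ cannot be estimated by H\"older with an $L^\infty$-bound on the other factor. The unified resolution is the density approximation of the $L^1$-factor by $L^\infty$-functions, which trades the bad product for a manageable one plus a remainder controlled by the uniform $L^\infty$-bounds coming from the hypotheses. Once the derivative is established, the continuity of the derivative curves $t\mapsto\partial_t(f_tg_t)$ and $t\mapsto\Phi'(f_t)\partial_tf_t$ in $L^1(\nu)$ follows by applying the same estimates to time-increments rather than to $h$, completing the proof that both curves lie in $C^1([0,T);L^1(\nu))$.
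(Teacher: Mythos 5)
Your proof is correct. The decomposition in part~(i) into $E_h^{(1)},E_h^{(2)},E_h^{(3)}$, with $E_h^{(1)},E_h^{(2)}$ handled by H\"older against the uniform $L^\infty$ bounds and $E_h^{(3)}$ handled by approximating $\partial_t g_t$ with $\psi\in L^\infty$, is a clean and complete argument; the same device then disposes of $A_h\cdot(f_{t+h}-f_t)/h$ in part~(ii) once the uniform bound $|A_h|\le 2\|\Phi'\|_{L^\infty}$ and the dominated-convergence limit $A_h\to 0$ in $L^p$ for $p<\infty$ are in place (for the latter, you should make explicit the standard ``every subsequence has a further subsequence'' argument to upgrade a.e.\ convergence along a subsequence to convergence of the full net $h\to0$, but this is routine). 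Your closing remark about continuity of the derivative curves is also correct, though one should record that this again requires the same $L^\infty$-approximation device applied to $\partial_t f_t$ (resp.\ $\partial_t g_t$) at the fixed time $t$, and for part~(ii) it uses once more that $\Phi'(f_s)\to\Phi'(f_t)$ in $L^p$, $p<\infty$.

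The paper itself omits the proof of this lemma, asserting only that it ``follows from standard arguments based on the Vitali convergence theorem.'' Your route is a genuine alternative: instead of controlling the difference quotient via uniform integrability and convergence in measure (Vitali), you substitute density of $L^\infty(\nu)$ in $L^1(\nu)$ together with H\"older's inequality. Both hinge on $\nu$ being finite, but the density approach has the merit of producing explicit quantitative estimates with an $\varepsilon$-parameter, which makes each step verifiable by inspection and avoids having to check uniform integrability of the families $\{(f_{t+h}-f_t)\partial_t g_t\}_h$ and $\{A_h(f_{t+h}-f_t)/h\}_h$ separately. The Vitali approach is perhaps shorter to state once the machinery is in place, but your version is self-contained and equally elementary.
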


The proof of Lemma~\ref{lem:derivative} is elementary and follows from standard arguments based on the Vitali convergence theorem; hence, we omit it here.  
As a direct application, we obtain the following regularity property for the relative entropy functional.

\begin{lemma}\label{lem:ent-reg}
	Let \(\{\rho_t\}_{t\in[0,T)}\) and \(\{\sigma_t\}_{t\in[0,T)}\) be curves in \(C^1([0,T);L^1(\nu))\) consisting of probability densities. 
	Assume that
	\[
	\sup_{t\in[0,T)}\|\log\rho_t\|_{L^\infty(\nu)} < \infty,
	\qquad
	\sup_{t\in[0,T)}\|\log\sigma_t\|_{L^\infty(\nu)} < \infty.
	\]
	Then the map \(t \mapsto \rho_t \log(\rho_t / \sigma_t)\) belongs to \(C^1([0,T);L^1(\nu))\).  
	Moreover, the entropy functional
	\[
	W(t) := \int_{\Pi} \rho_t \log\Bigl(\frac{\rho_t}{\sigma_t}\Bigr) \, d\nu
	\]
	is differentiable in time, with derivative given by
	\[
	W'(t)
	= \int_{\Pi} \partial_t \rho_t \,
	\log\Bigl(\frac{\rho_t}{\sigma_t}\Bigr)\, d\nu
	+ \int_{\Pi} \rho_t
	\left( \frac{\partial_t \rho_t}{\rho_t}
	- \frac{\partial_t \sigma_t}{\sigma_t} \right)
	d\nu.
	\]
\end{lemma}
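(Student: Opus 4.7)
The plan is to reduce everything to the previously established Lemma \ref{lem:derivative}. The central observation is that the $L^\infty$-bounds on $\log\rho_t$ and $\log\sigma_t$ confine both densities, $\nu$-a.e.\ and uniformly in $t$, to the compact interval $[e^{-M},e^M]$, where
\[
M := \max\!\Bigl(\sup_{t\in[0,T)}\|\log\rho_t\|_{L^\infty(\nu)},\,\sup_{t\in[0,T)}\|\log\sigma_t\|_{L^\infty(\nu)}\Bigr)<\infty.
\]
On this interval the functions $u\mapsto\log u$ and $u\mapsto u\log u$ are smooth with bounded derivatives. I would first construct $C^1$ extensions $\widetilde\Phi_1,\widetilde\Phi_2:\mbr\to\mbr$ that agree with $\log u$ and $u\log u$ on $[e^{-M},e^M]$ respectively, each with globally bounded derivative (a linear extrapolation outside the interval suffices). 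This is the only moderately technical step; everything afterwards is bookkeeping.

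Since $\rho_t(x)\in[e^{-M},e^M]$ for $\nu$-a.e.\ $x$ and all $t$, the $L^1(\nu)$-identities $\widetilde\Phi_1(\rho_t)=\log\rho_t$ and $\widetilde\Phi_2(\rho_t)=\rho_t\log\rho_t$ hold pointwise in time, and the analogous identities hold with $\sigma_t$ in place of $\rho_t$. Applying Lemma \ref{lem:derivative}(ii) to the truncated $\widetilde\Phi_i$ would therefore place the curves $\{\log\rho_t\},\{\log\sigma_t\},\{\rho_t\log\rho_t\}$ in $C^1([0,T);L^1(\nu))$, with the expected chain-rule derivatives
\[
\partial_t\log\rho_t=\frac{\partial_t\rho_t}{\rho_t},\qquad \partial_t\log\sigma_t=\frac{\partial_t\sigma_t}{\sigma_t},\qquad \partial_t(\rho_t\log\rho_t)=(1+\log\rho_t)\,\partial_t\rho_t.
\]
Next, because $\|\rho_t\|_{L^\infty(\nu)}\le e^M$ and $\|\log\sigma_t\|_{L^\infty(\nu)}\le M$ are both finite uniformly in $t$, Lemma \ref{lem:derivative}(i) would apply to the product curve $\{\rho_t\log\sigma_t\}$, placing it in $C^1([0,T);L^1(\nu))$ with derivative $(\partial_t\rho_t)\log\sigma_t+\rho_t(\partial_t\sigma_t)/\sigma_t$.

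Combining these pieces through the decomposition $\rho_t\log(\rho_t/\sigma_t)=\rho_t\log\rho_t-\rho_t\log\sigma_t$ yields the first assertion of the lemma, and a short algebraic rearrangement of the resulting derivative (using $(1+\log\rho_t)-\log\sigma_t=\log(\rho_t/\sigma_t)+1$) produces the pointwise-in-time $L^1(\nu)$-identity
\[
\partial_t\Bigl[\rho_t\log\tfrac{\rho_t}{\sigma_t}\Bigr]=\partial_t\rho_t\,\log\tfrac{\rho_t}{\sigma_t}+\rho_t\Bigl(\tfrac{\partial_t\rho_t}{\rho_t}-\tfrac{\partial_t\sigma_t}{\sigma_t}\Bigr).
\]
The differentiability of $W(t)$ with the stated formula then follows immediately from the fact that integration $f\mapsto\int_\Pi f\,d\nu$ is a bounded linear functional on $L^1(\nu)$ and therefore commutes with the $L^1$-valued time derivative. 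I do not anticipate any genuine obstacle; the only care required is in the truncation/extension step above, which is standard.
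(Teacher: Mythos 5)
Your proposal is correct and fills in exactly the details that the paper omits: the paper states Lemma \ref{lem:ent-reg} as "a direct application" of Lemma \ref{lem:derivative} without supplying a proof, and your truncation-and-extension argument is precisely the bookkeeping that this phrase gestures at. The only point worth making explicit — which you do — is that $\widetilde\Phi_i$ must be defined and $C^1$ with bounded derivative on all of $\mathbb{R}$ (since that is the hypothesis of Lemma \ref{lem:derivative}(ii)), which the linear extrapolation outside $[e^{-M},e^{M}]$ handles; since $\rho_t,\sigma_t\in[e^{-M},e^{M}]$ $\nu$-a.e.\ uniformly in $t$, the extension is irrelevant on the support, and the chain-rule identities you write are the ones that hold $\nu$-a.e. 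Your algebra combining $\partial_t(\rho_t\log\rho_t)=(1+\log\rho_t)\partial_t\rho_t$ and $\partial_t(\rho_t\log\sigma_t)=(\partial_t\rho_t)\log\sigma_t+\rho_t\,\partial_t\sigma_t/\sigma_t$ recovers the stated formula for $W'(t)$ correctly, and passing the $L^1$-valued derivative through the bounded linear functional $\int_\Pi(\cdot)\,d\nu$ is standard.
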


With the preparatory results above, we can now rigorously justify the 
\emph{data processing inequality} used in the proof.

\begin{lemma}[Data processing inequality]\label{lem:dpineq}
	Let \(\mcl{L}^*\) be an adjoint Markov generator on \(L^1(\nu)\), and 
	let \(\rho, \sigma \in D(\mcl{L}^*)\) be two probability densities. 
	Define \(\rho_t = e^{t\mcl{L}^*}\rho\) and \(\sigma_t = e^{t\mcl{L}^*}\sigma\), and assume that 
	the maps \(t \mapsto \log(\rho_t)\) and \(t \mapsto \log(\sigma_t)\) are uniformly 
	bounded in \(L^\infty(\nu)\) for \(t \in [0,1]\).
	Then the following inequality holds:
	\[
	\int_{\Pi} \mcl{L}^*\rho \, 
	\log\!\Bigl(\frac{\rho}{\sigma}\Bigr)\, d\nu 
	+ 
	\int_{\Pi} 
	\rho \left(
	\frac{\mcl{L}^*\rho}{\rho} - \frac{\mcl{L}^*\sigma}{\sigma}
	\right)
	d\nu 
	\;\le\; 0.
	\]
\end{lemma}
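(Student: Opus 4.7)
Set $W(t):=\mcl{H}(\rho_t\|\sigma_t)$ for $t\in[0,1]$. My plan is to identify the left-hand side of the claim with $W'(0)$ and then establish the monotonicity $W(t)\le W(0)$, which immediately yields $W'(0^+)\le 0$ and hence the inequality.

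For the first step, I would invoke Lemma~\ref{lem:ent-reg}. Since $\rho,\sigma\in D(\mcl{L}^*)$, standard $C_0$-semigroup theory applied to $\{e^{t\mcl{L}^*}\}$ on $L^1(\nu)$ guarantees that $\{\rho_t\},\{\sigma_t\}\in C^1([0,1];L^1(\nu))$ with derivatives $\mcl{L}^*\rho_t$ and $\mcl{L}^*\sigma_t$; combined with the hypothesis that $\log\rho_t,\log\sigma_t$ are uniformly bounded in $L^\infty(\nu)$ on $[0,1]$, the hypotheses of Lemma~\ref{lem:ent-reg} are satisfied. The lemma then gives
\[
W'(t)=\int_\Pi \mcl{L}^*\rho_t\log\!\bigl(\rho_t/\sigma_t\bigr)\,d\nu+\int_\Pi \rho_t\!\left(\frac{\mcl{L}^*\rho_t}{\rho_t}-\frac{\mcl{L}^*\sigma_t}{\sigma_t}\right)d\nu,
\]
and specializing to $t=0$ recovers exactly the expression we want to bound.

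For the second step, the monotonicity $W(t)\le W(0)$ is the classical \emph{data processing inequality}. Since $\{e^{t\mcl{L}^*}\}_{t\ge 0}$ is by assumption an adjoint Markov semigroup on $L^1(\nu)$, its formal dual is a positive, identity-preserving Markov semigroup $\{T_t\}_{t\ge 0}$ on $\mcl{B}_b(\Pi)$, represented by transition kernels $\kappa_t(x,dy)$. By duality, $\rho_t\,d\nu=(\rho\,d\nu)\kappa_t$ and $\sigma_t\,d\nu=(\sigma\,d\nu)\kappa_t$. The classical inequality $\mcl{H}(\mu K\|\eta K)\le \mcl{H}(\mu\|\eta)$ for any probability measures $\mu,\eta$ and Markov kernel $K$---a direct consequence of Jensen's inequality applied to the convex function $u\mapsto u\log u$---applied with $K=\kappa_t$ then yields $W(t)\le W(0)$, as desired.

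The main obstacle I foresee is the rigorous identification of the dual Markov transition kernel $\kappa_t$ in the abstract framework of adjoint Markov semigroups used in this paper, since the axioms (positivity, mass conservation, $C_0$ on $L^1(\nu)$) produce only a weak-$\ast$-continuous dual semigroup on $L^\infty(\nu)$; turning this into a bona fide kernel on a Polish space is standard but requires some care. If this path proves cumbersome, a direct alternative is to show $W'(t)\le 0$ pointwise in $t\in[0,1]$: using mass conservation $\int \mcl{L}^*\rho_t\,d\nu=0$ the displayed formula for $W'(t)$ simplifies to $\int \mcl{L}^*\rho_t\log(\rho_t/\sigma_t)\,d\nu-\int (\rho_t/\sigma_t)\,\mcl{L}^*\sigma_t\,d\nu$, which can be shown to be nonpositive by an abstract Dirichlet-form argument built on the positive maximum principle characterization of Proposition~\ref{prop:schilling}, in the spirit of the comparison arguments in Section~\ref{sec:wp}.
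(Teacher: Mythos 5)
Your plan is essentially the proof in the paper: both reduce the claim to $W'(0^+)\le 0$ with $W(t)=\mathcal{H}(\rho_t\|\sigma_t)$, using Lemma~\ref{lem:ent-reg} to identify $W'(0^+)$ with the left-hand side of the stated inequality, and the monotonicity $W(t)\le W(0)$ from the classical data processing inequality applied to $T^*=e^{t\mcl{L}^*}$. The obstacle you flag --- constructing a transition-kernel representation of the dual semigroup to invoke the classical result --- is precisely what the paper sidesteps. Proposition~\ref{prop:dataproc} in the appendix proves $\mathcal{H}(T^*\rho\|T^*\sigma)\le\mathcal{H}(\rho\|\sigma)$ directly at the operator level for any positive, mass-conserving $T^*$ on $L^1(\nu)$: one sets $\tilde T\eta := T^*(\eta\sigma)/T^*\sigma$, notes that $\tilde T$ is a positive, mass-conserving operator from $L^1(\sigma\,d\nu)$ to $L^1(T^*\sigma\,d\nu)$ with $\tilde T 1 = 1$, and then applies the abstract Jensen inequality for such operators (Lemma~\ref{lem:jensen}) with the convex function $\Phi(u)=u\log u$. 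This keeps the whole argument inside the abstract $L^1(\nu)$ framework of the paper and avoids disintegration/kernel technicalities entirely; your Dirichlet-form fallback is therefore unnecessary. Apart from that one substitution --- citing Proposition~\ref{prop:dataproc} instead of a kernel-based data processing inequality --- your proposal matches the paper's proof step for step.
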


\begin{proof}
	Let \(T^*:L^1(\nu)\to L^1(\nu)\) be a positive, mass-conserving operator; that is,
	if \(f \ge 0\) then \(T^*f \ge 0\), and
	\[
	\int_{\Pi} T^* f \, d\nu = \int_{\Pi} f \, d\nu
	\quad \text{for all } f \in L^1(\nu).
	\]
	In particular, \(T^*\) preserves probability densities.
    By the classical data processing inequality (see Proposition \ref{prop:dataproc}), one has
	\[
	\mcl{H}(T^*\rho\,\|\,T^*\si)
	\;\le\;
	\mcl{H}(\rho\,\|\,\si),
	\]
	where \(\mcl{H}\) denotes the relative entropy.
	
	Now consider two probability densities \(\rho,\sigma \in \mcp(\Pi)\cap L^1(\nu)\), and define
	\[
	W(t) := \mcl{H}(\rho_t\,\|\,\sigma_t),
	\qquad
	\rho_t = e^{t\mcl{L}^*}\rho,
	\quad
	\sigma_t = e^{t\mcl{L}^*}\sigma.
	\]
	Assume that \(W(0)\) is finite. Since the semigroup \(e^{t\mcl{L}^*}\) is positive and mass-conserving for all \(t\ge0\), 
	the classical data processing inequality mentioned above implies
	\[
	W(t)
	= \mcl{H}\!\big(e^{t\mcl{L}^*}\rho\,\big\|\,e^{t\mcl{L}^*}\sigma\big)
	\le
	\mcl{H}(\rho\,\|\,\sigma)
	= W(0).
	\]
	In particular, if the right-hand derivative \(W'(0+)\) exists, then necessarily \(W'(0+)\le0\).
	
	Next, assume that \(\rho,\sigma \in D(\mcl{L}^*)\) and that
	\(\log\rho_t\), \(\log\sigma_t\) are locally uniformly bounded in \(L^\infty(\nu)\). 
	Then \(\{\rho_t\}_t\) and \(\{\sigma_t\}_t\) belong to \(C^1([0,\infty);L^1(\nu))\), 
	and Lemma~\ref{lem:ent-reg} ensures that \(W\) is differentiable with
	\[
	W'(0+)
	= \drv{}{t}\Big|_{t=0}
	\int_{\Pi} \rho_t
	\log\!\Bigl(\frac{\rho_t}{\sigma_t}\Bigr)\, d\nu
	= 
	\int_{\Pi} \mcl{L}^*\rho
	\log\!\Bigl(\frac{\rho}{\sigma}\Bigr)\, d\nu
	+ 
	\int_{\Pi} \rho
	\left(
	\frac{\mcl{L}^*\rho}{\rho}
	- \frac{\mcl{L}^*\sigma}{\sigma}
	\right)
	d\nu.
	\]
	Combining this identity with the fact that \(W'(0+)\le 0\) yields the desired inequality.
\end{proof}


	
	

We may now complete the proof of Lemma \ref{lem:int-hard}.

\begin{proof}[Proof of Lemma~\ref{lem:int-hard}]
	The proof proceeds in four steps, establishing the integral inequality~\eqref{int-hard} 
	under progressively weaker assumptions on the initial densities \(\rho_0,\sigma_0\):
	\begin{itemize}
		\item Step~1: 
		\(\rho_0,\sigma_0 \in D(\mK^*)\) and 
		\(\|\log\rho_0\|_{L^\infty},\,\|\log\sigma_0\|_{L^\infty} < \infty.\)
		
		\item Step~2: 
		\(\rho_0 \in D(\mK^*)\), \(\sigma_0 \in L^1(\nu)\), and 
		\(\|\log\rho_0\|_{L^\infty},\,\|\log\sigma_0\|_{L^\infty} < \infty.\)
		
		\item Step~3: 
		\(\rho_0,\sigma_0 \in L^1(\nu)\) and 
		\(\|\log\rho_0\|_{L^\infty},\,\|\log\sigma_0\|_{L^\infty} < \infty.\)
		
		\item Step~4: 
		\(\rho_0,\sigma_0 \in L^1(\nu)\), 
		$\rho_0\log\rho_0\in L^1(\nu)$, and 
		\(\|\log\sigma_0\|_{L^\infty} < \infty.\)
	\end{itemize}
	
	\emph{Step 1.} 
	Since $\rho_0,\sigma_0 \in D(\mK^*)$, the corresponding solutions $\{\rho_t\}$ and $\{\sigma_t\}$ to \eqref{eq:2fp-eq} are classical, that is, $\{\rho_t\},\{\sigma_t\} \in C^1([0,T);L^1(\nu))$. 
	Because the logarithms of the initial conditions are bounded in $L^\infty$, Proposition~\ref{lem:log-bdd diff} implies that their $L^\infty$-norms grow at most linearly in time. 
	Hence, the assumptions of Lemma~\ref{lem:ent-reg} are satisfied. 
	By this lemma, the computation in \eqref{eq:calc1} is justified, leading to the differential inequality \eqref{ineq:diff-bound}. 
	Integrating this inequality over $[0,t]$ yields the desired integral bound \eqref{int-hard}.

	\emph{Step 2.}	
	We now remove the regularity assumption on $\sigma_0$. 
	Let $\sigma_0 \in L^1(\nu)$ with $\|\log\sigma_0\|_{L^\infty} < \infty$. 
	By Lemma~\ref{lem:log-approx}, there exists a sequence $\{\sigma_0^{(n)}\}_n \subset D(\mK^*)$ such that 
	\begin{align}\label{eq:si-cond}
		\sigma_0^{(n)} \to \sigma_0 \ \text{in } L^1(\nu), 
		\qquad 
		\|\log\sigma_0^{(n)}\|_{L^\infty} < \|\log\si_0\|_{L^\infty}+\log 3.
	\end{align}
	Let $\{\sigma_t^{(n)}\}$ be the corresponding solutions of \eqref{eq:2fp-eq} with initial data $\sigma_0^{(n)}$. 
	Since $\sigma_0^{(n)} \in D(\mK^*)$, each $\{\sigma_t^{(n)}\}$ is a classical solution (Proposition~\ref{lem:linear_wp}). 
	Moreover, the following convergence holds:
	\[
	\sigma_t^{(n)} \to \sigma_t \quad \text{in } C([0,T];L^1(\nu)).
	\]
	By Lemma~\ref{lem:log-bdd diff}, the logarithmic bounds propagate linearly in time:
	\[
	\|\log \sigma_t^{(n)}\|_{L^\infty} 
	\le \|\log \sigma_0^{(n)}\|_{L^\infty} + 2Ct 
	\le \|\log \sigma_0\|_{L^\infty} + \log 3 + 2Ct.
	\]
	Hence, for all $t \in [0,T]$, 
	\[
	\log(\sigma_t^{(n)}) \to \log(\sigma_t) 
	\quad \text{in measure (with respect to } d\nu),
	\qquad 
	\sup_{n\ge1,\,t\in[0,T]}\|\log\sigma_t^{(n)}\|_{L^\infty} < \infty.
	\]
	
	Next, since $\si_t^{(n)}\to \si_t$ in $L^1(\nu)$, $\mA_t^*,\mB_t^*\in L^1(\nu)\to L^1(\nu)$, and the $\si_t^{(n)}$'s are uniformly bounded away from zero, we have 
	\begin{align}\label{def:F}
		F_t^{(n)}:= \frac{(\mA_t^*-\mB_t^*)\si_t^{(n)}}{\si_t^{(n)}}
		\to 
		\frac{(\mA_t^*-\mB_t^*)\si_t}{\si_t}
		=:F_t 
		\quad \text{in measure (with respect to } dt\times d\nu).
	\end{align}
	Moreover, we have the following uniform $L^\infty$-bound, which follows from \ref{K3} and \eqref{eq:si-cond}:
	\[
	\sup_{n\ge 1,\,0\le t\le T}\|F_t^{(n)}\|_{L^\infty}<\infty.
	\]
	
	From the integral inequality established in Step~1, it holds for each $n\ge 1$:
	\begin{align}\label{eq:to-pass}
		\int_{\Pi}\rho_t \big[\log(\rho_t)-\log(\si_t^{(n)})\big] d\nu
		\le 
		\int_{\Pi}\rho_0 \big[\log(\rho_0)-\log(\si_0^{(n)})\big] d\nu
		+ \int_0^t \int_{\Pi} \rho_s \, F_s^{(n)}\, d\nu\, ds.
	\end{align}
	Finally, we pass to the limit as $n \to \infty$ in \eqref{eq:to-pass}. 
	Since $\sigma_t^{(n)} \to \sigma_t$ in $L^1(\nu)$, we have $\log \sigma_t^{(n)} \to \log \sigma_t$ in measure by the uniform logarithmic bound established above. 
	Similarly, by \eqref{def:F}, $F_t^{(n)} \to F_t$ in measure with respect to $dt\times d\nu$, and the family $\{F_t^{(n)}\}_{n,t}$ is uniformly bounded in $L^\infty$. 
	Hence, by the dominated convergence theorem, we obtain
	\[
	\int_{\Pi}\rho_t \log \sigma_t^{(n)}\, d\nu 
	\to 
	\int_{\Pi}\rho_t \log \sigma_t\, d\nu,
	\qquad
	\int_0^t\!\!\int_{\Pi} \rho_s F_s^{(n)}\, d\nu\,ds
	\to
	\int_0^t\!\!\int_{\Pi} \rho_s F_s\, d\nu\,ds.
	\]
	Passing to the limit in \eqref{eq:to-pass}, we therefore obtain the same inequality with $\sigma_t$ in place of $\sigma_t^{(n)}$, which establishes \eqref{int-hard} for all 
	\(
	\sigma_0 \in L^1(\nu),
	\|\log\sigma_0\|_{L^\infty}<\infty.
	\)
	
	\emph{Step 3.} To remove the regularity assumption on $\rho_0$, we proceed as in Step~2.  
	Let $\{\rho_0^{(n)}\}_n \subset D(\mK^*)$ be a sequence satisfying \eqref{eq:si-cond} with ``$\rho$'' in place of ``$\si$'', and denote by $\{\rho_t^{(n)}\}_t$ the corresponding solutions of \eqref{eq:2fp-eq} with initial data $\rho_0^{(n)}$.  
	By the same argument as in Step~2, we have $\rho_t^{(n)} \to \rho_t$ in $L^1(\nu)$, and the family $\{\log\rho_t^{(n)}\}$ is uniformly (locally in $t$) bounded in $L^\infty(\nu)$.  
	Consequently, $\rho_t^{(n)}\log\rho_t^{(n)} \to \rho_t\log\rho_t$ in $L^1(\nu)$ (as $f(u)=u\log u$ is continuous and $\rho_t^{(n)}$ is uniformly-in-$n$ bounded above).  
	Passing to the limit $n\to\infty$ in the estimate obtained in Step~2 yields \eqref{int-hard} for general $\rho_0\in L^1(\nu)$ with $\|\log\rho_0\|_{L^\infty}<\infty$.
	
	\emph{Step 4.} 
	Let $\rho_0 \in L^1(\nu)$ satisfy $\rho_0 \log \rho_0 \in L^1(\nu)$. 
ies $\{\rho_0^{(n)}\}_{n\ge1}$ such that 
	\[
	\rho_0^{(n)} \to \rho_0 \quad \text{in } L^1(\nu),
	\qquad 
	\int_\Pi \rho_0^{(n)}\log\rho_0^{(n)}\,d\nu \to \int_\Pi \rho_0\log\rho_0\,d\nu.
	\]
	Let $\{\rho_t^{(n)}\}_{t\ge0}$ denote the corresponding mild solutions of~\eqref{eq:2fp-eq} with initial data $\rho_0^{(n)}$. 
	Then $\rho_t^{(n)} \to \rho_t$ in $L^1(\nu)$ for all $t\ge0$. 
	By Step~3, for each $n\ge1$ we have
	\begin{align*}
		\int_{\Pi} \rho_t^{(n)}\!\left[\log\rho_t^{(n)} - \log\sigma_t\right]d\nu
		&\le
		\int_{\Pi} \rho_0^{(n)}\!\left[\log\rho_0^{(n)} - \log\sigma_0\right]d\nu
		+ \int_0^t\!\!\int_{\Pi} \rho_s^{(n)} F_s\, d\nu\,ds,
	\end{align*}
	where $F_s$ is given by~\eqref{def:F}.  
	Passing to the limit as $n\to\infty$ and applying Fatou’s lemma together with the convergence properties above, we obtain
	\begin{align*}
		\int_{\Pi} \rho_t[\log\rho_t - \log\sigma_t]\,d\nu
		&\le
		\liminf_{n\to\infty}
		\int_{\Pi} \rho_t^{(n)}[\log\rho_t^{(n)} - \log\sigma_t]\,d\nu\\
		&\le
		\liminf_{n\to\infty}
		\int_{\Pi} \rho_0^{(n)}[\log\rho_0^{(n)} - \log\sigma_0]\,d\nu
		+ \liminf_{n\to\infty}\int_0^t\!\!\int_{\Pi}\rho_s^{(n)}F_s\,d\nu\,ds\\
		&=
		\int_{\Pi} \rho_0[\log\rho_0 - \log\sigma_0]\,d\nu
		+ \int_0^t\!\!\int_{\Pi}\rho_sF_s\,d\nu\,ds.
	\end{align*}
	This establishes the desired inequality for general initial data $\rho_0\in L^1(\nu)$ with $\rho_0\log\rho_0\in L^1(\nu)$.
	
	\medskip
	
	We now construct the approximating sequence $\{\rho_0^{(n)}\}$.  
	Define the truncated densities
	\[
	\tilde\rho_0^{(n)}(x)
	:= \min\!\big\{\max\{\rho_0(x), n^{-1}\},\, n\big\}
	=
	\begin{cases}
		n, & \rho_0(x)\ge n,\\
		\rho_0(x), & \rho_0(x)\in (n^{-1},n),\\
		n^{-1}, & \rho_0(x)\le n^{-1}.
	\end{cases}
	\]
	Then $\tilde\rho_0^{(n)} \to \rho_0$ in $L^1(\nu)$, and since the function $u\mapsto u\log u$ is decreasing near~$0$ and increasing for large~$u$, we have 
	$\tilde\rho_0^{(n)}\log\tilde\rho_0^{(n)} \nearrow \rho_0\log\rho_0$ pointwise.
	By the monotone convergence theorem,
	\[
	\int_\Pi \tilde\rho_0^{(n)}\log\tilde\rho_0^{(n)}\,d\nu
	\;\to\;
	\int_\Pi \rho_0\log\rho_0\,d\nu.
	\]
	Normalize by setting
	\[
	\rho_0^{(n)} = Z_n^{-1}\tilde\rho_0^{(n)}, 
	\qquad 
	Z_n = \int_\Pi \tilde\rho_0^{(n)}\,d\nu.
	\]
	Then $\rho_0^{(n)}$ are probability densities, $Z_n\to1$, and hence $\rho_0^{(n)}\to\rho_0$ in $L^1(\nu)$.  
	Finally,
	\[
	\int_\Pi \rho_0^{(n)}\log\rho_0^{(n)}\,d\nu
	= Z_n^{-1}\!\int_\Pi \tilde\rho_0^{(n)}[\log\tilde\rho_0^{(n)} - \log Z_n]\,d\nu
	\;\longrightarrow\;
	\int_\Pi \rho_0\log\rho_0\,d\nu.
	\]
	This completes the construction and the proof.
\end{proof}

\section{Concentration Inequality (Proof of Lemma \ref{lem:concen})} \label{sec:concen-ineq}

In this section we prove the concentration inequality, Lemma \ref{lem:concen}, used in the proof of the main result. 
Throughout this section, we assume the settings from the lemma: let $(\Pi,\rho)$ be a probability space, and $(\Pi^N,\bs\rho_N)$ be its $N$-fold product probability space:
\begin{align*}
    \Pi^N = \Pi \times\cdots\times \Pi,\qquad \brho_N=\rho^{\otimes N}. 
\end{align*}

To prove Lemma~\ref{lem:concen}, we will apply the second-order concentration inequality established by Götze and Sambale in \cite{gotze2020second}.  
We first recall the relevant difference operators.
For a function $F:\Pi \to \mbr$ with variable $x \in \Pi$, we define the \emph{first-order difference operator} $\mD_x$ by  
\[
    (\mD_x F)(x',x) := F(x) - F(x').
\]
Similarly, for a function $G:\Pi^2 \to \mbr$ with variables $(x,y)\in \Pi^2$, we define the \emph{second-order difference operator} $\mD_{x,y}$ by  
\begin{align}\label{eq:mD-def}
    (\mD_{x,y} G)(x',y';x,y)
    &:= G(x,y) - G(x',y) - G(x,y') + G(x',y').
\end{align}
We emphasize that $\mD_x$ maps measurable functions $F:\Pi \to \mbr$ to functions on $\Pi^2 \to \mbr$, while $\mD_{x,y}$ maps functions $G:\Pi^2 \to \mbr$ to functions on $\Pi^4 \to \mbr$.  
Moreover, for any $G:\Pi^2 \to \mbr$ with variables $(x,y)$, the operators satisfy the composition rule
\[
    \mD_{x,y} G \;=\; \mD_y\bigl[\mD_x G\bigr],
\]
where $\mD_x G$ is understood as the first-order difference operator acting on the $x$-variable.

We now introduce the tensorized versions of the first- and second-order difference operators acting on functions $\bs{G}:\Pi^N \to \mbr$.  
For $\bsx=(x_1,\dots,x_N)\in\Pi^N$ and indices $1\le i,j \le N$ with $i\neq j$, let
\begin{align}\label{eq:notation}
    \bsx_i' &:= (x_1,\dots, x_i',\dots, x_N), \qquad 
    \bsx_{ij}' := (x_1,\dots, x_i',\dots, x_j',\dots, x_N),
\end{align}
that is, $\bsx_i'$ is obtained from $\bsx$ by replacing the $i$th coordinate $x_i$ with $x_i'$, while $\bsx_{ij}'$ is obtained by replacing the $i$th and $j$th coordinates by $x_i'$ and $x_j'$, respectively.

For $\bs{G}:\Pi^N \to \mbr$, we define the first- and second-order difference operators by
\begin{align}
    \mD_i \bs{G}(x_i';\bsx) &:= \bs{G}(\bsx) - \bs{G}(\bsx_i'), \label{def:1stdiff}\\
    \mD_{ij} \bs{G}(x_i',x_j';\bsx) &:= \mD_i\!\bigl[\mD_j \bs{G}\bigr](x_i',x_j';\bsx) 
        = \bs{G}(\bsx) - \bs{G}(\bsx_i') - \bs{G}(\bsx_j') + \bs{G}(\bsx_{ij}'), \nonumber
\end{align}
where the special case for $i=j$:
\begin{align}\label{eq:diagonal}
	\mD_{ii} \bs{G}:=0.
\end{align}
Based on these operators, we further define the quantities
\begin{align}
    \md_i \bs{G}(\bsx)
    &:= \left[ \frac{1}{2} \int_{\Pi} \bigl(\mD_i \bs{G}(x_i';\bsx)\bigr)^2 \, d\rho(x_i') \right]^{1/2}, \label{eq:d1-def}\\
    \md_{ij} \bs{G}(\bsx)
    &:= \left[ \frac{1}{4} \int_{\Pi^2} \bigl(\mD_{ij} \bs{G}(x_i',x_j';\bsx)\bigr)^2 \, d\rho(x_i')\, d\rho(x_j') \right]^{1/2}.\nonumber
\end{align}
and define the \emph{Hessian matrix} of $\bs{G}$:
$$\md^{(2)}\bs G(\bsx)=(\md_{ij}\bs G(\bsx))_{1\leq i,j\leq N}$$
We remark that the matrix $\md^{(2)}\bs G$ has zero diagonal, due to \ref{eq:diagonal}.
Finally, recall that the Hilbert--Schmidt (Frobenius) norm of a matrix $A=(a_{ij})_{1\le i,j\le N}$ is given by
\[
    \|A\|_{\HS} := \left( \sum_{i,j=1}^N |a_{ij}|^2 \right)^{1/2}.
\]

We next introduce the expectation operator with respect to the product measure $\bs\rho_N$.  
For a measurable function $\bsF:\Pi^N \to \mbr$, we write
\[
    \mbe[\bsF] \;=\; \mbe_{\bs\rho_N}[\bsF] 
    := \int_{\Pi^N} \bsF(\bsx)\, d\bs\rho_N(\bsx).
\]
We also define the conditional expectation operator.  
For $1 \le k \le N$, let
\[
    \mbe_{-k}[\bsF] \;=\; \mbe[\bsF \mid x_k] 
    := \int_{\Pi^{N-1}} \bsF(\bsx)\, d\bs\rho_{N-1}(\bsx_{-k}),
\]
where we recall $\bsx_{-k} = (x_1,\dots,\cancel{x_k},\dots,x_N) \in \Pi^{N-1}$.  
In particular, $\mbe_{-k}[\bsF]$ is the conditional expectation of $\bsF$ with respect to the $\sigma$-algebra generated by the variable $x_k$, and therefore depends only on $x_k$.

Now we state the concentration inequality proved in \cite{gotze2020second}. 
\begin{theorem}[Theorem 6.1, \cite{gotze2020second}]\label{thm:GS}
    Let $(\Pi,\rho)$ be a probability space, $N\ge 2$, and $\bsF:\Pi^N \rightarrow \mathbb{R}$ be a bounded measurable function such that 
    \begin{align}\label{eq:expcond}
        \mbe[\bs{F}]=0,\qquad \sum_{k=1}^N\mbe_{-k}[\bs{F}]=0. 
    \end{align}
    Assume that the following holds for some $B_1,B_2\ge 0$:
    \begin{align}
    \label{eq:Int_conds}\max_{i=1,\cdots,N}|\mathfrak{d}_i\bsF|\leq B_1 \hspace{1em} and \hspace{1em} \|\mathfrak{d}^{(2)}\bsF\|_{\HS}\leq B_2
    \end{align}
    Then, with $c=\f{1}{11}$, we have
    \begin{align*}
    \mbe\left[\exp\left(\frac{c}{B_1+B_2}|\bsF|\right)\right]=\int_{\Pi^N} \exp\left(\frac{c}{B_1+B_2}|\bsF|\right)d\bs{\rho}_N\leq 2. 
    \end{align*}
\end{theorem}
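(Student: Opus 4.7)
The strategy is the entropy (Herbst) method, iterated at two levels in order to exploit the second-order centering built into the hypothesis. The target is a quadratic bound $L(\lambda):=\log\mathbb{E}_{\bs\rho_N}[e^{\lambda\bsF}] \le C\lambda^2(B_1+B_2)^2$ valid on an interval $|\lambda|\le c/(B_1+B_2)$; once this is in hand, an exponential Markov step applied to $e^{\lambda|\bsF|}$ produces the stated bound, and tracking constants throughout yields $c=1/11$.

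I would first reformulate the centering hypothesis. Since the coordinates are independent under $\bs\rho_N$, the identity $\sum_k\mathbb{E}_{-k}[\bsF]\equiv 0$ is an equation among mean-zero functions of the single variables $x_k$; freezing all but one coordinate and varying the free one forces each summand $\mathbb{E}_{-k}[\bsF]$ to vanish $\rho$-a.s.\ individually. Thus $\bsF$ has no first-order Hoeffding projection, which is exactly what permits concentration at the second-order scale $B_1+B_2$. Next, I would set up Herbst via the identity $\frac{d}{d\lambda}(L(\lambda)/\lambda) = \mathrm{Ent}(e^{\lambda\bsF})/(\lambda^2\mathbb{E}[e^{\lambda\bsF}])$ together with the tensorization of entropy, $\mathrm{Ent}(e^{\lambda\bsF})\le\sum_i\mathbb{E}[\mathrm{Ent}_i(e^{\lambda\bsF})]$, and bound each conditional entropy by a symmetrization / log-Sobolev estimate of the form $\mathrm{Ent}_i(e^{\lambda\bsF})\lesssim \lambda^2\, \mathbb{E}_i[e^{\lambda\bsF}(\md_i\bsF)^2]$, with $O(\lambda^3)$ corrections absorbed using the $L^\infty$ bound $\max_i|\md_i\bsF|\le B_1$.

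The heart of the argument is a second iteration. For fixed $i$ and fixed $x_i,x_i'\in\Pi$, the function $\bsx_{-i}\mapsto \mD_i\bsF(x_i';\bsx)=\bsF(\bsx)-\bsF(\bsx_i')$ has $\bs\rho_{N-1}$-mean zero, precisely because each $\mathbb{E}_{-i}[\bsF]=0$. Applying the $L^2$-Poincar\'e / Efron--Stein inequality in the remaining $(N-1)$ coordinates then yields
\[
\int_{\Pi^{N-1}}\bigl(\mD_i\bsF\bigr)^2\,d\bs\rho_{N-1}(\bsx_{-i}) \;\le\; \sum_{j\ne i}\int\bigl(\mD_j\mD_i\bsF\bigr)^2\,d\rho(x_j'),
\]
and the commutativity $\mD_j\mD_i=\mD_{ij}$ converts the first-order quantity $\sum_i(\md_i\bsF)^2$ into the second-order quantity $\sum_{i\neq j}(\md_{ij}\bsF)^2 = \|\md^{(2)}\bsF\|_{\HS}^2 \le B_2^2$. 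Inserting this back into the tensorized entropy bound (and handling the tilt $e^{\lambda\bsF}$, which is not of product form in $\bsx_{-i}$, by either a weighted Poincar\'e inequality or a ``removal of weight'' estimate at cost $e^{|\lambda|B_1}$), one arrives at
\[
\frac{d}{d\lambda}\!\left(\frac{L(\lambda)}{\lambda}\right) \;\le\; C(B_1+B_2)^2 \qquad \text{for } |\lambda|\le c/(B_1+B_2).
\]
Integrating from $0$ produces the quadratic bound on $L(\lambda)$, and a final exponential Markov inequality yields $\mathbb{E}[\exp(b|\bsF|/(B_1+B_2))]\le 2$ with an explicit $b>0$.

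The main obstacle is the weighted second iteration. Passing from $\mathrm{Ent}_i(e^{\lambda\bsF})$ to an expression in $\md_{ij}\bsF$ requires separating the weight $e^{\lambda\bsF}$ from the squared difference before applying Poincar\'e on the remaining coordinates; this is exactly where the $L^\infty$ bound $B_1$ is indispensable, since it controls the exponential tilt and permits the decoupling step. Balancing this exponential correction against the quadratic main term is what forces the window $|\lambda|\le c/(B_1+B_2)$ and produces the additive combination $B_1+B_2$ (rather than $\max\{B_1,B_2\}$ or a product) in the final exponent; optimizing the constants in this trade-off is what ultimately pins down $c=1/11$.
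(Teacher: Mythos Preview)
The paper does not prove this theorem at all: it is quoted verbatim from G\"otze--Sambale \cite{gotze2020second} and used as a black box in the proof of Lemma~\ref{lem:concen}. There is therefore no ``paper's own proof'' to compare against. Your sketch is consistent with the approach that the introduction attributes to \cite{gotze2020second} (derivation via a logarithmic Sobolev inequality, i.e.\ the entropy/Herbst method iterated to second order), and your reduction of the centering hypothesis $\sum_k\mathbb{E}_{-k}[\bsF]=0$ to the vanishing of each first-order Hoeffding projection is correct and is indeed the mechanism that makes the second-order Poincar\'e step work.
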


With this theorem in place, we may now prove Lemma \ref{lem:concen}. 
\begin{proof}[Proof of Lemma \ref{lem:concen}]
Let $\Phi:\Pi\times \eP\to\mbr$ satisfy \ref{Phi0}--\ref{Phi3} with uniform constant $C\ge 0$ and 
consider the setting of Theorem \ref{thm:GS} with $\bsF=\bsF_\Phi$ from \eqref{eq:concen_func}. 
For notational simplicity let us introduce the shorthand: for $\bs{x}\in \Pi^{N}$, denote $\mu_{-k} := \mu(\bs{x}_{-k})=\f{1}{N-1}\sum_{\ell\neq k}\de_{x_{\ell}}$, then introduce for $1\le k\le N$:
\begin{align}
\label{eq:Psi}
    \bs{\Psi}_k(\bsx)&:= \Phi(x_k,\mu_{-k}),\qquad \bar \Psi_k(x_k):=\mbe_{-k}[\bs\Psi_k(\bsx)]= \mbe_{-k}\left[\Phi(x_k,\mu(\bsx_{-k}))\right].
\end{align}
We note the function $\bar\Psi_k$ depends only on $x_k$ because the truncated variable $\bsx_{-k}$ is integrated out by the expectation operator $\mbe _{-k}.$
Following this we have:
\begin{align}\label{eq:F-decomp}
    \bs{F}(\bsx) &=\sum_{k=1}^N [\bs\Psi_k(\bsx)-\bar \Psi_k(x_k)]. 
\end{align}
Clearly $\bs{F}$ is a bounded measurable function, since by \ref{Phi0} for each $1\le k\le N$, $\bs\Psi_k$ is a bounded function on $\Pi^N\to\mbr$.

To apply Theorem \ref{thm:GS}, it remains to verify Conditions \eqref{eq:expcond} and \eqref{eq:Int_conds}. 
We begin with the expectation conditions in \eqref{eq:expcond}. 
It suffices to show that
\begin{align}\label{eq:exp-cond}
	\mbe_{-k}[\bs{F}(\bs{x})] = 0,\qquad \mbox{ for all }1\le k\le N. 
\end{align}
If the above holds, then by the tower property of conditional expectation,
\[
\mbe[\bs{F}(\bs{x})] = \mbe[\mbe_{-k}[\bs{F}(\bs{x})]] = 0,
\]
which establishes the first condition. The second follows also. 

Now let us show \eqref{eq:exp-cond}. Indeed, since the \ref{Phi1} implies for all $\ell \neq k$. 
\begin{align*}
  \mbe_{-k}[\bs\Psi_\ell(\bsx)] &= \mbe_{-k}[\Phi(x_\ell,\mu(\bs{x}_{-\ell}))]=\int_{\Pi^{N-1}} \Phi(x_\ell,\mu(\bs{x}_{-\ell}))\, d\bs\rho_{N-1}(\bsx_{-k})=0,\\
  \mbe_{-k}[\bar\Psi_\ell(x_\ell)]&=\mbe_{-k}[\mbe_{-\ell}[\Phi(x_\ell,\mu(\bs{x}_{-\ell}))]] =\int_{\Pi}  \mbe_{-k}[\bs\Psi_\ell(\bsx)] d\rho(x_k) =0. 
\end{align*}
Therefore, this yields
\begin{align*}
    \mbe_{-k}[\bs{F}(\bsx)]&= \mathbb{E}_{{-k}}\sqb{\sum_{\ell=1}^N\rb{\bs\Psi_\ell(\bsx)-\bar\Psi_\ell(x_\ell) } }
    = \mbe_{{-k}}\sqb{\Phi(x_k,\mu_{-k})-\mbe_{{-k}}\Phi(x_k,\mu_{-k})}=0. 
\end{align*}

We next check the two bounds from \eqref{eq:Int_conds}. 
Recall the notations introduced in \eqref{eq:notation}.
For the first bound, since the argument is identical, we may without loss showing only for the case $i=1$, that is we show for some $B_1\ge 0$, it holds for all $x_1'\in \Pi,\bsx\in \Pi^N$: 
\begin{align*}
    |\mD_1 \bs F(\bsx)|\le B_1.
\end{align*}
We note by \ref{Phi2}, for every $k\geq2$, $\bs\Psi_k$ from \eqref{eq:Psi} satisfies $$|\bs{\Psi}_k(\bsx)-\bs{\Psi}_k(\bsx_1')|\leq \frac{C}{N-1}.$$
By \ref{Phi0}, for $k=1$ we also have 
\begin{align*}
	|\bs\Psi_1(\bsx)-\bs\Psi_1(\bsx_1')|&\le C,\qquad |\bar\Psi(x_1)-\bar\Psi(x_1')|\le C.
\end{align*}
By the definition of the first order difference order from \eqref{def:1stdiff}, we have also $\mD_1 \bar\Psi_k(x_k';\bsx)\equiv0$ for all $k\neq 1$. Combining these we conclude:
\begin{align*}
|\mD_1 \bsF(x_1';\bsx)|&=\left |\bs{\Psi}_1(\bsx)-\bar \Psi_1(x_1)-\bs{\Psi}_1(\bsx_1')+\bar \Psi_1(x_1')+\sum_{k=2}^N\left(\bs{\Psi}_k(\bsx)-\bs{\Psi}_k(\bsx_1')\right) \right |\\
&\le |\bs{\Psi}_1(\bsx)-\bs{\Psi}_1(\bsx_1')|+|\bar \Psi_1(x_1)-\bar \Psi_1(x_1')|+\sum_{k=2}^N | \bs{\Psi}_k(\bsx)-\bs{\Psi}_k(\bsx_1') | \le 3C
\end{align*}
By the definition of the difference operator $\md_1$ (see \eqref{eq:d1-def}), this leads us to
\begin{align*}
    \md_1 \bsF(\bsx) &= \sqb{\f 12 \int_{\Pi} (\mD_1\bs{F}_\Phi (x_1',\bs{x}))^2 d\rho(x_1')}^{1/2}\leq \sqb{\f 1 2 \int_\Pi (3C)^2d\rho(x_1')}^{1/2} = \frac{3C}{\sqrt 2}.
  \end{align*}
Hence the bound from \eqref{eq:Int_conds} holds with $B_1= \frac{3}{\sqrt 2}C$.

We now consider the second bound from \eqref{eq:Int_conds}. To begin let us start with bounding $\mD_{ij}\bs F$, $i\neq j$, and as before without loss consider only the case $(i,j)=1,2$. 
We note $\mD_{12}$ acting on any single variable functions, particularly $\bar\Psi_k$ from \eqref{eq:Psi}, are zero. 
Following from the definition of the linearity of the second order difference operators and the decomposition of $\bs{F}$ from \eqref{eq:F-decomp}, 
\begin{align*}
    \mD_{12}\bs{F}(x_1',x_2';\bsx)&= \sum_{k=1}^N \mD_{12}\bs{\Psi}_k(x_1',x_2';\bsx) = \left(\mD_{12} \bs{\Psi}_1+\mD_{12} \bs{\Psi}_2+\sum_{k\ge 3} \mD_{12} \bs{\Psi}_k\right)(x_1',x_2';\bsx). 
\end{align*}
Let us now consider $\mD_{12}\Psi_k$ for $k=1$. In this  case, the difference operator $\mD_{12}$ is acting on the first variable of $\Phi(x,\mu)$, and the second via $\f 1{N-1}\de_y$. Using the first order bound from \ref{Phi2}, with $\si =\f{1}{N-1}\sum_{k\ge 3}\de_{x_k}$,
\begin{align*}
	\left|\mD_{12} \bs{\Psi}_1(x_1',x_2';\bsx)\right|
	&= \left|\left(\bs{\Psi}(\bsx)-\bs{\Psi}(\bsx_2')\right)
	-\left(\bs{\Psi}(\bsx_1')-\bs{\Psi}(\bsx_{12}')\right)\right|\\
	&\leq \left|\Phi(x_1,\sigma+\tfrac{1}{N-1}\delta_{x_2})
	-\Phi(x_1,\sigma+\tfrac{1}{N-1}\delta_{x_2'})\right|\\
	&\quad + \left|\Phi(x_1',\sigma+\tfrac{1}{N-1}\delta_{x_2})
	-\Phi(x_1',\sigma+\tfrac{1}{N-1}\delta_{x_2'})\right|
	\leq \frac{2C}{N-1}.
\end{align*}
For $k=2$, the case is similar, leading to the same bound above. For $k\geq 3$, the difference operator $\mD_{12}$ is acting on $\mu$ via $\f{1}{N-1}(\de_{x_1}+\de_{x_2})$. Borrowing the same notations from \ref{Phi3}, the condition implies
\begin{align*}
    |\mD_{12}\bs\Psi_k(x_1',x_2';\bsx)|&= |\Phi(x_1,\mu)-\Phi(x_1,\mu^{(1)})-\Phi(x_1,\mu^{(2)})+\Phi(x_1,\mu^{(1,2)})|
    \le \frac{C}{(N-1)(N-2)}. 
\end{align*}
Collecting all the bounds above for $k=1,2,3,\cdots,N$. This gives for all $N\ge 2$: 
\begin{align*}
\mD_{12}\bs{F}_\Phi(x_1',x_2';\bsx)&\leq \frac{2C}{N-1}+\frac{2C}{N-1}+ \frac{(N-2)C}{(N-1)(N-2)}\leq \f{5C}{N-1}. 
\end{align*}
Thus,
\begin{align*}
    \md_{12}\bsF_\Phi(\bsx)&\leq\sqb{\f 14\int_{\Pi^2} \left(\frac{5C}{N-1} \right)^2 d\rho(x_i')d\rho(x_j')}^{1/2}
    \leq \f{5}{2}\,\cdot\frac{C}{N-1}. 
\end{align*}
This bound is independent of $i\neq j$ and $\bsx\in\Pi^N$ and hence we have for all $N\ge 3$:
\begin{align*}
\|\mathfrak{d}^{(2)} \bsF_\Phi(\bsx)\|_{\HS}&=\sqb{\sum_{i\neq j }\left(\md_{ij}\bsF_\Phi(\bsx)\right)^2} ^{1/2}\le \f{5C}{2(N-1)}\cdot\sqrt{N(N-1)}= \f{5C}{2}\sqrt{\f{N}{N-1}}\le \f{5C}{2}\sqrt{\f 32}. 
\end{align*}
This concludes $\|\mathfrak{d}^{(2)} \bsF_\Phi(\bsx)\|_{\HS}\le B_2 := \f{5C}{2}\sqrt{\f 32}$. 

Finally, we invoke Theorem \ref{thm:GS}. With the choice $c=\f 1{11}$ and
\begin{align*}
	b = c\rb{\f{3}{\sqrt 2}+\f{5}{2}\sqrt{\f 32}}^{-1}
\end{align*}
the following bound for the exponential integral holds:
\begin{align*}
\int_{\Pi^N}\exp\rb{\f{b}{C}|\bsF(\bsx)|}d\bs{\rho}(\bsx)    \leq2. \qquad \qedhere 
\end{align*}
\end{proof}

\section{Some special Cases for Entropic Chaos} \label{sec:example}

In this last		 section, we present several illustrative examples of mixed mean-field jump systems satisfying the propagation of chaos property. Our aim is to identify explicit and verifiable conditions under which Assumptions \ref{A3}--\ref{A5} hold.

\subsection{2- and 3-body linear interaction jump kernels}
Let $(\Pi,\nu)$ be a finite measure space. We consider a multi-body mean-field jump kernel that averages over two- and three-body interactions. 
Let
\[
\Gamma^{(1)}:\Pi^2 \to \mathcal{M}_+(\Pi), \qquad
\Gamma^{(2)}:\Pi^3 \to \mathcal{M}_+(\Pi),
\]
where we recall $\mathcal{M}_+(\Pi)$ denotes the set of bounded positive measures on $\Pi$. 
Let us make the following assumptions on $\Ga^{(1)},\Ga^{(2)}$.
\begin{enumerate}[label=($\Gamma$\arabic*)]
	\item \label{Ga1} For each $z,z'\in \Pi$, the kernels 
	\[
	(x,E) \mapsto \Gamma^{(1)}(x,z,E), \qquad (x,E) \mapsto \Gamma^{(2)}(x,z,z',E)
	\] 
	are Markov jump kernels, and admit kernels w.r.t. the measure $\nu$, denoted by $\Gamma^{(*1)}$ and $\Gamma^{(*2)}$, respectively. Moreover, we assume the existence of a constant $C_1 \ge 0$ such that, for all $x,z,z' \in \Pi$,
	\[
	\max \Big\{ \Gamma^{(1)}(x,z,\Pi), \, \Gamma^{(*1)}(x,z,\Pi), \, 
	\Gamma^{(2)}(x,z,z',\Pi), \, \Gamma^{(*2)}(x,z,z',\Pi) \Big\} \le C_1.
	\]
\end{enumerate}
The first kernel $\Gamma^{(1)}$ represents the \emph{two-body interaction}: the jump of a particle at state $x \in \Pi$ is influenced by the state $z \in \Pi$ of a single other particle. The second kernel $\Gamma^{(2)}$ represents the \emph{three-body interaction}: the jump of a particle at $x$ is influenced simultaneously by the states $(z, z') \in \Pi^2$ of two other particles.

Now for a fixed $N\ge 3$, define the $N$-particle mean-field empirical kernel as follows. For $\mu\in \eP$ with $\mu=\f{1}{N-1}\sum_{k=1}^{N-1}\de_{z_k}$, 
\begin{align}\label{def:LaN}
	\La_N(x,E;\mu)&:=\int_{\Pi}\Ga^{(1)}(x,z,E)d\mu(z)+ \f{N-1}{N-2}\int_{\Pi^2\setminus \De} \Ga^{(2)}(x,z,z')d\mu(z)d\mu(z') \\
	&=\f{1}{N-1}\sum_{k=1}^{N-1} \Ga^{(1)}(x,z_k,E)+\f{1}{(N-1)(N-2)}\sum_{k\neq \ell}\Ga^{(2)}(x,z_k,z_\ell,E). \nonumber
\end{align}
where $\De = \{(z,z'):z=z'\}\subset \Pi^2$. 
Consider the averaged mean-field kernel $\bar \La_N$ of $\La_N$ as in Definition \ref{def:avg-mf}. We have
\begin{align*}
	\bar\La_N(x,E;\rho)&= \int_{\Pi^{N-1}}\La_N(x,E;\mu(\bsx_{-1}))d\rho^{\otimes(N-1)}(\bsx_{-1})\\
	&= \int_{\Pi} \Ga^{(1)}(x,z,E)d\rho(z)+ \int_{\Pi^2} \Ga^{(2)}(x,z,z',E)d\rho(z)d\rho(z'). 
\end{align*}
We note here the averaged kernel does not depend on $N$. Hence we denote it as $\bar \La$. In this case, the averaged mean-field law, which is the solution of \ref{eq:Nmf}, does not depend on $N$ as well, if the given initial conditions are identical. 

\begin{proposition}\label{prop:6.1}
	Let $\Ga^{(1)},\Ga^{(2)}$ satisfy \ref{Ga1}, and define the $N$-particle empirical jump kernel $\La_N$ as in \eqref{def:LaN}. Then $\La_N$ from \eqref{def:LaN} satisfies \ref{A1p}--\ref{A4}. Particularly if $\mK^*$ is an adjoint Markov generator satisfying \ref{K-cond}, the $N$-particle system associated with $\mcl{L}_N^*(\mu)=\mK^*+\mA^*_N(\mu)$, where $\mA_N^*(\mu)$ is the adjoint Markov jump operator associated to $\La_N(\mu)$,
	has the entropic propagation of chaos property as $N\to\infty$.
\end{proposition}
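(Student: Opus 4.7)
The plan is to verify Conditions \ref{A1p}--\ref{A4} for $\La_N$ defined in \eqref{def:LaN}, then invoke Theorem \ref{main:ec-Nmf} together with Corollary \ref{cor:average vs mf}(b) to conclude entropic propagation of chaos. A structural simplification, noted directly after \eqref{def:LaN}, is that the averaged kernel $\bar\La_N$ (built from $\La_N$ as in Definition \ref{def:avg-mf}) is $N$-independent; a direct computation using the symmetry of the sums yields $\bar\La_N(x,\cdot;\rho) = \int_{\Pi}\Ga^{(1)}(x,z,\cdot)\,d\rho(z) + \int_{\Pi^2}\Ga^{(2)}(x,z,z',\cdot)\,d\rho(z)\,d\rho(z')$. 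Taking this same formula as the definition of the mean-field kernel $\La$, the averaged mean-field solution $\bar\rho_t^{(N)}$ of \eqref{eq:Nmf-N} coincides with the mean-field solution $\rho_t$ of \eqref{eq:mf} for every $N$; in particular the $L^\infty$-convergence of logarithms required in Corollary \ref{cor:average vs mf}(b) is trivial.

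For \ref{A1p} and \ref{A2p}, I would use the summation form of $\La_N$ in \eqref{def:LaN} together with \ref{Ga1} to bound $\La_N(x,\Pi;\mu)\le 2C_1$ uniformly in $x\in\Pi$ and $\mu\in\eP$; the adjoint kernel $\La_N^*$, obtained by substituting $\Ga^{(*i)}$ for $\Ga^{(i)}$ in \eqref{def:LaN}, satisfies the same bound. For \ref{A3}, writing $\mu,\mu^{(1)}\in\eP$ in terms of particles $z_1,\ldots,z_{N-1}$ with only $z_1$ replaced by $z_1'$, the one-body sum changes only in the single term with index~$1$, contributing a $\mJ$-norm change of at most $2C_1/(N-1)$. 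In the two-body sum, only the $2(N-2)$ pairs $(k,\ell)$ with $k=1$ or $\ell=1$ are affected, each contributing at most $2C_1/[(N-1)(N-2)]$, for a total of $4C_1/(N-1)$. Summing gives $\|\La_N(\mu)-\La_N(\mu^{(1)})\|_{\mJ}\le 6C_1/(N-1)$, and the argument for $\La_N^*$ is identical.

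The main combinatorial content is \ref{A4}. The one-body sum is affine in $\mu$, so its discrete second-order difference in two coordinates vanishes identically. For the two-body sum, a summand indexed by $(k,\ell)$ survives the mixed second difference in the substitutions $z_1\to z_1'$, $z_2\to z_2'$ only if it depends on both $z_1$ and $z_2$, which happens precisely for $(k,\ell)\in\{(1,2),(2,1)\}$. Each such summand contributes the full four-point discrete mixed difference of $\Ga^{(2)}$, of $\mJ$-norm at most $4C_1$, producing a total bound of $8C_1/[(N-1)(N-2)]$, and hence \ref{A4} with $\Theta = 8C_1$. The same reasoning applies to $\Ga^{(*2)}$.

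Once \ref{A1p}--\ref{A4} are established and \ref{K-cond} is assumed by hypothesis, Theorem \ref{main:ec-Nmf} yields $\sup_{t\in[0,T]}\mcl{H}_N(\bsrho_t^{(N)}\|\bar\bsrho_t^{(N)})\le\mcl{H}_N(\bsrho_0^{(N)}\|\rho_0^{\otimes N})e^{\beta T}+O(1/N)$; if $\{\bsrho_0^{(N)}\}_N$ is entropic $\rho_0$-chaotic the right-hand side vanishes as $N\to\infty$. Combined with the identity $\bar\bsrho_t^{(N)}=\rho_t^{\otimes N}$ established in the first paragraph, this is precisely entropic $\rho_t$-chaoticity. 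The only real pitfall I anticipate is the combinatorial bookkeeping of the two-body sum when verifying \ref{A3} and \ref{A4}, which must be tracked carefully for both $\Ga^{(2)}$ and $\Ga^{(*2)}$; no deeper analytic obstacle seems to arise.
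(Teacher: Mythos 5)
Your proof is correct and follows essentially the same route as the paper's: identical term-by-term bookkeeping for \ref{A1p}--\ref{A3}, the same observation that the one-body part is affine in $\mu$ so its second-order difference vanishes, the same identification of the surviving pairs $(1,2),(2,1)$ in the two-body part for \ref{A4} (your counting of the $2(N-2)$ affected pairs in \ref{A3} matches the paper's estimate $I+II$ exactly), and the same key structural observation that $\bar\La_N$ and hence $\bar\rho_t^{(N)}=\rho_t$ are $N$-independent, which makes Theorem~\ref{main:ec-Nmf} directly applicable with $\bar\bsrho_t^{(N)}=\rho_t^{\otimes N}$. One small remark: you arrive at $\Theta=8C_1$ in \ref{A4} where the paper's displayed bound reads $4C_1/[(N-1)(N-2)]$; the paper controls $\sup_E|D\La_N(z,E)|$ rather than the total-variation norm $\|D\La_N(z,\cdot)\|_{\TV}$ required in \ref{A4}, and your factor of $2$ correctly accounts for the passage from the one to the other, so your constant is in fact the careful one — the discrepancy is not an error on your part.
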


\begin{proof}
	Let us consider \ref{A1p}, \ref{A2p} together.
	Let $\mu=\frac{1}{N-1}\sum_{k=1}^{N-1}\delta_{z_k}\in \eP$ with $N\ge 3$. Then by \ref{Ga1} it holds:
	\[
	\begin{aligned}
		\Lambda_N(x,\Pi;\mu)
		&= \int_{\Pi}\Gamma^{(1)}(x,z,\Pi)\,d\mu(z)
		+ \frac{N-1}{N-2}\int_{\Pi^2\setminus\Delta}\Gamma^{(2)}(x,z,z',\Pi)\,d\mu(z)\,d\mu(z')\\
		&\le C_1\,\mu(\Pi) + \frac{N-1}{N-2}\,C_1\,(\mu\otimes\mu)(\Pi^2\setminus\Delta)\le  3C_1,
	\end{aligned}
	\]
	and the same holds for $\La_N^*$, following the identical computation.
	Hence \ref{A1p} and \ref{A2p} hold uniformly in $N$. 
	
	Now consider \ref{A3}. 
	Fix $N\ge 3$ and let us denote
	\[
	\mu=\frac{1}{N-1}\sum_{k=1}^{N-1}\delta_{z_k},
	\qquad
	\mu^{(1)}=\frac{1}{N-1}\Big(\delta_{z_1'}+\sum_{k=2}^{N-1}\delta_{z_k}\Big).
	\]
	Note that $\|\mu-\mu^{(1)}\|_{\mathrm{TV}}=\frac{2}{N-1}$. 
	We compute for any $x\in \Pi$ and measurable set $E\in\mcl{B}(\Pi)$:
	\begin{align*}
		\big|\Lambda_N(x,E;\mu)-\Lambda_N(x,E;\mu^{(1)})\big|
		&\le|\inn{\Ga^{(1)}(x,\cdot,E),\mu-\mu^{(1)}}|+\\ &\qquad |\inn{\Ga^{(2)}(x,\cdot,\cdot,E)\chi_{\Pi^2\setminus\De},\mu\otimes \mu - \mu^{(1)}\otimes \mu^{(1)} }|=I+II.
	\end{align*}
	where we use the natural pairing notation $\inn{\mu,\vphi}=\int\vphi d\mu$. Using the bound from \ref{Ga1}, we have $I\le \f{2C_1}{N-1}$. On the other hand, with $\Phi(z,z')=\Ga^{(2)}(x,z,z',E)$, the second term is bounded above by
	\begin{align*}
		II&\le\f{1}{(N-1)(N-2)}\sum_{k=2}^{N-1} [\Phi(z_1,z_k)-\Phi(z_1',z_k)+\Phi(z_k,z_1)-\Phi(z_k,z_1')]
		\le \f{4C_1}{N-1}.
	\end{align*}
	Combining both we obtain
	\[
	\|\La_N(\mu)-\La_N(\mu^{(1)})\|_{\mcl{J}}\le \f{6C_1}{N-1}. 
	\]
	The same argument for adjoint kernel yields the same bound above with $\La_N^*$ in place of $\La_N$. This gives \ref{A3}.

	Lastly consider \ref{A4}.
	Fix $N\ge 3$ and set $a:=\frac{1}{N-1}$. Let
	\begin{align*}
		\eta:=\delta_{x_1}+\delta_{x_2},\quad
		\eta^{(1)}:=\delta_{x_1'}+\delta_{x_2},\quad
		\eta^{(2)}:=\delta_{x_1}+\delta_{x_2'},\quad
		\eta^{(1,2)}:=\delta_{x_1'}+\delta_{x_2'}.
	\end{align*}
	\begin{align*}
		\mu: = \si+ a\eta,\quad \mu^{(1)}:=\si +a\eta^{(1)},\quad \mu^{(2)}:=\si+ a\eta^{(2)},\quad \mu^{(1,2)}:=\si+a\eta^{(1,2)}. 
	\end{align*}
	Fix $z\in\Pi,E\in\mcl{B}(\Pi)$ and define the second-order combination:
	\[
	D\Lambda_N(z,E)
	:= \Lambda_N(z,E;\mu) - \Lambda_N(z,E;\mu^{(1)})
	- \Lambda_N(z,E;\mu^{(2)}) + \Lambda_N(z,E;\mu^{(1,2)}).
	\]
	From \eqref{def:LaN}, we write $\Lambda_N=\Lambda_N^{(1)}+\Lambda_N^{(2)}$ with
	\begin{align*}
		\Lambda_N^{(1)}(z,E;\mu)&:=\int_{\Pi}\Gamma^{(1)}(z,z_1,E)\,d\mu(z_1),\\
		\Lambda_N^{(2)}(z,E;\mu)&:=\frac{N-1}{N-2}\int_{\Pi^2\setminus\Delta}\Gamma^{(2)}(z,z_1,z_2,E)\,d\mu(z_1)\,d\mu(z_2).	
	\end{align*}
	Since $\Lambda_N^{(1)}$ is linear in $\mu$, its second-order difference vanishes, i.e., $D\Lambda_N^{(1)}(z,E)=0$. 

	Let us now consider $D\La_N^{(2)}$. Since $\mu = \si+ a\eta$, we have
	\[
	\mu\otimes\mu
	= \sigma\otimes\sigma
	+ a(\sigma\otimes\eta+\eta\otimes\sigma)
	+ a^2\,\eta\otimes\eta,
	\]
	and the analogous expansions hold for $\mu^{(1)},\mu^{(2)},\mu^{(1,2)}$.
	Given a finite measure, denote 
	\[
	I_2(\rho,\rho')
	:= \int_{\Pi^2\setminus\Delta}\Gamma^{(2)}(z,z_1,z_2,E)\,d(\rho\otimes\rho')(z_1,z_2).
	\]
	Note that $\La_N^{(2)}(z,E;\mu)=\f{N-1}{N-2}I_2(\mu,\mu)$. By the bilinearity of $I_2$ we have 
	\begin{align*}
		I_2(\mu,\mu)&= I_2(\si,\si)+a I(\si,\eta)+aI(\eta,\si)+ a^2 I(\eta,\eta),
	\end{align*}
	and the similar identities hold for $\mu^{(1)},\mu^{(2)},\mu^{(1,2)}$. Hence we find
	\begin{align*}
		D\La_N^{(2)}(z,E)&= \f{N-1}{N-2}\rb{I_2(\mu,\mu)-I_2(\mu^{(1)},\mu^{(1)})-I_2(\mu^{(2)},\mu^{(2)})+I_2(\mu^{(1,2)},\mu^{(1,2)})}\\
		&=\frac{N-1}{N-2}\,a^2\Big(
		I_2(\eta,\eta)
		- I_2(\eta^{(1)},\eta^{(1)})
		- I_2(\eta^{(2)},\eta^{(2)})
		+ I_2(\eta^{(1,2)},\eta^{(1,2)})
		\Big).
	\end{align*}
	We note all the $a$-linear terms cancel, due to the bilinearity of $I_2$.

	We note next by \ref{Ga1} it holds
	\begin{align*}
		I_2(\eta,\eta)=\Gamma^{(2)}(z,x_1,x_2,E)+\Gamma^{(2)}(z,x_2,x_1,E)\in [0,2C_1].
	\end{align*}
	Likewise, all terms in the round bracket are $\in[0,2C_1]$. This follows
	\begin{align*}
		|D\La_N(z,E)|= |D\La_N^{(2)}(z,E)|\le \f{N-1}{N-2}\cdot a^2\cdot 4C_1 = \f{4C_1}{(N-1)(N-2)}. 
	\end{align*}
	This completes the proof of \ref{A4}. The second order estimate for the adjoint kernel $\La_N^*$ follows by the same argument.
	
	Now consider the $N$-particle system generated by $\mathcal{L}^*_N(\mu)=\mathcal{K}^*+\mathcal{A}_N(\mu)$. 
	Since Conditions \ref{A1p}--\ref{A4} are satisfied, the entropic estimate in Theorem \ref{main:ec-Nmf} applies. 
	As noted before Proposition \ref{prop:6.1}, the averaged mean-field equation does not depend on $N$. 
	Consequently, the entropic estimate yields the entropic propagation of chaos for the $N$-particle system as $N\to\infty$.
\end{proof}

\subsection{\(\mathbb{R}^k\)-parametrized jump kernels}
In this last subsection, let us consider a class of mean-field jump system whose jump kernel is parametrized by a parameter $\theta\in \mbr^k$. 
Let \(\kappa : \Pi^2 \to \mathbb{R}^k\) be a measurable function.  
For \(x\in \Pi,\mu \in \mathcal{P}(\Pi)\), define the vector
\[
(\ka*\mu)(x) := \int_{\Pi} \kappa(x,y)\, \mu(dy)\in \mbr^k.
\]
We note the definition above naturally extends to general bounded (signed) measures
and the following \emph{Young's-type inequality} holds:
\begin{align}\label{eq:Young's}
	\|(\ka*\mu)(x)-(\ka*\mu')(x)\|_{L^\infty(\nu)}&\le \|\ka\|_{L^\infty(\Pi^2)}\|\mu-\mu'\|_{\TV}
\end{align}

Let \(\Gamma : \Pi \times \mathbb{R}^k \times \Pi \to [0,\infty)\) be a measurable function.  
Given \(\kappa\) and \(\Gamma\) as above, we define the associated mean-field jump kernel by
\begin{equation}\label{def:mfjump6}
	\Lambda(x,dy;\mu)= \Gamma\bigl(x,(\kappa*\mu)(x),y\bigr)\, d\nu(y). 
\end{equation}
In other words, the jump mechanism of a particle located at \(x\in\Pi\) is determined by the parameter 
\(\theta = (\kappa*\mu)(x)\in \mathbb{R}^k\), which is obtained by averaging the vector field \(\kappa\) against 
the mean-field distribution \(\mu\).
Since the interaction enters only through the \( \mathbb{R}^k \)-valued quantity \( \theta \), this structure 
allows one to exploit the law of large numbers when estimating the fluctuation of \( (\kappa*\mu)(x) \), and hence 
to verify Condition~\ref{A5}.
In particular, the dependence on a finite-dimensional parameter often yields explicit rates of convergence in the propagation-of-chaos analysis.

A simple but illustrative example is the following:
\begin{align*}
	\Lambda(x,dy;\mu) := \lambda\!\left((\kappa*\mu)(x)\right)\, P(x,y)d\nu(y),
	\qquad \lambda:\mathbb{R}^k \to \mathbb{R}^+.
\end{align*}
Here the jump distribution of a particle at \(x\) is given by a fixed probability kernel \(P(x,y)d\nu(y)\), while the 
jump \emph{intensity} is modulated by the function \(\lambda(\theta)\), where 
\(\theta = (\kappa*\mu)(x)\) encodes the averaged interaction effect of the environment.

To apply the main theorem of this work,
we impose the following assumptions over $\ka,\Ga$:
\begin{enumerate}[label=(\(\Theta\)\arabic*)]
	\item \label{Ta1} \emph{Uniform bound on $\ka,\Ga$.}  
	There exists \(M_1 \ge 0\) such that for all $x,y\in\Pi,\theta\in \mbr^k$
	\begin{align*}
		|\kappa(x,y)|,\;\Ga(x,\theta,y)&\le M_1.
	\end{align*}
	
	\item \label{Ta2}\emph{Lipschitz continuity in the parameter.}  
	There exists \(M_2 \ge 0\) such that, for all $x,y\in \Pi$ and \(\theta, \theta' \in \mathbb{R}^k\),
	\begin{align*}
		|\Ga(x,\theta,y)-\Ga(x,\theta',y)|
		&\le M_2 \, |\theta - \theta'|. 
	\end{align*}
	\item \label{Ta3}\emph{The second order displacement bound on $\theta$.} There is $M_3\ge 0$ such that the following holds. For all $\theta_0,\theta_1\in \mbr^k$, with $\theta_k=(1-t)\ta_0+t\theta_1$, it holds for $\Upsilon=\Ga,\Ga^*$ and $t\in[0,1]$:
	\begin{align*}
		|\Ga(x,\theta_t,y)-t\Ga(x,\theta_1,y)-(1-t)\Ga(x,\theta_0,y)|\le \frac{M_3}{2}t(1-t)|\theta_0-\theta_1|^2. 
	\end{align*}
\end{enumerate}

\begin{proposition}
	Let \(\kappa : \Pi^2 \to \mathbb{R}^k\) and \(\Gamma : \Pi \times \mathbb{R}^k \times \Pi \to [0,\infty)\)
	satisfy assumptions \ref{Ta1}--\ref{Ta3}.
	Then the family of mean-field jump kernels \(\{\Lambda(\mu)\}_{\mu \in \mathcal{P}(\Pi)}\), defined in \eqref{def:mfjump6}, satisfies assumptions \ref{A1p}–\ref{A5}.
	Specifically, if $\mK^*$ is an adjoint Markov generator satisfying \ref{K-cond}, then the $N$-particle system associated to $\mcl{L}^*(\mu)=\mK^*+\mA^*(\mu)$ exhibits $L^1$-propagation of chaos as $N\to\infty$.
\end{proposition}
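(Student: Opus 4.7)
The plan is to verify Conditions \ref{A1p}--\ref{A5} for the kernel $\{\La(\mu)\}_{\mu\in\mcp(\Pi)}$ defined by \eqref{def:mfjump6} and then invoke Proposition \ref{prop:amf to mf} to conclude $L^1$-propagation of chaos. The key structural observation driving everything is that $\La(\mu)$ is absolutely continuous with respect to $\nu$, with density $\la(x,y;\mu):=\Ga(x,(\ka*\mu)(x),y)$, so Proposition \ref{prop:adjoint-ker} supplies an adjoint kernel of the explicit form $\La^*(y,dx;\mu)=\Ga(x,(\ka*\mu)(x),y)\,d\nu(x)$. Conditions \ref{A1p} and \ref{A2p} will then follow immediately from $\|\Ga\|_\infty\le M_1$ in \ref{Ta1} and the finiteness of $\nu$: both $\|\La(\mu)\|_{\mJ}$ and $\|\La^*(\mu)\|_{\mJ}$ are bounded by $M_1\nu(\Pi)$ uniformly in $\mu$.

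The remaining verifications will exploit the fact that $\mu\mapsto(\ka*\mu)(x)$ is \emph{affine}, together with the $L^\infty$-bound \eqref{eq:Young's}. For \ref{A3}, combining this with the Lipschitz estimate \ref{Ta2} gives $|\la(x,y;\mu)-\la(x,y;\mu^{(1)})|\le 2M_1M_2/(N-1)$ pointwise in $(x,y)$; integrating in $y$ (resp.\ in $x$) yields \ref{A3} for $\La$ (resp.\ $\La^*$). For \ref{A4}, the affineness is essential: under an interpolation $\mu_t=(1-t)\mu_0+t\mu_1$, the parameters $\theta_t(x):=(\ka*\mu_t)(x)$ form a straight line in $\mbr^k$ between $\theta_i(x):=(\ka*\mu_i)(x)$. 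Applying the quadratic displacement bound \ref{Ta3} pointwise in $x$ and using $|\theta_0(x)-\theta_1(x)|\le M_1\|\mu_0-\mu_1\|_{\TV}$ will produce the second-order TV-displacement bound for both $\La$ and $\La^*$ with constant of order $M_1^2 M_3\nu(\Pi)$, from which \ref{A4} follows via Proposition \ref{prop:second-to-A4}(ii).

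For \ref{A5}, the natural choice is $\Xi(x,\mu,\rho):=M_2\nu(\Pi)\,|(\ka*\mu)(x)-(\ka*\rho)(x)|$, which dominates $\|\La(x,\cdot;\mu)-\La(x,\cdot;\rho)\|_{\TV}$ by the same Lipschitz argument. To control $\vep_N(\rho)$, I would view, conditionally on $x_1$, the random variables $Y_k:=\ka(x_1,x_k)$ for $k=2,\dots,N$ as i.i.d.\ under $\rho^{\otimes(N-1)}$, bounded in $\mbr^k$ by $M_1$, with conditional mean $(\ka*\rho)(x_1)$. A Jensen/variance bound then yields
\begin{align*}
\mbe\!\left[\,|(\ka*\mu(\bsx_{-1}))(x_1)-(\ka*\rho)(x_1)|\,\big|\,x_1\right]\le \frac{M_1}{\sqrt{N-1}},
\end{align*}
and integration in $x_1$ gives $\vep_N(\rho)\le M_1M_2\nu(\Pi)/\sqrt{N-1}$, a bound \emph{independent of $\rho$} and hence trivially uniform over $\{\|\rho\|_{L^\infty}\le R\}$.

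Once \ref{A1p}--\ref{A5} are established, Proposition \ref{prop:amf to mf} together with Corollary \ref{cor:average vs mf} delivers $L^1$-propagation of chaos. The hardest step will be \ref{A4}: one must recognize that the only nonlinear dependence on $\mu$ enters through $\Ga$ composed with the affine map $\mu\mapsto\ka*\mu$, so that the finite-dimensional second-order regularity \ref{Ta3} on $\theta\in\mbr^k$ can be lifted to the required second-order TV-displacement bound on the kernel. Once this pattern is seen, the remaining conditions, including the law-of-large-numbers estimate underlying \ref{A5}, reduce to finite-dimensional fluctuation estimates thanks to the $\mbr^k$-parametrization.
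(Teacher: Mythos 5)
Your proposal is correct and follows essentially the same route as the paper: verify \ref{A1p}--\ref{A5} by exploiting that the only dependence on $\mu$ enters through the affine map $\mu\mapsto\kappa*\mu$, use Proposition~\ref{prop:second-to-A4} to lift the finite-dimensional Lipschitz/second-order displacement bounds \ref{Ta2}--\ref{Ta3} to \ref{A3}--\ref{A4}, bound $\varepsilon_N(\rho)$ by a variance estimate for the i.i.d.\ sample $\{\kappa(x_1,x_k)\}_{k\ge 2}$, and conclude via Proposition~\ref{prop:amf to mf} and Corollary~\ref{cor:average vs mf}. (Your constants are slightly cleaner in two places: you bound $\|\La^*(\mu)\|_{\mJ}\le M_1\nu(\Pi)$ directly from $\Ga\le M_1$, and you carry the $M_2$ factor through the Lipschitz estimate and in the definition of $\Xi$.)
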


\begin{proof}
	Let us begin with Conditions \ref{A1p}, \ref{A2p}. We first note for each $\mu\in \mcl{P}(\Pi)$, the jump kernel $\La(x,dy,\mu)$ admits a density w.r.t $\nu(dy)$, as shown in \eqref{def:mfjump6}. We note the adjoint kernel is given by 
	\begin{align*}
		\La^*(y,dx;\mu)&= \Ga(x,(\kappa*\mu)(x),y)d\nu(x). 
	\end{align*}
	We next check the bounds for the mean-field kernel $\La$ and adjoint kernel $\La^*$. Combining \ref{Ta1} and \eqref{eq:Young's}, we check
	\begin{align*}
		\|\La(\mu)\|_{\mJ}&= \sup_{x\in \Pi} \|\La(x,\cdot;\mu)\|_{\TV} = \sup_{x\in\Pi}\|\Ga(x,(\ka*\mu)(x),y)\|_{L^1(\nu,dy)}\le M_1\nu(\Pi),\\
		\|\La^*(\mu)\|_{\mJ}&= \sup_{x\in \Pi} \|\La^*(y,\cdot;\mu)\|_{\TV} = \sup_{y\in\Pi}\|\Ga(x,(\ka*\mu)(x),y)\|_{L^1(\nu,dx)}\\
		&\le \sup_{y\in\Pi}\|\Ga(x,0,y)\|_{L^1(\nu,dx)}+\sup_{y\in\Pi}\|\Ga(x,(\ka*\mu)(x),y)-\Ga(x,0,y)\|_{L^1(\nu,dx)} \\
		&\le M_1\nu(\Pi)+  M_2\int_{\Pi} |\kappa*\mu(x)|d\nu(x)\le M_1\nu(\Pi)+M_2\nu(\Pi)\|\ka*\mu\|_{L^\infty}\\
		&\le M_1\nu(\Pi)+M_1M_2\nu(\Pi).
	\end{align*}
	
	Next we verify Condition \ref{A3}. 
	This follows by the Lipschitz continuity of $\mu\mapsto \Upsilon(\mu)$ w.r.t. the total variation norm, $\Upsilon = \La,\La^*$, see Proposition \ref{prop:second-to-A4}. We note by \ref{Ta2}, \eqref{eq:Young's} it holds for all $\mu,\rho\in \mcl{P}(\Pi)$:
	\begin{align*}
		\|\La(\mu)-\La(\rho)\|_{\mJ} &= \sup_{x\in\Pi} \|\Ga(x,(\ka*\mu)(x),y)-\Ga(x,(\ka*\rho)(x),y)\|_{L^1(\nu,dy)}\\
		&\le\int_{\Pi} \abs{(\kappa*\mu)(x)-(\kappa*\rho)(x)} d\nu(x) 
		\le \nu(\Pi) \|(\ka*\mu)-(\ka*\rho)\|_{L^\infty}\\
		&\le \nu(\Pi)M_1\|\mu-\rho\|_{\TV}. 
	\end{align*}
	A same computation above, replacing $L^1(\nu,dy)$ by $L^1(\nu,dx)$, gives the same Lipschitz bound for $\mu\mapsto \La^*(\mu)$. Hence \ref{A3} follows.
	
	Next consider \ref{A4}. By Proposition \ref{prop:second-to-A4}, it suffices to verify the second order displacement condition: with $\Upsilon = \La,\La^*$, it holds for some $\Theta'\ge 0$ and all $\mu_0,\mu_1\in \mcl{P}(\Pi)$, $t\in[0,1]$:
	\begin{align}\label{eq:sec-bdd}
		\|\Upsilon(\mu_t)-t\Upsilon(\mu_1)-(1-t)\Upsilon(\mu_0)\|_{\mJ}\le \f{\Ta}{2}t(1-t)\|\mu_0-\mu_1\|_{\mJ}^2,\quad \mu_t =(1-t)\mu_0+t\mu_1.
	\end{align}
	
	Fix $\mu_0,\mu_1\in \mcp(\Pi)$.
	If we set $\ta_i(x)=(\kappa*\mu_i)(x)\in\mbr^k$ for $i=1,2$, then by linearity of the map $\mu\mapsto \ka*\mu$, the interpolation $\ta_t(x)\in\mbr^k,t\in[0,1]$, is given by
	\begin{align*}
		\ta_t(x)&= (1-t)\ta_0(x)+t\ta_1(x)= \ka*\mu_t(x),\qquad \mu_t = (1-t)\mu_0+t\mu_1. 
	\end{align*}
	This follows:
	\begin{align*}
		&\qquad \|\La(\mu_t)-t\La(\mu_1)-(1-t)\La(\mu_0)\|_{\mJ}\\
		&= \sup_{x\in \Pi}\|\Ga(x,\ka*\mu_t(x),y)-(1-t)\Ga(x,(\ka*\mu_0)(x),y)-t\Ga(x,(\ka*\mu_1)(x),y)\|_{L^1(\nu,dy)}\\
		&\le \int_{\Pi} \f{M_3}{2}t(1-t)|\kappa*\mu_0(x)-\kappa*\mu_1(x)|^2 d\nu(y)
		\le \frac{M_1^2M_3 \nu(\Pi)}{2}t(1-t)\|\mu_0-\mu_1\|^2_{\TV}. 
	\end{align*}
	Similary, the same computation above gives the same bound for $\La^*$.
	Hence \ref{eq:sec-bdd} holds. This verifies \ref{A4}.

	Finally, we verify \ref{A5}. By \ref{Ta2} and \eqref{def:mfjump6}, for any fixed \(x\in \Pi, \mu,\rho\in \mcl{P}(\Pi)\), we have  
	\begin{align*}
		\|\Lambda(x,\cdot,\mu)-\Lambda(x,\cdot,\rho)\|_{\TV}
		&= \|\Gamma(x,(\kappa*\mu)(x),y)-\Gamma(x,(\kappa*\rho)(x),y)\|_{L^1(\nu,dy)} \\
		&\le \nu(\Pi)\,\big|(\kappa*\mu)(x)-(\kappa*\rho)(x)\big| 
		=: \Xi(x,\mu,\rho).
	\end{align*}
	Now fix \(\rho\in L^{1}(\nu)\cap \mathcal{P}(\Pi)\) and \(N\ge 1\), and consider the quantity \(\varepsilon_{N}(\rho)\) appearing in \ref{A5}. By Jensen’s inequality,
	\begin{align*}
		\varepsilon_{N}(\rho)^{2}
		&= \Bigg[ \int_{\Pi^{N}} \Xi\big(x_{1},\mu(\bsx_{-1}),\rho\big)\, d\rho^{\otimes N}(\bsx) \Bigg]^{2} \\
		&= \Bigg[ \nu(\Pi)\int_{\Pi^{N}} \big| (\kappa*\mu(\bsx_{-1}))(x_{1}) - (\kappa*\rho)(x_{1}) \big| \, d\rho^{\otimes N}(\bsx) \Bigg]^{2} \\
		&\le \nu(\Pi)^{2} \int_{\Pi} \int_{\Pi^{N-1}}
		\bigg| \frac{1}{N-1}\sum_{k=2}^{N} \kappa(x_{1},x_{k}) - \int_{\Pi}\kappa(x_{1},y)\, d\rho(y) \bigg|^{2}
		d\rho^{\otimes (N-1)}(\bsx_{-1})\, d\rho(x_{1}).
	\end{align*}

	For fixed \(x_{1}\), the inner integral is the variance of an i.i.d.\ sample of size \(N-1\) with common distribution \(\rho\). By the standard variance estimate, it is bounded above by:
	\begin{align*}
		\frac{1}{N-1}\,\mathrm{Var}_{\rho}\!\big[\kappa(x_{1},\cdot)\big]
		\le \frac{1}{N-1}\|\kappa(x_{1},y)\|_{L^{2}(\rho,dy)}^{2}.
	\end{align*}
	Substituting this into the previous bound and applying \ref{Ta1}, we obtain
	\begin{align*}
		\varepsilon_{N}(\rho)^{2}
		&\le \frac{\nu(\Pi)^{2}}{N-1} 
		\int_{\Pi}\|\kappa(x_{1},y)\|_{L^{2}(\rho,dy)}^{2} \, d\rho(x_{1})
		= \frac{\nu(\Pi)^{2}}{N-1}\|\kappa\|_{L^{2}(\rho^{\otimes 2})}^{2} 
		\le \frac{\nu(\Pi)^{2}}{N-1}\|\kappa\|_{L^{\infty}}^{2}
		\le \frac{M_{1}^{2}\nu(\Pi)^{2}}{N-1}.
	\end{align*}	
	Thus,
	\begin{align*}
		\varepsilon_{N}(\rho)
		\le \frac{M_{1}\nu(\Pi)}{\sqrt{N-1}}
		\longrightarrow 0 \qquad \text{as } N\to \infty,
	\end{align*}
	uniformly over all densities \(\rho\). This establishes \ref{A5}.
	
	The propagation of chaos property for the $N$-particle systems follows from Proposition \ref{prop:amf to mf} and Corollary \ref{cor:average vs mf}.
\end{proof}	
\appendix
\section{} 
\renewcommand{\thelemma}{A.\arabic{lemma}}
\renewcommand{\thetheorem}{A.\arabic{theorem}}
\renewcommand{\theproposition}{A.\arabic{proposition}}
\renewcommand{\theremark}{A.\arabic{remark}}

\subsection{Existence of Adjoint Kernels}

\begin{proof}[Proof of Proposition \ref{prop:adjoint-ker}]
	$\rightarrow$. 
	Suppose that $\La$ admits an adjoint kernel $\La^* \in \mcl{J}_+(\Pi)$ with respect to $\nu$. 
	For any $E\in\mcl{B}$ with $\nu(E)=0$,
	\begin{align*}
		(\nu\La)(E)
		&= \int_\Pi \La(x,E)\,\nu(dx)
		= \int_{\Pi\times E} \La(x,dy)\,\nu(dx)   \\
		&= \int_{\Pi\times E} \La^*(y,dx)\,\nu(dy)
		= \int_E \La^*(y,\Pi)\,\nu(dy)
		= 0.
	\end{align*}
	Thus $\nu\La \ll \nu$.  
	Let $\la$ denote the Radon--Nikodym density, so that $d(\nu\La)=\la\, d\nu$.
	
	To show $\la\in L^\infty(\nu)$, take any $\vphi\in L^1(\nu)$. Then
	\begin{align*}
		\inn{\la,\vphi}_\nu
		&= \int_\Pi \vphi(y)\,\la(y)\,\nu(dy)
		= \int_\Pi \vphi(y)\,(\nu\La)(dy)\\
		&= \int_{\Pi^2} \vphi(y)\La(x,dy)\,\nu(dx)
		= \int_{\Pi^2} \vphi(y)\La^*(y,dx)\,\nu(dy)
		= \int_\Pi \vphi(y)\,\La^*(y,\Pi)\,\nu(dy).
	\end{align*}
	Hence
	\[
	|\inn{\la,\vphi}_\nu|
	\le \|\vphi\|_{L^1(\nu)}\, \|\La^*(\cdot,\Pi)\|_{L^\infty(\nu)}
	= \|\vphi\|_{L^1(\nu)}\,\|\La^*\|_{\mcl{J}}.
	\]
	By $L^1$--$L^\infty$ duality, $\|\la\|_{L^\infty(\nu)}\le \|\La^*\|_{\mcl{J}}<\infty$.
	
	\medskip
	
	$\leftarrow$. 
	Assume now that $\nu\La\ll\nu$ with density $\la\in L^\infty(\nu)$.  
	Define a finite measure $\ga\in\mcl{M}_+(\Pi^2)$ by
	\[
	\ga(dx,dy) := \La(x,dy)\,\nu(dx).
	\]
	Let $\pi:\Pi^2\to\Pi$ be the projection $\pi(x,y)=y$. Its pushforward is
	\[
	\nu' := \pi_\sharp \ga = \nu\La = \la\,\nu.
	\]
	
	By the disintegration theorem, there exists a Markov kernel  
	$\tilde{\La}:\Pi\to\mcl{P}(\Pi)$ such that
	\[
	\ga(dx,dy) = \tilde{\La}(y,dx)\,\nu'(dy).
	\]
	We note that the kernel $\tilde{\La}(y,dx)$ is unique $\nu'$-almost everywhere, and hence also $\nu$-almost everywhere.
	Substituting $\nu'(dy)=\la(y)\nu(dy)$ gives
	\[
	\La(x,dy)\,\nu(dx)
	= \ga(dx,dy)
	= \la(y)\,\tilde{\La}(y,dx)\,\nu(dy).
	\]
	Define
	\(
	\La^*(y,dx) := \la(y)\,\tilde{\La}(y,dx).
	\)
	Then $\La^*$ is an adjoint kernel of $\La$. 
	Moreover, since both $\la$ and $\tilde{\La}$ are uniquely determined up to $\nu$-null sets, it follows that the adjoint kernel $\La^*$ is unique $\nu$-almost everywhere.

	Finally, since $\tilde{\La}(y,\Pi)=1$,
	\[
	\sup_{y\in\Pi} \La^*(y,\Pi)
	= \sup_{y\in\Pi} \la(y)\tilde{\La}(y,\Pi)
	= \|\la\|_{L^\infty(\nu)}.
	\]
	Thus $\La^*\in\mcl{J}_+(\Pi)$ with $\|\La^*\|_{\mcl{J}}=\|\la\|_{L^\infty(\nu)}$.
\end{proof}

\subsection{Data processing inequality}
The \emph{data processing inequality} expresses the monotonicity of relative entropy under the 
action of a Markov flow. Specifically, if $\rho$ and $\sigma$ are two probability measures and 
$T^*$ denotes an adjoint Markov operator, then 
\begin{align*}
	\mathcal{H}(T^*\rho\|T^*\sigma)\le \mathcal{H}(\rho\|\sigma).
\end{align*}
In other words, the application of a Markov operator cannot increase the information divergence 
between two probability measures. 
This property encapsulates the intuitive notion that post-processing cannot generate additional 
information about the original distribution. 
While the classical proof of this result (e.g., \cite{csiszar2011information})  typically relies on the kernel representation of Markov 
operators, to fit the theme of the present work we shall instead employ an operator-based approach. 
Our proof relies solely on the positivity, linearity, and mass-preserving properties of Markov 
operators, together with a Jensen-type inequality that forms the key step in the argument below.

\begin{lemma}[Jensen’s inequality for Markov operators]\label{lem:jensen}
	Let $(\Pi,\mu)$ and $(\tilde{\Pi},\tilde{\mu})$ be two finite measure spaces, and let 
	$\tilde{T}:L^1(\Pi,\mu)\to L^1(\tilde{\Pi},\tilde{\mu})$ be a positive linear operator such that 
	$\tilde{T}1=1$. 
	Let $\Phi:\mathbb{R}^+\to\mathbb{R}$ be a convex function. Then, for all nonnegative 
	$\eta\in L^1(\mu)$,
	\[
	\Phi(\tilde{T}\eta)\le \tilde{T}[\Phi(\eta)].
	\]
	If, in addition, $\tilde{T}$ is \emph{mass conserving}, i.e.
	\[
	\int_{\tilde{\Pi}}\tilde{T}\eta\, d\tilde{\mu} = \int_{\Pi}\eta\, d\mu 
	\quad \text{for all } \eta\in L^1(\mu),
	\]
	then
	\[
	\int_{\tilde{\Pi}}\Phi(\tilde{T}\eta)\, d\tilde{\mu} 
	\le \int_{\Pi}\Phi(\eta)\, d\mu.
	\]
\end{lemma}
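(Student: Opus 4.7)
The plan is to transfer a convex inequality through $\tilde T$ by using the dual representation of a proper convex function as a countable pointwise supremum of affine minorants, and then to integrate.

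First, I would invoke the standard fact that every proper convex function $\Phi:\mathbb{R}^+\to\mathbb{R}$ admits a representation
\[
\Phi(t) \;=\; \sup_{n\in\mathbb{N}}(a_n t + b_n), \qquad t\ge 0,
\]
for a countable family $\{(a_n,b_n)\}\subset\mathbb{R}^2$ with $a_n t+b_n\le \Phi(t)$ for all $t\ge 0$. For each $n$, the pointwise inequality $a_n \eta + b_n \le \Phi(\eta)$ holds on $\Pi$. Since $\tilde T$ is linear, positive, and satisfies $\tilde T 1=1$, applying $\tilde T$ to this inequality yields, $\tilde\mu$-a.e.,
\[
a_n (\tilde T\eta) + b_n \;=\; \tilde T(a_n\eta+b_n) \;\le\; \tilde T[\Phi(\eta)].
\]
Taking the countable supremum over $n$ (which preserves an a.e.\ inequality) then gives the pointwise bound $\Phi(\tilde T\eta)\le \tilde T[\Phi(\eta)]$. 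For the integrated assertion, I would integrate both sides against $\tilde\mu$ and apply the mass-conservation identity $\int_{\tilde\Pi}\tilde T f\,d\tilde\mu=\int_\Pi f\,d\mu$ to the right-hand side.

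The main technical obstacle is that $\Phi(\eta)$ need not lie in $L^1(\mu)$, so the symbol $\tilde T[\Phi(\eta)]$ requires care. To handle this, I would first subtract a fixed affine minorant $a_0 t+b_0\le \Phi(t)$ to reduce to the case $\Phi\ge 0$; this reduction is consistent with the operator identities since $\tilde T 1=1$ and $\tilde T\eta\in L^1(\tilde\mu)$ exactly absorb the constant and linear parts of the subtraction. Next, I would approximate $\Phi$ by the nondecreasing sequence of finite maxima $\Phi_N:=\max_{n\le N}(a_n t+b_n)$, each of which grows at most linearly so that $\Phi_N(\eta)\in L^1(\mu)$ and $\tilde T[\Phi_N(\eta)]$ is directly defined. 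The preceding argument applied to each $\Phi_N$ gives $\Phi_N(\tilde T\eta)\le \tilde T[\Phi_N(\eta)]$ a.e., and letting $N\to\infty$ via monotone convergence---canonically extending $\tilde T$ to $[0,\infty]$-valued measurable functions by $\tilde T[\Phi(\eta)]:=\sup_N \tilde T[\Phi_N(\eta)]$---delivers both the pointwise and integral inequalities. If $\int_\Pi \Phi(\eta)\,d\mu=+\infty$ the integrated bound is trivial, and otherwise the mass-conservation identity closes the argument.
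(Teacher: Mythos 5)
Your proof is correct and follows the same core strategy as the paper's: represent $\Phi$ as a supremum of affine minorants, push each affine bound through $\tilde T$ using linearity, positivity, and $\tilde T 1 = 1$, take the supremum, and integrate. However, you supply two pieces of technical care that the paper's version leaves implicit, and both are worth noting. First, the paper takes the supremum over the \emph{uncountable} family $A$ of all affine minorants, whereas each inequality $a\tilde T\eta + b \le \tilde T[\Phi(\eta)]$ holds only $\tilde\mu$-a.e.\ with an exceptional null set possibly depending on $(a,b)$; your restriction to a countable family of supporting lines (valid for continuous convex functions) makes the passage to the supremum a.e.-legitimate. Second, the paper writes $\tilde T[\Phi(\eta)]$ without addressing whether $\Phi(\eta) \in L^1(\mu)$, so the symbol is not a priori defined as an $L^1(\tilde\mu)$ element; your reduction to $\Phi \ge 0$, approximation by linear-growth truncations $\Phi_N$, and extension of $\tilde T$ to nonnegative measurable functions via monotone convergence closes this gap cleanly, and correctly handles the trivial case $\int \Phi(\eta)\,d\mu = +\infty$. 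In short, same route, but your version is the rigorous one.
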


\begin{proof}
	By convexity of $\Phi$, one may represent it as
	\[
	\Phi(u)=\sup_{(a,b)\in A}(au+b),\qquad 
	A=\{(a,b)\in\mathbb{R}^2: av+b\le \Phi(v)\text{ for all }v\ge 0\}.
	\]
	Fix any $(a,b)\in A$. Using the linearity of $\tilde{T}$ and the property $\tilde{T}1=1$, we have
	\[
	a\tilde{T}\eta+b = \tilde{T}(a\eta+b).
	\]
	Since $(a,b)\in A$ implies $a\eta+b\le \Phi(\eta)$ pointwise, and $\tilde{T}$ is positive 
	(thus order-preserving), it follows that
	\[
	a\tilde{T}\eta+b = \tilde{T}(a\eta+b)\le \tilde{T}[\Phi(\eta)].
	\]
	Taking the supremum over all $(a,b)\in A$ yields
	\[
	\Phi(\tilde{T}\eta)\le \tilde{T}[\Phi(\eta)],
	\]
	which establishes the first claim.  
	
	If $\tilde{T}$ is mass conserving, we integrate the inequality with respect to $\tilde{\mu}$ to obtain
	\[
	\int_{\tilde{\Pi}}\Phi(\tilde{T}\eta)\, d\tilde{\mu}
	\le \int_{\tilde{\Pi}}\tilde{T}[\Phi(\eta)]\, d\tilde{\mu}
	= \int_{\Pi}\Phi(\eta)\, d\mu,
	\]
	where the equality follows from the mass conservation property.  
\end{proof}

\begin{proposition}[Data processing inequality]\label{prop:dataproc}
	Let $T^*:L^1(\nu)\to L^1(\nu)$ be an adjoint Markov operator. Then, for any pair of probability 
	densities $\rho,\sigma\in L^1(\nu)$,
	\[
	\mathcal{H}(T^*\rho\|T^*\sigma)\le \mathcal{H}(\rho\|\sigma),
	\]
	where $\mathcal{H}(\rho\|\sigma)=\int \rho\log(\rho/\sigma)\, d\nu$ denotes the relative entropy.
\end{proposition}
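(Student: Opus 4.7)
My plan is to deduce the inequality directly from the Jensen-type estimate in Lemma~\ref{lem:jensen}. The classical obstruction in an operator-only framework (as opposed to a kernel one) is that $T^*$ does not by itself satisfy $T^*1=1$, since it acts on densities rather than functions. The standard trick is to change the reference measure: $T^*$ transports densities relative to $\nu$, but it induces a well-behaved Markov-type operator on the quotient spaces where $\sigma\,d\nu$ and $(T^*\sigma)\,d\nu$ are used as reference probability measures.

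First I would reduce to the case $\rho \ll \sigma$, since otherwise $\mcl{H}(\rho\|\sigma) = +\infty$ and the inequality is trivial. Writing $\eta := \rho/\sigma$ (with the convention $0/0 = 0$; well-defined $\sigma\nu$-a.e.\ because $\rho\ll\sigma$), I would then define the operator
\[
	\tilde{T} : L^1(\sigma\nu) \longrightarrow L^1(T^*\sigma\cdot\nu),
	\qquad
	\tilde{T}\eta := \frac{T^*(\eta\sigma)}{T^*\sigma},
\]
again with $0/0 := 0$ on $\{T^*\sigma = 0\}$ (a set of $T^*\sigma\cdot\nu$-measure zero). Well-definedness of $\tilde{T}$ follows from positivity and mass conservation of $T^*$: indeed, $|T^*(\eta\sigma)|\le T^*(|\eta|\sigma)$, so
\[
	\int_\Pi |\tilde{T}\eta|\,T^*\sigma\,d\nu
	\;\le\; \int_\Pi T^*(|\eta|\sigma)\,d\nu
	\;=\; \int_\Pi |\eta|\sigma\,d\nu \;<\; \infty.
\]

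Next I would verify that $\tilde{T}$ satisfies the hypotheses of Lemma~\ref{lem:jensen}: linearity from linearity of $T^*$, positivity from positivity of $T^*$, the normalization $\tilde{T}1 = T^*\sigma/T^*\sigma = 1$ on $\{T^*\sigma>0\}$, and mass conservation from the computation above applied to $|\eta|$ replaced by $\eta\ge 0$, which yields $\int \tilde T\eta\,d(T^*\sigma\nu) = \int\eta\,d(\sigma\nu)$. Applying Lemma~\ref{lem:jensen} with the convex function $\Phi(u) = u\log u$ on $\mathbb{R}^+$ (extended by $\Phi(0)=0$) and $\eta = \rho/\sigma$, I obtain
\[
	\int_\Pi \Phi\!\Big(\tfrac{T^*\rho}{T^*\sigma}\Big)\,T^*\sigma\,d\nu
	\;\le\;
	\int_\Pi \Phi\!\Big(\tfrac{\rho}{\sigma}\Big)\,\sigma\,d\nu,
\]
and after unfolding $\Phi$, both sides become precisely $\mcl{H}(T^*\rho\|T^*\sigma)$ and $\mcl{H}(\rho\|\sigma)$, respectively.

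The only genuinely delicate point is the handling of the null sets $\{\sigma = 0\}$ and $\{T^*\sigma = 0\}$. The first is handled by $\rho\ll\sigma$. For the second, I would note that positivity of $T^*$ together with the truncation $\rho_n := (\rho/\sigma \wedge n)\sigma \le n\sigma$ gives $T^*\rho_n\le nT^*\sigma$, so $T^*\rho_n$ vanishes on $\{T^*\sigma = 0\}$; letting $n\to\infty$ with $L^1(\nu)$-continuity (and a subsequence for a.e.\ convergence) transfers this property to $T^*\rho$. Hence the quotient $T^*\rho/T^*\sigma$ is unambiguously defined on the only set that matters for the integrals above, and the proof is complete.
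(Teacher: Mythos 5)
Your proof is correct and takes essentially the same route as the paper: both construct the normalized operator $\tilde T\eta = T^*(\eta\sigma)/T^*\sigma$ between the $\sigma\nu$- and $T^*\sigma\nu$-spaces, verify it satisfies the hypotheses of Lemma~\ref{lem:jensen}, and apply that lemma with $\Phi(u)=u\log u$. Your treatment is somewhat more careful about the null sets $\{\sigma=0\}$ and $\{T^*\sigma=0\}$ than the paper's, which passes over these silently, but this is a refinement of the same argument rather than a different approach.
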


\begin{proof}
	Fix a density $\sigma\in L^1(\nu)$ and define the measures
	\[
	d\mu = \sigma\, d\nu, \qquad d\tilde{\mu} = T^*\sigma\, d\nu.
	\]
	Both $\mu$ and $\tilde{\mu}$ are probability measures.  
	Define the operator $\tilde{T}:L^1(\mu)\to L^1(\tilde{\mu})$ by
	\[
	\tilde{T}(\eta) := \frac{T^*(\eta\sigma)}{T^*\sigma}.
	\]
	Then $\tilde{T}$ is linear, positive, satisfies $\tilde{T}1=1$, and is mass conserving:
	\[
	\int_{\tilde{\Pi}}\tilde{T}\eta\, d\tilde{\mu}
	= \int_{\Pi}\eta\, d\mu.
	\]
	
	Let $\rho\in L^1(\nu)$ be another density, and set $\eta = \rho/\sigma$.  
	Then
	\[
	\frac{T^*\rho}{T^*\sigma}
	= \frac{T^*(\eta\sigma)}{T^*\sigma}
	= \tilde{T}(\eta).
	\]
	Define $\Phi(u)=u\log u$, a convex function on $\mathbb{R}^+$.  
	Applying \ref{lem:jensen} gives
	\begin{align*}
		\mathcal{H}(T^*\rho\|T^*\sigma)
		&= \int T^*\rho \log\!\left(\frac{T^*\rho}{T^*\sigma}\right)d\nu
		= \int \frac{T^*\rho}{T^*\sigma}\log\!\left(\frac{T^*\rho}{T^*\sigma}\right) T^*\sigma\, d\nu\\
		&= \int_{\tilde{\Pi}}\Phi(\tilde{T}\eta)\, d\tilde{\mu}
		\le 
		 \int_{\Pi}\Phi(\eta)\, d\mu
		= \int_{\Pi}\frac{\rho}{\sigma}\log\!\left(\frac{\rho}{\sigma}\right)\sigma\, d\nu
		= \mathcal{H}(\rho\|\sigma).
	\end{align*}
	This proves the claim.  
\end{proof}

\bibliography{refs}
\bibliographystyle{abbrv}

\end{document}